\newcommand{\be}{\begin{equation}}
\newcommand{\ee}{\end{equation}}
\newcommand{\bse}{\begin{subequations}}
\newcommand{\ese}{\end{subequations}}
\newcommand{\jbracket}[1]{\langle{#1}\rangle}
\DeclarePairedDelimiter{\parn}{\lparen}{\rparen}	
\newcommand{\augV}{{V}_c^{\mathrm{aug}}}
\newcommand{\R}{\mathbb{R}}
\newcommand{\Xspace}{\mathbb{X}}
\newcommand{\Wspace}{\mathbb{W}}
\newcommand{\Vspace}{\mathbb{V}}
\newcommand{\jump}[1]{\left\llbracket{#1}\right\rrbracket}
\newcommand{\DpsiS}{\mathcal{S}}
\newcommand{\DpsiT}{\mathcal{T}}
\newcommand{\eng}{E}
\newcommand{\Hc}{H_c}
\newcommand{\augHam}{E_c}
\newcommand{\mom}{P}
\newcommand{\potE}{V}
\newcommand{\kinE}{K}
\newcommand{\mi}{d}
\newcommand{\DN}{G}
\newcommand{\HE}{\mathcal{H}}
\newcommand{\Qform}{Q_c}
\newcommand{\NLop}{\widetilde{\mathcal{M}}^\pm_\varepsilon}
\newcommand{\NLopc}{\widetilde{\mathcal{M}}^\pm_c}
\newcommand{\symbm}{\mathfrak{m}}
\newcommand{\symbq}{\mathfrak{q}}
\newcommand{\symbr}{\mathfrak{r}}
\newcommand{\etaerror}{r}
\newcommand{\Lin}{\mathrm{Lin}}
\newcommand{\nbhdO}{\mathcal{O}}
\newcommand{\tube}{\mathcal{U}}
\newcommand{\gammaC}{\gamma}
\newcommand{\placeholder}{\,\cdot\,}
\newcommand{\ident}{\mathrm{Id}}
\newcommand{\Dom}[1]{\operatorname{Dom}{#1}}
\newcommand{\Rng}[1]{\operatorname{Rng}{#1}}
\newcommand{\realpart}[1]{\operatorname{Re}{#1}}
\newcommand{\imagpart}[1]{\operatorname{Im}{#1}}
\newcommand{\signum}[1]{\operatorname{sgn}{#1}}
\newcommand{\spectrum}[1]{\operatorname{spec}{#1}}
\newcommand{\essspectrum}[1]{\operatorname{ess\,spec}{#1}}
\newcommand{\sechsq}[1]{\operatorname{sech}^2{#1}}
\DeclareMathOperator{\sech}{sech}
\newcommand{\diffx}{\textup{d}x}
\newcommand{\diff}{\textup{d}}
\newcommand{\diffy}{\textup{d}y}
\newcommand{\Diff}{\textup{D}}
\numberwithin{equation}{section}
\theoremstyle{plain} 
\newtheorem{theorem}{Theorem}[section] 
\newtheorem{corollary}[theorem]{Corollary}
\newtheorem{lemma}[theorem]{Lemma}
\theoremstyle{remark}
\newtheorem{remark}[theorem]{Remark}
\title[Internal wave stability]{Orbital stability of internal waves}
\author[R. M. Chen]{Robin Ming Chen}
\address{Department of Mathematics, University of Pittsburgh, Pittsburgh, PA 15260} 
\email{mingchen@pitt.edu}  
\author[S. Walsh]{Samuel Walsh}
\address{Department of Mathematics, University of Missouri, Columbia, MO 65211, USA} 
\email{walshsa@missouri.edu} 
\date{\today}
\begin{document}

\begin{abstract}
This paper studies the nonlinear stability of capillary-gravity waves propagating along the interface dividing two immiscible fluid layers of finite depth.  The motion in both regions is governed by the incompressible and irrotational Euler equations, with the density of each fluid being constant but distinct.  A diverse collection of small-amplitude solitary wave solutions for this system have been constructed by several authors in the case of strong surface tension (as measured by the Bond number) and slightly subcritical Froude number.  We prove that all of these waves are (conditionally) orbitally stable in the natural energy space.  Moreover, the trivial solution is shown to be conditionally stable when the Bond and Froude numbers lie in a certain unbounded parameter region.  For the near critical surface tension regime, we prove that one can infer conditional orbital stability or orbital instability of small-amplitude traveling waves solutions to the full Euler system from considerations of a dispersive PDE model equation.

These results are obtained by reformulating the problem as an infinite-dimensional Hamiltonian system, then applying a version of the Grillakis--Shatah--Strauss method recently introduced in \cite{varholm2020stability}.  A key part of the analysis consists of computing the spectrum of the linearized augmented Hamiltonian at a shear flow or small-amplitude wave.  For this, we generalize an idea used by Mielke \cite{mielke2002energetic} to treat capillary-gravity water waves beneath vacuum. 
\end{abstract}

\maketitle

\setcounter{tocdepth}{1}
\tableofcontents

\section{Introduction} \label{introduction section}

We consider the classical problem of determining the evolution of a free boundary dividing two superposed incompressible, inviscid, and immiscible fluids under the influence of gravity.  This situation arises in countless applications, with a particularly important example being internal waves propagating along a pycnocline or thermocline in the ocean. Recent years have seen enormous progress made in understanding the Cauchy problem for this system, and there is now a robust (local) well-posedness theory.  In parallel, a large body of work has established the existence of myriad traveling wave solutions.  Far less is known about the stability of these waves.  While many authors have addressed the spectral or linear stability of interfacial waves, nonlinear results are mostly limited to dispersive model equations such as Kortweg--de Vries (KdV).   In this paper, we prove a number of theorems on the (conditional) orbital stability of small-amplitude traveling wave solutions to the full system when the surface tension is strong in a sense to be quantified shortly. 

\begin{figure} \label{internal wave system figure}
\includegraphics{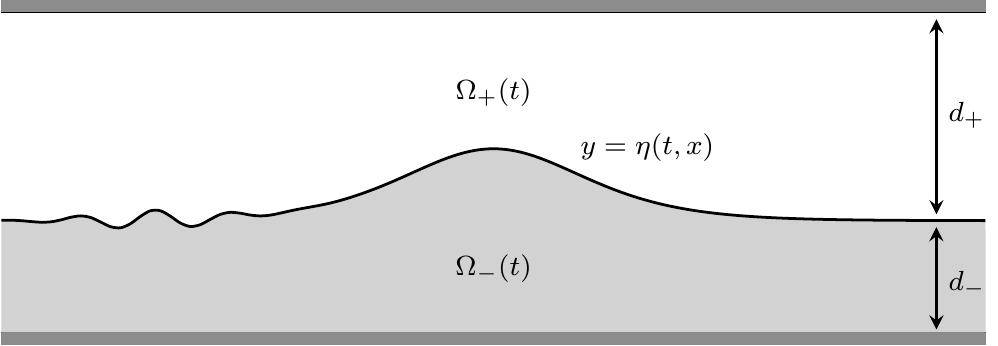}
\caption{Configuration of the internal wave system.  The unshaded fluid region $\Omega_+(t)$ has density $\rho_+$ while the darker shaded region $\Omega_-(t)$ below is of density $\rho_- \geq \rho_+$.  Their interface $\mathscr{S}(t)$ is a free boundary given by the graph of $\eta = \eta(t,x)$.  In the far field, the widths of the upper and lower layer limit to $d_+$ and $d_-$, respectively.}
\end{figure}

Mathematically, the problem is formulated as follows.  Fix Cartesian coordinates $(x,y)$ so that the wave propagates in the $x$-direction with gravity acting in the negative $y$-direction.  Because we are most interested in the motion of the boundary, we suppose that the fluid domain is confined to a channel with rigid walls at heights $y = \pm d_\pm$ for fixed $d_\pm \in (0, \infty)$.  At each time $t \geq 0$, the interface $\mathscr{S} = \mathscr{S}(t)$ is taken to be the graph of an unknown smooth function $\eta = \eta(t,x)$. For small-amplitude waves, this choice incurs no loss of generality.  Then, the upper layer inhabits the (time-dependent) set
\[ 
\Omega_+ = \Omega_+(t) := \left\{ (x,y) \in \mathbb{R}^2 : \eta(t,x) < y < d_+ \right\},
\]
while the lower layer is given by
\[ 
\Omega_- = \Omega_-(t) := \left\{ (x,y) \in \mathbb{R}^2 : -d_- < y <  \eta(t,x) \right\}.
\]
We write $\Omega(t) := \Omega_+(t) \cup \Omega_-(t)$ to denote the fluid domain.  Our focus will be on spatially localized waves for which $\eta(t,\placeholder)$ decays at infinity.  See Figure~\ref{internal wave system figure} for an illustration.

Assuming that the flow in each region is irrotational and incompressible, the velocity field in $\Omega_\pm(t)$ is then given by $\nabla\Phi_\pm$, for some function $\Phi_\pm = \Phi_\pm(t,x)$ called the velocity potential.  We take the density in $\Omega_\pm(t)$ to be constant and denote it by $\rho_\pm > 0$.  In order to ensure that heavier fluid elements do not lie above lighter elements, it is required that $\rho_+ \leq \rho_-$.  The case $\rho_+ = 0$ formally corresponds to a single fluid beneath vacuum.  All of our analysis extends to this regime with only superficial modifications to the arguments.  

The evolution of the system is governed by the incompressible irrotational Euler equations with a free boundary.  In the bulk, the conservation of momentum has the simple expression
\bse \label{Eulerian water wave problem}
\be 
\Delta \Phi_\pm = 0 \qquad \textrm{in } \Omega_\pm(t).  \label{Eulerian Laplace problem} 
\ee
On both the rigid and moving boundary components, we have the kinematic condition
\be 
\left\{
\begin{aligned} 
\partial_t \eta & = -\eta^\prime \partial_x \Phi_- + \partial_y \Phi_- = -\eta^\prime \partial_x\Phi_+ + \partial_y \Phi_+ & \qquad &\textrm{on } \{ y = \eta(t,x) \} \\
\partial_y \Phi_\pm & = 0 & \qquad & \textrm{on } \{ y = \pm d_\pm \},
\end{aligned}
\right.
 \label{Eulerian kinematic} \ee
while on $\mathscr{S}(t)$ the dynamic or Bernoulli condition is imposed:
\be
 \jump{ \rho \partial_t \Phi  +  \frac{1}{2} \rho |\nabla \Phi|^2 + g \rho \eta} + \sigma \left( \frac{\eta^\prime}{\sqrt{1+(\eta^\prime)^2}} \right)^\prime = 0  \qquad \textrm{on } \{ y = \eta(t,x) \}.  
\label{Eulerian dynamic} \ee 
\ese
Here $\jump{\placeholder} := (\placeholder)_+ - (\placeholder)_-$ denotes the jump of a quantity over the interface, $g > 0$ is the gravitational constant, and $\sigma > 0$ is the coefficient of surface tension.  The last term on the right-hand side above is the signed curvature of the interface and represents the influence of capillary effects.  In \eqref{Eulerian kinematic}, we are enforcing the continuity of the normal velocity across the interface, while \eqref{Eulerian dynamic} arises from the Young--Laplace law for the pressure jump.   Also, here and in what follows we will mostly adhere to the convention that primes denote $x$-derivatives of functions depending on $(t,x)$, while $\partial_x$ is reserved for functions of $(t,x,y)$ or in defining operators.  

Rather than work with the full velocity potential $\Phi_\pm$, which is defined on a moving domain, it is advantageous to consider its restriction to the free boundary:
\[ \varphi_\pm = \varphi_\pm(t,x) := \Phi_\pm(t,x,\eta(t,x)).\]
Through the use of nonlocal operators, it is possible to reformulate \eqref{Eulerian water wave problem} in terms of the surface variables $(\eta, \varphi_+,\varphi_-)$; see Section~\ref{nonlocal operators section}.

\subsection{Informal statement of results}

 \emph{Traveling} or \emph{steady} solutions of \eqref{Eulerian water wave problem} are waves of permanent configuration that appear independent of time when viewed in a moving reference frame.  Specifically, they exhibit the ansatz
\[ \eta(t,x) = \eta^c(x-ct), \qquad \varphi_\pm(t,x) = \varphi_\pm^c(x-ct),\]
for some traveling wave profile $(\eta^c, \varphi_+^c,\varphi_-^c)$ and wave speed $c \in \mathbb{R}$.
\begin{figure} 
\begin{center}
\includegraphics[scale=1.2]{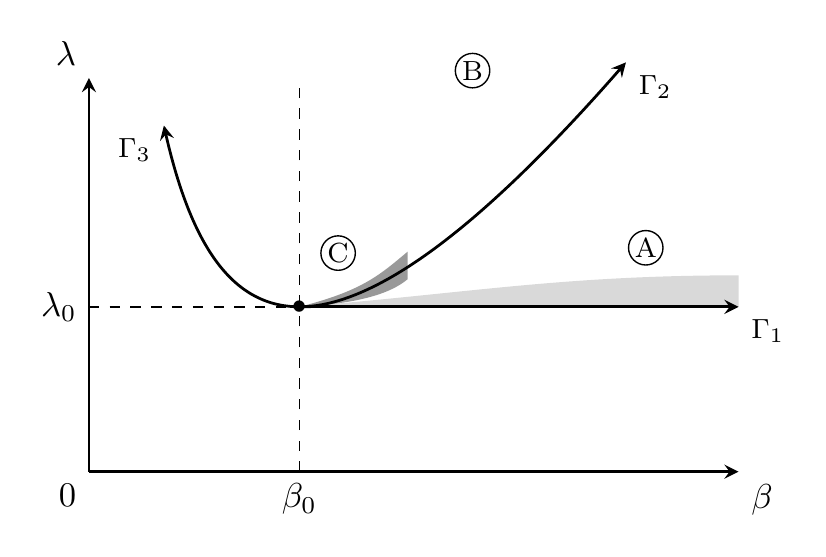}
\end{center}
\caption{Bifurcation diagram for internal capillary-gravity waves.  Region A is the lighter shaded area that lies above $\Gamma_1$ and to the right of $\Gamma_2$; this is where one has monotone solitary waves.  Region B consists of all $(\beta,\lambda)$ lying above $\Gamma_1$ and to the right of $\Gamma_3$. Finally, Region C is the darker shaded set neighboring $\Gamma_2$. Explicit parameterizations for these curves can be found in \eqref{parameterization Gamma} and \eqref{Gamma 3 parameterization}}
\label{dispersion figure}
\end{figure}
In the gravity wave case $\sigma = 0$, it is known that there exist \emph{solitary waves} \cite{bona1983finite,amick1986global,mielke1995homoclinic,james2001internal}, for which $\eta^c$ decays as $|x| \to \infty$; \emph{periodic waves} \cite{amick1986global,amick1989small}, for which $\eta^c$ is periodic in $x$; and \emph{fronts} \cite{amick1989small,makarenko1992bore,mielke1995homoclinic,chen2019center,chen2020global}, for which $\eta^c$ has distinct limits upstream and downstream.  Without surface tension, however, the dynamical problem is ill-posed \cite{lannes2013stability}, so to study stability we always take $\sigma > 0$.    Rigorous existence results for small-amplitude periodic waves (including those with vorticity) were obtained in this regime by Le \cite{le2018transmission}.  Solitary internal capillary-gravity waves were constructed by Kirrmann \cite{kirrmann1991reduction} and Nilsson \cite{nilsson2017internal}; the stability of these solutions is the main subject of the present paper.  We also note that analytical and numerical investigations of this regime have been performed by Laget and Dias \cite{laget1997interfacial}.

The existence and qualitative properties of traveling internal waves are determined by four dimensionless parameters.  The primary two are the Bond number $\beta$ and inverse square Froude number $\lambda$ given by  
\be\label{dimensionless parameters}
\beta := \frac{\sigma}{d_+ \rho_- c^2}, \qquad \lambda := -\frac{g \jump{\rho} d_+}{\rho_- c^2}.
\ee
The Bond number measures the strength of the surface tension, while $\lambda$ describes the balance between kinetic and potential energy.  One can think of the Froude number $1/\sqrt{\lambda}$ as a non-dimensionalized wave speed, hence large $\lambda$ corresponds roughly to slow moving waves.  

The dispersion relation for internal capillary-gravity waves (rescaled to dimensionless variables) is given by
\be \label{dispersion relation}
 \sum_{\pm}  \frac{\rho_\pm}{\rho_-} \xi \coth{\left(\frac{d_\pm}{d_+} \xi\right)} = \lambda + \beta \xi^2. 
\ee
This results from linearizing the problem at the trivial solution $(\eta, \varphi_+, \varphi_-) = (0,0,0)$, then looking for eigenvalues of the form $i \xi$.  If $\xi$ is a root to \eqref{dispersion relation}, the linearized problem admits a plane wave solution with $\eta = \exp{(i\xi(x-ct))}$.  After some algebra, it can be shown that there are three bifurcation curves $\Gamma_1$, $\Gamma_2$, $\Gamma_3$ that organize the $(\beta,\lambda)$-plane into regions where the configuration of the spectrum near the imaginary axis is qualitatively the same; see Figure~\ref{dispersion figure}.  They meet at the point $(\beta_0,\lambda_0)$, which is given by
\be\label{critical parameters}
\beta_0 := \frac13 \left( \frac{\rho_+}{\rho_-} + \frac{d_-}{d_+} \right), \qquad \lambda_0 := \frac{\rho_+}{\rho_-} + \frac{d_+}{d_-},
\ee
and there we find that $\xi = 0$ is a root of \eqref{dispersion relation} with multiplicity $4$.  We say that $\beta_0$ is the critical Bond number separating the weak and strong surface tension regimes.

In this regard, the internal wave system is quite similar to that of water waves beneath vacuum; see, for instance, \cite{amick1989solitary,iooss1990bifurcation,dias1993capillary,iooss1993perturbed,buffoni1996bifurcation,buffoni1996plethora,groves2015existence}.  However, there are two additional parameters to consider: the ratios of the fluid densities $\varrho$ and far-field layer heights $h$, defined by
\be \label{definition ratios}
\varrho := \frac{\rho_+}{\rho_-}, \qquad h := \frac{d_-}{d_+}.
\ee
These are specific to the two-fluid problem and allow for a surprisingly rich variety of traveling waves.  For example, it has been proved by Nilsson \cite{nilsson2017internal} and Kirrmann \cite{kirrmann1991reduction} that for $(\beta,\lambda)$ in the Region A illustrated in Figure~\ref{dispersion figure}, there exists six qualitatively distinct types of small-amplitude waves.  When $\varrho - 1/h^2$ is negative and $O(1)$ as $\lambda \searrow \lambda_0$, they find waves of depression (that is, $\eta < 0$) that are to leading order KdV solitons.  These are the only kind of wave possible in the corresponding parameter regime for the one-fluid case, which is consistent with simply taking $\rho_+ = 0$.  On the other hand, when $\varrho - 1/h^2 > 0$, there are internal waves of elevation ($\eta > 0$) whose interface is a perturbed KdV soliton.  Moreover, in the regime $| \varrho - 1/h^2| \eqsim |\lambda-\lambda_0|^{1/2} \ll 1$, they construct traveling waves that are Gardner solitons to leading order.  This furnishes four types of solutions, with waves of depression and elevation for both signs of $\varrho - 1/h^2$.  A fuller account is given in Section~\ref{traveling wave section}.

Our first theorem, stated informally for the time being, establishes the nonlinear stability of all these waves in the orbital sense.

\begin{theorem}[Strong surface tension] \label{large beta theorem} 
Every sufficiently small-amplitude solitary internal wave $(\eta^c, \varphi_+^c,\varphi_-^c)$ with $(\beta,\lambda)$ in Region~A and $0 < \lambda - \lambda_0 \ll 1$ is conditionally orbitally stable in the following sense.  For all $R > 0$ and $r > 0$, there exists $r_0 > 0$ such that, if $(\eta, \varphi_+, \varphi_-)$ is any solution defined on a time interval $[0,t_0)$ that obeys the bound
\be \label{a priori bound}
\sup_{t \in [0,t_0)} \left( \| \eta(t) \|_{H^{3+}} + \| \varphi_+(t) \|_{\dot H^{\frac{5}{2}+} \cap \dot H^{\frac{1}{2}}} + \| \varphi_-(t) \|_{\dot H^{\frac{5}{2}+} \cap \dot H^{\frac{1}{2}}} \right) < R,
\ee
and for which the initial data satisfies 
\be \label{initial data bound}
\| \eta(0) - \eta^c \|_{H^1} + \| \varphi_+(0) - \varphi_+^c \|_{\dot H^{\frac{1}{2}}} +  \| \varphi_-(0) - \varphi_-^c \|_{\dot H^{\frac{1}{2}}} < r_0,
\ee
then 
\be \label{conditional stability}
\sup_{t \in [0,t_0)} \inf_{s \in \mathbb{R}} \left( \| \eta(t, \placeholder - s) - \eta^c \|_{H^1} + \| \varphi_+(t, \placeholder - s) - \varphi_+^c \|_{\dot H^{\frac{1}{2}}}   +  \| \varphi_-(t, \placeholder -s) - \varphi_-^c \|_{\dot H^{\frac{1}{2}}} \right) < r.
\ee
\end{theorem}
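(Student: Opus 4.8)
The strategy is to cast the problem as an infinite-dimensional Hamiltonian system and apply the abstract orbital stability machinery of \cite{varholm2020stability}, which is a refinement of the Grillakis--Shatah--Strauss method tailored to situations where the group orbit is not smooth and one only controls solutions in a weaker norm than the energy norm (hence the conditional nature of the conclusion). First I would recall the Hamiltonian formulation: the Euler system \eqref{Eulerian water wave problem} in surface variables $u = (\eta, \varphi_+, \varphi_-)$ conserves the energy $\eng$ (total kinetic plus potential plus capillary energy) and the horizontal momentum $\mom$, and generates a Hamiltonian flow with respect to the natural symplectic form, with spatial translation being the underlying symmetry group. A traveling wave $u^c$ with speed $c$ is then a critical point of the \emph{augmented Hamiltonian} $\augHam_c := \eng - c\mom$, i.e. $\augHam_c'(u^c) = 0$. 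The abstract theorem of \cite{varholm2020stability} reduces conditional orbital stability to two hypotheses: (i) a spectral condition on the Hessian $\augHam_c''(u^c)$ — namely that, modulo the one-dimensional kernel generated by $\partial_x u^c$, it is positive definite except for at most one simple negative eigenvalue; and (ii) a convexity/slope condition on the scalar function $\mi(c) := \augHam_c(u^c)$, specifically that $\mi''(c) < 0$ (equivalently $\tfrac{d}{dc}\mom(u^c) < 0$) to compensate for that single negative direction.

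Next I would verify hypothesis (i). This is where most of the work lies and where I expect the main obstacle. The waves in Region A are small-amplitude perturbations of either KdV or Gardner solitons, so the linearized augmented Hamiltonian $\augHam_c''(u^c)$ should be a small perturbation of the linearization about the trivial shear flow plus a Schrödinger-type operator inherited from the model equation. The plan is to first analyze $\augHam_c''(0)$ — the Hessian at the trivial solution — using the generalization of Mielke's \cite{mielke2002energetic} argument advertised in the abstract: one writes the quadratic form via the Dirichlet--Neumann operators, diagonalizes in Fourier space, and reads off from the dispersion relation \eqref{dispersion relation} that for $(\beta,\lambda)$ in the appropriate region (above $\Gamma_1$) the form is nonnegative with an appropriately characterized null space — this is presumably the content of an earlier result in the paper establishing conditional stability of the trivial solution. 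Then, for the genuine small-amplitude wave, I would use a perturbative/Lyapunov--Schmidt argument: the essential spectrum stays on the positive real axis (bounded away from $0$), and the finitely many eigenvalues near $0$ are governed by the reduced equation, which to leading order is the linearization of KdV (or Gardner) about its soliton. The classical spectral theory of the KdV-soliton linearized operator $-\partial_x^2 + 1 - 6\,\mathrm{sech}^2$ (or the Gardner analogue) gives exactly one simple negative eigenvalue and a simple zero eigenvalue (the translation mode), and this structure persists under the small perturbation by a standard continuity/non-degeneracy argument. The delicate points will be (a) controlling the essential spectrum of the full Euler linearization — which requires the nonlocal-operator estimates from Section~\ref{nonlocal operators section} and the strong-surface-tension assumption $\beta > \beta_0$ to ensure coercivity at high frequencies — and (b) ruling out additional small eigenvalues emerging from the shear-flow part of the spectrum, for which the positivity of $\augHam_c''(0)$ on the relevant subspace is the key input.

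Finally I would check hypothesis (ii), the slope condition $\mi''(c) < 0$. For waves bifurcating from KdV/Gardner solitons, $\mom(u^c)$ and hence $\mi(c)$ admit explicit leading-order expansions in the amplitude parameter (equivalently in $\lambda - \lambda_0$), computed from the known soliton profiles; differentiating these expansions and checking the sign is a direct calculation, with the sign coming out correctly precisely because $\lambda - \lambda_0 > 0$ and $(\beta,\lambda)$ lies in Region A. With both hypotheses in hand, the abstract theorem of \cite{varholm2020stability} yields the stability statement \eqref{conditional stability}; the role of the a priori bound \eqref{a priori bound} is exactly to supply the compactness/continuity needed to run the abstract argument in the gap between the low-regularity metric used to measure closeness and the higher regularity in which the conserved quantities are smooth. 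I would close by noting that the $\rho_+ = 0$ (water wave) case follows identically, and that the adaptation of Mielke's idea to the two-layer setting — handling the two coupled Dirichlet--Neumann operators and the extra parameters $\varrho, h$ — is the genuinely new technical ingredient.
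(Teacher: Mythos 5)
Your overall architecture is the same as the paper's: the Benjamin--Bridges Hamiltonian formulation, verification of the hypotheses of \cite{varholm2020stability}, a Morse-index-one statement for the linearized augmented Hamiltonian obtained by comparison with the linearized KdV/Gardner soliton operator, and a slope condition on the moment of instability $\mi(c)=\augHam(U_c)$. Two corrections are needed. First, your slope condition is stated with the wrong sign and is internally inconsistent: since $\mi'(c)=-\mom(U_c)$, your parenthetical requirement $\tfrac{d}{dc}\mom(U_c)<0$ is equivalent to $\mi''(c)>0$, which is the convexity condition the abstract theorem actually demands (and what Theorem~\ref{precise strong surface tension theorem} verifies); $\mi''(c)<0$ is the instability regime. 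Moreover the sign does not come ``precisely because $\lambda-\lambda_0>0$'': because the family is parameterized by $c$ with the dimensional parameters frozen, $\varepsilon_c$, $\beta_c$, and in the Gardner case $\kappa_c$ (hence the limiting profile itself) all vary with $c$, and the verification rests on the monotonicity of $c\mapsto -c\,\varepsilon_c^{2m-1}(\beta_c-\beta_0)^{1/2}$ together with, for the Gardner branch, an explicit monotonicity computation of $\int_{\R}\widetilde\eta^2\,\diffx$ in $\kappa_c$.

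The substantive gap is in your spectral step. A ``standard continuity/non-degeneracy'' perturbation off the trivial flow, or an unspecified Lyapunov--Schmidt reduction, cannot resolve the eigenvalue count as stated: the linearization at the small-amplitude wave converges to the trivial-flow operator only pointwise, and all three spectral features you must track---the negative eigenvalue, the translational zero, and the lower edge of the remaining spectrum---collapse to $0$ at rate $\varepsilon^2$ (Theorem~\ref{thm Qspectrum}), so no fixed spectral gap survives the limit and ordinary perturbation theory off $\eta=0$ loses exactly the information you need. The paper's mechanism is different in two essential respects that your plan omits: (i) one first eliminates $\psi$ by minimizing $\augHam$ in $\psi$ (Mielke's augmented potential), reducing the problem to the scalar second-order nonlocal operator $\Qform(\eta)$ of Lemma~\ref{quadratic form lemma}; and (ii) one conjugates by the long-wave scaling $S_\varepsilon$ and divides by $\varepsilon^2$, proving (Lemma~\ref{lem limit rescaled}) that the rescaled operator converges to the linearized steady KdV/Gardner operator, after which a compactness argument---uniform ellipticity of the symbol, exponentially weighted resolvent bounds to get decay of eigenfunctions, and convergence of eigenpairs and spectral projections---yields exactly one simple negative eigenvalue and a simple zero, with the rest of the spectrum bounded below by $a\varepsilon^2c^2\rho_-/d_+$. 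Only then is the count transferred to $I^{-1}\Hc$ via the identity \eqref{Hc formula}, which expresses $\Diff^2\augHam(U_c)$ as $\Qform(\eta_c)$ plus a manifestly nonnegative quadratic form built from $A(\eta_c)$. Without the reduction to $\Qform$ and the rescaling (or an equally quantitative substitute), your hypothesis (i) is not established.
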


\begin{remark} \label{strong surface tension remark}
The bound in \eqref{conditional stability} controls the distance between $(\eta,\varphi_+,\varphi_-)$ and the family of translates of the steady wave.  This is natural given that the underlying system \eqref{Eulerian water wave problem} is translation invariant, and indeed it is necessary even for model equations such as KdV.  Local well-posedness for the Cauchy problem at the level of regularity represented by the norm in \eqref{a priori bound} has been proved by Shatah and Zeng \cite{shatah2008interface,shatah2011interface}.  On the other hand, we will show in Section~\ref{Hamiltonian structure section} that the lower regularity norm in \eqref{initial data bound} and \eqref{conditional stability} is equivalent to the physical energy.  We also emphasize that because $r$ is independent of $t_0$, this result is much stronger than continuity of the data-to-solution map.  For a global-in-time solution, it gives orbital stability in the classical sense.
\end{remark}

Our next result concerns uniform flows for which the interface is perfectly flat and the velocity is purely horizontal with the same constant value $c$ in both layers.  In a reference frame moving with the wave, it therefore appears quiescent.  
While linear stability criteria for this regime are classical (see, for example, \cite{drazin2004book}), as far as we are aware, this is the first nonlinear stability result.  

\begin{theorem}[Uniform flow] \label{parallel flow theorem} 
The laminar solution $(\eta^c, \varphi_+^c, \varphi_-^c) = (0,0,0)$ is conditionally stable in the sense of Theorem~\ref{large beta theorem} provided that $(\beta,\lambda)$ lies in Region~B.
\end{theorem}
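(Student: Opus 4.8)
Since the laminar solution $(0,0,0)$ is a steady configuration in every translating reference frame, it is a traveling wave of (any) speed $c$ and hence a critical point of the augmented Hamiltonian $\augHam = \eng - c\mom$ for every $c\in\R$; indeed $\eng$, $\mom$, and their differentials all vanish at the origin, the physical energy vanishing to second order there. Consequently the moment of instability $\mi(c) = \augHam(0,0,0)$ is identically $0$, so the convexity hypothesis on $\mi$ in the Grillakis--Shatah--Strauss theorem of \cite{varholm2020stability} is vacuous and the argument collapses to its simplest case: it suffices to show that $\Hc$, the self-adjoint operator representing the second variation $\Qform$ of $\augHam$ at the origin, is strictly positive on the energy space $\Xspace$, i.e.\ that $\Qform(u)\gtrsim\norm{u}_{\Xspace}^2$. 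Granting this, $\augHam$ is a genuine Lyapunov functional --- it is conserved along any solution with the regularity afforded by \eqref{a priori bound} \cite{shatah2008interface, shatah2011interface}, and $\augHam(u) - \augHam(0,0,0) \gtrsim \norm{u}_{\Xspace}^2$ near the origin --- so \eqref{conditional stability} follows from the usual continuity-in-time bootstrap, the super-quadratic remainder in the Taylor expansion of $\augHam$ being absorbed by means of \eqref{a priori bound} exactly as in the proof of Theorem~\ref{large beta theorem}. The only simplification relative to Theorem~\ref{large beta theorem} is that the translation orbit of $(0,0,0)$ is a single point, so nothing is quotiented out and the infimum over $s$ in \eqref{conditional stability} is attained at $s=0$.

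The content of the proof is thus the assertion that $\Qform$ is coercive on $\Xspace$ precisely for $(\beta,\lambda)$ in Region~B. I would compute $\Qform(\eta,\varphi_+,\varphi_-)$ in Fourier variables from the quadratic parts of $\eng$ and $\mom$ obtained in Section~\ref{Hamiltonian structure section}. The potentials enter $\Qform$ only through the linearized Dirichlet--Neumann operators $\DN_\pm$ and the linearized kinematic coupling across $\{y=0\}$, contributing a positive form comparable to $\norm{\varphi_+}_{\dot H^{1/2}}^2 + \norm{\varphi_-}_{\dot H^{1/2}}^2$. Minimizing $\Qform$ over $(\varphi_+,\varphi_-)$ with $\eta$ held fixed, subject to the kinematic constraint --- the two-layer analogue of Mielke's reduction \cite{mielke2002energetic} --- collapses $\Qform$ to a scalar Fourier multiplier in $\eta$ whose symbol, after the nondimensionalization of \eqref{dimensionless parameters}, is
\be
\symbm(\xi) = \lambda + \beta\xi^2 - \sum_{\pm}\frac{\rho_\pm}{\rho_-}\,\xi\coth\!\left(\frac{d_\pm}{d_+}\xi\right),
\ee
the difference between the two sides of the dispersion relation \eqref{dispersion relation}. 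Since the kinetic part already controls the $\dot H^{1/2}$ norms of the potentials (modulo the innocuous zero-frequency gauge mode), $\Qform$ is coercive on $\Xspace$ if and only if $\symbm(\xi)\geq\delta\jb{\xi}^2$ for some $\delta>0$ and all $\xi\in\R$; the $\xi^2$ growth of $\symbm$ is exactly what reproduces the $H^1$ norm of $\eta$ on the left of \eqref{conditional stability}.

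It remains to decide when $\symbm$ is uniformly positive, which is an elementary analysis of an explicit even function. One has $\symbm(0) = \lambda - \lambda_0$ and, near the origin, $\symbm(\xi) = (\lambda-\lambda_0) + (\beta-\beta_0)\xi^2 + O(\xi^4)$ with $\beta_0,\lambda_0$ as in \eqref{critical parameters}, while $\symbm(\xi)\sim\beta\xi^2\to+\infty$ as $\abs{\xi}\to\infty$. Tracing where a zero of $\symbm$ first appears --- at the origin (the curve $\Gamma_1$) or through an interior minimum dropping to zero (the curve $\Gamma_3$) --- is precisely the analysis of \eqref{dispersion relation} behind Figure~\ref{dispersion figure}, and it identifies $\{\, \symbm > 0 \text{ on } \R \,\}$ with the set of $(\beta,\lambda)$ lying above $\Gamma_1$ and to the right of $\Gamma_3$, i.e.\ with Region~B. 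A routine compactness argument then promotes this pointwise positivity, together with the quadratic growth at infinity, to the bound $\symbm(\xi)\geq\delta\jb{\xi}^2$, which closes the coercivity argument.

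I expect the only genuinely delicate point to be the partial minimization over the potentials: one must check that it is performed in the correct (energy) topology, that the kinematic constraint determines the transmitted potential uniquely modulo the zero-frequency gauge, and that no coercivity in the $\varphi_\pm$ directions is lost in passing to $\symbm$. This is, however, nothing but the spectral analysis of $\Hc$ already carried out for general small-amplitude waves (in the course of proving Theorem~\ref{large beta theorem}), specialized to the trivial state; with that in hand the remaining steps --- conservation of $\augHam$, the coercivity estimate, and the bootstrap --- are routine and parallel those for Theorem~\ref{large beta theorem}.
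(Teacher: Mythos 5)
Your proposal is correct and follows essentially the same route as the paper: a direct Lyapunov-type argument based on conservation of $\augHam$, coercivity of its second variation at the trivial state on Region~B, and absorption of the super-quadratic Taylor remainder using the a priori $\Wspace$ bound via the $\Xspace$--$\Vspace$--$\Wspace$ interpolation of Lemma~\ref{spaces lemma}, exactly as in the paper's self-contained proof of Theorem~\ref{precise parallel theorem}. Your partial minimization over the potentials yielding the dispersion-relation symbol is precisely the paper's augmented-potential reduction, i.e.\ the combination of the structure formula \eqref{Hc formula} with the Fourier multiplier $\Qform(0)$ of Lemma~\ref{cont spec lemma}, whose uniform positivity characterizes Region~B.
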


Lastly, we consider the critical surface tension case where $(\beta,\lambda)$ lies in Region~C near $(\beta_0,\lambda_0)$; see Figure~\ref{dispersion figure}.   It is well-established that in this regime, the dynamics of sufficiently shallow waves are captured by a fifth-order nonlinear dispersive PDE similar to the Kawahara equation \cite{kawahara1972oscillatory,benney1976general}.   For spatially localized traveling waves, one can then integrate to obtain a fourth-order ODE
\be \label{intro kawahara}
Z^{\prime\prime\prime\prime} - 2(1+\delta) Z^{\prime\prime} + Z - Z^2 = 0,
\ee
where we have scaled out all but the non-dimensional parameter $\delta = \delta_c$, which is determined explicitly by the wave speed via \eqref{definition epsilon C region}.   The ODE \eqref{intro kawahara} boasts an extraordinarily large variety of solutions that are homoclinic to $0$ (see, for example, \cite{buffoni1996plethora}). For this paper, we focus on the family $\{ Z_\delta \}$ of ``primary homoclinic'' orbits that are even, unimodal, and exponentially localized.  They have been rigorously constructed for $\delta \geq 0$ and  $-1 \ll \delta < 0$,  and numerically observed to persist as $\delta \searrow -2$.  Nilsson \cite{nilsson2017internal} shows that for every $|\delta_c| \ll 1$, there exists a traveling wave $(\eta^c, \varphi_+^c, \varphi_-^c)$ solution to \eqref{Eulerian water wave problem} with $\eta^c$ given to leading order by a rescaling of $Z_{\delta_c}$.  The next result states that the orbital stability or instability of these solutions to the full internal wave problem can be determined by  considerations of the far simpler model equation \eqref{intro kawahara}.

\begin{theorem}[Critical surface tension] \label{critical beta theorem}
Let $\{ Z_\delta \}$ be the family of primary homoclinic solutions to \eqref{intro kawahara} and suppose that $(\beta,\lambda)$ lies in Region~C with $|\delta_c| \ll 1$.  Then the corresponding traveling wave solution $(\eta^{c_*}, \varphi_+^{c_*}, \varphi_-^{c_*})$ to \eqref{Eulerian water wave problem} is conditionally orbitally stable provided that the function
\be \label{intro d prime}
c \mapsto  \signum{c} \int_\R  Z_{\delta_c}^2 \,\diffx
\ee
is strictly increasing at $c_*$, and it is orbitally unstable if this function is strictly decreasing there.
\end{theorem}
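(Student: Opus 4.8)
The plan is to apply the infinite-dimensional Grillakis--Shatah--Strauss (GSS) framework of Varholm, Wahlén, and Walsh \cite{varholm2020stability} to the Hamiltonian reformulation of \eqref{Eulerian water wave problem} developed in the preceding sections. Once the structural hypotheses of \cite{varholm2020stability} are checked --- smoothness of $\Hc$, the chain of phase/energy spaces, and the higher-regularity local well-posedness of Shatah--Zeng \cite{shatah2008interface,shatah2011interface} --- the steady wave $U_{c_*} := (\eta^{c_*},\varphi_+^{c_*},\varphi_-^{c_*})$ is a critical point of the augmented Hamiltonian $\augHam = \Hc - c\mom$, where $\mom$ is the momentum generating the group of horizontal translations, the sole nontrivial symmetry of the system (matching the model \eqref{intro kawahara}). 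Conditional orbital stability versus instability is then controlled by two data: the Morse index $n(c)$ of the linearized augmented Hamiltonian $\augHam''(U_c)$ relative to the symplectic form --- the number of negative directions surviving removal of the neutral translation mode --- together with the triviality of $\ker \augHam''(U_c)$ modulo that mode; and the sign of $d''(c_*)$, where $d(c) := \augHam(U_c)$ is the scalar moment of instability. Granting $n(c_*)=1$ with one-dimensional kernel, the theory yields conditional orbital stability when $d''(c_*)>0$, and (conditional) spectral --- hence orbital --- instability when $d''(c_*)<0$, since then the number of positive eigenvalues of the $1\times 1$ Hessian $d''$ is zero, so $n(c_*)$ minus that number is odd. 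The qualifier ``conditional'' is forced by the mismatch between the well-posedness class \eqref{a priori bound} and the energy-space topology of \eqref{initial data bound}, on which $\augHam$ is a Lyapunov functional.

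\emph{Step 1: the steady family.}
First I would record that, for $(\beta,\lambda)$ in Region~C with $|\delta_c|$ small, Nilsson's construction \cite{nilsson2017internal} gives $U_c$ depending smoothly on $c$ near $c_*$, obtained as a transversal zero of the reduced bifurcation equations from a Lyapunov--Schmidt/center-manifold reduction; in particular $U_c$ is a nondegenerate constrained critical point of $\augHam$, and $c\mapsto \delta_c$ is a local diffeomorphism given explicitly by \eqref{definition epsilon C region}.

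\emph{Step 2: the Morse index (the crux).}
The heart of the matter is to prove $n(c)=1$ with $\ker \augHam''(U_c) = \linspan{\partial_x U_c}$. Following and generalizing Mielke's treatment of capillary-gravity water waves beneath vacuum \cite{mielke2002energetic}, I would recast the steady internal-wave problem near $(\beta_0,\lambda_0)$ as a spatial-dynamical system, apply a Kirchgässner-type center-manifold reduction, and thereby identify the only part of $\spectrum{\augHam''(U_c)}$ that can approach the origin with the spectrum of the linearization of the model ODE \eqref{intro kawahara} about the primary homoclinic $Z_{\delta_c}$. For the latter it is classical that the even, unimodal, exponentially localized ground state has Morse index exactly $1$ with kernel spanned by $Z_{\delta_c}'$; the complementary (``hyperbolic'' and essential) spectrum of $\augHam''(U_c)$ stays bounded away from $0$ uniformly for small amplitude. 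The difficulties here are genuinely analytic: $\augHam''(U_c)$ is a nonlocal, non-elliptic operator acting between the distinct spaces dictated by the quasilinear Hamiltonian structure (it involves the Dirichlet--Neumann operators of the two layers), so the reduction, the control of the essential spectrum, and the persistence of the spectral count under the small-but-$O(1)$ perturbation away from the pure model all require care. I expect this step to be the main obstacle.

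\emph{Step 3: the moment of instability.}
Differentiating $\augHam'(U_c)=0$ against $\dUdc$ yields $d'(c) = -\mom(U_c)$, hence $d''(c) = -\frac{d}{dc}\mom(U_c)$. Evaluating $\mom$ on the small-amplitude family and unravelling the long-wave rescaling that relates $U_c$ to $Z_{\delta_c}$, one finds that, up to a fixed positive multiplicative constant and higher-order corrections whose derivatives remain uniformly controlled in a neighborhood of the fixed speed $c_*$, $d'(c)$ coincides with $\signum{c}\int_\R Z_{\delta_c}^2\,\diffx$. Consequently $d''(c_*)$ has the same sign as $\frac{d}{dc}\big|_{c_*}\!\big(\signum{c}\int_\R Z_{\delta_c}^2\,\diffx\big)$, so that strict increase at $c_*$ gives $d''(c_*)>0$ and, with Step~2, conditional orbital stability via \cite{varholm2020stability}, while strict decrease gives $d''(c_*)<0$ and hence orbital instability. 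The remaining work in this step is the bookkeeping of the scaling factors coming from Nilsson's Boussinesq-type normalization, which is mechanical given the construction but must be done carefully enough to confirm that the leading term indeed controls the sign of $d''(c_*)$.
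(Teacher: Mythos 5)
Your overall architecture (GSS via \cite{varholm2020stability}, Morse index $1$ plus sign of $d''(c_*)$, with $d'(c)=-\mom(U_c)$) matches the paper, but Step~2 contains a genuine gap, in two places. First, the spectral input on the model equation is not ``classical'': the linearization of \eqref{intro kawahara} about $Z_\delta$ is the \emph{fourth-order} operator $Q_\delta=\partial_x^4-2(1+\delta)\partial_x^2+1-2Z_\delta$ in \eqref{linearized operator kawahara}, for which there is no Sturm--Liouville oscillation theory; the statement that an even, unimodal, exponentially localized homoclinic automatically yields Morse index exactly one with kernel $\linspan{Z_\delta'}$ is false in general for such operators. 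The paper has to import Sandstede's result (Lemma~\ref{lem spec kawahara}), whose proof hinges on the fact that the primary homoclinics are \emph{transversely constructed}; this is precisely why the theorem is restricted to $|\delta_c|\ll 1$. Without this (or an equivalent argument) your Step~2 has no proof of $n(c_*)=1$. Second, the mechanism you propose --- a Kirchg\"assner spatial-dynamics center-manifold reduction of the linearized problem --- is not what Mielke does and is not developed enough to work as stated: the center-manifold reduction is a tool for the \emph{steady existence} problem, and it does not by itself identify the near-zero spectrum of the self-adjoint Hessian $\Diff^2\augHam(U_c)$ with that of the reduced ODE linearization. The paper's route is different: eliminate $\psi$ via the kinematic condition to form the augmented potential, compute its Hessian $\Qform(\eta)$ (Lemma~\ref{quadratic form lemma}), conjugate by the long-wave scaling $S_\varepsilon$ and prove operator convergence to $Q_\delta$ (Lemmas~\ref{R asymptotics lemma}, \ref{lem limit rescaled}), transfer the eigenvalue count back to $\Qform(\eta_\varepsilon)$ by a compactness/weighted-decay argument (Theorems~\ref{thm Qspectrum}, \ref{thm Qspectrum conditional}), and only then pass to $\Diff^2\augHam(U_c)$ through the identity \eqref{Hc formula} and positivity of $A(\eta_c)$ (Theorem~\ref{spectrum theorem}). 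Some bridge of this kind between the model ODE and the nonlocal Hessian is the actual content of the crux you flagged, and your proposal leaves it unproved.

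A smaller but real inaccuracy occurs in Step~3: the factor relating $d'(c)$ to $\int_\R Z_{\delta_c}^2\,\diffx$ is \emph{not} a fixed positive constant. The computation gives
$d'(c)=-c\,\varepsilon_c^{7}\,d_+^2\gamma\sum_\pm\rho_\pm\tfrac{d_+}{d_\pm}\int_\R Z_c^2\,\diffx+O(\varepsilon_c^{11})$ in $C^1$, so the prefactor is $c$-dependent and carries the sign $-\signum{c}$; the argument only closes because $c\mapsto-c\,\varepsilon_c^{7}$ (with $\varepsilon_c$ from \eqref{definition epsilon C region}) is itself strictly increasing, which is what converts the hypothesis on $\signum{c}\int_\R Z_{\delta_c}^2\,\diffx$ into $d''(c_*)>0$ (resp.\ $<0$). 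As written, your claim that $d'(c)$ ``coincides with $\signum{c}\int Z_{\delta_c}^2$ up to a fixed positive constant'' has the wrong sign structure, though this is repairable bookkeeping rather than a conceptual flaw.
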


We remark that this theorem is new even for the one-fluid case.  Physically, the integral in \eqref{intro d prime} represents the momentum carried by the wave; whether it is increasing or decreasing as a function of $\delta$ has been investigated by many authors but remains open in the present case.  Under conditions analogous to Theorem~\ref{critical beta theorem}, Levandosky \cite{levandosky1999stability,levandosky2007stability} proves a nonlinear stability/instability result for ground state solutions to a family of fifth-order dispersive PDEs that includes the Kawahara equation.   On the other hand, for $\delta = 1/6$, the primary homoclinic solution to \eqref{intro kawahara} has the explicit formula 
\[
Z_{\tfrac{1}{6}} = \frac{35}{24} \sech^4{\left(\frac{\sqrt{6}}{12} \placeholder \right)},
\]
and by exploiting this, \eqref{intro d prime} can be evaluated directly for various choices of the dimensional parameters \cite{albert1992positivity,dey1996stationary,molinet2018}.  Numerical evidence in \cite{Ilichev1992stability} suggests that stability holds for the Kawahara equation with $\delta > 0$, but analytical results are not currently available.  Through Theorem~\ref{critical beta theorem}, progress on this question for the model equation can immediately be translated to \eqref{Eulerian water wave problem}.

\subsection{Idea of the proof}

It is well known that the internal wave problem \eqref{Eulerian water wave problem} can be formulated as an abstract Hamiltonian system of the general form 
\[ \partial_t u = J \Diff \eng(u),\]
where $u = u(t)$ is an unknown related to $(\eta, \varphi_+,\varphi_-)$, the Poisson map $J$ is a skew-adjoint operator, and $\eng$ is a conserved energy functional.  The translation invariance of the system gives rise to a second conserved quantity, the momentum $\mom$.  A traveling wave solution with wave speed $c$ is in fact a critical points of the augmented Hamiltonian $\augHam := \eng - c \mom$.  

It is therefore natural to adopt a constrained variational viewpoint, attempting to show that the waves are minimizers of the energy on level sets of the momentum.  A serious challenge that arises in many applications, including the present one, is that $\Diff^2 \augHam$ has an unstable direction as well as a $0$ eigenvalue due to translation invariance.  This situation can lead to either stability or instability, and a deft use of the conserved quantities is necessary to discern which occurs for the waves in question.  Benjamin \cite{benjamin1972stability} pioneered this approach in his study of the orbital stability of KdV solitons.  A systematic and greatly expanded version was later developed by Grillakis, Shatah, and Strauss \cite{grillakis1987stability1}.  Now called the GSS method, it is one of the primary tools in nonlinear stability theory for Hamiltonian systems.

Historically, though, GSS has not been especially successful in treating the full water wave problem. Indeed, \eqref{Eulerian water wave problem} exhibits a host of features that make it highly resistant to na\"ive applications of systematic methods. For example, the theory in \cite{grillakis1987stability1} requires that $J$ be an isomorphism, which does not hold here as we show in Section~\ref{Hamiltonian structure section}.   It is also formulated under the hypothesis that the Cauchy problem is globally well-posed in the natural energy space.  At present, \eqref{Eulerian water wave problem} is only known to be locally well-posed and this assumes considerably more smoothness.  Because the water wave problem is quasilinear, it is not expected to generate a flow on the energy space.  Worse still, the corresponding functional $\eng$ is not even differentiable at this level or regularity.

Seeking to address these issues, Varholm, Wahl\'en, and Walsh \cite{varholm2020stability} obtained a variant of the GSS method that weakens the above hypotheses.  In place of the bijectivity of $J$, it essentially requires only that $J$ is injective with dense range.  The functional analytic framework is also designed to accommodate the gap in regularity between the energy space and the smoothness needed for local well-posedness.  In this paper, we use the relaxed GSS method to attack the water wave problem directly and prove Theorem~\ref{large beta theorem} and Theorem~\ref{critical beta theorem}.  A simpler, self-contained argument suffices for Theorem~\ref{parallel flow theorem} as the augmented linearized Hamiltonian has no unstable directions in that case.  

The most challenging step in this procedure is computing the spectrum of the linearized augmented Hamiltonian at a traveling wave.  For this, we generalize a technique introduced by Mielke \cite{mielke2002energetic} in his work on solitary capillary-gravity waves in a single finite-depth fluid and with strong surface tension.  Briefly, this involves using the kinematic condition to eliminate $\varphi_\pm$ and obtain an auxiliary functional acting only on $\eta$.  Conjugating by a rescaling operator, a delicate argument shows that for sufficiently small-amplitude waves, the spectrum coincides to leading order with the linearization of a dispersive model equation (steady KdV or Gardner in the setting of Theorem~\ref{large beta theorem} and steady Kawahara for Theorem~\ref{critical beta theorem}).  Here it is important to note that these calculations are substantially more difficult in the internal wave setting than for a single fluid: the nonlocal operators introduced in the Hamiltonian reformulation are more complicated, and they must be expanded to higher order.  On the other hand, Mielke proves conditional orbital stability using an ad hoc modification of the GSS method.  Because we have at our disposal the general theory from \cite{varholm2020stability}, we are able to streamline this part of the argument.

Let us also mention an alternative variational approach to proving nonlinear stability of water waves due to Buffoni.  Roughly speaking, this consists of a penalization scheme followed by a concentration compactness argument to directly construct traveling waves as constrained minimizers of the energy with fixed momentum.  In some circumstances, one can then apply a soft analysis argument of Cazenave and Lions \cite{cazenave1982orbital} to infer so-called (conditional) \emph{energetic stability}, meaning that the \emph{set} of constrained minimizers is stable in the energy norm.  This differs from the orbital stability we obtain unless one also has uniqueness of the minimizer up to translation, which is typically not available.  Through this variational method, Buffoni proved the existence and stability (in the above sense) of solitary waves in the single-fluid case with strong surface tension \cite{buffoni2004stability}.  He also gave partial results concerning waves with weak surface tension and in infinite depth \cite{buffoni2005conditional,buffoni2009gravity}.  Pushing significantly further the technique, Groves and Wahl\'en \cite{groves2010existence,groves2011conditional} subsequently obtained complete versions of these theorems, and also treated the case of constant vorticity \cite{groves2015existence}.

\subsection{Plan of the article}

In Section~\ref{formulation section}, we begin by reformulating the internal wave problem \eqref{Eulerian water wave problem} as an abstract Hamiltonian system in the style of Benjamin and Bridges \cite{benjamin1997reappraisal1}.  A  number of hypotheses necessary to apply the general theory in \cite{varholm2020stability} are then be verified.  We also recall the existence theory due to Nilsson \cite{nilsson2017internal}, recasting it within the Hamiltonian framework of the present paper.  

Section~\ref{spectrum section} is devoted to computing the spectrum of the linearized augmented Hamiltonian at a uniform flow or small-amplitude traveling wave.  As mentioned above, our calculation is patterned on the basic approach of Mielke \cite{mielke2002energetic}, but with many additional challenges owing to the more complicated physical setting.  

The main results are then proved in Section~\ref{proof section}.  Thanks to the general theory, this requires us only to determine whether the so-called moment of instability, a scalar-valued function of the wave speed, is strictly convex or concave.  This is accomplished by exploiting a long-wave rescaling and the leading-order form of the waves known from the existence theory.

Finally, Appendix~\ref{identities appendix} contains some elementary calculations that plan an essential part in the spectral computation.
 
\section{Hamiltonian formulation for internal waves} \label{formulation section}

\subsection{Nonlocal operators and surface variables} \label{nonlocal operators section} 

Following the classical Zakharov--Craig--Sulem idea, we will reformulate the interface Euler equations \eqref{Eulerian water wave problem} as a nonlocal problem in terms of quantities restricted to the free boundary $\mathscr{S}(t)$.  A similar approach was taken by Benjamin and Bridges \cite{benjamin1997reappraisal1} and Craig and Groves \cite{craig2000normal} in their treatments of this system.

Recall that we have defined 
\[ \varphi_\pm(t,x) := \Phi_\pm(t, x, \eta(t,x)),\]
to be the traces of the velocity potentials for the upper and lower regions.  The velocity field can then be recovered by means of the Dirichlet--Neumann operator in $\Omega_\pm(t)$. For a fixed $\eta$, this is the mapping given by
\be \begin{split}
 \DN_\pm(\eta) f_\pm & := \jbracket{\eta^\prime} \big( N_\pm \cdot \nabla {\HE}_\pm(\eta) f \big)|_{\mathscr{S}} 
\end{split}\label{definition DN} \ee
where $N_\pm$ is the unit outward normal to $\Omega_\pm$ along $\mathscr{S}$, we are making use of the Japanese bracket notation $\jbracket{\placeholder} := \sqrt{1+|\placeholder|^2}$, and ${\HE}_\pm(\eta) f$ is the harmonic extension of $f$ to $\Omega_\pm$.  Specifically, in view of the kinematic conditions \eqref{Eulerian kinematic} on the rigid boundaries, we take $\HE_\pm(\eta) f$ to be the unique solution to
\be \label{definition HE}
 \left\{ \begin{aligned} 
\Delta {\HE}_\pm(\eta) f & = 0 \qquad \textrm{in } \Omega_\pm \\ 
{\HE}_\pm(\eta) f & = f \qquad \textrm{on } \{y = \eta\} \\
\partial_y {\HE}_\pm(\eta) f & = 0 \qquad \textrm{on } \{y = \pm d_\pm\}. \end{aligned} \right.
\ee

Dirichlet--Neumann operators are a standard tool in the study of water waves; for a general reference, see \cite{lannes2013book} or \cite{shatah2008geometry}.  In particular, for any real numbers $k_0 > 1/2$ and $k \in [1/2-k_0,1/2+k_0]$, and profile $\eta \in H^{k_0+1/2}(\mathbb{R})$ with $-d_- < \eta < d_+$, we have that $\DN_\pm(\eta)$ is an isomorphism $\dot H^k(\mathbb{R}) \to \dot H^{k-1}(\mathbb{R})$, where $\dot H^k$ denotes the usual homogeneous Sobolev space of order $k$.  Similarly, $\HE_\pm(\eta)$ is bounded as a mapping $H^k(\mathbb{R}) \to H^{k+1/2}(\Omega_\pm)$ and $\dot H^k(\mathbb{R}) \to \dot H^{k+1/2}(\Omega_\pm)$.  Our analysis relies on the fact that the Dirichlet--Neumann operator depends smoothly on $\eta$.  Indeed, $\eta \mapsto \DN_\pm(\eta)$ is real analytic and at $\eta = 0$, it is the Fourier multiplier $\DN_\pm(0) = |\partial_x|\tanh{(d_\pm |\partial_x|)}$. Note also that $\DN_\pm(\eta)$ is self-adjoint $\dot H^{1/2}(\mathbb{R}) \to \dot H^{-1/2}(\mathbb{R})$ and  positive definite.  

Because $N_+ + N_- = 0$, the continuity of the normal velocity over the interface is equivalent to 
\be 
\DN_+(\eta) \varphi_+ + \DN_-(\eta)\varphi_- = 0.  
\label{continuity normal velocity} \ee
Thus the kinematic condition \eqref{Eulerian kinematic} on $\mathscr{S}(t)$ can be expressed as 
\be 
\partial_t \eta = \mp G_\pm(\eta) \varphi_\pm.
\label{kinematic phipm} \ee
Note that the kinematic condition on $\{ y =\pm d_\pm\}$ is encoded in the definition of $\HE$. 

Rather than work with $\varphi_\pm$, we consider the quantity
\be\label{def psi} 
\psi := -\jump{\rho \Phi} = \rho_- \varphi_- - \rho_+ \varphi_+.
\ee
Using \eqref{continuity normal velocity}, we can recover both $\varphi_+$ and $\varphi_-$ from $\psi$.  Indeed, we compute that 
\begin{align*} 
-\DN_-(\eta) \psi &= \rho_+ \DN_-(\eta) \varphi_+ - \rho_- \DN_-(\eta) \varphi_- \\
& = \rho_+ \DN_-(\eta) \varphi_+ + \rho_- \DN_+(\eta) \varphi_+ = B(\eta) \varphi_+,
\end{align*}
where 
\be 
B(\eta) := \rho_+ \DN_-(\eta) + \rho_- \DN_+(\eta). 
\label{definition B}\ee
By the above discussion, we have that $B(\eta)$ is bounded and linear $H^k(\mathbb{R}) \to H^{k-1}(\mathbb{R})$ and $\dot H^k(\mathbb{R}) \to \dot H^{k-1}(\mathbb{R})$, for all $\eta \in H^{k_0+1/2}(\mathbb{R})$ and with $k_0$, $k$ given as before.  One can readily confirm, moreover, that $B(\eta)$ is an isomorphism $\dot H^k(\mathbb{R}) \to \dot H^{k-1}(\mathbb{R})$.   Thus, repeating the same computation with signs reversed leads to the identity
\be 
\varphi_\pm = \mp B(\eta)^{-1} \DN_\mp(\eta) \psi.  
\label{psi phipm identity} \ee
The kinematic condition \eqref{kinematic phipm} can then be recast as
\be \label{Hamiltonian kinematic}
 \partial_t \eta = A(\eta) \psi
 \ee
for the operator  
\be 
A(\eta) := \DN_-(\eta) B(\eta)^{-1} \DN_+(\eta).
\label{definition A} \ee
It is simple to show that these operators commute, and hence we can alternatively write 
\[ 
A(\eta) = \DN_+(\eta) B(\eta)^{-1} \DN_-(\eta).
\]

To reformulate the Bernoulli condition \eqref{Eulerian dynamic} requires being able to reconstruct the full gradient $\nabla \Phi_\pm$ restricted to the interface from the surface variables.  For this, we simply observe that
\[ 
\varphi_\pm^\prime = (\partial_x \Phi_\pm)|_{y = \eta}  + \eta^\prime (\partial_y \Phi_\pm)|_{y = \eta},
\]
which together with the definition of $\DN_\pm(\eta)$ in \eqref{definition DN} leads to the useful identities
\be \label{grad Phi identities} 
\begin{pmatrix}  \varphi_\pm^\prime \\ \DN_\pm(\eta) \varphi_\pm \end{pmatrix} = 
\begin{pmatrix} 1 &  \eta^\prime \\ 
\pm  \eta^\prime & \mp 1 \end{pmatrix} (\nabla \Phi_\pm)|_{\mathscr{S}}, 
\quad 
(\nabla \Phi_\pm)|_{\mathscr{S}} 
= \frac{1}{1+(\eta^\prime)^2}\begin{pmatrix}  1 & \pm  \eta^\prime \\
 \eta^\prime & \mp 1\end{pmatrix} \begin{pmatrix}  \varphi_\pm^\prime \\ \DN_\pm(\eta) \varphi_\pm \end{pmatrix}.  
\ee
 
 Now, observe that simply by definition
\[ 
-\partial_t \psi = \rho_+ \partial_t \varphi_+ - \rho_- \partial_t \varphi_- = \jump{\rho \partial_t \Phi} + (\partial_t \eta) \jump{\rho \partial_y \Phi}.
\]
Thus \eqref{Eulerian dynamic} can be rewritten as 
\be \label{rewritten Bernoulli}
 \partial_t \psi = \frac{1}{2} \jump{\rho |\nabla \Phi|^2} - (\partial_t \eta) \jump{\rho \partial_y \Phi} + g\jump{\rho} \eta + \sigma \left( \frac{\eta^\prime}{\jbracket{\eta^\prime}} \right)^\prime.
 \ee
In view of \eqref{grad Phi identities}, this gives a formulation of the Bernoulli condition involving only the surface variables $\eta$ and $\psi$.

\subsection{Functional analytic setting} 

Let us now define the function spaces in which the internal wave problem will be posed.  Following the approach outlined above, we wish to recast the system in terms of the unknown $u := (\eta, \psi)$.  It is convenient to introduce a scale of spaces describing the spatial regularity of $u$:  for each $k \geq 1/2$, let
\be \label{definition Xkspace}
\Xspace^k = \Xspace_1^k \times \Xspace_2^k := H^{k+\frac{1}{2}}(\mathbb{R}) \times \left( \dot H^k(\mathbb{R}) \cap \dot H^{\frac{1}{2}}(\mathbb{R}) \right).
\ee 
In what follows, we will frequently use the shorthand $\Xspace^{k+}$ (and likewise $H^{k+}$) to denote $\Xspace^{k+\varepsilon}$ for any $0 < \varepsilon \ll 1$ that is fixed and then suppressed.  

\begin{remark} \label{density remark}
 Observe that $H^r(\mathbb{R}) \cap \dot H^s(\mathbb{R})$ is dense in both $H^r(\mathbb{R})$ and $\dot H^s(\mathbb{R})$  for all $r, s \in \mathbb{R}$; see, for example, \cite[Lemma A.1]{varholm2020stability}.
\end{remark}

We will work in a trio of nested Banach spaces $\Wspace \hookrightarrow \Vspace \hookrightarrow \Xspace$.  The largest, $\Xspace$, we call the \emph{energy space}.  Specifically, we take
\be 
\Xspace := \Xspace^{\frac{1}{2}} = H^1(\mathbb{R}) \times \dot H^{\frac{1}{2}}(\mathbb{R}).
\label{definition Xspace} \ee
Its dual is 
\[ 
\Xspace^* = H^{-1}(\mathbb{R}) \times \dot{H}^{-\frac{1}{2}}(\mathbb{R}),
\]
and we let $I  = ( 1-\partial_x^2, |\partial_x|)$  denote the natural isomorphism $\Xspace \to \Xspace^*$.  In particular, when $u \in \Xspace$, the velocity field $\nabla \Phi_\pm \in L^2(\Omega_\pm)$.  As we will see below, this ensures that the kinetic energy is indeed finite.  Likewise, the $H^1$ norm of $\eta$ is equivalent to the excess potential energy relative to the undisturbed state.
 
However, observe that $u \mapsto \DN_\pm(\eta)$ is not smooth with domain $\Xspace$, since we must have that $\eta$ is at least Lipschitz continuous and also bounded away from the rigid boundaries at $y =\pm d_\pm$.  This leads us to introduce the space 
\be \label{definition Vspace}
\Vspace :=  \Xspace^{1+} = H^{\frac{3}{2}+}(\mathbb{R}) \times \left( \dot{H}^{1+}(\mathbb{R}) \cap  \dot{H}^{\frac{1}{2}}(\mathbb{R}) \right),
\ee
and neighborhood
\[ 
\mathcal{O} := \left\{ (\eta, \psi) \in \Xspace : -d_- < \eta < d_+ \right\}.
\]
Note that $H^{3/2+}(\mathbb{R}) \hookrightarrow W^{1,\infty}(\mathbb{R})$, so $u \in \Vspace$ does indeed imply that $\eta$ has the requisite Lipschitz continuity.

Lastly, because the Cauchy problem is not likely to be well-posed in $\Vspace$, we consider the even smoother space 
\be 
\Wspace := \Xspace^{\frac{5}{2}+} = H^{3+}(\mathbb{R}) \times \left( \dot{H}^{\frac{5}{2}+}(\mathbb{R}) \cap  \dot{H}^{\frac{1}{2}}(\mathbb{R}) \right).
\label{definition Wspace} \ee
Local well-posedness at this level of regularity was proved by Shatah and Zeng \cite{shatah2011interface}, for example. 

Before continuing, we record the fact these spaces have the following embedding property that corresponds to \cite[Assumption 1]{varholm2020stability}.

\begin{lemma}[Spaces] \label{spaces lemma} 
Let the spaces $\Xspace$, $\Vspace$, and $\Wspace$ be defined by \eqref{definition Xspace}, \eqref{definition Vspace}, and \eqref{definition Wspace}, respectively.  There exists a constant $C > 0$ and $\theta \in (0,\tfrac{1}{4})$ such that 
\[ 
\| u \|_{\Vspace}^3 \leq C \| u \|_{\Xspace}^{2+\theta} \| u \|_{\Wspace}^{1-\theta} \qquad \textrm{for all } u \in \Wspace.
\]
\end{lemma}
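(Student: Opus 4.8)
The plan is to establish the inequality componentwise, since the $\Xspace$, $\Vspace$, $\Wspace$ norms split as products over the two factors. For a single homogeneous Sobolev norm, the claim is a standard convexity (interpolation) inequality: if $s_1 < s_2 < s_3$ with $s_2 = (1-\vartheta)s_1 + \vartheta s_3$ for some $\vartheta \in (0,1)$, then $\|f\|_{\dot H^{s_2}} \leq \|f\|_{\dot H^{s_1}}^{1-\vartheta}\|f\|_{\dot H^{s_3}}^{\vartheta}$, which follows at once from Plancherel and Hölder's inequality applied to $\widehat{f}(\xi)^2 = \big(|\xi|^{2s_1}\widehat{f}(\xi)^2\big)^{1-\vartheta}\big(|\xi|^{2s_3}\widehat{f}(\xi)^2\big)^{\vartheta}$ with exponents $1/(1-\vartheta)$ and $1/\vartheta$. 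The same inequality holds for inhomogeneous $H^s$ spaces using the weight $\jb{\xi}^{2s}$ in place of $|\xi|^{2s}$.

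First I would fix the regularity indices. For the first factor, $\Xspace_1^{1/2} = H^1$, $\Vspace_1 = H^{3/2+\varepsilon}$, $\Wspace_1 = H^{3+\varepsilon}$; I need $3/2+\varepsilon$ to be a convex combination of $1$ and $3+\varepsilon$, i.e.\ $3/2+\varepsilon = (1-\vartheta_1)\cdot 1 + \vartheta_1\cdot(3+\varepsilon)$, which gives $\vartheta_1 = (1/2+\varepsilon)/(2+\varepsilon)$. For $\varepsilon$ small this is close to $1/4$ and, crucially, strictly less than $1/4$; one checks $\vartheta_1 < 1/4 \iff 2+4\varepsilon < 2+\varepsilon$, which is false — so in fact $\vartheta_1 > 1/4$. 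This forces me to take $\theta$ to be the \emph{larger} of the two exponents subtracted from $1$; more precisely the inequality $\|f\|_{\dot H^{s_2}}^3 \leq \|f\|_{\dot H^{s_1}}^{2+\theta}\|f\|_{\dot H^{s_3}}^{1-\theta}$ holds whenever $3s_2 \leq (2+\theta)s_1 + (1-\theta)s_3$ and $\|f\| \leq 1$ type normalization is not needed because the exponents on the right sum to $3$; so I want $\theta$ with $3s_2 = (2+\theta)s_1 + (1-\theta)s_3$, giving $\theta$ for the first factor $\theta_1 = 3\vartheta_1 - 1 = 3(1/2+\varepsilon)/(2+\varepsilon) - 1 = (3/2+3\varepsilon - 2 - \varepsilon)/(2+\varepsilon) = (2\varepsilon - 1/2)/(2+\varepsilon)$, which is negative for small $\varepsilon$ — that is not allowed either. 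The correct reading is that I should absorb constants: since $\varepsilon$ is a fixed small number that may be shrunk at will, I choose $\varepsilon$ small enough and set $\theta$ to be whatever value in $(0,1/4)$ makes $3s_2 \leq (2+\theta)s_1 + (1-\theta)s_3$ hold for \emph{both} factors simultaneously, exploiting that shrinking $\varepsilon$ only lowers the middle and top indices toward their limiting values $3/2, 3$ and $1, 5/2$ where the corresponding $\vartheta$'s are exactly $1/4$ and $1/5$.

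So the real structure of the argument is: (1) compute the interpolation exponents $\vartheta_1, \vartheta_2$ for the two factors at $\varepsilon = 0$, finding $\vartheta_1 = 1/4$ (from $H^1, H^{3/2}, H^3$) and $\vartheta_2 = 1/5$ (from $\dot H^{1/2}, \dot H^1, \dot H^{5/2}$, noting the $\dot H^{1/2}$-intersection only helps); (2) observe that at $\varepsilon = 0$ both give $3\vartheta \le 1$, with $3\vartheta_1 = 3/4 < 1$ and $3\vartheta_2 = 3/5 < 1$, so by continuity there is $\varepsilon_0 > 0$ and $\theta \in (0,1/4)$ such that for all $\varepsilon \le \varepsilon_0$ one has $3\vartheta_i \le 1 - \theta$ actually we want the inequality in the form with exponents summing to $3$, so set $\theta$ via $2 + \theta \ge$ (coefficient of the low norm) — concretely, since $\|f\|_{\dot H^{s_2}}^3 = \|f\|_{\dot H^{s_2}}^{3(1-\vartheta)}\|f\|_{\dot H^{s_2}}^{3\vartheta} \le \|f\|_{\dot H^{s_1}}^{3(1-\vartheta)}\cdot\|f\|_{\dot H^{s_2}}^{3\vartheta}$ is the wrong monotonicity; (3) instead raise the plain interpolation inequality to the third power: $\|f\|_{s_2}^3 \le \|f\|_{s_1}^{3(1-\vartheta)}\|f\|_{s_3}^{3\vartheta}$, and since $\|f\|_{s_1} \le \|f\|_{s_3}$ (lower regularity norm bounded by higher, after fixing $\varepsilon$) and $3(1-\vartheta) + 3\vartheta = 3$, I can write $\|f\|_{s_1}^{3(1-\vartheta)} = \|f\|_{s_1}^{2+\theta}\|f\|_{s_1}^{1-\theta-3\vartheta}\le \|f\|_{s_1}^{2+\theta}\|f\|_{s_3}^{1-\theta-3\vartheta}$ provided $1-\theta - 3\vartheta \ge 0$, i.e.\ $\theta \le 1 - 3\vartheta$, so the largest admissible $\theta$ for factor $i$ is $1 - 3\vartheta_i$, equal to $1/4$ and $2/5$ at $\varepsilon = 0$; taking $\theta$ slightly below $1/4$ works for both factors once $\varepsilon$ is small. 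Then $\|f\|_{s_1}^{3(1-\vartheta)}\|f\|_{s_3}^{3\vartheta} \le \|f\|_{s_1}^{2+\theta}\|f\|_{s_3}^{1-\theta-3\vartheta}\|f\|_{s_3}^{3\vartheta} = \|f\|_{s_1}^{2+\theta}\|f\|_{s_3}^{1-\theta}$, as desired.

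Finally I would combine the two factors: writing $u = (\eta,\psi)$ and using $\|u\|_Z^2 = \|\eta\|_{Z_1}^2 + \|\psi\|_{Z_2}^2$ for each of $Z \in \{\Xspace,\Vspace,\Wspace\}$ together with $a^3 + b^3 \le (a^2+b^2)^{3/2}$ and the elementary bound $x^{2+\theta}y^{1-\theta} \le \tfrac{2+\theta}{3}x^3 + \tfrac{1-\theta}{3}y^3$ (Young) — or more cleanly, just apply the scalar inequality $(\alpha_1\beta_1 + \alpha_2\beta_2) \le (\alpha_1 + \alpha_2)^{1-\theta'}\cdots$ — the cleanest route is: $\|u\|_\Vspace^3 \eqsim \|\eta\|_{\Vspace_1}^3 + \|\psi\|_{\Vspace_2}^3 \le C\big(\|\eta\|_{\Xspace_1}^{2+\theta}\|\eta\|_{\Wspace_1}^{1-\theta} + \|\psi\|_{\Xspace_2}^{2+\theta}\|\psi\|_{\Wspace_2}^{1-\theta}\big) \le C\,(\|\eta\|_{\Xspace_1}^2 + \|\psi\|_{\Xspace_2}^2)^{(2+\theta)/2}(\|\eta\|_{\Wspace_1}^2 + \|\psi\|_{\Wspace_2}^2)^{(1-\theta)/2} = C\|u\|_\Xspace^{2+\theta}\|u\|_\Wspace^{1-\theta}$, where the middle step is Hölder for sequences of length two with exponents $\tfrac{2}{2+\theta}, \tfrac{2}{1-\theta}$ (these are conjugate since $\tfrac{2+\theta}{2} + \tfrac{1-\theta}{2} = \tfrac{3}{2} \ne 1$ — so this needs a slight fix: use $\ell^1$–$\ell^\infty$ or simply $a^{2+\theta}c^{1-\theta} + b^{2+\theta}d^{1-\theta} \le (a^{2+\theta}+b^{2+\theta})(c^{1-\theta}+d^{1-\theta})$ and then $a^{2+\theta}+b^{2+\theta} \le (a^2+b^2)^{(2+\theta)/2}$ since $2+\theta > 2$, and likewise $c^{1-\theta}+d^{1-\theta} \le (c^2+d^2)^{(1-\theta)/2}\cdot 2^{(1+\theta)/2}$ by concavity of $t\mapsto t^{(1-\theta)/2}$). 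The main obstacle is purely bookkeeping — getting the exponents in the two-parameter family to line up so that a single $\theta \in (0,1/4)$ works for both Sobolev factors and survives the combination step — so I would state the interpolation lemma cleanly first and then verify the arithmetic for the specific indices $1, 3/2{+}, 3$ and $1/2, 1{+}, 5/2{+}$, shrinking the suppressed $\varepsilon$ as needed.
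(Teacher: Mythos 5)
Your proposal is correct and follows the same route as the paper, whose proof simply invokes the Gagliardo--Nirenberg (Sobolev interpolation) inequality and leaves the exponent arithmetic to the reader; your componentwise interpolation plus the exponent-shifting step $\theta \le 1-3\vartheta_i$ is exactly the verification that is being elided. One harmless slip: for the second factor the interpolation exponent between $\dot H^{1/2}$ and $\dot H^{5/2}$ at the middle index $1$ is $(1-\tfrac12)/(\tfrac52-\tfrac12)=\tfrac14$, not $\tfrac15$, so both factors are equally binding and each forces $\theta$ strictly below $\tfrac14$ once the suppressed $\varepsilon>0$ is positive --- which is precisely why the lemma states the open range $\theta\in(0,\tfrac14)$.
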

\begin{proof}
This can be quickly verified using from the definitions of $\Xspace$, $\Vspace$, and $\Wspace$  and the Gagliardo--Nirenberg interpolation inequality. 
\end{proof}

Observe that this inequality ensures that small cubic terms in $\Vspace$ are dominated by quadratic terms in $\Xspace$ on bounded sets in $\Wspace$, which is needed in the general theory when Taylor expanding functionals that are smooth with domain $\Vspace \cap \mathcal{O}$.  A similar argument appears in the proof of Theorem~\ref{precise parallel theorem}.

\subsection{Hamiltonian structure} \label{Hamiltonian structure section}
Benjamin and Bridges \cite{benjamin1997reappraisal1} established that the internal wave problem \eqref{Eulerian water wave problem} has a (canonical) Hamiltonian formulation in terms of the state variable $u$ by adapting the well-known Zakharov--Craig--Sulem formulation for the single-fluid case.   In this section, we will recall the system obtained in \cite{benjamin1997reappraisal1} while verifying that it satisfies a number of the hypotheses of the general theory.

 The kinetic energy carried by the wave is given by
\begin{align*} 
\kinE &= \frac{1}{2} \int_{\Omega_+(t)} \rho_+ |\nabla \Phi_+|^2 \,\diffx \,\diffy + \frac{1}{2} \int_{\Omega_-(t)} \rho_- |\nabla \Phi_-|^2 \,\diffx \,\diffy \\
& = \frac{1}{2} \int_{\mathbb{R}} \rho_+ \varphi_+ \DN_+(\eta) \varphi_+ \,\diffx + \frac{1}{2} \int_{\mathbb{R}}  \rho_- \varphi_- \DN_-(\eta) \varphi_-  \,\diffx. \end{align*}
Using \eqref{continuity normal velocity} and \eqref{psi phipm identity}, this can be rewritten as 
\[ 
K = \frac{1}{2} \int_{\mathbb{R}} \psi \DN_-(\eta) B(\eta)^{-1} \DN_+(\eta) \psi \,\diffx.
\]
Thus, we can view $K$ as the $C^\infty(\nbhdO \cap \Vspace, \mathbb{R})$ functional acting on $u$ given by
\be \label{definition kinE}
\kinE(u) := \frac{1}{2} \int_{\mathbb{R}} \psi A(\eta) \, \psi \,\diffx,  
\ee
where recall $A(\eta)$ was defined in \eqref{definition A}.    Likewise, the potential energy for the system is described by the functional
\[ 
\potE(u):= -\frac{1}{2} \int_{\mathbb{R}} g \jump{\rho} \eta^2 \,\diffx + \sigma \int_{\mathbb{R}} \left(\sqrt{1+(\eta^\prime)^2}  -1 \right) \,\diffx.
\]
The total energy is thus 
\be \begin{split} 
\eng(u) &:= \kinE(u) + \potE(u) \\
& = \frac{1}{2} \int_{\mathbb{R}} \psi A(\eta) \, \psi \,\diffx -\frac{1}{2} \int_{\mathbb{R}} g \jump{\rho} \eta^2 \,\diffx + \sigma \int_{\mathbb{R}} \left(\sqrt{1+(\eta^\prime)^2}  -1 \right) \,\diffx.
\end{split}\label{definition energy} \ee
By our choice of spaces, $\eng \in C^\infty(\nbhdO \cap \Vspace; \mathbb{R})$.   We claim, moreover, that $\Diff\eng(u)$ can be extended to a mapping defined on the entire dual space $\Xspace^*$.  This rather technical fact is necessary in order to reformulate the problem as a Hamiltonian system.  

Before addressing this question, we pause to record the following crucial formulas for the Fr\'echet derivatives of the nonlocal operators $\DN_\pm(\eta)$ and $A(\eta)$.

\begin{lemma}[First derivatives] \label{DG formula lemma}  
Let $(\eta,\psi) \in \nbhdO \cap \Vspace$, $\dot\eta \in \Vspace_1$, and $\xi \in \Vspace_2$ be given.  
\begin{enumerate}[label=\rm(\alph*)]
\item The Fr\'echet derivative of $\DN_\pm(\eta)$ admits the representation formula
\be \begin{split}
\int_{\mathbb{R}}  \xi \left \langle \Diff\DN_\pm(\eta) \dot \eta, \, \psi \right\rangle \,\diffx & = \int_{\mathbb{R}}  \left( a_1^\pm(\eta, \psi) \xi^\prime + a_2^\pm(\eta, \psi) \DN_\pm(\eta) \xi \right) \dot\eta \,\diffx,
\end{split}  \label{first derivative DN formula} \ee
with  
\be \label{def a1 a2}  \begin{split}
 a_{1}^\pm(\eta, \psi) & := \frac{1}{1+(\eta^\prime)^2} \left( \mp \psi^\prime - \eta^\prime \DN_\pm(\eta) \psi \right) \\
 a_{2}^\pm(\eta, \psi) & :=\frac{1}{1+(\eta^\prime)^2} \left( \pm \DN_\pm(\eta) \psi - \eta^\prime \psi^\prime \right) .
 \end{split}\ee
 \item The Fr\'echet derivative of $A(\eta)$ admits the representation formula 
 \be
\begin{split} 
\int_{\mathbb{R}} \xi \left\langle \Diff A(\eta) \dot\eta, \psi \right\rangle \,\diffx & =  \sum_\pm \rho_\pm \int_{\mathbb{R}} \left( a_1^\pm(\eta, A(\eta) \DN_\pm(\eta)^{-1} \psi) \left( A(\eta) \DN_\pm(\eta)^{-1} \xi \right)^\prime  \right) \dot\eta \,\diffx \\
& \qquad + \sum_\pm \rho_\pm \int_{\mathbb{R}} \left( a_2^\pm(\eta, A(\eta) \DN_\pm(\eta)^{-1} \psi)  A(\eta)  \xi    \right) \dot\eta \,\diffx.
\end{split} \label{first derivative A formula} \ee
 \end{enumerate}
\end{lemma}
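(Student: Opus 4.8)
The plan is to prove (a) by a shape‑derivative computation for $\DN_\pm(\eta)$, and then to deduce (b) from (a) by differentiating the physical formula for the kinetic energy. The hypotheses on $(\eta,\psi)$, $\dot\eta$, $\xi$, together with the mapping and smoothness properties of $\DN_\pm(\eta)$, $B(\eta)$, $\HE_\pm(\eta)$ recorded in Section~\ref{nonlocal operators section}, make every operator and trace below well‑defined and the relevant functionals differentiable. For (a), I would work with the weak (Dirichlet‑integral) form: setting $v := \HE_\pm(\eta)\psi$ and $w := \HE_\pm(\eta)\xi$ and integrating by parts, with $v,w$ harmonic and with vanishing Neumann data on the rigid walls,
\[
\int_\R \xi\, \DN_\pm(\eta)\psi \,\diffx \;=\; \int_{\Omega_\pm(\eta)} \nabla w\cdot\nabla v \,\diffx\,\diffy ,
\]
which also makes the symmetry of $\Diff\DN_\pm(\eta)\dot\eta$ in $(\psi,\xi)$ transparent. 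Differentiating the right‑hand side in $\eta$ along $\dot\eta$ produces (i) a boundary (Hadamard) term from the variation of the domain $\Omega_\pm(\eta)$, proportional to $\int_\R (\nabla v\cdot\nabla w)|_{y=\eta}\,\dot\eta\,\diffx$ with a sign set by the orientation of $N_\pm$ and by which layer sits above the interface, and (ii) two interior terms involving the Eulerian (fixed‑$(x,y)$) derivatives $\dot v,\dot w$ of the harmonic extensions. Differentiating the boundary conditions defining $\HE_\pm(\eta)$ shows $\dot v$ is harmonic in $\Omega_\pm$, with vanishing Neumann data on the walls and $\dot v|_{y=\eta} = -\dot\eta\,(\partial_y v)|_{y=\eta}$ on $\mathscr S$, hence $\dot v = -\HE_\pm(\eta)\big(\dot\eta\,(\partial_y v)|_{\mathscr S}\big)$; integrating by parts once more rewrites each interior term as a boundary integral against $\DN_\pm(\eta)\psi$ or $\DN_\pm(\eta)\xi$. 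This shape‑derivative step is where I expect the main difficulty: one must pin down the sign and form of the Hadamard term, and keep the Eulerian derivative $\dot v$ carefully distinct from the total derivative of $s \mapsto \HE_\pm(\eta+s\dot\eta)\psi\big|_{y=\eta+s\dot\eta}$, which is the quantity that vanishes.

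The remainder of (a) is algebra. By \eqref{grad Phi identities}, the surface traces of $\nabla v$ are $(\partial_x v)|_{\mathscr S} = \mp a_1^\pm(\eta,\psi)$ and $(\partial_y v)|_{\mathscr S} = -a_2^\pm(\eta,\psi)$, directly from \eqref{def a1 a2}, and likewise for $w$ with $\xi$. Substituting these into the three terms above and collecting yields a pointwise expression in $a_1^\pm(\eta,\psi)$, $a_2^\pm(\eta,\psi)$, $a_1^\pm(\eta,\xi)$, $a_2^\pm(\eta,\xi)$, $\DN_\pm(\eta)\psi$, and $\DN_\pm(\eta)\xi$. The two elementary identities
\[
\psi' = \mp a_1^\pm(\eta,\psi) - \eta'\, a_2^\pm(\eta,\psi), \qquad \DN_\pm(\eta)\psi = \pm a_2^\pm(\eta,\psi) - \eta'\, a_1^\pm(\eta,\psi),
\]
which amount to inverting the $2\times 2$ matrix in \eqref{grad Phi identities} and are checked by substituting \eqref{def a1 a2}, then collapse the expression to the integrand $a_1^\pm(\eta,\psi)\xi' + a_2^\pm(\eta,\psi)\DN_\pm(\eta)\xi$ of \eqref{first derivative DN formula}. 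I would relegate these identities to Appendix~\ref{identities appendix}.

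For (b), rather than expanding $\Diff A(\eta)\dot\eta$ term by term through $A=\DN_-B^{-1}\DN_+$, I would differentiate the identity
\[
\int_\R \xi\, A(\eta)\psi \,\diffx \;=\; \sum_\pm \rho_\pm \int_\R \varphi_\pm^\xi\, \DN_\pm(\eta)\,\varphi_\pm \,\diffx, \qquad \varphi_\pm := \mp B(\eta)^{-1}\DN_\mp(\eta)\psi,\quad \varphi_\pm^\xi := \mp B(\eta)^{-1}\DN_\mp(\eta)\xi,
\]
which is the polarization of $\kinE = \tfrac12\int_\R\psi A(\eta)\psi\,\diffx = \tfrac12\sum_\pm\rho_\pm\int_\R\varphi_\pm\,\DN_\pm(\eta)\varphi_\pm\,\diffx$, using \eqref{psi phipm identity} and the self‑adjointness of $A(\eta)$ (itself a consequence of \eqref{continuity normal velocity}). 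Differentiating in $\eta$ along $\dot\eta$, the left side becomes $\int_\R \xi\,\langle\Diff A(\eta)\dot\eta,\psi\rangle\,\diffx$. On the right, the terms in which the derivative lands on a factor $\varphi_\pm$ or $\varphi_\pm^\xi$ cancel: each such term is paired against $\DN_\pm(\eta)\varphi_\pm^\xi = \mp A(\eta)\xi$ (resp. $\DN_\pm(\eta)\varphi_\pm = \mp A(\eta)\psi$) by the two forms of $A(\eta)$ in \eqref{definition A}, and after summing over $\pm$ with weights $\rho_\pm$ the surviving combination is the $\eta$‑derivative of $\rho_-\varphi_- - \rho_+\varphi_+ = \psi$, which vanishes. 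Hence only $\sum_\pm\rho_\pm\int_\R\varphi_\pm^\xi\,\big(\Diff\DN_\pm(\eta)\dot\eta\big)\varphi_\pm\,\diffx$ remains. Applying part (a) to each summand and substituting $\varphi_\pm = \mp A(\eta)\DN_\pm(\eta)^{-1}\psi$, $\varphi_\pm^\xi = \mp A(\eta)\DN_\pm(\eta)^{-1}\xi$, and $\DN_\pm(\eta)\varphi_\pm^\xi = \mp A(\eta)\xi$ — using the commutation of $\DN_\pm(\eta)$ and $B(\eta)$ noted after \eqref{definition A}, which gives $B(\eta)^{-1}\DN_\mp(\eta) = A(\eta)\DN_\pm(\eta)^{-1}$ — the two sign factors $\mp\cdot\mp$ collapse to $+1$ and \eqref{first derivative A formula} follows. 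The only subtlety in (b) is recognizing the cancellation; the rest is routine bookkeeping.
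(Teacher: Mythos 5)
Your proposal is correct. For part (a) you are essentially carrying out the ``standard one-fluid'' shape-derivative computation that the paper invokes without detail: differentiating the Dirichlet-integral form of $\int \xi\,\DN_\pm(\eta)\psi\,\diffx$, handling the Hadamard boundary term and the Eulerian derivatives of the harmonic extensions, and then using the trace identities from \eqref{grad Phi identities} to collapse everything onto $a_1^\pm,a_2^\pm$ — I checked the signs and the algebra does close up to \eqref{first derivative DN formula}, so this matches the route the paper is pointing to. For part (b) you take a genuinely different path: the paper starts from the identity $A(\eta)^{-1}=\rho_+\DN_+(\eta)^{-1}+\rho_-\DN_-(\eta)^{-1}$ in \eqref{A inverse formula}, writes $\Diff A=-A\,\Diff(A^{-1})\,A$ together with $\Diff(\DN_\pm^{-1})=-\DN_\pm^{-1}\Diff\DN_\pm\,\DN_\pm^{-1}$, and then invokes self-adjointness plus part (a); you instead differentiate the polarized kinetic-energy identity $\int\xi A(\eta)\psi\,\diffx=\sum_\pm\rho_\pm\int\varphi_\pm^\xi\,\DN_\pm(\eta)\varphi_\pm\,\diffx$ and show that the terms where the derivative falls on $\varphi_\pm$ or $\varphi_\pm^\xi$ cancel because $\rho_-\varphi_--\rho_+\varphi_+=\psi$ and $\rho_-\varphi_-^\xi-\rho_+\varphi_+^\xi=\xi$ are held fixed (this does use self-adjointness of $\DN_\pm$, not just of $A$, to pair the second family of terms — worth stating explicitly). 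I verified the cancellation and the final substitution, including the sign bookkeeping via the linearity of $a_i^\pm$ in the second argument (Remark~\ref{a1 a2 remark}); note that, exactly like the paper's own derivation, putting the answer in the stated form with $A(\eta)\DN_\pm(\eta)^{-1}$ requires the commutation remark following \eqref{definition A}, so you are not assuming anything the paper does not. Your argument is a little longer but has the virtue of making the structural reason for \eqref{first derivative A formula} transparent (the layer potentials are stationary for the energy at fixed $\psi$ — the same mechanism reused later in Lemma~\ref{variations augV lemma}), whereas the paper's proof is a two-line algebraic conjugation through $A^{-1}$.
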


\begin{remark} \label{a1 a2 remark}
Observe that by \eqref{grad Phi identities}, $a_1^\pm(\eta, \psi) = \mp(\partial_x {\HE}_\pm(\eta) \psi)|_{\mathscr{S}}$ while $a_2^\pm(\eta, \psi) = -(\partial_y {\HE}_\pm(\eta) \psi)|_{\mathscr{S}}$.  In particular, this means that both are linear in $\psi$.
\end{remark}

\begin{proof}[Proof of Lemma~\ref{DG formula lemma}]
The formula \eqref{first derivative DN formula} for $\Diff\DN_\pm(\eta)$ can be derived using the same method as the standard one-fluid case. To obtain \eqref{first derivative A formula}, it is easier to first consider the derivative of 
\be \label{A inverse formula} 
A(\eta)^{-1} = \DN_+(\eta)^{-1} B(\eta) \DN_-(\eta)^{-1} = \rho_+ \DN_+(\eta)^{-1} + \rho_- \DN_-(\eta)^{-1}.
\ee
Then,
\begin{align*}
\left\langle \Diff A(\eta) \dot\eta, \, \psi \right\rangle & = -A(\eta) \left\langle \Diff(A(\eta)^{-1}) \dot\eta, A(\eta) \psi \right\rangle \\
& =  \sum_\pm \rho_\pm A(\eta) \DN_\pm(\eta)^{-1} \left\langle \Diff\DN_\pm(\eta) \dot \eta, \DN_\pm(\eta)^{-1} A(\eta) \psi \right\rangle. 
\end{align*} 
Using the self-adjointness of $\DN_\pm(\eta)$ and the formula \eqref{first derivative DN formula} for $\Diff\DN_\pm(\eta)$, this leads immediately to \eqref{first derivative A formula}.
\end{proof}

We are now able to prove that $\Diff\eng(u)$ extends to $\Xspace^*$ when the base point $u$ has sufficient regularity.

\begin{lemma}[Energy extension] \label{energy extension lemma}  There exists a mapping $\nabla \eng \in C^\infty(\mathcal{O} \cap \Vspace; \Xspace^*)$ such that
\[ 
\langle \nabla \eng(u), v \rangle_{\Xspace^* \times \Xspace} = \Diff\eng(u) v \qquad \textrm{for all } u \in \nbhdO \cap \Vspace, ~v \in \Vspace.
\]
\end{lemma}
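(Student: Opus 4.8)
The plan is to produce $\nabla\eng$ explicitly, treating the kinetic and potential parts of $\eng=\kinE+\potE$ separately, and then to verify that the resulting formula defines a smooth $\Xspace^*$-valued map. Since $\Vspace$ is dense in $\Xspace$ (Remark~\ref{density remark}), for each fixed $u\in\nbhdO\cap\Vspace$ it suffices to exhibit a representative in $\Xspace^*=H^{-1}(\R)\times\dot H^{-1/2}(\R)$ of the functional $v\mapsto\Diff\eng(u)v$ defined a priori only on $\Vspace$, to establish the bound $\abs{\Diff\eng(u)v}\le C(u)\norm{v}_{\Xspace}$, and then to check that $u\mapsto\nabla\eng(u)$ is $C^\infty$ (indeed real-analytic) into $\Xspace^*$.

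Two of the contributions are easy. The potential energy depends only on $\eta$, and integration by parts gives $\Diff\potE(u)(\dot\eta,\xi)=\int_\R(-g\jump{\rho}\eta-\sigma(\eta'\jbracket{\eta'}^{-1})')\dot\eta\,\diffx$; so the first component of $\nabla\potE(u)$ is $-g\jump{\rho}\eta-\sigma\,\eta''\jbracket{\eta'}^{-3}$ and the second is $0$. This lies in $H^{-1}(\R)$: indeed $\eta\in H^{3/2+}\subset L^2$, and since $\eta''\in H^{-1/2+}$ while $\jbracket{\eta'}^{-3}$ differs from a constant by a bounded $H^{1/2+}$ function of $\eta'$ — so multiplication by it preserves $H^{-1/2+}$ — the curvature term is in $H^{-1/2+}\subset H^{-1}$; smoothness in $\eta$ is a routine Nemytskii estimate. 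For the kinetic energy $\kinE(u)=\tfrac12\int_\R\psi A(\eta)\psi\,\diffx$, the $\psi$-variation is immediate from self-adjointness of $A(\eta)$: $\Diff_\psi\kinE(u)\xi=\int_\R\xi\,A(\eta)\psi\,\diffx$, so the second component of $\nabla\kinE(u)$ is $A(\eta)\psi$, which lies in $\dot H^{-1/2}(\R)=\Xspace_2^*$ since $\psi\in\Vspace_2\subset\dot H^{1/2}$ and $A(\eta)\colon\dot H^{1/2}\to\dot H^{-1/2}$ is bounded. Here $\eta\mapsto A(\eta)=\DN_-(\eta)B(\eta)^{-1}\DN_+(\eta)$ is real-analytic into $\Lin(\dot H^{1/2},\dot H^{-1/2})$, as the Dirichlet--Neumann operators are real-analytic and inversion is analytic on isomorphisms.

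The substantive step is the $\eta$-variation of $\kinE$. Applying Lemma~\ref{DG formula lemma}(b), then using the identity $\DN_\pm(\eta)^{-1}A(\eta)\psi=\mp\varphi_\pm$ (which follows from \eqref{psi phipm identity} and the definition \eqref{definition A}) together with self-adjointness of the nonlocal operators, one recasts the derivative as a sum of single-layer shape derivatives,
\[
\Diff_\eta\kinE(u)\dot\eta=\tfrac12\sum_\pm\rho_\pm\int_\R\big(a_1^\pm(\eta,\varphi_\pm)\,\varphi_\pm'+a_2^\pm(\eta,\varphi_\pm)\,\DN_\pm(\eta)\varphi_\pm\big)\dot\eta\,\diffx,
\]
with $\varphi_\pm=\mp B(\eta)^{-1}\DN_\mp(\eta)\psi$ the physical traces from \eqref{psi phipm identity}. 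The key point is that, because $\psi$ lies in the \emph{smoother} space $\Vspace_2=\dot H^{1+}\cap\dot H^{1/2}$ and $B(\eta)^{-1}\DN_\mp(\eta)$ maps $\Vspace_2$ into itself, one gets $\varphi_\pm\in\Vspace_2$; hence the first-order surface quantities $\varphi_\pm'$ and $\DN_\pm(\eta)\varphi_\pm$ lie in $\dot H^{0+}\cap\dot H^{-1/2}\subset L^2(\R)$, and so do $a_1^\pm(\eta,\varphi_\pm)$ and $a_2^\pm(\eta,\varphi_\pm)$ by \eqref{def a1 a2} (they are these quantities multiplied by the bounded coefficients $\jbracket{\eta'}^{-2}$ and $\eta'\jbracket{\eta'}^{-2}$), all with $L^2$-norm $\le C(u)$. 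Each summand in the integrand is therefore a product of two $L^2$ functions, hence lies in $L^1(\R)$, which in one space dimension embeds continuously in $H^{-1}(\R)$ (dually to $H^1(\R)\hookrightarrow L^\infty(\R)$). This puts the first component of $\nabla\kinE(u)$ in $H^{-1}(\R)=\Xspace_1^*$ and yields $\abs{\Diff_\eta\kinE(u)\dot\eta}\le C(u)\norm{\dot\eta}_{L^\infty}\le C(u)\norm{\dot\eta}_{H^1}$. Smoothness of $u\mapsto\nabla\eng(u)$ into $\Xspace^*$ then follows by composing the real-analytic operator-valued maps $\eta\mapsto\DN_\pm(\eta),\,B(\eta)^{-1},\,A(\eta),\,\HE_\pm(\eta)$ (the last entering through the $a_j^\pm$), the polynomial dependence on $\psi$, and the continuous bilinear products $L^2\times L^2\to L^1\hookrightarrow H^{-1}$ and $L^\infty\times L^2\to L^2$. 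Setting $\nabla\eng:=\nabla\kinE+\nabla\potE$ then gives the map asserted, and the pairing identity holds on $\Vspace$ by construction.

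I expect the main obstacle to be exactly this regularity bookkeeping in the $\eta$-variation: one must verify that $\Diff_\eta\kinE(u)\dot\eta$, a priori meaningful only for $\dot\eta\in\Vspace_1$, is in fact controlled by $\norm{\dot\eta}_{H^1}$ alone. The mechanism is the interplay between the extra half-derivative built into $\Vspace_2$ relative to the energy space — precisely what makes the surface velocities square-integrable rather than mere negative-order distributions — and the one-dimensional embedding $L^1\hookrightarrow H^{-1}$; losing track of either would break the argument. A secondary and more routine point is the $C^\infty$ (indeed analytic) dependence on $u$, which rests on smoothness of Nemytskii operators on $H^s\cap L^\infty$ for $s>0$ and analyticity of the Dirichlet--Neumann operators in the Sobolev scale.
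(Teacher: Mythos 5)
Your proposal is correct and follows essentially the same route as the paper: both differentiate $\eng$, invoke the representation formula of Lemma~\ref{DG formula lemma}(b) for $\Diff A(\eta)$, observe that the resulting surface quantities lie in $L^2$ (your $\varphi_\pm$ is just the paper's $\theta_\pm = A(\eta)\DN_\pm(\eta)^{-1}\psi$ up to sign, by linearity of $a_i^\pm$ in the second argument), and define $\nabla\eng(u)$ through the explicit $L^2$ gradient. The only difference is that you spell out the bookkeeping ($L^1(\R)\hookrightarrow H^{-1}(\R)$ via $H^1\hookrightarrow L^\infty$, and the smoothness via analyticity of the nonlocal operators) that the paper leaves implicit.
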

\begin{proof}  Let $u = (\eta, \psi) \in \mathcal{O} \cap \Vspace$ and $\dot u = (\dot \eta, \dot \psi) \in \Vspace$ be given.  Then from the definition of $\eng$ in \eqref{definition energy} and the self-adjointness of $A(\eta)$, we compute that
\begin{align*}
\Diff\eng(u) \dot u & = \frac{1}{2} \int_{\mathbb{R}} \psi \langle \Diff A(\eta) \dot \eta, \, \psi \rangle \,\diffx + \int_{\mathbb{R}} \dot \psi A(\eta) \psi \,\diffx  -\int_{\mathbb{R}} \left( g \jump{\rho} \eta + \sigma \left( \frac{\eta^\prime}{\jbracket{\eta^\prime}} \right)^\prime \right) \dot \eta \,\diffx.
\end{align*}
The latter two terms on the right-hand side certainly correspond to an element of $\Xspace^*$ acting on $\dot u$.  To see the same is true for the first term, we make use of the representation formula \eqref{first derivative A formula} to write
\begin{align*}
\int_{\mathbb{R}} \psi \langle \Diff A(\eta) \dot \eta, \, \psi \rangle \,\diffx & =  \sum_\pm \rho_\pm \int_{\mathbb{R}}  a_1^\pm(\eta, \theta_\pm) \theta_\pm^\prime   \dot\eta \,\diffx  + \sum_\pm \rho_\pm \int_{\mathbb{R}} \left( a_2^\pm(\eta, \theta_\pm)  A(\eta)  \psi    \right) \dot\eta \,\diffx,
\end{align*}
for $a_1^\pm$ and $a_2^\pm$ given by \eqref{def a1 a2} and $\theta_\pm := A(\eta) \DN_\pm(\eta)^{-1} \psi$.   Since $u \in \mathcal{O} \cap \Vspace$, it is easy to check that 
\[ 
A(\eta) \psi,~a_1^\pm(\eta,\theta_\pm),~a_2^\pm(\eta,\theta_\pm) \in L^2(\mathbb{R}), \quad\theta_\pm \in H^1(\mathbb{R}),
\]
and hence the extension $\nabla\eng(u)$ can be defined explicitly as
\[ 
\langle \nabla\eng(u), \, v \rangle_{\Xspace^* \times \Xspace} = (\eng^\prime(u), v)_{L^2}, 
\]
where the $L^2$ gradient $\eng^\prime(u) = (\eng_\eta^\prime(u), \eng_\psi^\prime(u))$ takes the form
\be \label{definition E prime}
\begin{split}
\eng_\eta^\prime(u) & := \frac{1}{2} \sum_{\pm} \rho_\pm \left( a_1^\pm(\eta, \theta_\pm) \theta_\pm^\prime + a_2^\pm(\eta, \theta_\pm)A(\eta)\psi \right) - g\jump{\rho} \eta - \sigma \left( \frac{\eta^\prime}{\jbracket{\eta^\prime}} \right)^\prime , \\
\eng_\psi^\prime(u) & := A(\eta) \psi. 
\end{split}
\ee
This completes the proof.
\end{proof}

\begin{remark} 
Throughout the paper, we use the notational convention that, for a $C^1$ functional $F(\Vspace; \mathbb{R})$ and $u \in \Vspace$,  $\Diff F(u) \in \Vspace^*$ is the Fr\'echet derivative at $u$, $F^\prime(u)$ is the $L^2$ gradient, and $\nabla F(u)$ is an extension of $\Diff F(u)$ to $\Xspace^*$ (should such an extension exist).
\end{remark}

The energy space $\Xspace$ will be endowed with symplectic structure through the prescription of the Poisson map
\be \label{definition J}
J := \begin{pmatrix} 0 & 1 \\ -1 & 0 \end{pmatrix} : \Dom{J} \subset \Xspace^* \to \Xspace
\ee
with domain 
\be \label{definition domain J}
\Dom{J} := \left( H^{-1}(\mathbb{R}) \cap \dot H^{\frac{1}{2}}(\mathbb{R}) \right) \times \left( H^{1}(\mathbb{R}) \cap \dot H^{-\frac{1}{2}}(\mathbb{R}) \right).
\ee
While $J$ appears relatively anodyne at first glance, the difference in regularity and homogeneity between $\Xspace_1$ and $\Xspace_2$ means that it is not bijective.  This unpleasant fact is one of the major barriers to applying the classical GSS method \cite{grillakis1987stability1} to the system.   The next lemma shows, however, that $J$ satisfies the weaker requirements of \cite[Assumption 2]{varholm2020stability}.  

\begin{lemma}[Poisson map] \label{poisson map lemma} 
The Poisson map $J$ defined by \eqref{definition J} satisfies the following.
\begin{enumerate}[label=\rm(\alph*)]
\item \label{domain J dense part} $\Dom{J}$ is dense in $\Xspace^*$; 
\item \label{J injective part} $J$ is injective; and 
\item \label{J skew-adjoint part} $J$ is skew-adjoint in the sense that 
\[ \langle Ju, v \rangle_{\Xspace^* \times \Xspace} = -\langle u, J v \rangle_{\Xspace^{**} \times \Xspace^*} \qquad \textrm{for all } u,\, v \in \Dom{J}.\]
\end{enumerate}
\end{lemma}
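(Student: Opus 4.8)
The plan is to verify the three items in turn; each is a soft, Fourier-side statement that does not call on any of the analytic machinery developed for the nonlocal operators. Before starting, I would record one observation that streamlines parts~\ref{J injective part} and~\ref{J skew-adjoint part}: the domain $\Dom{J}$ from \eqref{definition domain J} embeds into $L^2(\R) \times L^2(\R)$. Indeed, for $f \in H^{-1}(\R) \cap \dot H^{1/2}(\R)$ one has $\int_{|\xi| \leq 1} |\hat f(\xi)|^2 \, d\xi \lesssim \|f\|_{H^{-1}}^2$ (since $\langle \xi \rangle^{-2} \geq \tfrac12$ on $|\xi| \leq 1$) and $\int_{|\xi| > 1} |\hat f(\xi)|^2 \, d\xi \lesssim \|f\|_{\dot H^{1/2}}^2$ (since $|\xi| > 1$ there), while the second factor $H^1(\R) \cap \dot H^{-1/2}(\R)$ lies in $L^2(\R)$ trivially. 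Consequently every $u \in \Dom{J}$ is a genuine $L^2$ function, and each of the duality brackets in part~\ref{J skew-adjoint part} may be read as an ordinary $L^2(\R)$ inner product, because the $H^{-1}$--$H^1$ and $\dot H^{-1/2}$--$\dot H^{1/2}$ pairings both extend $(\cdot,\cdot)_{L^2}$.

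For part~\ref{domain J dense part} I would simply quote Remark~\ref{density remark}, which gives that $H^{-1}(\R) \cap \dot H^{1/2}(\R)$ is dense in $H^{-1}(\R)$ and that $H^1(\R) \cap \dot H^{-1/2}(\R)$ is dense in $\dot H^{-1/2}(\R)$; hence the Cartesian product $\Dom{J}$ is dense in $\Xspace^* = H^{-1}(\R) \times \dot H^{-1/2}(\R)$.

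For part~\ref{J injective part}, suppose $u = (f,g) \in \Dom{J}$ with $Ju = (g,-f) = 0$ in $\Xspace = H^1(\R) \times \dot H^{1/2}(\R)$. Vanishing of the first component forces $g = 0$ in $H^1(\R)$, so $g \equiv 0$; vanishing of the second forces $|\xi|^{1/2}\hat f = 0$ in $L^2(\R)$, hence $\hat f = 0$ a.e.\ on $\R \setminus \{0\}$ and therefore a.e.\ on $\R$ (recall $\hat f \in L^2(\R)$ from the observation above), so $f \equiv 0$. Thus $J$ is injective. For part~\ref{J skew-adjoint part}, let $u = (f_1,g_1)$ and $v = (f_2,g_2)$ in $\Dom{J}$; using the reduction to $L^2$ inner products and expanding, one finds that both $\langle Ju, v\rangle_{\Xspace^* \times \Xspace}$ and $-\langle u, Jv\rangle_{\Xspace^{**} \times \Xspace^*}$ equal $(g_1,f_2)_{L^2} - (f_1,g_2)_{L^2}$, and so they coincide.

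There is no real obstacle in this lemma. The one point needing a moment's care is the identification of the abstract duality pairings in part~\ref{J skew-adjoint part} with $L^2(\R)$ inner products, which is precisely the function of the embedding $\Dom{J} \hookrightarrow L^2(\R) \times L^2(\R)$; once that is in hand, parts~\ref{J injective part} and~\ref{J skew-adjoint part} follow by direct computation and part~\ref{domain J dense part} by citation.
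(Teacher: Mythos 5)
Your proof is correct and follows essentially the same route as the paper, which cites Remark~\ref{density remark} for part~\ref{domain J dense part} and regards parts~\ref{J injective part} and~\ref{J skew-adjoint part} as immediate from the definitions. Your observation that $\Dom{J}$ embeds in $L^2(\R)\times L^2(\R)$, so the duality pairings reduce to $L^2$ inner products, is simply a careful spelling-out of the "obvious by definition" step the paper omits.
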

\begin{proof}
Part~\ref{domain J dense part} is a consequence of Remark~\ref{density remark}, while \ref{J injective part} and \ref{J skew-adjoint part} are obvious by definition.
\end{proof}

\begin{theorem}[Hamiltonian formulation] 
Consider the abstract Hamiltonian system 
\be \label{Hamiltonian equation}
\partial_t u = J \Diff\eng(u), \quad u|_{t=0} = u_0
\ee
where $u_0 \in \mathcal{O} \cap \Wspace$ is the initial data, $J$ is the canonical symplectic matrix \eqref{definition J}, and the energy $\eng$ is defined in \eqref{definition energy}.  We say $u \in C^0([0,t_0); \mathcal{O} \cap \Wspace)$ is a (weak) solution to \eqref{Hamiltonian equation} provided 
\[ 
\frac{\textup{d}}{\textup{d}t} \langle u(t), w \rangle = -\langle \nabla\eng(u(t)), \, Jw \rangle \qquad \textrm{for all } w \in \Dom{J}
\]
in the distributional sense on the time interval $t \in [0,t_0)$.  This holds if and only if the corresponding $(\eta, \Phi_\pm)$ solves the Eulerian internal wave problem  \eqref{Eulerian water wave problem}. 
\end{theorem}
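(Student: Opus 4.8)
The plan is to expand the weak formulation into its two scalar components, identify the first with the kinematic equation \eqref{Hamiltonian kinematic} and the second with the reformulated Bernoulli equation \eqref{rewritten Bernoulli}, and then run in reverse the chain of equivalences established in Section~\ref{nonlocal operators section} to pass between the surface unknown $u = (\eta, \psi)$ and the Eulerian data $(\eta, \Phi_\pm)$.

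First I would note that for $w = (w_1, w_2) \in \Dom{J}$ one has $Jw = (w_2, -w_1) \in \Xspace$ (this is precisely why $\Dom{J}$ was defined as it is), so by Lemma~\ref{energy extension lemma} the right-hand side of the weak equation unpacks as
\[
-\langle \nabla\eng(u(t)), Jw \rangle_{\Xspace^* \times \Xspace} = \big( \eng_\psi^\prime(u(t)), w_1 \big)_{L^2} - \big( \eng_\eta^\prime(u(t)), w_2 \big)_{L^2},
\]
while the left-hand side is $\tfrac{\diff}{\diff t}\big[(\eta, w_1)_{L^2} + (\psi, w_2)_{L^2}\big]$ in the sense of distributions in $t$. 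Since $\Dom{J}$ is a product space and its factors are dense in $H^{-1}(\R)$ and $\dot H^{-\frac12}(\R)$ respectively (Remark~\ref{density remark}), testing the two components separately shows that the weak equation is equivalent to the pair
\[
\partial_t \eta = \eng_\psi^\prime(u) = A(\eta)\psi, \qquad \partial_t \psi = -\eng_\eta^\prime(u),
\]
holding distributionally in $t$, where we have inserted the explicit formulas \eqref{definition E prime}. Because $u \in C^0([0,t_0); \nbhdO \cap \Wspace)$ and the nonlocal operators and their Fr\'echet derivatives are continuous on $\nbhdO \cap \Vspace$, both right-hand sides are in fact continuous in $t$, so any weak solution is automatically $C^1$ in time and these identities hold classically.

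The first equation is exactly \eqref{Hamiltonian kinematic}. Recovering $\varphi_\pm$ from $\psi$ by \eqref{psi phipm identity}---which encodes the continuity of normal velocity \eqref{continuity normal velocity}---and setting $\Phi_\pm := \HE_\pm(\eta)\varphi_\pm$, the discussion of Section~\ref{nonlocal operators section} shows that $A(\eta)\psi = \mp\DN_\pm(\eta)\varphi_\pm = \partial_t \eta$ is equivalent to the kinematic condition on $\mathscr{S}(t)$ in \eqref{Eulerian kinematic}, while the Laplace equation \eqref{Eulerian Laplace problem} and the rigid-wall conditions are built into the definition \eqref{definition HE} of $\HE_\pm$. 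Conversely, the surface traces of any solution of \eqref{Eulerian water wave problem} satisfy $\partial_t \eta = A(\eta)\psi$.

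The main point is to match the second equation $\partial_t \psi = -\eng_\eta^\prime(u)$ with the dynamic condition. I would verify directly that $-\eng_\eta^\prime(u)$, as written in \eqref{definition E prime}, coincides with the right-hand side of \eqref{rewritten Bernoulli}, which by Section~\ref{nonlocal operators section} is equivalent to the Bernoulli condition \eqref{Eulerian dynamic}; the gravitational and capillary terms agree term by term. For the quadratic part, the key steps are: (i) identify the auxiliary functions $\theta_\pm = A(\eta)\DN_\pm(\eta)^{-1}\psi$ with $\mp\varphi_\pm$, using \eqref{psi phipm identity} and the commutativity of these nonlocal operators; (ii) invoke Remark~\ref{a1 a2 remark} together with the linearity of $a_1^\pm, a_2^\pm$ in their second argument to rewrite $a_1^\pm(\eta,\theta_\pm)$ and $a_2^\pm(\eta,\theta_\pm)$ as the interfacial traces of $\partial_x \Phi_\pm$ and $\pm\,\partial_y \Phi_\pm$; and (iii) use the algebraic identities \eqref{grad Phi identities}, together with $A(\eta)\psi = \partial_t\eta$, to reassemble $\tfrac12\jump{\rho|\nabla\Phi|^2} - (\partial_t\eta)\jump{\rho\,\partial_y\Phi}$. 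I expect the sign bookkeeping in this last computation---arranging that the sum over $\pm$ produces the jumps with the correct orientation---to be the most delicate piece, though it is entirely elementary once \eqref{grad Phi identities} is in hand. The remaining regularity assertions (well-definedness of the operators, their derivatives, and $\HE_\pm$, and the legitimacy of the integrations by parts) follow from $u \in \nbhdO \cap \Wspace \hookrightarrow \nbhdO \cap \Vspace$ and the mapping properties recorded in Section~\ref{nonlocal operators section}. Chaining these equivalences together yields the stated result.
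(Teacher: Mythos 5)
Your proposal is correct and follows essentially the same route as the paper: unpack the weak formulation componentwise into $\partial_t\eta = \eng_\psi^\prime(u) = A(\eta)\psi$ and $\partial_t\psi = -\eng_\eta^\prime(u)$, recover $\Phi_\pm$ via \eqref{psi phipm identity} and \eqref{definition HE}, and then match $-\eng_\eta^\prime(u)$ with \eqref{rewritten Bernoulli} by identifying $\theta_\pm = \mp\varphi_\pm$ and using Remark~\ref{a1 a2 remark}, the identities \eqref{grad Phi identities}, and the kinematic equation. The only difference is that you spell out the density/testing argument and the automatic time-regularity a bit more explicitly than the paper's sketch, which is harmless.
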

\begin{proof}
At this formulation of the problem was previously obtained by Benjamin and Bridges \cite{benjamin1997reappraisal1}, we provide a sketch of the argument for completeness.  Suppose that $u(t) = (\eta(t), \psi(t)) \in C^0([0,t_0); \mathcal{O} \cap \Wspace)$ is a weak solution to the Hamiltonian system \eqref{Hamiltonian equation}.  Recalling \eqref{psi phipm identity}, we have that $\Phi_\pm := \mp\HE(\eta) \DN_\pm(\eta)^{-1} A(\eta) \psi \in \dot H^{3+} \cap \dot H^1$ is the velocity potential in $\Omega_\pm$ and satisfies \eqref{Eulerian Laplace problem}.  The definition of the harmonic extension operator $\HE(\eta)$  in \eqref{definition HE} also ensures the kinematic condition holds on $\{ y = \pm d_\pm\}$.  Moreover, from the expression for $\eng^\prime(u)$ obtained in  \eqref{definition E prime}, we see that 
\[ 
\partial_t \eta = \eng_\psi^\prime(u) = A(\eta) \psi,
\]
in the distributional sense.  This is precisely \eqref{Hamiltonian kinematic} and hence corresponds to the kinematic condition on the internal interface \eqref{Eulerian kinematic}.  

We claim that the Bernoulli condition \eqref{Eulerian dynamic} is equivalent to 
\[ 
\partial_t \psi = -\eng_\eta^\prime(u).
\]
interpreted again in the distributional sense.   Observe that, due to Remark~\ref{a1 a2 remark} and the identity \eqref{psi phipm identity}, many of the quantities occurring in $\eng_\eta^\prime(u)$ have physical significance:
\[ 
\theta_\pm = A(\eta) \DN_\pm(\eta)^{-1} \psi = \mp \varphi_\pm, \quad a_1^\pm(\eta, \theta_\pm) = (\partial_x\Phi_\pm)|_{\mathscr{S}}, \quad a_2^\pm(\eta, \theta_\pm) = \pm (\partial_y \Phi_\pm)|_{\mathscr{S}}.
\]
Hence, 
\begin{align*} 
\eng_\eta^\prime(u) &= \frac{1}{2} \sum_\pm \rho_\pm \left( \mp (\partial_x \Phi_\pm)|_{\mathscr{S}}  \varphi_\pm^\prime \pm (\partial_y \Phi_\pm)|_{\mathscr{S}} A(\eta) \psi \right) - g\jump{\rho} \eta - \sigma \left( \frac{\eta^\prime}{\jbracket{\eta^\prime}} \right)^\prime \\
& = -\frac{1}{2} \jump{\rho |\nabla \Phi|^2} + (\partial_t \eta) \jump{\rho \partial_y \Phi} - g\jump{\rho} \eta - \sigma \left( \frac{\eta^\prime}{\jbracket{\eta^\prime}} \right)^\prime,
\end{align*}
where in the second line we have used the kinematic condition \eqref{Hamiltonian kinematic} and the identities \eqref{grad Phi identities}.  Comparing this to equivalent statement of the Bernoulli condition in \eqref{rewritten Bernoulli}, we see that the proof is indeed complete. 
\end{proof}

\subsection{The symmetry group and the momentum}

The internal wave problem is invariant under translations in the $x$-direction, which formally should be associated to the conservation of (horizontal linear) momentum; see, for example, \cite{benjamin1982hamiltonian}.  To put this on firmer ground, we introduce the one-parameter symmetry group
\be \label{definition T}
T(s) u := u(\placeholder - s) \qquad \textrm{for all } u \in \Xspace.
\ee
In the next lemma, we verify that $T$ exhibits the necessary properties for the abstract theory in \cite{varholm2020stability}.

\begin{lemma}[Symmetry] \label{symmetry lemma} 
The translation symmetry group $T$ given by \eqref{definition T} satisfies the following.
\begin{enumerate}[label=\rm(\alph*)]
\item The neighborhood $\mathcal{O}$, $\Xspace^k$ for any $k$, and $I^{-1} \Dom{J}$ are invariant under $T(s)$ for all $s \in \mathbb{R}$.
\item $T$ comprises a flow on $\Xspace$ in the sense that $T(0) = \ident_{\Xspace}$ and $T(s+r) = T(s) T(r)$ for all $s, r \in \mathbb{R}$.  Moreover, $T(s)$ is unitary on $\Xspace$ and an isometry on $\Vspace$ and $\Wspace$ for all $s \in \mathbb{R}$.
\item The symmetry group commutes with the Poisson map in the sense that 
\be \label{J and T commute} 
J I T(\placeholder) = T(\placeholder) J I.
\ee
\item \label{generator part} The infinitesimal generator of $T|_{\Xspace^k}$ is the unbounded linear operator 
\be \label{definition generator} 
T^\prime(0)|_{\Xspace^k} : \Dom{T^\prime(0)} \subset \Xspace^k \to \Xspace^k \qquad u \mapsto -\partial_x u
\ee
with (dense) domain $\Dom{T^\prime(0)|_{\Xspace^k}} := \Xspace^{k+1}.$  In particular,
\[ 
\Dom{T^\prime(0)} = \Dom{T^\prime(0)|_\Xspace} = \Xspace^{\frac{3}{2}}, \quad 
\Dom{T^\prime(0)|_{\Vspace}} = \Xspace^{\frac{5}{2}+} , \quad 
\Dom{T^\prime(0)|_{\Wspace}} = \Xspace^{\frac{7}{2}+}. 
\]
\item \label{J dense range part} The subspace $\Rng{J}  \cap \Dom{T^\prime(0)|_{\Wspace}}$ is dense in $\Xspace$.
\item \label{energy conserved part} We have $E(T(s) u) = E(u)$ for all $s \in \mathbb{R}$ and $u \in \mathcal{O} \cap \Vspace$.
\end{enumerate}
\end{lemma}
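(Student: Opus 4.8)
The plan is to verify the six assertions one after another; all of them follow from elementary properties of the translation flow together with the structure of the spaces $\Xspace^k$, so the task is bookkeeping rather than hard analysis. The organizing observation is that on the Fourier side $T(s)$ acts by multiplication by the unimodular symbol $e^{-is\xi}$, and hence commutes with every Fourier multiplier and is an isometry of every Sobolev norm, homogeneous or inhomogeneous. Granting this, part~(a) is immediate: each $\Xspace^k$ is defined through such norms; $\mathcal{O}$ is cut out by the pointwise conditions $-d_- < \eta < d_+$, which are manifestly translation invariant; and since $I = (1-\partial_x^2,\,|\partial_x|)$ is a diagonal Fourier multiplier and $\Dom J$ is again defined through Sobolev norms, $I^{-1}\Dom J$ is $T(s)$-invariant. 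Part~(b): $T(0) = \ident_{\Xspace}$ and $T(s+r) = T(s)T(r)$ are immediate from $u(\placeholder - s - r) = \big(u(\placeholder - r)\big)(\placeholder - s)$, while unitarity on $\Xspace$ and the isometry property on $\Vspace$ and $\Wspace$ are instances of norm preservation. Part~(c): on $I^{-1}\Dom J$ the operator $JI$ coincides with the matrix of Fourier multipliers whose nonzero entries are $|\partial_x|$ and $\partial_x^2 - 1$, and these commute with $T(s)$.

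For part~(d) I would first record that $s \mapsto T(s)$ is a strongly continuous one-parameter unitary group on each $\Xspace^k$ — strong continuity being the familiar continuity of translation in Sobolev spaces — so it possesses a densely defined generator. Formally differentiating $u(\placeholder - s)$ at $s=0$ gives $-\partial_x u$, and the substance of the claim is the identification of the domain: a one-line Plancherel estimate shows that the difference quotients $s^{-1}\big(T(s)u - u\big)$ converge in $\Xspace^k$ precisely when $\partial_x u \in \Xspace^k$, which unwinds to $u \in \Xspace^{k+1}$. (In the second component one uses $k \geq 1/2$, so that $\dot H^{k+1} \cap \dot H^{1/2} \hookrightarrow \dot H^{3/2}$ and the extra condition $\partial_x u \in \dot H^{1/2}$ is automatic.) The three displayed domains are then the specializations of $\Xspace^{k+1}$ to the values of $k$ defining $\Xspace$, $\Vspace$, $\Wspace$, and density of $\Xspace^{k+1}$ in $\Xspace^k$ follows from Remark~\ref{density remark} and Fourier truncation.

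Part~(e) asks for one subspace lying inside $\Rng J \cap \Dom{T^\prime(0)|_{\Wspace}}$ and dense in $\Xspace$. Since $J(a,b) = (b,-a)$, we have $(\eta,\psi) \in \Rng J$ iff $(-\psi,\eta) \in \Dom J$, i.e.\ $\eta \in H^1 \cap \dot H^{-1/2}$ and $\psi \in H^{-1} \cap \dot H^{1/2}$; meanwhile $\Dom{T^\prime(0)|_{\Wspace}} = \Xspace^{7/2+}$. Any pair $(\eta,\psi)$ whose Fourier transforms both lie in $C_c^\infty(\R \setminus \{0\})$ belongs to $H^s \cap \dot H^s$ for every $s$, hence to both of the above sets, and such pairs are dense in $\Xspace = H^1 \times \dot H^{1/2}$ because one can truncate the Fourier support of an arbitrary element away from $0$ and from $\infty$ with convergence in the $\Xspace$ norm; this gives the claim. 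Finally, part~(f) reduces to the translation covariance $\DN_\pm(T(s)\eta)\,T(s) = T(s)\,\DN_\pm(\eta)$, which in turn follows from the invariance of the harmonic extension problem \eqref{definition HE} under the horizontal shift $(x,y) \mapsto (x-s,y)$ of the fluid domain; covariance of $B(\eta)$ and of $A(\eta)$ is then inherited through \eqref{definition B} and \eqref{definition A}, and a change of variables in \eqref{definition energy} (the potential terms being visibly shift invariant) yields $E(T(s)u) = E(u)$.

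The step demanding the most care — and the only one that is not purely formal — is part~(d): one must confirm strong continuity of $T$ on each $\Xspace^k$ and pin down the exact domain of its generator; likewise, in part~(f) the translation covariance of the nonlocal operators $\DN_\pm$, $B$, $A$ should be argued from the underlying elliptic problem rather than merely asserted. Both facts are standard, but they must be stated precisely here since they are exactly the hypotheses that feed into the abstract framework of \cite{varholm2020stability}.
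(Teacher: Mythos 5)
Your proposal is correct and follows essentially the same route as the paper: the paper dismisses parts (a)--(d) and (f) as routine and, for the key part~\ref{J dense range part}, computes $\Rng{J} \cap \Dom{T^\prime(0)|_\Wspace}$ exactly as you do and then invokes the density of $H^r \cap \dot H^s$ in both spaces (Remark~\ref{density remark}), which your explicit Fourier-truncation construction simply proves by hand. The extra detail you supply for the generator domain in part~\ref{generator part} (including the observation that $\partial_x\psi \in \dot H^{1/2}$ is automatic when $k \geq 1/2$) is consistent with, and fills in, what the paper leaves implicit.
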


\begin{proof}
Most of these facts are simple to confirm, so we omit the details.  However, part~\ref{J dense range part} merits closer consideration since its conclusion is the key assumption in \cite{varholm2020stability} that replaces the hypothesis that $J$ is bijective in the standard GSS approach.  First note that 
\[ 
\Rng{J} = \left( H^1(\mathbb{R}) \cap \dot H^{-\frac{1}{2}}(\mathbb{R}) \right) \times \left( H^{-1}(\mathbb{R}) \cap \dot H^{\frac{1}{2}}(\mathbb{R}) \right),
\]
and hence by part~\ref{generator part} we have that
\[ 
\Dom{T^\prime(0)|_\Wspace} \cap \Rng{J} = \left( H^{4+}(\mathbb{R}) \cap \dot H^{-\frac{1}{2}}(\mathbb{R}) \right) \times \left( H^{-1}(\mathbb{R}) \cap \dot H^{\frac{1}{2}}(\mathbb{R}) \cap \dot H^{\frac{7}{2}+}(\mathbb{R})  \right).
\]
This is indeed dense in $\Xspace$ due to Remark~\ref{density remark}.
\end{proof}

Now, letting
\[  
\mom_\pm := \pm \int_{\mathbb{R}} \rho_\pm \eta^\prime \varphi_\pm \,\diffx
\]
represent the momentum in $\Omega_\pm$, we have that the total momentum carried by the wave is 
\be \mom(u) := \mom_+(u) + \mom_-(u) = -\int_{\mathbb{R}} \eta^\prime \psi \,\diffx,\label{definition momentum} \ee
which defines a $C^\infty(\mathcal{O} \cap \Vspace; \mathbb{R})$ functional.  The next lemma establishes that $\mom$ is indeed generated by the translation invariance in the sense that \eqref{symmetry generates waves} holds.  In particular, together with Lemmas~\ref{energy extension lemma} and \ref{symmetry lemma}, this completes the proof that \cite[Assumption 3 and Assumption 4]{varholm2020stability} hold.

\begin{lemma}[Momentum] \label{momentum lemma} 
The momentum functional $\mom$ given by \eqref{definition momentum} satisfies the following.  
\begin{enumerate}[label=\rm(\alph*)]
\item \label{momentum extension part} There exists a mapping $\nabla \mom \in C^0(\mathcal{O} \cap \Vspace; \Xspace^*)$ such that, for all $u \in \mathcal{O} \cap \Vspace$,   $\nabla\mom(u)$ is an extensions of the Fr\'echet derivative $\Diff\mom(u)$.  
\item \label{T generates P part} For all such $u \in \mathcal{O} \cap \Vspace$ it holds that $\nabla P(u) \in \Dom{J}$ and, moreover,
\be \label{symmetry generates waves} 
T^\prime(0) u = J \nabla P(u)  \qquad \textrm{for all } u \in \mathcal{O} \cap \Dom{T^\prime(0)}.
\ee
\end{enumerate}
\end{lemma}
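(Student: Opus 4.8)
The plan is to compute $\Diff\mom$ directly from the explicit formula~\eqref{definition momentum}, identify the $L^2$-gradient, and then extract the two functional-analytic claims; the identity~\eqref{symmetry generates waves} will collapse to a single matrix multiplication once the right objects are in hand.

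For part~\ref{momentum extension part}, let $u = (\eta,\psi) \in \mathcal{O} \cap \Vspace$ and $\dot u = (\dot\eta,\dot\psi) \in \Vspace$. Differentiating $\mom(u) = -\int_\R \eta^\prime \psi\,\diffx$ and integrating by parts in $x$ gives
\[
\Diff\mom(u)\dot u = -\int_\R \bigl(\dot\eta^\prime \psi + \eta^\prime \dot\psi\bigr)\,\diffx = \int_\R \psi^\prime \dot\eta\,\diffx - \int_\R \eta^\prime \dot\psi\,\diffx ,
\]
so the only reasonable candidate for the extension is the $L^2$-gradient $\mom^\prime(u) := (\psi^\prime, -\eta^\prime)$, paired with $\Xspace$ through the $L^2$ inner product. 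To see that this defines an element of $\Xspace^* = H^{-1}(\R) \times \dot H^{-1/2}(\R)$, I would note that $\psi \in \dot H^{1/2}(\R)$ forces $\psi^\prime \in \dot H^{-1/2}(\R) \hookrightarrow H^{-1}(\R)$ --- the embedding being immediate from the Fourier weights $(1+\xi^2)^{-1} \le |\xi|^{-1}$ --- while $\eta \in H^{3/2+}(\R) \hookrightarrow \dot H^{1/2}(\R)$ gives $\eta^\prime \in \dot H^{-1/2}(\R)$. Defining $\langle \nabla\mom(u),v\rangle_{\Xspace^*\times\Xspace} := (\mom^\prime(u),v)_{L^2}$ and using the density of $\Vspace$ in $\Xspace$ (Remark~\ref{density remark}), this genuinely extends $\Diff\mom(u)$. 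Because $\mom$ is a bounded quadratic form on $\Xspace$, the assignment $u \mapsto \nabla\mom(u)$ is linear and bounded $\Xspace \to \Xspace^*$, hence in particular lies in $C^\infty(\mathcal{O}\cap\Vspace;\Xspace^*)$.

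For part~\ref{T generates P part}, I would first check $\nabla\mom(u) = (\psi^\prime,-\eta^\prime) \in \Dom{J}$ directly from the Fourier-side descriptions of the two factors in~\eqref{definition domain J}, using the regularity of $u$ and the embeddings above: one needs $\psi^\prime \in H^{-1}(\R)\cap\dot H^{1/2}(\R)$ and $-\eta^\prime \in H^1(\R)\cap\dot H^{-1/2}(\R)$. Then, for $u \in \mathcal{O}\cap\Dom{T^\prime(0)}$, part~\ref{generator part} of Lemma~\ref{symmetry lemma} identifies $T^\prime(0)u = -\partial_x u \in \Xspace$, and with $J$ as in~\eqref{definition J} the claim is simply
\[
J\nabla\mom(u) = \begin{pmatrix} 0 & 1 \\ -1 & 0 \end{pmatrix}\begin{pmatrix} \psi^\prime \\ -\eta^\prime \end{pmatrix} = \begin{pmatrix} -\eta^\prime \\ -\psi^\prime \end{pmatrix} = -\partial_x u = T^\prime(0)u ,
\]
which is exactly~\eqref{symmetry generates waves}.

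Unlike the energy, the momentum is quadratic and ``local'' --- it involves no Dirichlet--Neumann operators --- so no operator expansions or real-analyticity input enter, and the whole argument is essentially computational. The only step that demands genuine care is matching the homogeneous and inhomogeneous Sobolev scales: making sure that the low-frequency $\dot H^{\pm 1/2}$ control inherited from $\psi \in \dot H^{1/2}(\R)$ and $\eta \in H^{3/2+}(\R) \hookrightarrow \dot H^{1/2}(\R)$, together with the high-frequency regularity carried by $u$, is precisely enough to place $\nabla\mom(u)$ first in $\Xspace^*$ and then in $\Dom{J}$. I expect this bookkeeping --- rather than anything conceptual --- to be the main obstacle.
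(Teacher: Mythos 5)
Your proposal is correct and follows essentially the same route as the paper: compute $\Diff\mom$ by integration by parts, read off the $L^2$ gradient $\mom'(u)=(\psi',-\eta')$, check it lies in $\Xspace^*$ via the elementary Fourier-weight comparisons, and obtain \eqref{symmetry generates waves} by a single matrix multiplication against Lemma~\ref{symmetry lemma}\ref{generator part}. The one step you defer --- verifying $(\psi',-\eta')\in\Dom{J}$, i.e.\ $\psi'\in\dot H^{1/2}$ and $\eta'\in H^1$ --- is also the one the paper asserts without computation; note that at the bare $\Vspace=\Xspace^{1+}$ regularity (with $0<\varepsilon\ll 1$) these memberships do not actually follow, whereas they do hold on $\Dom{T'(0)}=\Xspace^{3/2}$, which is the only place the identity \eqref{symmetry generates waves} is invoked.
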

\begin{proof}
The existence of the extension $\nabla \mom$ in part~\ref{momentum extension part} is obvious from the formulas for the derivative $\Diff\mom$.  In particular, for $u = (\eta, \psi) \in \mathcal{O} \cap \Vspace$ and $\dot u = (\dot\eta, \dot\psi) \in \Vspace$, we have  
\be \label{DP}
 \Diff\mom(u) \dot u = \int_{\mathbb{R}} \psi^\prime \dot \eta \,\diffx - \int_{\mathbb{R}} \eta^\prime \dot \psi \,\diffx =: \langle \nabla\mom(u), \, \dot u\rangle_{\Xspace^* \times \Xspace}.
 \ee
The right-hand side above clearly defines an element of $\Xspace^*$ that depends continuously on $u$.  In particular, it has the explicit $L^2$ gradient
\be \label{definition P prime}
\mom^\prime(u) = (\mom_\eta^\prime(u), \mom_\psi^\prime(u)), \qquad \mom_\eta^\prime(u) := \psi^\prime, \quad \mom_\psi^\prime(u) := -\eta^\prime.
\ee

From this it is also clear that $\nabla \mom(u) \in \Dom{J}$ for $u \in \mathcal{O} \cap \Vspace$.  Noting that $\Dom{T^\prime(0)} = \Xspace^{3/2} \subset \Vspace$, the identity \eqref{symmetry generates waves} now follows from  the definitions of $J$ in \eqref{definition J} and $T^\prime(0)$ in \eqref{definition generator}.
\end{proof}

\subsection{Traveling waves} \label{traveling wave section}

In Hamiltonian language, a traveling internal wave is a solution to \eqref{Hamiltonian equation} taking the form 
\be \label{definition traveling wave} 
u(t) = T(ct) U,
\ee
for some wave speed $c \in \mathbb{R}$ and time-independent bound state $U \in \mathcal{O} \cap \Wspace$.  Let us now discuss in somewhat finer detail the existence theory obtained by Nilsson in \cite{nilsson2017internal}.  

Recall that we have defined the dimensionless parameters $\beta$, $\lambda$, $\varrho$, and $h$ in \eqref{dimensionless parameters} and \eqref{definition ratios}.  Let $\mathscr{T} := \{ z \in \mathbb{C} : \realpart{z} \in (-r,r)\}$ be a thin slab centered on the imaginary axis.  For $r > 0$ sufficiently small, we have by the dispersion relation \eqref{dispersion relation} that there exist three curves in the $(\beta,\lambda)$-plane along which the spectrum of the linearized problem in $\mathscr{T}$ crosses the real or imaginary axis.  

Consider first the curve $\Gamma_1$, which is simply the line $\lambda = \lambda_0$.  Immediately below it and to the right of $\beta = \beta_0$, the spectrum in $\mathscr{T}$ consists of a pair of oppositely signed real eigenvalues and a complex conjugate pair on the imaginary axis.    Passing through $\Gamma_1$, the imaginary eigenvalues collide at the origin then move along the real axis.  This same $0^2$ resonance is associated with transition from periodic solutions to solitons in the steady KdV equation, for example.  On the curve 
\[ 
\Gamma_2 := \left\{ (\beta(\xi), \lambda(\xi)) : \xi \in [0,\infty) \right\},
\]
where 
\be \label{parameterization Gamma}
\begin{split}
\beta(\xi) & := \sum_\pm \frac{\rho_\pm}{\rho_-} \frac{d_+}{d_\pm} \left( \frac{- \sin{(\frac{d_\pm}{d_+} \xi) \cos{(\frac{d_\pm}{d_+} \xi)} + \frac{d_\pm}{d_+} \xi}}{2 \xi \sin^2{(\frac{d_\pm}{d_+} \xi)}}  \right),  \\ 
\lambda(\xi) &:= \beta(\xi)^2 + \xi \sum_{\pm} \frac{\rho_\pm}{\rho_-} \coth{(\frac{d_+}{d_\pm} \xi)},
\end{split}
 \ee
the spectrum in $\mathscr{T}$ consists of two real eigenvalues with multiplicity $2$.  In the region bounded by $\Gamma_1$ and $\Gamma_2$, there are two pairs of oppositely signed simple real eigenvalues.  

Nilsson's approach is to fix $\beta > \beta_0$ and treat $\lambda$ as a bifurcation parameter with $0 < \lambda-\lambda_0 \ll 1$.  This ensures that $(\beta,\lambda)$ remains in the Region~A depicted in Figure~\ref{dispersion figure}, which is the narrow open set bounded below by  $\Gamma_1$ and lying beneath $\Gamma_2$.  Because he opts to non-dimensionalize the system at the outset, translating his result to our setting involves introducing some heavy notation.  Thankfully, this will be pared down soon.

\begin{theorem}[Nilsson \cite{nilsson2017internal}] \label{region A existence theorem} 
Let $\{\Pi_\varepsilon = (\rho_{\pm\varepsilon}, d_{\pm\varepsilon},\sigma_\varepsilon,c_\varepsilon) : 0 < \varepsilon \ll 1\}$ be a smooth curve in the dimensional parameter space such that the corresponding Bond number is fixed to $\beta > \beta_0$ and $\lambda = \lambda_0 + \varepsilon^2$.
 \begin{enumerate}[label=\rm(\alph*)]
 \item \label{region A existence KdV part} Suppose that $\varrho_\varepsilon - 1/h_\varepsilon^2 = O(1)$ as $\varepsilon \searrow 0$.  Then for any $k > 1/2$, there exists a smooth curve 
 \[ 
 \mathscr{C}_\beta^{\mathrm{A}} = \{ u_{\varepsilon;\,\beta}^{\mathrm{A}} : 0 < \varepsilon \ll 1 \}\subset \Xspace^k
 \]
 so that $u_{\varepsilon;\,\beta}^{\mathrm{A}}$ is a traveling internal wave for the parameter values $\Pi_\varepsilon$.  Along this curve, the free surface profile has leading-order form 
 \be \label{region A KdV scaling}
  \eta_{\varepsilon;\, \beta}^{\mathrm{A}} = \frac{\varepsilon^2 d_{+} }{\varrho-1/h^2} \sechsq{\left(\frac{\varepsilon \placeholder}{2 d_{+} \sqrt{\beta-\beta_0}}\right) }+ O(\varepsilon^3) \qquad \textrm{in $\Xspace_1^k$ as $\varepsilon\searrow 0$.} 
  \ee 
  \item \label{region A existence Gardner part} Suppose instead that $\varrho_\varepsilon - 1/h_\varepsilon^2 = \kappa \varepsilon$ for a fixed $\kappa \neq 0$.  Then for any $k > 1/2$ there exists two smooth curves 
  \[ \mathscr{C}_{\beta,\kappa,\pm}^{\mathrm{A}} = \{ u_{\varepsilon;\,\beta,\kappa,\pm}^{\mathrm{A}} : 0 < \varepsilon \ll 1\}  \subset \Xspace^k\]
  so that $u_{\varepsilon;\,\beta,\kappa,\pm}^{\mathrm{A}}$ is a traveling internal wave for the parameter values $\Pi_\varepsilon$. Along $\mathscr{C}_{\beta,\kappa,\pm}^{\mathrm{A}}$, the free surface profile has leading-order form
   \be \label{region A Gardner scaling}
 \eta_{\varepsilon;\, \beta,\kappa,\pm}^{\mathrm{A}} = \frac{2 \varepsilon d_{+} }{\kappa \pm \sqrt{\kappa^2 + 4(\varrho + 1/h^3)} \cosh\left( \dfrac{\varepsilon \placeholder}{d_{+} \sqrt{\beta-\beta_0}} \right)} + O(\varepsilon^3) \qquad \textrm{in $\Xspace_1^k$ as $\varepsilon \searrow 0$.}  
 \ee
 \end{enumerate}
\end{theorem}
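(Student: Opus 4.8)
Because this theorem merely recasts the existence results of Nilsson~\cite{nilsson2017internal} — building on the earlier work of Kirrmann~\cite{kirrmann1991reduction} — the plan is not to reprove the bifurcation analysis in detail but to recall its structure and then carefully \emph{translate} its output into the Hamiltonian state variable $u=(\eta,\psi)$ and the scale of spaces $\Xspace^k$ used here. First I would set up the spatial-dynamics formulation: passing to the moving frame $x\mapsto x-ct$, the steady version of \eqref{Eulerian water wave problem} is rewritten as an autonomous first-order evolution equation in the spatial variable $x$ on an infinite-dimensional phase space of vertical sections. The linearization of this system at the zero state has spectrum in the slab $\mathscr{T}$ governed by the roots of the dispersion relation \eqref{dispersion relation}, so, by the discussion preceding the theorem, for $\beta>\beta_0$ fixed and $(\beta,\lambda)$ lying just above $\Gamma_1$ and below $\Gamma_2$ one finds a pair of real eigenvalues near the origin (which emerge from the imaginary axis through a collision at $0$ on $\Gamma_1$) together with the remainder of the spectrum bounded away from the imaginary axis. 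A Kirchg\"assner-type center manifold reduction at this $0^2$ resonance then yields, for any prescribed finite regularity, a two-dimensional reduced ODE depending smoothly on the parameters, whose small bounded orbits correspond to small bounded solutions of the full steady problem. Using the reversibility of the steady problem under $x\mapsto-x$, this reduced flow is a reversible vector field undergoing a $0^2$ bifurcation, and for $\lambda$ slightly above $\lambda_0$ it possesses an even orbit homoclinic to the (now hyperbolic) equilibrium, approached at an exponential rate; this is the solitary wave.

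The second step is to analyze the reduced equation. Inserting the long-wave scaling associated with $\lambda=\lambda_0+\varepsilon^2$ together with the matching amplitude scaling, the reduced system takes, to leading order in $\varepsilon$, the form of the steady Korteweg--de Vries equation $Z''=aZ-bZ^2$ with $a,b$ positive and determined by $\beta-\beta_0$ and $\varrho-1/h^2$. In the regime $\varrho_\varepsilon-1/h_\varepsilon^2=O(1)$ the quadratic coefficient is nonzero, the unique even, exponentially localized homoclinic orbit is the familiar $\sech^2$ profile, and undoing the scalings and the non-dimensionalization \eqref{dimensionless parameters}--\eqref{definition ratios} produces exactly \eqref{region A KdV scaling}, with $O(\varepsilon^3)$ remainder measured in $\Xspace_1^k$. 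In the regime $\varrho_\varepsilon-1/h_\varepsilon^2=\kappa\varepsilon$ the quadratic coefficient is itself $O(\varepsilon)$, so the cubic term must be retained and the rescaled reduced equation becomes the steady Gardner (combined KdV--mKdV) equation; its two families of even, exponentially localized homoclinics — distinguished by the sign of the $\cosh$ in the denominator — give the two curves $\mathscr{C}_{\beta,\kappa,\pm}^{\mathrm{A}}$ together with the profiles \eqref{region A Gardner scaling}. Smoothness of $\varepsilon\mapsto u_\varepsilon^{\mathrm{A}}$ in $\Xspace^k$ then follows from the smooth dependence of the center manifold and its reduced vector field on the parameters $\Pi_\varepsilon$, combined with the implicit function theorem applied to the nondegenerate (transversally unfolded) homoclinic orbits of the reduced ODE.

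The last step is to handle regularity and the reconstruction of the $\psi$-component, which is not part of Nilsson's statement but is what places $u_\varepsilon^{\mathrm{A}}$ in $\Xspace^k$. Since each such solitary wave solves the steady interfacial Euler system with analytic data and tends to the trivial state exponentially, elliptic regularity shows that $\eta_\varepsilon^{\mathrm{A}}$ is real-analytic and exponentially decaying; recovering $\varphi_\pm$ from the harmonic-extension problems \eqref{definition HE} and forming $\psi=\rho_-\varphi_--\rho_+\varphi_+$ (after fixing the additive constant so that $\psi$ vanishes in the far field, which is possible because the upstream and downstream states coincide) preserves analyticity and exponential decay. Hence $u_\varepsilon^{\mathrm{A}}=(\eta_\varepsilon^{\mathrm{A}},\psi_\varepsilon^{\mathrm{A}})\in\Xspace^k$ for every $k>1/2$ — indeed in $\bigcap_{k}\Xspace^k$ — and the error terms are controlled in every $\Xspace_1^k$. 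The step I expect to be tedious rather than conceptually hard is the bookkeeping: reconciling the two rescalings (the long-wave spatial scaling and the amplitude scaling) with Nilsson's non-dimensional normalization so that the explicit constants $\beta-\beta_0$, $\varrho-1/h^2$, and $\varrho+1/h^3$ appearing in \eqref{region A KdV scaling}--\eqref{region A Gardner scaling} come out exactly, and verifying that the center-manifold reduction is uniform for the family $\Pi_\varepsilon$ rather than at a single parameter value. These are lengthy but routine computations that I would not reproduce in full.
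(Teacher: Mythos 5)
Your outline matches how the paper itself treats this result: Theorem~\ref{region A existence theorem} is cited from Nilsson \cite{nilsson2017internal} without proof, and the remark following it describes exactly the route you take — a Kirchg\"assner-type center manifold reduction at the $0^2$ resonance on $\Gamma_1$, a reduced equation of steady KdV type (or Gardner type when $\varrho-1/h^2 = O(\varepsilon)$), and a bootstrapping/reconstruction argument supplying the regularity and the $\psi$-component, which the paper carries out in the proof of Corollary~\ref{KdV bound state corollary}. Your proposal is correct and takes essentially the same approach.
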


\begin{remark}
The above solutions are obtained using a center manifold reduction at the point $(\beta,\lambda_0) \in \Gamma_1$.  For the scaling regime of part~\ref{region A existence KdV part}, the reduced equation is a perturbation of steady KdV.  This gives rise to waves with the classical $\sechsq{}$ asymptotics in \eqref{region A KdV scaling}.  However, when $\varrho-1/h^2 \ll 1$, cubic terms enter at leading order, and so one instead obtains an equation of Gardner or mKdV-KdV type.  An important consequence of this construction is that the $O(\varepsilon^3)$ remainder terms in \eqref{region A KdV scaling} and \eqref{region A Gardner scaling} are exponentially decaying and exhibit the same scaling of the spatial variable as the leading-order part.  Note also that the regularity of the solutions is not stated by Nilsson, but follows from a standard bootstrapping argument.
\end{remark}

Theorem~\ref{region A existence theorem} fixes $\beta$ but allows the dimensional parameters to vary.  While convenient for proving existence, this choice is not ideal for stability analysis:  two waves on one of these curves may not necessarily solve the same physical problem. The general theory in \cite{grillakis1987stability1,varholm2020stability} instead asks for a family of bound states parameterized by $c$, with the remaining dimensional parameters held constant.     Given a choice of parameters $(\rho_{\pm*},d_{\pm*},\sigma_*,c_*)$, we therefore let
\be \label{definition beta_c^A}
 (\beta_c,\lambda_c) := \left( \frac{\sigma_*}{d_{+*} \rho_{-*} c^2}, \, -\frac{g \jump{\rho_*}d_{+*}}{\rho_{-*} c^2} \right), \quad \varepsilon_c^{\mathrm{A}} := \sqrt{\lambda_c - \lambda_0} \qquad  \textrm{for } |c-c_*| \ll 1.
 \ee
 The first of these parameterizes a segment of the straight line joining $(\beta_*,\lambda_*)$ to the origin in the $(\beta,\lambda)$-plane, while the second expresses the bifurcation parameter $\varepsilon$ from Theorem~\ref{region A existence theorem} in terms of $c$.
 
The next two corollaries convert Theorem~\ref{region A existence theorem} to statements on bound states indexed by $c$.  In particular, they prove that \cite[Assumption 5]{varholm2020stability} is satisfied.  

\begin{corollary}[KdV bound states] \label{KdV bound state corollary} 
Let $(\rho_{\pm*},d_{\pm*},\sigma_*,c_*)$ be given so that $\varrho_* - 1/h_*^2 \neq 0$ and the corresponding non-dimensional parameters $(\beta_*,\lambda_*)$ lies in Region~A.  
There exists an open interval $\mathscr{I} \ni c_*$ and a family of bound states $\{ U_c^{\mathrm{A}} \}_{c \in \mathscr{I}} \subset \mathcal{O} \cap \Wspace$  having the non-dimensional parameter values $(\beta_c,\lambda_c)$ given by \eqref{definition beta_c^A}.  The free surface profile is 
\[ 
\eta_c^{\mathrm{A}} := \eta_{\varepsilon_c^{\mathrm{A}};\, \beta_c}^{\mathrm{A}} \qquad \textrm{for } c \in \mathscr{I}. 
\]
Moreover, $\{U_c^{\mathrm{A}}\}$ satisfies \cite[Assumption 5]{varholm2020stability} in that the following holds.
\begin{enumerate}[label=\rm(\alph*)] 
\item \label{bound state smooth part} The mapping $c \in \mathscr{I} \mapsto U_c^{\mathrm{A}} \in \mathcal{O} \cap \Wspace$ is $C^1$.
\item \label{bound state technical part} For all $c \in \mathscr{I}$, 
\[ 
U_c^{\mathrm{A}} \in \Dom{T^{\prime}(0)^3} \cap \Dom{JIT^\prime(0)}, \qquad
 U_c^{\mathrm{A}}, \, JIT^\prime(0) U_c \in \Dom{T^\prime(0)|_{\Wspace}}.
\]
\item \label{bound state nontrivial part} Each $U_c^{\mathrm{A}}$ is nontrivial in that $T^\prime(0) U_c \not\equiv 0$ for $c \in \mathscr{I}$.  
\item \label{bound state localized} The waves are localized in that $\lim\inf_{|s| \to \infty} \| T(s) U_c^{\mathrm{A}} - U_c^{\mathrm{A}} \|_{\Xspace} > 0$.
\end{enumerate}
\end{corollary}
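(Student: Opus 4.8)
The plan is to reduce everything to the existence theorem (Theorem~\ref{region A existence theorem}\ref{region A existence KdV part}) together with the smooth dependence of the construction on the bifurcation parameter $\varepsilon$ and on the physical parameters. First I would fix $(\rho_{\pm*},d_{\pm*},\sigma_*,c_*)$ with $\varrho_*-1/h_*^2\neq 0$ and $(\beta_*,\lambda_*)$ in Region~A, and observe that the maps $c\mapsto(\beta_c,\lambda_c)$ and $c\mapsto \varepsilon_c^{\mathrm A}=\sqrt{\lambda_c-\lambda_0}$ in \eqref{definition beta_c^A} are smooth and satisfy $\varepsilon_{c_*}^{\mathrm A}>0$ (since $(\beta_*,\lambda_*)\in$ Region~A forces $\lambda_*>\lambda_0$). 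Shrinking to a small interval $\mathscr I\ni c_*$ keeps $\varepsilon_c^{\mathrm A}$ in the range $0<\varepsilon\ll1$ of the existence theorem, keeps $(\beta_c,\lambda_c)$ in Region~A, and keeps $\varrho_c-1/h_c^2$ bounded away from $0$ and $O(1)$ as $\varepsilon_c^{\mathrm A}\to 0$ — so part~\ref{region A existence KdV part} applies along the whole family. Defining $U_c^{\mathrm A}:=u_{\varepsilon_c^{\mathrm A};\,\beta_c}^{\mathrm A}$ then produces a family of traveling internal waves with the stated free-surface profile $\eta_c^{\mathrm A}$. Membership in $\mathcal O\cap\Wspace$ follows from the bootstrapping remark after Theorem~\ref{region A existence theorem}: the profiles are exponentially localized with $H^\infty$ regularity, hence in $H^{3+}$, and the companion $\psi$ is recovered from the traveling-wave profile equation $\Diff\augHam(U_c)=0$, i.e. $c\,\mom_\psi'(U_c^{\mathrm A})=\eng_\psi'(U_c^{\mathrm A})$, giving $\psi_c^{\mathrm A}=-c\,A(\eta_c^{\mathrm A})^{-1}\partial_x\eta_c^{\mathrm A}$, which inherits the same regularity and decay since $A(\eta)^{-1}=\rho_+\DN_+(\eta)^{-1}+\rho_-\DN_-(\eta)^{-1}$ is a smooth family of order-$(-1)$ operators; localization of $\eta_c^{\mathrm A}$ also keeps $\eta_c^{\mathrm A}$ strictly between $-d_-$ and $d_+$, so $U_c^{\mathrm A}\in\mathcal O$.

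For part~\ref{bound state smooth part} I would invoke the smoothness of the center-manifold reduction in the parameters: the reduced steady-KdV-type ODE depends smoothly on $(\beta,\lambda,\varrho,h)$ and on the bifurcation parameter, its primary homoclinic orbit depends $C^1$ (indeed smoothly) on these by the usual transversality/implicit-function argument, and the reconstruction map back to $(\eta,\psi)$ is smooth on $\mathcal O\cap\Vspace$. Composing with the smooth maps $c\mapsto(\beta_c,\lambda_c,\varepsilon_c^{\mathrm A})$ gives that $c\mapsto U_c^{\mathrm A}\in\mathcal O\cap\Wspace$ is $C^1$; since all the regularity is uniform on $\mathscr I$ (exponential decay at a fixed rate, $H^\infty$ bounds uniform for $\varepsilon$ small), the $C^1$ statement is valid into $\Wspace$ and not merely into $\Xspace$. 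Part~\ref{bound state technical part} is then a matter of reading off domains: $\Dom{T'(0)^3}=\Xspace^{7/2}$ and $\Dom{T'(0)|_{\Wspace}}=\Xspace^{7/2+}$ by Lemma~\ref{symmetry lemma}\ref{generator part}, and $U_c^{\mathrm A}\in\Wspace=\Xspace^{5/2+}$ together with the $H^\infty$/exponential-decay regularity of the profile puts $U_c^{\mathrm A}$ in every $\Xspace^k$, hence in $\Dom{T'(0)^3}$ and $\Dom{T'(0)|_{\Wspace}}$; the operator $JIT'(0)$ is, on such smooth decaying functions, a concrete constant-coefficient operator whose output again lies in every $\Xspace^k$, so $JIT'(0)U_c^{\mathrm A}\in\Dom{T'(0)|_{\Wspace}}$ and $U_c^{\mathrm A}\in\Dom{JIT'(0)}$. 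For part~\ref{bound state nontrivial part}, $T'(0)U_c^{\mathrm A}=-\partial_x U_c^{\mathrm A}$, and since $\eta_c^{\mathrm A}$ is a nonconstant $\sechsq{}$-type profile (nonzero at leading order with nonzero derivative) it is not identically zero; similarly part~\ref{bound state localized} follows because $\eta_c^{\mathrm A}\to 0$ at $\pm\infty$ while being nonzero somewhere, so a large translate $T(s)U_c^{\mathrm A}$ is essentially disjointly supported from $U_c^{\mathrm A}$ and $\|T(s)U_c^{\mathrm A}-U_c^{\mathrm A}\|_{\Xspace}\to 2\|U_c^{\mathrm A}\|_{\Xspace}>0$; taking $\liminf$ over $|s|\to\infty$ gives the stated positivity.

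The main obstacle is the uniformity of the construction and its $C^1$ dependence on $c$ \emph{at the level of the space $\Wspace$} (and on the domains in part~\ref{bound state technical part}): Nilsson states existence with a fixed Bond number and varying dimensional parameters, in non-dimensional variables, and does not explicitly track either the regularity class of the solutions or their smooth dependence on the full parameter vector. Re-indexing by $c$ with all physical parameters held fixed forces $\beta$ to vary along a ray through the origin, so I must check that the center-manifold reduction — which was carried out at a fixed point on $\Gamma_1$ — persists smoothly along this ray and that the associated $O(\varepsilon^3)$ error terms retain their exponential decay and long-wave scaling uniformly; this is exactly the content flagged in the remark following Theorem~\ref{region A existence theorem}, and it is where the bootstrapping argument and the smooth-parameter-dependence of the reduced ODE must be spelled out carefully. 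Everything else — the extraction of $\psi_c^{\mathrm A}$ from the profile equation, the domain bookkeeping, and properties~\ref{bound state nontrivial part}--\ref{bound state localized} — is routine once that uniformity is in hand.
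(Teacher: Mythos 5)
Your proposal is correct and follows essentially the same route as the paper: re-index Nilsson's family via $c\mapsto(\beta_c,\varepsilon_c^{\mathrm A})$, recover $\psi_c$ by solving the kinematic condition with $A(\eta)^{-1}=\rho_+\DN_+(\eta)^{-1}+\rho_-\DN_-(\eta)^{-1}$, and use the exponential localization and $C^\infty$ regularity of the profile (with smooth dependence on $(\varepsilon,\beta)$) to read off parts (a)--(d). The only cosmetic slip is that $\|T(s)U_c^{\mathrm A}-U_c^{\mathrm A}\|_{\Xspace}\to\sqrt{2}\,\|U_c^{\mathrm A}\|_{\Xspace}$ rather than $2\|U_c^{\mathrm A}\|_{\Xspace}$, which does not affect the conclusion in (d).
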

\begin{proof}
Let $(\rho_{\pm*},d_{\pm*},\sigma_*,c_*)$ be given as above and assume that the corresponding $(\beta_*,\lambda_*)$ satisfy $\beta_* > \beta_0$ and $0 < \lambda_*-\lambda_0 \ll 1$.  Then for all $|c-c_*| \ll 1$, the dimensional parameters meet the hypotheses of Theorem~\ref{region A existence theorem}\ref{region A existence KdV part}, and so may simply take $U_c^{\mathrm{A}} := u_{\varepsilon_c^{\mathrm{A}};\,\beta_c}$ for $\beta_c$ and $\varepsilon_c^{\mathrm{A}}$ defined according to \eqref{definition beta_c^A}.

The free surface profile from \eqref{region A KdV scaling} is constructed as a solution to a second-order ODE that is a homoclinic to $0$.  It can be verified directly that the origin is a saddle point, and hence $\eta_{\varepsilon; \beta}^{\mathrm{A}}$ is exponentially localized, with uniform decay rate on compact subsets of parameter space.   Moreover, due to the translation invariance, the profile is of class $C^\infty$.  In particular, it is clearly an element of $\Xspace_1^k$ for all $k \geq 1/2$.  Solving the kinematic condition, we see that the corresponding $\psi = \psi_{\varepsilon;\beta}^{\mathrm{A}}$ is likewise smooth and an element of $\Xspace_2^k$ for all $k \geq 1/2$.  Part~\ref{bound state smooth part} now follows from the smooth dependence of $u_{\varepsilon; \beta}$ on $(\varepsilon,\beta)$.  Part~\ref{bound state technical part} certainly holds in view of the (arbitrarily high) regularity of the bound states.  Finally, parts~\ref{bound state nontrivial part} and \ref{bound state localized} are obvious given the form of $\eta_c^{\mathrm{A}}$.
\end{proof}

An identical argument applied to the family of waves in Theorem~\ref{region A existence theorem}\ref{region A existence Gardner part} yields the following.

\begin{corollary}[Gardner bound states]  \label{Gardner bound state corollary} 
Let $(\rho_{\pm*},d_{\pm*},\sigma_*,c_*)$ be given so that the corresponding $(\beta_*,\lambda_*)$ lies in Region~A and $|\varrho_*-1/h_*^2| \eqsim |\lambda_*-\lambda_0|^{1/2}$.   There exists an open interval $\mathscr{I} \ni c_*$ and two families of bound states $\{ {U}_c^{\mathrm{A}\pm} \}_{c \in \mathscr{I}} \subset \mathcal{O} \cap \Wspace$  having the non-dimensional parameter values $(\beta_c^\mathrm{A},\lambda_c^{\mathrm{A}})$ given by \eqref{definition beta_c^A} and with the remaining parameters fixed.  They satisfy \cite[Assumption 5]{varholm2020stability} and the corresponding free surface is given by 
\[ 
\eta_c^{\mathrm{A} \pm} := \eta_{\varepsilon_c^{\mathrm{A}};\, \beta_c,\kappa_c^{\mathrm{A}},\pm}^{\mathrm{A}} \qquad \textrm{for } \kappa_c^{\mathrm{A}} := \frac{1}{\varepsilon_c^{\mathrm{A}}} \left( \varrho_* - \frac{1}{h_*^2} \right), \quad c \in \mathscr{I}. 
\]
\end{corollary}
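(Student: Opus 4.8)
The plan is to follow the proof of Corollary~\ref{KdV bound state corollary} closely, substituting part~\ref{region A existence Gardner part} of Theorem~\ref{region A existence theorem} for part~\ref{region A existence KdV part}. Fix $(\rho_{\pm*},d_{\pm*},\sigma_*,c_*)$ as in the statement and observe that $\varrho$ and $h$ are ratios of densities and far-field depths, hence independent of $c$; thus $\mu_* := \varrho_* - 1/h_*^2$ is a fixed nonzero constant. Since $(\beta_*,\lambda_*)$ lies in Region~A we have $\beta_* > \beta_0$ and $0 < \lambda_* - \lambda_0 \ll 1$, so by continuity of $c \mapsto (\beta_c,\lambda_c)$ from \eqref{definition beta_c^A} there is an open interval $\mathscr{I} \ni c_*$ on which $\beta_c > \beta_0$ and $\lambda_c > \lambda_0$; consequently $\varepsilon_c^{\mathrm{A}} = \sqrt{\lambda_c - \lambda_0} > 0$ and $\kappa_c^{\mathrm{A}} = \mu_*/\varepsilon_c^{\mathrm{A}}$ are well-defined and depend smoothly on $c \in \mathscr{I}$.

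The one feature not covered verbatim by Theorem~\ref{region A existence theorem} is that the coefficient $\kappa$ there is fixed, whereas along the $c$-indexed family one has $\varrho - 1/h^2 = \kappa_c^{\mathrm{A}}\,\varepsilon_c^{\mathrm{A}}$ with $\kappa_c^{\mathrm{A}}$ varying. Here the hypothesis $|\mu_*| \eqsim |\lambda_* - \lambda_0|^{1/2} = \varepsilon_{c_*}^{\mathrm{A}}$ is what saves the day: it forces $\kappa_{c_*}^{\mathrm{A}}$ to be $O(1)$ and bounded away from $0$, so after shrinking $\mathscr{I}$ we may assume $\kappa_c^{\mathrm{A}}$ ranges over a compact subset of $\R \setminus \{0\}$. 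Since Nilsson's bound states arise from a center-manifold reduction whose reduced vector field, and therefore the primary homoclinic orbit appearing in \eqref{region A Gardner scaling}, depends smoothly on the rescaled parameters uniformly on compact sets — this uniformity being precisely what underlies the exponential localization with uniform rate noted after Theorem~\ref{region A existence theorem} — the construction goes through with $\kappa = \kappa_c^{\mathrm{A}}$ for every $c \in \mathscr{I}$. We then set $\eta_c^{\mathrm{A}\pm} := \eta_{\varepsilon_c^{\mathrm{A}};\,\beta_c,\kappa_c^{\mathrm{A}},\pm}^{\mathrm{A}}$ and take $U_c^{\mathrm{A}\pm}$ to be the bound state with this free surface, $\psi_c^{\mathrm{A}\pm}$ being recovered from the steady kinematic condition $-c\,(\eta_c^{\mathrm{A}\pm})' = A(\eta_c^{\mathrm{A}\pm})\,\psi_c^{\mathrm{A}\pm}$ via the isomorphism property of $A(\eta)$ from \eqref{definition A}. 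Both components are then $C^\infty$ and exponentially decaying with norms uniform on $\mathscr{I}$, so $U_c^{\mathrm{A}\pm} \in \nbhdO \cap \Wspace$.

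Checking \cite[Assumption 5]{varholm2020stability} is then identical to the proof of Corollary~\ref{KdV bound state corollary}. The $C^1$ dependence of $c \mapsto U_c^{\mathrm{A}\pm} \in \nbhdO \cap \Wspace$ follows from the smooth dependence of $u_{\varepsilon;\,\beta,\kappa,\pm}^{\mathrm{A}}$ on $(\varepsilon,\beta,\kappa)$ composed with the smoothness of $c \mapsto (\varepsilon_c^{\mathrm{A}},\beta_c,\kappa_c^{\mathrm{A}})$; the memberships $U_c^{\mathrm{A}\pm} \in \Dom{T^\prime(0)^3} \cap \Dom{JIT^\prime(0)}$ and $U_c^{\mathrm{A}\pm},\, JIT^\prime(0) U_c^{\mathrm{A}\pm} \in \Dom{T^\prime(0)|_{\Wspace}}$ are immediate from the arbitrarily high regularity of the waves; and nontriviality $T^\prime(0) U_c^{\mathrm{A}\pm} \not\equiv 0$ together with localization $\liminf_{|s|\to\infty}\|T(s)U_c^{\mathrm{A}\pm} - U_c^{\mathrm{A}\pm}\|_{\Xspace} > 0$ hold because $\eta_c^{\mathrm{A}\pm}$ in \eqref{region A Gardner scaling} is a nonconstant, exponentially localized profile. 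The only genuine obstacle is the point raised in the second paragraph: one must confirm that the center-manifold construction behind Theorem~\ref{region A existence theorem}\ref{region A existence Gardner part} is uniform for $\kappa$ in compact subsets of $\R \setminus \{0\}$ rather than only for a single prescribed value; with that established, everything else is a routine transcription of the KdV case.
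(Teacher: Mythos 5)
Your proposal is correct and follows exactly the route the paper takes: the paper's entire proof is the one-line remark that ``an identical argument'' to Corollary~\ref{KdV bound state corollary} applies, with Theorem~\ref{region A existence theorem}\ref{region A existence Gardner part} substituted for part~\ref{region A existence KdV part}. Your extra care about the fact that $\kappa_c^{\mathrm{A}}$ varies with $c$ (whereas $\kappa$ is held fixed in the existence theorem) identifies a point the paper silently elides, and your resolution via smooth, locally uniform dependence of the center-manifold reduction on $\kappa$ is the right one.
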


Consider now the situation where $(\beta,\lambda)$ is contained in Region~C, which is a neighborhood of the curve $\Gamma_2$.  Nilsson uses a center manifold reduction method to construct traveling waves, this time bifurcating from the point $(\beta_0,\lambda_0)$.  Setting $\gammaC := (\varrho+h)/45$, one can show using the parameterization of $\Gamma_2$ that for all $\delta \in \mathbb{R}$, the point
 \be \label{region C lambda beta}
 \beta = \beta_0 + 2(1+\delta) \gammaC\varepsilon^2, \qquad \lambda = \lambda_0 + \gammaC\varepsilon^4
 \ee
 is contained in Region~C for all $0 < \varepsilon \ll 1$.  When $\delta > 0$, it lies below $\Gamma_2$ and for $\delta < 0$, it lies above.  
 
 At $\varepsilon = 0$, this gives the critical parameter value $(\beta_0,\lambda_0)$ where we recall that $0$ is an eigenvalue of multiplicity $4$. The resulting reduced equation on the center manifold thus has a four-dimensional phase space.   When $\varrho -1/h^2 = O(1)$ as $\varepsilon \searrow 0$, after performing a rescaling and truncation, we obtain the ODE
 \be \label{Z1 ODE}
 Z^{\prime\prime\prime\prime} - 2(1+ \delta)  Z^{\prime\prime} + Z - \frac32 \gammaC^{-3/2} \left( \varrho - \frac{1}{h^2} \right) Z^2 = 0.
 \ee
 This equation arises in the study of capillary-gravity waves beneath vacuum in the critical surface tension regime as well as a modeling the buckling of elastic struts \cite{amick1992homoclinic}.   Analysis in \cite{champneys1993bifurcation,buffoni1996bifurcation} shows that, at $\delta = 0$, there is a \emph{primary homoclinic} solution $Z_{0}$ to \eqref{Z1 ODE} that is unimodal, even, and exponentially localized.  Moreover, there is a smooth one-parameter family of homoclinic orbits $\{Z_{\delta}\}_\delta$ defined for $\delta \geq 0$ and $-1 \ll \delta < 0$ that bifurcates from $Z_{0}$.  These solutions are {\it transversely constructed}, in that the stable and unstable manifolds of the zero equilibrium of \eqref{intro kawahara} intersect transversely at $Z = Z_\delta(0)$ at the zero level set of the Hamiltonian energy.  For $\delta \ge 0$, we have that $Z_\delta$ is the unique (up to translation) homoclinic solution to \eqref{intro kawahara} that is positive, even and monotone for $x > 0$ (see \cite{amick1992homoclinic}).     When $-1 \ll \delta < 0$, uniqueness is not known and $Z_\delta$ has exponentially decaying oscillatory tails.   In addition to the primary homoclinic orbits, there exists a ``plethora'' of other solutions to \eqref{Z1 ODE} that take the form of multisolitons; see \cite{buffoni1996bifurcation,devaney1976homoclinic}.   Because these are multimodal, they are unlikely to be amenable to analysis through the general theory in \cite{varholm2020stability} and so we will not consider them here.   
 
On the other hand, if $\varrho-1/h^2 = \kappa \varepsilon^2$, for some $\kappa \neq 0$, then upon rescaling and truncating to leading order, the reduced equation on the center manifold takes the form 
\be \label{Z2 ODE}
 Z^{\prime\prime\prime\prime} - 2(1+ \delta)  Z^{\prime\prime} + Z - \frac32 \gammaC^{-3/2} \kappa Z^2 - 4 \gammaC^{-2} \left( \varrho +\frac{1}{h^3} + \frac{2(\varrho-1)^2}{225 \gammaC} \right) Z^3  = 0.
\ee
In \cite[Appendix B]{nilsson2017internal}, it is shown that, at $\delta = 0$, this ODE has both a positive and negative primary homoclinic solution, which we denote by $Z_{0;\kappa,\pm}$.  As in the non-resonant case, these are exponentially localized, unique up to translation (for the fixed sign), and because they are transversely constructed, they persists for $| \delta| \ll 1$.  Let the corresponding families be denoted $\{Z_{\delta;\kappa,\pm} \}$.

We now state Nilsson's results for this case reformulated in the style of Corollaries~\ref{KdV bound state corollary} and \ref{Gardner bound state corollary}.  Let $(\rho_{\pm*},d_{\pm*},\sigma_*,c_*)$ be given so that the corresponding $(\beta_*,\lambda_*)$ lies in Region~C. In view of \eqref{region C lambda beta}, we define  
\be \label{definition epsilon C region}
\varepsilon_c^{\mathrm{C}} := \left(\frac{\lambda_c - \lambda_0}{\gamma_*}\right)^{1/4}, \qquad \delta_c := \frac{\beta_c - \beta_0}{2 \gamma_* (\varepsilon_c^{\mathrm{C}})^2} -1  \qquad \textrm{for } |c - c_*| \ll 1,
\ee
with $(\beta_c,\lambda_c)$ given in \eqref{definition beta_c^A}.  The existence of bound states is then summarized in the following lemma.  

\begin{lemma}[Region C bound state] \label{region C bound states lemma} 
Let $(\rho_{\pm*},d_{\pm*},\sigma_*,c_*)$ be given so that $\varrho_*-1/h_*^2 \neq 0$ and the corresponding non-dimensional parameters $(\beta_*,\lambda_*)$ lie in Region~C with $0 < \varepsilon_{c_*}^\mathrm{C} \ll 1$.
\begin{enumerate}[label=\rm(\alph*)] 
\item \label{region C kawahara part}  There exists an open interval $\mathscr{I} \ni c_*$ and a family of bound states $\{ U_c^{\mathrm{C}} \}_{c \in \mathscr{I}} \subset \mathcal{O} \cap \Wspace$ having the non-dimensional parameter values $(\beta_c,\lambda_c)$ and satisfying \cite[Assumption 5]{varholm2020stability}.  The corresponding free surface profile takes the form 
 \be \label{region C Z1 scaling}
 \eta_c^{\mathrm{C}} = \varepsilon^4 d_{+} \sqrt{\gamma} Z_{\delta}\left( \frac{\varepsilon \placeholder}{d_{+}} \right) + O\left(\varepsilon^5\right) \qquad \textrm{in } \Xspace_1^k
 \ee
 with $\varepsilon = \varepsilon_c^{\mathrm{C}}$ and $\delta = \delta_c^{\mathrm{C}}$ given by \eqref{definition epsilon C region}, $d_+ = d_{+*}$, and $\gamma = \gamma_*$.
\item Suppose that $|\varrho_* - 1/h_*^2| \eqsim (\epsilon_{c_*}^{\mathrm{C}})^2 \ll 1$.  Then there exists an open interval $\mathscr{I} \ni c_*$ and two families of bound states $\{ U_c^{\mathrm{C}\pm} \}_{c \in \mathscr{I}} \subset \mathcal{O} \cap \Wspace$ having the non-dimensional parameter values $(\beta_c,\lambda_c)$ and satisfying \cite[Assumption 5]{varholm2020stability}.  The corresponding free surface profile takes the form 
 \be \label{region C Z2 scaling}
 \eta_{c}^{\mathrm{C}\pm} = \varepsilon^2 d_{+} \sqrt{\gammaC}  Z_{\delta; \,\kappa,\pm} \left( \frac{\varepsilon \placeholder}{d_{+}} \right) + O\left(\varepsilon^3\right) \quad \textrm{in } \Xspace_1^k\qquad \textrm{for } \kappa = \kappa_c^{\mathrm{C}} := \frac{1}{\varepsilon_c^2} \left( \varrho_*-\frac{1}{h_*^2} \right),
 \ee
 with $\varepsilon = \varepsilon_c^{\mathrm{C}}$ and $\delta = \delta_c^{\mathrm{C}}$ given by \eqref{definition epsilon C region}, $d_+ = d_{+*}$, and $\gamma = \gamma_*$. 
\end{enumerate}
\end{lemma}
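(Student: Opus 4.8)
The plan is to deduce this lemma directly from Nilsson's center manifold construction near $(\beta_0,\lambda_0)$ together with the Hamiltonian reformulation of Section~\ref{nonlocal operators section}, following the template already used for Corollaries~\ref{KdV bound state corollary} and~\ref{Gardner bound state corollary}; the only new ingredient is bookkeeping the two-parameter unfolding used in the reduction. First I would fix the dimensional parameters $(\rho_{\pm*},d_{\pm*},\sigma_*,c_*)$ and observe that $c\mapsto(\beta_c,\lambda_c)$ from \eqref{definition beta_c^A} traces a real-analytic arc of the ray through the origin and $(\beta_*,\lambda_*)$, along which $\varrho_*,h_*$, hence $\gamma_*=(\varrho_*+h_*)/45$, are frozen. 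Since $(\beta_*,\lambda_*)$ lies in the \emph{open} Region~C with $0<\varepsilon_{c_*}^{\mathrm{C}}\ll1$ and $\lambda_c-\lambda_0>0$ for $c$ near $c_*$, passing to a smaller interval $\mathscr{I}\ni c_*$ makes $c\mapsto(\varepsilon_c^{\mathrm{C}},\delta_c)$ defined by \eqref{definition epsilon C region} well defined and $C^\infty$, with image a short arc through $(\varepsilon_{c_*}^{\mathrm{C}},\delta_{c_*})$ that remains in Region~C (equivalently, in the strip described by \eqref{region C lambda beta}) and with $\delta_c$ in the range on which the primary homoclinic family $\{Z_\delta\}$ has been constructed. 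By construction the values $(\varepsilon_c^{\mathrm{C}},\delta_c)$ are precisely those for which \eqref{region C lambda beta} returns $(\beta_c,\lambda_c)$.

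Next I would invoke Nilsson's result: for each such $c$ the rescaled, truncated reduced equation on the four-dimensional center manifold is \eqref{Z1 ODE} with $\delta=\delta_c$, whose primary homoclinic $Z_{\delta_c}$ lifts to a genuine traveling wave of \eqref{Eulerian water wave problem} at parameters realizing $(\beta_c,\lambda_c)$, with surface profile $\eta_c^{\mathrm{C}}$ of the form \eqref{region C Z1 scaling} and exponentially localized remainder on the same spatial scale. A standard elliptic bootstrap places $\eta_c^{\mathrm{C}}\in\Xspace_1^k$ for every $k\geq1/2$, uniformly on compact subsets of $\mathscr{I}$ and smoothly in $(\varepsilon,\delta)$. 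I would then recover the second surface variable from the steady form of the kinematic condition \eqref{Hamiltonian kinematic}, setting $\psi_c^{\mathrm{C}}:=-c\,A(\eta_c^{\mathrm{C}})^{-1}(\eta_c^{\mathrm{C}})'$; because $A(\eta)$ is an isomorphism $\dot H^k\to\dot H^{k-1}$ for $\eta$ of this regularity and $(\eta_c^{\mathrm{C}})'$ lies in $\dot H^{k-1}(\mathbb{R})$ for every $k\geq1/2$, this gives $\psi_c^{\mathrm{C}}\in\Xspace_2^k$ for all $k\geq1/2$, so $U_c^{\mathrm{C}}:=(\eta_c^{\mathrm{C}},\psi_c^{\mathrm{C}})\in\mathcal{O}\cap\Wspace$ is the bound state in \eqref{definition traveling wave}. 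The four clauses of \cite[Assumption 5]{varholm2020stability} are then checked as in Corollary~\ref{KdV bound state corollary}: $c\mapsto U_c^{\mathrm{C}}$ is $C^1$ into $\mathcal{O}\cap\Wspace$ since $c\mapsto(\varepsilon_c^{\mathrm{C}},\delta_c)$ is smooth, $(\varepsilon,\delta)\mapsto\eta$ is smooth into each $\Xspace_1^k$, and $\eta\mapsto-c\,A(\eta)^{-1}\eta'$ is smooth into each $\Xspace_2^k$ by real-analyticity and invertibility of $A(\eta)$; the domain/membership conditions follow from arbitrarily high regularity together with exponential decay; nontriviality holds because $\eta_c^{\mathrm{C}}$ is a nonzero even bump, so $-\partial_x U_c^{\mathrm{C}}\not\equiv0$; and localization holds because an exponentially localized nontrivial profile separates from its translates in $\Xspace$ as $|s|\to\infty$.

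For part~(b) the argument is identical once one observes that the extra hypothesis $|\varrho_*-1/h_*^2|\eqsim(\varepsilon_{c_*}^{\mathrm{C}})^2$ makes $\kappa_c^{\mathrm{C}}=(\varepsilon_c^{\mathrm{C}})^{-2}(\varrho_*-1/h_*^2)$ bounded, bounded away from $0$, and $C^\infty$ on a suitable $\mathscr{I}$; one then repeats the previous paragraph with \eqref{Z2 ODE} and the transversely constructed sign-definite primary homoclinics $Z_{\delta;\kappa,\pm}$ in place of \eqref{Z1 ODE} and $Z_\delta$, producing the two families $\{U_c^{\mathrm{C}\pm}\}$ with profiles \eqref{region C Z2 scaling}. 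I expect the only step that is not pure bookkeeping to be the first one: one must confirm that sweeping the single parameter $c$ through $\mathscr{I}$, with every other dimensional quantity frozen, keeps $(\beta_c,\lambda_c)$ inside Region~C and keeps $\delta_c$ inside the interval on which $\{Z_\delta\}$ (resp.\ $\{Z_{\delta;\kappa,\pm}\}$) has been rigorously constructed. Both are immediate for $\mathscr{I}$ small, since Region~C is open and $\delta_{c_*}$ is interior to the admissible range by hypothesis, but this is the one place where the geometry of the bifurcation set genuinely enters rather than a verbatim transcription of the Region~A arguments.
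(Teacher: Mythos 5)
Your proposal is correct and follows essentially the same route as the paper: the paper gives no separate proof of this lemma, treating it as Nilsson's Region~C existence theory restated in the bound-state framework and verified exactly as in the proof of Corollary~\ref{KdV bound state corollary} (pass from the $(\varepsilon,\delta)$ parameterization to the wave speed $c$ via \eqref{definition epsilon C region}, recover $\psi$ from the steady kinematic condition, bootstrap regularity and use exponential localization to check the clauses of Assumption~5). Your added care that the arc $c\mapsto(\beta_c,\lambda_c)$ stays in Region~C and that $\delta_c$ remains in the range where the transversely constructed primary homoclinics exist is precisely the implicit bookkeeping the paper relies on.
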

\begin{remark}
While we will carry out many of the calculations for both families $\{U_c^{\mathrm{C}}\}$ and $\{ U_c^{\mathrm{C}\pm}\}$, we only obtain a stability result for the former.  In the latter case, we find that the rescaled linearized augmented potential does not converge precisely to the linearization of \eqref{Z1 ODE}, which obstructs the spectral analysis in the next section; see Lemma~\ref{lem limit rescaled}\ref{scaled Q in region C}.  
\end{remark}
 
We conclude this section by noting that Nilsson also proves the existence of many types of traveling waves with $(\beta,\lambda)$ in a neighborhood of the bifurcation curve 
\be \label{Gamma 3 parameterization}
\Gamma_3 = \{ (\beta(i\xi), \lambda(i\xi)) : \xi \in [0,\infty) \},
\ee
with $(\beta(\xi), \lambda(\xi))$ given by \eqref{parameterization Gamma}.  The stability of these solutions will be the subject of a forthcoming work.
 
 \section{Spectral analysis} \label{spectrum section}

Observe that if $u(t) = T(ct)U$ is a traveling wave for the bound state $U \in \mathcal{O} \cap \Wspace$ and wave speed $c \in \mathbb{R}$, then necessarily by \eqref{Hamiltonian equation} and Lemma~\ref{symmetry lemma}\ref{energy conserved part} we have 
\[ 
\frac{\diff u}{\diff t} = c T^\prime(0) U = J \Diff\eng(U).
\]
Combining this with \eqref{symmetry generates waves}, we obtain  the steady equation   
\[  
\Diff\eng(U) = c \Diff\mom(U),
\]
where we have used that $J$ is injective.  This motivates us to consider the \emph{augmented Hamiltonian}, which for a fixed $c$ is the functional $\augHam \in C^\infty(\nbhdO \cap \Vspace; \mathbb{R})$ given by
\[
\augHam(u) := E(u) - c P(u).
\]
The above calculation shows that bound states are critical points of $\augHam$.  It also suggests that one can construct such solutions as constrained extrema of the energy on level sets of the momentum, with the wave speed a Lagrange multiplier.   In order to exploit this connection, we must first understand better the second derivative of $\augHam$. 

With that in mind, this section is devoted to the quite difficult task of computing the spectrum of the linearized augmented Hamiltonian at either a shear flow or small-amplitude internal capillary-gravity wave.   Here we will follow the general approach of Mielke \cite{mielke2002energetic}, which was also the basis for the calculation in \cite{varholm2020stability}.  The strategy has two steps.  First, via the kinematic condition $\psi$ is eliminated in favor of $\eta$.  Making this substitution in the definition of $\augHam$ gives the so-called augmented potential $\augV = \augV(\eta)$, which proves to be much more amenable to analysis.  In particular, we show in Section~\ref{subsec aug potential} that its second variation at a critical point is characterized by a certain second-order nonlocal differential operator $Q_c(\eta)$.  As one might predict, $Q_c(0)$ is a Fourier multiplier whose symbol is related directly to the dispersion relation \eqref{dispersion relation}.

This is enough to characterize the continuous spectrum of $\Diff^2\augV(\eta)$ when $\eta$ is sufficiently small amplitude; see Lemma~\ref{cont spec lemma}.  Determining the discrete spectrum, however, requires considerably more effort.  Following Mielke, the second step is to conjugate $Q_c(\eta)$ with a rescaling $S_\varepsilon$ informed by the asymptotics of $\eta$ discussed in Section~\ref{traveling wave section}.  Briefly put, the idea here is to show that linearization and scaling almost commute.  It is well known that in the shallow water regime, the internal wave system can be modeled by nonlinear dispersive PDEs such as KdV or Gardner.   We seek to prove that imposing this scaling on the linearized operator $Q_c(\eta)$ via conjugation by $S_\varepsilon$ will, to leading order, coincide with the linearization of the corresponding model equation.  After a delicate calculation, we do indeed find that in the long-wave limit $\varepsilon \searrow 0$, the rescaled operator $S_\varepsilon^{-1} Q_c(\eta) S_\varepsilon$ converges (in an appropriate sense) to the linearized steady KdV or Gardner equation in the case of Region~A, and to the linearization of \eqref{Z1 ODE} or \eqref{Z2 ODE} in the case of Region~C.  This is the subject of Section~\ref{sec rescaling}.  In Section~\ref{spectrum linearized augV section}, we prove that the spectrum of $\Diff^2\augV$ is qualitatively the same as that of this limiting rescaled operator.

Lastly, in Section~\ref{spectrum linearized augHam section} we take the hard-won information about the spectrum of $\Diff^2\augV$ and translate it back to that of $\Diff^2\augHam$.  For $U_c$ one of the family of bound states described in Section~\ref{traveling wave section}, we confirm that $\Diff^2\augHam(U_c)$ extends to a self-adjoint operator on $\Xspace$ that has Morse index $1$.  This is the final hypothesis in the general theory \cite{varholm2020stability}.

\subsection{The augmented potential and its derivatives}\label{subsec aug potential}

If $u_* = (\eta_*, \psi_*)$ is a critical point of $\augHam$, then in particular $\Diff_\psi \eng(u_*) = c \Diff_\psi \mom(u_*)$.  Because $\potE$ is independent of $\psi$ and $A(\eta)$ is self-adjoint, we see that 
\[ \Diff_\psi \eng(u) \dot \psi = \int_{\mathbb{R}} \dot \psi A(\eta) \psi \,\diffx.\]
 Combining this with \eqref{DP} we find that $\psi_*$ can be uniquely determined from $\eta_*$ via
\be  \psi_*(\eta) := -cA(\eta)^{-1} \eta^\prime. \label{definition psistar} \ee
Note that $\psi_*$ also depends on $c$, but in this section the wave speed will be fixed, so there is no harm in suppressing it. In fact it will turn out to be easier to work with $\varphi_*$ rather than $\psi_*$. So we recall from \eqref{def psi} and \eqref{kinematic phipm} that
\be \label{relation psistar phi}
\psi_* = \rho_- \varphi_{*-} - \rho_+ \varphi_{*+}, \qquad \varphi_{*\pm} = \pm c \DN_{\pm}(\eta)^{-1} \eta'.
\ee
When there is no risk of confusion, we will drop the $*$ subscripts to declutter the notation.  

Recall from Remark~\ref{a1 a2 remark} that the coefficients $a_1^\pm$ and $a_2^\pm$ that arise in the first derivative formula \eqref{first derivative DN formula} for $\DN_\pm(\eta)$ can be alternatively be expressed as
\[
a_1^\pm(\eta, \phi) := \mp(\partial_x \HE_\pm(\eta) \phi )|_{\mathscr{S}}, \qquad a_2^\pm := -(\partial_y \HE_\pm(\eta) \phi)|_{\mathscr{S}}.
\]
Therefore, when they are evaluated at $\phi = \varphi_\pm$, they give (up to a sign) the trace of the velocity field on ${\mathscr{S}}$.  Following \cite[Section 6]{varholm2020stability}, we introduce the related functions
\be\label{def b}
b_1^\pm := \mp a_1^\pm(\eta, \varphi_\pm) - c, \qquad b_2^\pm := - a_2^\pm(\eta, \varphi_\pm).
\ee
This way, $(b_1^\pm, b_2^\pm)$ represents the \emph{relative velocity} in $\Omega_\pm$ restricted to the interface. Consequently, for $\eta \in \Wspace_1$, we have from \eqref{relation psistar phi} that $b_1^\pm, b_2^\pm \in H^{2+}(\mathbb{R})$.  Notice also that, because $u$ represents a traveling wave, the kinematic condition \eqref{Hamiltonian kinematic} gives 
\be\label{kinematic for b}
b_2^\pm = \eta' b_1^\pm.
\ee

Differentiating \eqref{relation psistar phi}, we find that
\[
\Diff\psi_*(\eta) \dot\eta = \rho_- \Diff \varphi_-(\eta) \dot\eta - \rho_+ \Diff \varphi_+(\eta) \dot\eta, \qquad \Diff\varphi_\pm(\eta) \dot\eta = \pm \langle c \Diff(\DN_\pm(\eta)^{-1}) \dot\eta, \, \eta^\prime\rangle \pm c \DN_{\pm}(\eta)^{-1} \dot\eta^\prime.
\]
On the other hand, 
\[
\langle \Diff(\DN_\pm(\eta)^{-1}) \dot\eta,\,  \eta^\prime \rangle = -\DN_\pm(\eta)^{-1} \langle \Diff\DN_\pm(\eta)\dot\eta, \,  \DN_\pm(\eta)^{-1}  \eta^\prime \rangle,
\]
and so we may infer from Lemma \ref{DG formula lemma} and \eqref{relation psistar phi} that
\begin{align*}
\DN_\pm(\eta) \langle \Diff(\DN_\pm(\eta)^{-1}) \dot\eta,\, c \eta^\prime \rangle 
& = \parn{a_1^\pm(\eta, \pm \varphi_\pm) \dot\eta}^\prime - \DN_\pm(\eta) \parn{a_2^\pm(\eta, \pm \varphi_\pm) \dot\eta} \\
& = \pm \parn{a_1^{\pm}(\eta,\varphi_\pm) \dot\eta}^\prime \mp \DN_\pm(\eta)\parn{a_2^\pm(\eta,\varphi_\pm) \dot\eta}.
\end{align*}
Thus, 
\begin{align*}
\Diff\varphi_\pm(\eta) \dot\eta & = \DN_\pm(\eta)^{-1} \parn{a_1^{\pm}(\eta,\varphi_\pm) \dot\eta}^\prime - a_2^\pm(\eta,\varphi_\pm) \dot\eta \pm c \DN_{\pm}(\eta)^{-1} \dot\eta^\prime \\
& = \mp \DN_\pm(\eta)^{-1} \parn{b_1^{\pm} \dot\eta}^\prime + b_2^\pm \dot\eta,
\end{align*}
and hence
\begin{equation}\label{compute Dpsistar}
\begin{split}
\Diff\psi_*(\eta) \dot\eta & = \underbrace{\sum_{\pm} \rho_\pm \DN_\pm(\eta)^{-1} \parn{b_1^\pm \dot\eta}^\prime}_{\textstyle{=: \DpsiS \dot\eta}} - \underbrace{\sum_{\pm} \pm \rho_\pm b_2^\pm \dot\eta}_{\textstyle{=: \DpsiT \dot\eta}}.
\end{split}
\end{equation}

Now, let the \emph{augmented potential} be the functional $\augV \in C^\infty(\mathcal{O} \cap \Vspace; \mathbb{R})$ given by
\be \augV(\eta) := \augHam(\eta, \psi_*(\eta)) = \min_{\psi} \augHam(\eta, \psi).\label{definition augV} \ee
While it is not immediately obvious, for small-amplitude waves the spectrum of $\Diff^2 \augHam$ can be determined from that of $\Diff^2\augV$.  We therefore devote the remainder of this subsection to studying the second variation of $\augV$. In particular, we will derive an analytically tractable quadratic form representation defined in terms of physical quantities.  

An essential ingredient in all of these calculations is having access to concise formulas for the variations of the many nonlocal operators.  First, we need the following elementary second derivative formula for the Dirichlet--Neumann operators $\DN_\pm$.  Here we use notation similar to that in \cite{mielke2002energetic,varholm2020stability}.

\begin{lemma}[Second derivative of $\DN_\pm$]\label{D^2G formula lemma}
For all $u = (\eta, \psi) \in \mathcal{O} \cap \Vspace$ and $\dot\eta \in \Vspace_1$, it holds that
\be \label{second derivative DN formula} 
\begin{split}
\int_{\mathbb{R}} \psi \left\langle \Diff^2\DN_\pm(\eta)[\dot\eta,\dot\eta], \, \psi\right\rangle \,\diffx & =  \int_{\mathbb{R}} \left( a_4^\pm(u) \dot\eta^2 + 2a_2^\pm(u) \dot\eta \DN_\pm(\eta) \left( a_2^\pm(u) \dot\eta \right) \right) \,\diffx,
\end{split}
\ee
where
\be \label{def a3 a4}
\begin{split}
a_4^\pm(u) &:= -2 a_1^\pm(u)^\prime a_2^\pm(u),
\end{split}
\ee
and $a_1^\pm$, $a_2^\pm$ are given by \eqref{def a1 a2}.
\end{lemma}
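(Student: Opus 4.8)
The plan is to obtain \eqref{second derivative DN formula} by differentiating the first‑derivative representation formula \eqref{first derivative DN formula} a second time in $\eta$. Since the bilinear map $(\dot\eta_1,\dot\eta_2)\mapsto\int_{\mathbb{R}}\psi\langle\Diff^2\DN_\pm(\eta)[\dot\eta_1,\dot\eta_2],\psi\rangle\,\diffx$ is symmetric, it is enough to compute its diagonal, which is the second Fr\'echet derivative of the quadratic functional $q_\pm(\eta):=\int_{\mathbb{R}}\psi\,\DN_\pm(\eta)\psi\,\diffx$. First I would apply \eqref{first derivative DN formula} with the test function taken to be $\psi$ itself (permissible because $\psi\in\Vspace_2$), which records
\[
\Diff q_\pm(\eta)\dot\eta=\int_{\mathbb{R}}\bigl(a_1^\pm(\eta,\psi)\,\psi^\prime+a_2^\pm(\eta,\psi)\,\DN_\pm(\eta)\psi\bigr)\dot\eta\,\diffx .
\]

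Next I would differentiate this right‑hand side in the direction $\dot\eta$, holding $\psi$ fixed and applying the product rule to the three $\eta$‑dependent factors $a_1^\pm(\eta,\psi)$, $a_2^\pm(\eta,\psi)$, and $\DN_\pm(\eta)\psi$. The term in which $\DN_\pm(\eta)\psi$ is differentiated becomes $\int_{\mathbb{R}} a_2^\pm(\eta,\psi)\dot\eta\,\langle\Diff\DN_\pm(\eta)\dot\eta,\psi\rangle\,\diffx$; reapplying \eqref{first derivative DN formula}, now with test function $\xi=a_2^\pm(\eta,\psi)\dot\eta$, produces the nonlocal integral $\int_{\mathbb{R}} a_2^\pm(\eta,\psi)\dot\eta\,\DN_\pm(\eta)\bigl(a_2^\pm(\eta,\psi)\dot\eta\bigr)\diffx$ together with a stray local term involving $a_1^\pm\,(a_2^\pm\dot\eta)^\prime$. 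For the two remaining terms I would compute $\Diff_\eta a_i^\pm(\eta,\psi)\dot\eta$ directly from the explicit expressions \eqref{def a1 a2}, using $\Diff_\eta(\DN_\pm(\eta)\psi)\dot\eta=\langle\Diff\DN_\pm(\eta)\dot\eta,\psi\rangle$ and the harmonic‑extension interpretation of $a_1^\pm,a_2^\pm$ in Remark~\ref{a1 a2 remark}: the shape derivative of the harmonic extension with fixed Dirichlet data satisfies $\Diff_\eta(\HE_\pm(\eta)\psi)\dot\eta=\HE_\pm(\eta)\bigl(a_2^\pm(\eta,\psi)\dot\eta\bigr)$, so that these derivatives can again be expressed through $\DN_\pm(\eta)\bigl(a_2^\pm(\eta,\psi)\dot\eta\bigr)$ and $\bigl(a_2^\pm(\eta,\psi)\dot\eta\bigr)^\prime$ by means of the trace relations \eqref{grad Phi identities}. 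A second application of \eqref{first derivative DN formula} then converts the residual $\langle\Diff\DN_\pm(\eta)\dot\eta,\psi\rangle$ factors into further explicit local and nonlocal pieces.

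At this stage one has a finite sum of integrals against $\dot\eta$ and $\dot\eta^2$, and the final step is to reorganize them. The nonlocal contributions should collect to exactly $2\int_{\mathbb{R}} a_2^\pm(\eta,\psi)\dot\eta\,\DN_\pm(\eta)\bigl(a_2^\pm(\eta,\psi)\dot\eta\bigr)\diffx$, the coefficient $2$ coming from symmetrizing the two distinct ways an inner $\DN_\pm$ is generated; and every purely local contribution — those produced by differentiating the factor $1/(1+(\eta^\prime)^2)$ and the factor $\eta^\prime$ in \eqref{def a1 a2}, plus the stray $a_1^\pm(a_2^\pm\dot\eta)^\prime$ term after an integration by parts — should combine into $\int_{\mathbb{R}} a_4^\pm(u)\dot\eta^2\,\diffx$ with $a_4^\pm=-2(a_1^\pm)^\prime a_2^\pm$. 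Here one uses the harmonicity of $\HE_\pm(\eta)\psi$ (to trade $\partial_y^2$ for $-\partial_x^2$ when simplifying the second‑order interface traces) and integration by parts. I expect the verification of this last cancellation to be the main obstacle: each individual manipulation is elementary, but one must carefully track a number of similar‑looking terms and confirm that nothing survives beyond the two stated pieces. The computation runs parallel to the analogous ones in \cite{mielke2002energetic,varholm2020stability}.
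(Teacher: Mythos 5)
Your plan is correct and is essentially the paper's own argument: the paper offers no details beyond calling this ``a straightforward though quite tedious calculation,'' and the computation it has in mind is exactly the one you outline — differentiate the first-variation representation \eqref{first derivative DN formula} once more, use that the shape derivative of the harmonic extension is $\HE_\pm(\eta)\bigl(a_2^\pm(\eta,\psi)\dot\eta\bigr)$, and convert its traces back via \eqref{grad Phi identities}. Carrying this out, the two appearances of the inner Dirichlet--Neumann operator do combine into $2\int a_2^\pm\dot\eta\,\DN_\pm(\eta)(a_2^\pm\dot\eta)\,\diffx$, the $\dot\eta\,\dot\eta^\prime$ cross terms cancel, and harmonicity together with the tangential chain rule collapses the remaining local terms to $-2(a_1^\pm)^\prime a_2^\pm\,\dot\eta^2$, confirming \eqref{second derivative DN formula} exactly as you predict.
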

\begin{proof}
This is a straightforward though quite tedious calculation.  
\end{proof}

Far more involved is the second derivative of $A(\eta)$, a formula for which is given in the next lemma.  As the proof is rather long but not especially deep, we delay it to Appendix~\ref{identities appendix}.

\begin{lemma}[Second derivative of $A$]  \label{second variation A lemma} 
For all $u = (\eta, \psi) \in \mathcal{O} \cap \Vspace$ and $\dot\eta \in \Vspace_1$, it holds that
\be \begin{split} 
\int_{\mathbb{R}} \psi \left\langle \Diff^2 A(\eta)[\dot \eta, \dot \eta], \,  \psi \right\rangle \,\diffx & =  \int_{\mathbb{R}}  \Big( a_4(u) \dot\eta + 2 \sum_\pm \rho_\pm a_2^\pm(\eta, \theta_\pm) \DN_\pm(\eta) \left( a_2^\pm(\eta,\theta_\pm) \dot\eta \right) \\ 
& \qquad \qquad- 2 \mathscr{M}(u)\dot\eta  + 2\mathscr{N}(u)\dot\eta\Big) \dot\eta \,\diffx, 
\end{split} \label{second derivative A formula} \ee
where we define the functions
\be
 \theta_\pm(u) := \DN_\pm(\eta)^{-1} A(\eta) \psi, \qquad  a_4(u) := \sum_{\pm} \rho_\pm a_4^\pm(\eta, \theta_\pm), \label{def theta and a4}
 \ee
 and linear operators 
\begin{align}
 \mathscr{L}_\pm(u) \dot\eta & :=   - \DN_\pm(\eta)^{-1} \left( a_1^\pm(\eta, \theta_\pm) \dot \eta \right)^\prime + a_2^\pm(\eta, \theta_\pm) \dot \eta, \qquad \mathscr{L}(u) := \sum_{\pm} \rho_\pm \mathscr{L}_\pm(u) \label{def script L} \\
\mathscr{M}(u) \dot\eta &:= \sum_\pm \rho_\pm \left( a_1^\pm(\eta, \theta_\pm) (\mathscr{L}_\pm(u)\dot\eta)^\prime + a_2^\pm(\eta,\theta_\pm) \DN_\pm(\eta) \mathscr{L}_\pm(u)\dot\eta \right)  \label{def script M} \\
\mathscr{N}(u)\dot\eta & := \sum_{\pm}  \rho_\pm \left( a_1^\pm(\eta,\theta_\pm) \left( A(\eta) \DN_\pm(\eta)^{-1} \mathscr{L}(u) \dot\eta \right)^\prime + a_2^\pm(\eta,\theta_\pm) A(\eta) \mathscr{L}(u)\dot\eta \right). \label{def script N}
\end{align}
\end{lemma}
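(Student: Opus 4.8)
The plan is to obtain \eqref{second derivative A formula} by differentiating the first--derivative formula \eqref{first derivative A formula} one more time in $\eta$; the only genuinely new ingredient needed is a formula for the Fr\'echet derivative of $\theta_\pm$. First I would put \eqref{first derivative A formula} into a more symmetric shape: pairing it against $\psi$ in $L^2$ and using the self-adjointness of $A(\eta)$ and of $\DN_\pm(\eta)$ one gets
\[
\int_{\mathbb{R}} \psi \left\langle \Diff A(\eta)\dot\eta,\, \psi\right\rangle \diffx \;=\; \sum_\pm \rho_\pm \int_{\mathbb{R}} \theta_\pm \left\langle \Diff\DN_\pm(\eta)\dot\eta,\, \theta_\pm\right\rangle \diffx,
\]
with $\theta_\pm = \theta_\pm(u) = \DN_\pm(\eta)^{-1}A(\eta)\psi$ as in \eqref{def theta and a4}. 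It is also convenient to record the ``strong'' form of \eqref{first derivative DN formula}, namely $\left\langle \Diff\DN_\pm(\eta)\dot\eta,\, \phi\right\rangle = -\left( a_1^\pm(\eta,\phi)\dot\eta\right)^\prime + \DN_\pm(\eta)\left( a_2^\pm(\eta,\phi)\dot\eta\right)$, which follows from \eqref{first derivative DN formula} by integration by parts and the self-adjointness of $\DN_\pm(\eta)$; this is exactly what will make the operators $\mathscr{L}_\pm$, $\mathscr{M}$, $\mathscr{N}$ of \eqref{def script L}--\eqref{def script N} materialize.

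Next I would differentiate the displayed identity along $\dot\eta$. Since $\DN_\pm(\eta)$ is a self-adjoint family, so is its derivative $\Diff\DN_\pm(\eta)\dot\eta$, and hence the two occurrences of $\theta_\pm$ contribute equally, yielding
\[
\int_{\mathbb{R}} \psi \left\langle \Diff^2 A(\eta)[\dot\eta,\dot\eta],\, \psi\right\rangle \diffx = \sum_\pm \rho_\pm \int_{\mathbb{R}} \theta_\pm \left\langle \Diff^2\DN_\pm(\eta)[\dot\eta,\dot\eta],\, \theta_\pm\right\rangle \diffx + 2\sum_\pm \rho_\pm \int_{\mathbb{R}} \left( \Diff\theta_\pm(\eta)\dot\eta\right)\left\langle \Diff\DN_\pm(\eta)\dot\eta,\, \theta_\pm\right\rangle \diffx.
\]
The first sum is handled immediately by Lemma~\ref{D^2G formula lemma}: after summing over $\pm$ with the weights $\rho_\pm$ and recalling $a_4(u)=\sum_\pm\rho_\pm a_4^\pm(\eta,\theta_\pm)$ from \eqref{def theta and a4}, it reproduces the two terms $\int_{\mathbb{R}} a_4(u)\dot\eta^2\diffx$ and $2\sum_\pm\rho_\pm\int_{\mathbb{R}} a_2^\pm(\eta,\theta_\pm)\dot\eta\, \DN_\pm(\eta)\left( a_2^\pm(\eta,\theta_\pm)\dot\eta\right)\diffx$ of \eqref{second derivative A formula}.

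The heart of the argument is the derivative of $\theta_\pm = \DN_\pm(\eta)^{-1}A(\eta)\psi$. Using $\Diff\left(\DN_\pm(\eta)^{-1}\right)\dot\eta = -\DN_\pm(\eta)^{-1}\left(\Diff\DN_\pm(\eta)\dot\eta\right)\DN_\pm(\eta)^{-1}$ and the strong form of \eqref{first derivative DN formula}, the term coming from varying the outer $\DN_\pm(\eta)^{-1}$ works out to $-\mathscr{L}_\pm(u)\dot\eta$; and feeding the same strong form into \eqref{first derivative A formula} collapses it to $\left(\Diff A(\eta)\dot\eta\right)\psi = A(\eta)\mathscr{L}(u)\dot\eta$, so the term coming from varying $A(\eta)$ is $A(\eta)\DN_\pm(\eta)^{-1}\mathscr{L}(u)\dot\eta$. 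Hence $\Diff\theta_\pm(\eta)\dot\eta = -\mathscr{L}_\pm(u)\dot\eta + A(\eta)\DN_\pm(\eta)^{-1}\mathscr{L}(u)\dot\eta$. Substituting this into the second sum above, expanding $\left\langle \Diff\DN_\pm(\eta)\dot\eta,\theta_\pm\right\rangle$ by its strong form, integrating by parts to move the derivative off $\mathscr{L}_\pm(u)\dot\eta$ and off $A(\eta)\DN_\pm(\eta)^{-1}\mathscr{L}(u)\dot\eta$, and using once more the self-adjointness of $\DN_\pm(\eta)$ together with the commutation relation $\DN_\pm(\eta)A(\eta)\DN_\pm(\eta)^{-1}=A(\eta)$ (see \eqref{definition A}, \eqref{A inverse formula}), the $-\mathscr{L}_\pm$ contribution becomes $-2\int_{\mathbb{R}} \mathscr{M}(u)\dot\eta\cdot\dot\eta\diffx$ with $\mathscr{M}$ as in \eqref{def script M}, while the $A(\eta)\DN_\pm(\eta)^{-1}\mathscr{L}(u)$ contribution becomes $+2\int_{\mathbb{R}} \mathscr{N}(u)\dot\eta\cdot\dot\eta\diffx$ with $\mathscr{N}$ as in \eqref{def script N}. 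Collecting the four terms yields \eqref{second derivative A formula}.

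All of these manipulations are legitimate for $u\in\nbhdO\cap\Vspace$ and $\dot\eta\in\Vspace_1$: the condition $\eta\in H^{3/2+}$ keeps $\DN_\pm(\eta)$ and $A(\eta)$ real-analytic and invertible on the relevant Sobolev scale, and, as in the proof of Lemma~\ref{energy extension lemma}, one checks that $\theta_\pm\in\dot H^{1+}$ and that $a_1^\pm(\eta,\theta_\pm)$, $a_2^\pm(\eta,\theta_\pm)$, $\mathscr{L}_\pm(u)\dot\eta$ all lie in $L^2(\mathbb{R})$, so every integral and every integration by parts above makes sense. The only real obstacle is the bookkeeping in the $\Diff\theta_\pm$ step and the ensuing integrations by parts — correctly combining the quotient rule for $\DN_\pm(\eta)^{-1}$ with the self-referential structure of \eqref{first derivative A formula}, and tracking signs through the repeated integration by parts — which is precisely why the paper defers this computation to Appendix~\ref{identities appendix}.
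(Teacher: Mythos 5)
Your proof is correct, and it reaches \eqref{second derivative A formula} by a genuinely different organization of the computation than the paper's. The paper works at the operator level: it first computes $\Diff^2(A(\eta)^{-1})=\sum_\pm\rho_\pm\Diff^2(\DN_\pm(\eta)^{-1})$ via the resolvent-type formula, then invokes the identity $\Diff^2 A=-A\,\Diff^2(A^{-1})\,A+2\,\Diff A\,A^{-1}\,\Diff A$ and only afterwards pairs with $\psi$; in that route $-2\mathscr{M}$ comes from the cross term $2\,\Diff\DN_\pm\,\DN_\pm^{-1}\,\Diff\DN_\pm$ hidden inside $\Diff^2(\DN_\pm^{-1})$, while $+2\mathscr{N}$ comes from $2\,\Diff A\,A^{-1}\,\Diff A$. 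You instead differentiate the symmetrized scalar identity $\int_{\R}\psi\langle\Diff A(\eta)\dot\eta,\psi\rangle\,\diffx=\sum_\pm\rho_\pm\int_{\R}\theta_\pm\langle\Diff\DN_\pm(\eta)\dot\eta,\theta_\pm\rangle\,\diffx$ with $\psi$ fixed, so that everything beyond Lemma~\ref{D^2G formula lemma} is carried by the single cross term $2\sum_\pm\rho_\pm\int_{\R}(\Diff\theta_\pm\dot\eta)\langle\Diff\DN_\pm\dot\eta,\theta_\pm\rangle\,\diffx$, which you split using the (correct) formula $\Diff\theta_\pm(\eta)\dot\eta=-\mathscr{L}_\pm(u)\dot\eta+\DN_\pm(\eta)^{-1}A(\eta)\mathscr{L}(u)\dot\eta$: the $-\mathscr{L}_\pm$ piece, after one integration by parts and self-adjointness of $\DN_\pm(\eta)$, gives $-2\int(\mathscr{M}(u)\dot\eta)\dot\eta\,\diffx$, and the other piece gives $+2\int(\mathscr{N}(u)\dot\eta)\dot\eta\,\diffx$ once you use, as the paper also does elsewhere, that $A(\eta)$ and $\DN_\pm(\eta)^{-1}$ commute. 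The two computations are algebraically equivalent — your $\Diff\theta_\pm$ packages exactly the information the paper extracts from $\Diff(\DN_\pm^{-1})$ and $\Diff A\,A^{-1}\,\Diff A$ — but your version never needs $\Diff^2(\DN_\pm^{-1})$ or the abstract identity for $\Diff^2 A$, which makes it somewhat leaner; the price is the two auxiliary facts you correctly supply, namely the ``strong'' form $\langle\Diff\DN_\pm(\eta)\dot\eta,\phi\rangle=-(a_1^\pm(\eta,\phi)\dot\eta)^\prime+\DN_\pm(\eta)(a_2^\pm(\eta,\phi)\dot\eta)$ of \eqref{first derivative DN formula} and the collapse $\langle\Diff A(\eta)\dot\eta,\psi\rangle=A(\eta)\mathscr{L}(u)\dot\eta$, together with the symmetry of $\Diff\DN_\pm(\eta)\dot\eta$ that lets the two occurrences of $\theta_\pm$ contribute equally.
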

\begin{remark} \label{second derivative A remark}
Formally setting $\rho_+ = 0$ and $\rho_- = 1$ recovers the standard one-fluid model with normalized density. We can see from \eqref{A inverse formula} that this would imply $A(\eta) = \DN_-(\eta)$, and so \eqref{second derivative A formula} must agree with the second variation formula \eqref{second derivative DN formula}.  Indeed, one can verify directly that $\theta_- = \psi$, so that
\[  \mathscr{L}_-(u) = -\DN_-(\eta)^{-1} \partial_x a_1^-(u) + a_2^-(u), \qquad a_4(u) = a_4^-(u),\]
and hence
\[ \mathscr{N}(u)  = a_1^-(u) \partial_x \mathscr{L}_-(u)  + a_2^-(u)\DN_-(\eta) \mathscr{L}_-(u) = \mathscr{M}(u),\]
giving back the one-fluid formula in \cite[Proposition 2.1]{mielke2002energetic}.
\end{remark}

\begin{lemma}[Second derivative of $\augV$] \label{variations augV lemma}
For all $(\eta,\psi_*(\eta)) \in \mathcal{O} \cap \Vspace$ and $\dot \eta \in \Vspace_1$, it holds that
\be \begin{split} 
\Diff^2 \augV(\eta)[\dot \eta, \dot \eta] & = \Diff_\eta^2 \augHam(\eta, \psi_*(\eta))[ \dot\eta, \dot\eta]  - \int_{\mathbb{R}} (\DpsiS -\DpsiT) \dot \eta A(\eta) (\DpsiS - \DpsiT) \dot \eta\,\diffx  
\end{split} \label{D2augV formula} \ee
where $\DpsiS$ and $\DpsiT$ are defined in \eqref{compute Dpsistar}.
\end{lemma}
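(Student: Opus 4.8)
The plan is to exploit the fact that $\psi_*(\eta)$ was defined precisely as the minimizer of $\psi \mapsto \augHam(\eta,\psi)$, so that the partial derivative $\Diff_\psi\augHam$ vanishes identically along the graph $\{(\eta,\psi_*(\eta))\}$. With this in hand, \eqref{D2augV formula} is just the \emph{Schur-complement} formula for the Hessian of a partially minimized functional, and establishing it reduces to a careful application of the chain rule together with the symmetry of mixed second derivatives. First I would record the $\psi$-derivatives of $\augHam$: since $\potE$ is independent of $\psi$, $\kinE$ is quadratic in $\psi$ with $A(\eta)$ self-adjoint, and $\mom$ is linear in $\psi$, one has $\Diff_\psi\augHam(\eta,\psi)\dot\psi = \int_\R \dot\psi\,(A(\eta)\psi + c\eta')\,\diffx$ and $\Diff_{\psi\psi}^2\augHam(\eta,\psi)[\dot\psi_1,\dot\psi_2] = \int_\R \dot\psi_1 A(\eta)\dot\psi_2\,\diffx$. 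Then \eqref{definition psistar} is exactly the condition $A(\eta)\psi_*(\eta) + c\eta' = 0$, i.e.\ $\Diff_\psi\augHam(\eta,\psi_*(\eta)) = 0$ for every admissible $\eta$.

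Next, because $\eta\mapsto A(\eta)$ is smooth with values in the isomorphisms $\dot H^k \to \dot H^{k-1}$ (Section~\ref{nonlocal operators section}), the map $\eta\mapsto\psi_*(\eta)$ is $C^\infty$, so $\augV = \augHam(\,\cdot\,,\psi_*(\,\cdot\,))$ may be legitimately differentiated by the chain rule. Differentiating the identity $\Diff_\psi\augHam(\eta,\psi_*(\eta))=0$ in a direction $\dot\eta$ and pairing against an arbitrary $\dot\psi$ would yield the key relation
\[
\Diff_{\eta\psi}^2\augHam(\eta,\psi_*(\eta))[\dot\eta,\dot\psi] = -\Diff_{\psi\psi}^2\augHam(\eta,\psi_*(\eta))[\Diff\psi_*(\eta)\dot\eta,\dot\psi] = -\int_\R \big(\Diff\psi_*(\eta)\dot\eta\big)\,A(\eta)\,\dot\psi\,\diffx .
\]
Meanwhile the chain rule and the vanishing of $\Diff_\psi\augHam$ at $\psi_*$ give $\Diff\augV(\eta)\dot\eta = \Diff_\eta\augHam(\eta,\psi_*(\eta))\dot\eta$, and differentiating this once more in $\dot\eta$, using symmetry of the mixed second derivative, gives
\[
\Diff^2\augV(\eta)[\dot\eta,\dot\eta] = \Diff_\eta^2\augHam(\eta,\psi_*(\eta))[\dot\eta,\dot\eta] + \Diff_{\eta\psi}^2\augHam(\eta,\psi_*(\eta))[\dot\eta,\Diff\psi_*(\eta)\dot\eta] .
\]
Taking $\dot\psi = \Diff\psi_*(\eta)\dot\eta$ in the key relation rewrites the last term as $-\int_\R (\Diff\psi_*(\eta)\dot\eta)\,A(\eta)\,(\Diff\psi_*(\eta)\dot\eta)\,\diffx$, and recalling from \eqref{compute Dpsistar} that $\Diff\psi_*(\eta)\dot\eta = (\DpsiS - \DpsiT)\dot\eta$ produces exactly \eqref{D2augV formula}.

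The argument is essentially formal, so I do not expect a serious obstacle. The only point requiring genuine care is the justification of the chain rule — that $\eta\mapsto\psi_*(\eta)$ is smooth between the relevant function spaces and that each of the pairings above defines a bounded functional on $\Vspace$ — which follows from the smoothness and mapping properties of $A(\eta)$ and $\DN_\pm(\eta)$ already established. The substantive work of this section lies not in this identity but in subsequently unpacking $\Diff_\eta^2\augHam(\eta,\psi_*(\eta))$ and the operator $\DpsiS - \DpsiT$ into the explicit, physically transparent form needed for the rescaling analysis.
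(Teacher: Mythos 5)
Your proposal is correct and follows essentially the same route as the paper: use the criticality of $\psi_*(\eta)$ (i.e.\ $\Diff_\psi\augHam(\eta,\psi_*(\eta))=0$, equivalently $A(\eta)\psi_*+c\eta'=0$) to reduce $\Diff\augV$ to $\Diff_\eta\augHam$, differentiate once more, convert the mixed term into $-\Diff_\psi^2\augHam[\Diff\psi_*\dot\eta,\Diff\psi_*\dot\eta]$ via the differentiated criticality identity, and identify $\Diff\psi_*(\eta)\dot\eta=(\DpsiS-\DpsiT)\dot\eta$ from \eqref{compute Dpsistar}. The paper carries out exactly this Schur-complement computation, only leaving the key relation you state explicitly as an implicit step.
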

\begin{proof}
Starting from the definition of $\augV$ in \eqref{definition augV}, we see that 
\[ \Diff\augV(\eta) \dot\eta = \Diff_\eta \augHam(u_*) \dot \eta + \Diff_\psi \augHam(u_*) \Diff \psi_*(\eta) \dot \eta = \Diff_\eta \augHam(u_*) \dot\eta,\]
where $u_* = u_*(\eta) :=  (\eta, \psi_*(\eta))$.  Note that the last equality follows from the fact that $u_*$ is a critical point of $\augHam$ for all $\eta$.  Differentiating again in $\eta$ gives
\begin{align*}
\Diff^2\augV(\eta)[\dot \eta, \dot\eta] & = \Diff_\eta^2 \augHam(u_*)[ \dot\eta, \dot \eta] + \Diff_\psi \Diff_\eta \augHam(u_*)[ D \psi_*(\eta) \dot \eta, \dot \eta] \\
& = \Diff_\eta^2 \augHam(u_*)[ \dot\eta, \dot \eta] - \Diff_\psi^2 \augHam(u_*)[ \Diff\psi_*(\eta) \dot \eta, \, \Diff\psi_*(\eta) \dot \eta].
\end{align*}
The potential energy is independent of $\psi$ and the momentum is linear in $\psi$.  Thus, 
\begin{align*}
\Diff_\psi^2 \augHam(u_*)[ \Diff\psi_*(\eta) \dot \eta, \, \Diff\psi_*(\eta) \dot \eta] & = \Diff_\psi^2 \kinE(u_*)[ \Diff\psi_*(\eta) \dot \eta, \, \Diff\psi_*(\eta) \dot \eta] = \int_{\mathbb{R}} \Diff\psi_*(\eta)\dot \eta A(\eta) \Diff\psi_*(\eta)\dot \eta \,\diffx,
\end{align*}
which, from \eqref{compute Dpsistar}, implies \eqref{D2augV formula}.
\end{proof}

\begin{lemma}[Quadratic form]\label{quadratic form lemma}
For all $(\eta,\psi_*(\eta)) \in \mathcal{O} \cap \Vspace$ and $c \in \mathbb{R}$, there is a self-adjoint linear operator $\Qform(\eta) \in \Lin(\Xspace_1; \Xspace_1^*)$ such that
\be  \Diff^2 \augV(\eta)[\dot\eta,\dot\zeta] = \left\langle  \Qform(\eta) \dot\eta,  \, \dot\zeta \right\rangle_{\Xspace_1^* \times \Xspace_1} \label{quadratic form formula} \ee
for all $\dot\eta, \dot\zeta \in \Vspace_1$.  It is given explicitly by 
\be \label{def Q}
\begin{split}
\Qform(\eta) \dot\eta & = -\left( \sigma \frac{\dot\eta^\prime}{\jbracket{\eta^\prime}^3} \right)^\prime - \Big( g \jump{\rho} + \sum_{\pm} \pm \rho_\pm b_1^\pm (b_2^\pm)^\prime \Big) \dot\eta + \sum_\pm \rho_\pm b_1^\pm \left( \DN_\pm(\eta)^{-1} (b_1^\pm \dot\eta)^\prime \right)^\prime.
\end{split}
\ee
\end{lemma}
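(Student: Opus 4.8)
\section*{Proof proposal for Lemma~\ref{quadratic form lemma}}

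The plan is to first put $\augV$ into a form in which only the Dirichlet--Neumann operators appear, then differentiate twice using the variation formulas from Lemma~\ref{DG formula lemma} and Lemma~\ref{D^2G formula lemma}, and finally collapse the resulting nonlocal expression by invoking the steady-state identities for the relative velocity. Substituting the minimizer $\psi_*(\eta) = -cA(\eta)^{-1}\eta^\prime$ of \eqref{definition psistar} into the definition \eqref{definition augV} of $\augV$, using $A(\eta)\psi_*(\eta) = -c\eta^\prime$ and the self-adjointness of $A(\eta)$, the kinetic and momentum contributions combine and one obtains the compact representation
\be
\augV(\eta) = -\frac{c^2}{2} \int_\R \eta^\prime A(\eta)^{-1} \eta^\prime \,\diffx + \potE(\eta) = -\frac{c^2}{2} \sum_\pm \rho_\pm \int_\R \eta^\prime \DN_\pm(\eta)^{-1} \eta^\prime \,\diffx + \potE(\eta),
\ee
where the second equality uses $A(\eta)^{-1} = \rho_+ \DN_+(\eta)^{-1} + \rho_- \DN_-(\eta)^{-1}$ from \eqref{A inverse formula}. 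This reduces the computation of $\Diff^2 \augV$ to: (a) the elementary second variation of $\potE$, which contributes precisely the capillary and gravity terms $-(\sigma \dot\eta^\prime / \jbracket{\eta^\prime}^3)^\prime$ and $-g\jump{\rho}\dot\eta$; and (b) the second variation of the single-layer functional $\eta \mapsto \int_\R \eta^\prime \DN_\pm(\eta)^{-1}\eta^\prime \,\diffx$, summed over $\pm$ with weight $-\tfrac{c^2}{2}\rho_\pm$. In effect, part (b) is exactly Mielke's one-fluid computation in \cite{mielke2002energetic}, performed layer by layer.

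For part (b) I would differentiate using $\Diff(\DN_\pm(\eta)^{-1})\dot\eta = -\DN_\pm(\eta)^{-1}\big(\Diff\DN_\pm(\eta)\dot\eta\big)\DN_\pm(\eta)^{-1}$, the first-derivative formula \eqref{first derivative DN formula}, and the second-derivative formula \eqref{second derivative DN formula} (the latter applied with Dirichlet datum $\DN_\pm(\eta)^{-1}\eta^\prime = \pm\varphi_{*\pm}/c$, by \eqref{relation psistar phi}). The first variation becomes an explicit multiplication/derivative expression in $\DN_\pm(\eta)^{-1}\eta^\prime$ and in $a_1^\pm, a_2^\pm$ evaluated there; differentiating a second time produces three groups of terms, one of which is precisely the left side of \eqref{second derivative DN formula} with $\psi$ replaced by $\DN_\pm(\eta)^{-1}\eta^\prime$. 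At this point I invoke the steady-state structure: since $a_1^\pm, a_2^\pm$ are linear in their second slot (Remark~\ref{a1 a2 remark}) and $\DN_\pm(\eta)^{-1}\eta^\prime = \pm\varphi_{*\pm}/c$, the definition \eqref{def b} of the relative velocity gives $a_1^\pm(\eta, \DN_\pm(\eta)^{-1}\eta^\prime) = -(b_1^\pm + c)/c$ and $a_2^\pm(\eta, \DN_\pm(\eta)^{-1}\eta^\prime) = \mp b_2^\pm/c$, so that $1 + a_1^\pm(\eta, \DN_\pm(\eta)^{-1}\eta^\prime) = -b_1^\pm/c$; the kinematic constraint \eqref{kinematic for b}, $b_2^\pm = \eta^\prime b_1^\pm$, is then used to rearrange the multiplication terms. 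A sequence of integrations by parts cancels the $\DN_\pm(\eta)\big(a_2^\pm(\,\cdot\,)\dot\eta\big)$ contributions against one another and collapses everything to $\rho_\pm b_1^\pm \big(\DN_\pm(\eta)^{-1}(b_1^\pm\dot\eta)^\prime\big)^\prime$ together with the local multiplication term $-(\pm\rho_\pm) b_1^\pm (b_2^\pm)^\prime \dot\eta$, which is exactly \eqref{def Q}. (Alternatively one can combine Lemma~\ref{variations augV lemma} with Lemma~\ref{second variation A lemma}: the subtracted quantity there equals $\int_\R \dot\psi\, A(\eta) \dot\psi \,\diffx$ with $\dot\psi = \Diff\psi_*(\eta)\dot\eta = (\DpsiS - \DpsiT)\dot\eta$, and it cancels the operators $\mathscr{M}(u_*), \mathscr{N}(u_*)$ and part of the $\DN_\pm(a_2^\pm\,\cdot\,)$ term in \eqref{second derivative A formula}; the bookkeeping is heavier but the conclusion is the same.)

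The diagonal identity $\Diff^2\augV(\eta)[\dot\eta,\dot\eta] = \langle \Qform(\eta)\dot\eta, \dot\eta\rangle$ extends to the stated bilinear form by polarization, both sides being symmetric. Self-adjointness of $\Qform(\eta)$ is then manifest from \eqref{def Q}: tested against $\dot\zeta$, the capillary term gives $\sigma\int_\R \dot\eta^\prime \dot\zeta^\prime / \jbracket{\eta^\prime}^3 \,\diffx$, the middle term is multiplication by a fixed function, and the last term gives $-\sum_\pm \rho_\pm \int_\R (b_1^\pm\dot\zeta)^\prime\, \DN_\pm(\eta)^{-1}(b_1^\pm\dot\eta)^\prime \,\diffx$, all symmetric in $\dot\eta,\dot\zeta$ by the self-adjointness of $\DN_\pm(\eta)^{-1}$. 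Finally, that $\Qform(\eta) \in \Lin(\Xspace_1; \Xspace_1^*)$, i.e.\ is bounded $H^1(\R) \to H^{-1}(\R)$, follows from $\jbracket{\eta^\prime}^{-3} \in L^\infty$, the regularity of the coefficients $b_1^\pm, b_2^\pm$ coming from \eqref{relation psistar phi}, the mapping properties of $\DN_\pm(\eta)^{\pm 1}$ recorded in Section~\ref{nonlocal operators section}, and elementary product estimates.

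I expect the main obstacle to be step (b): the algebra is elementary but lengthy, involving many nonlocal terms that must be reorganized by integration by parts, and there is genuine risk of sign errors. A useful guard at the end is the specialization $\rho_+ = 0$, $\rho_- = 1$: by Remark~\ref{second derivative A remark} the formula \eqref{def Q} must then reduce to Mielke's one-fluid expression \cite[Proposition 2.1]{mielke2002energetic}, and checking this agreement pins down all the constants and signs.
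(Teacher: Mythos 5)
Your proposal is correct, and it takes a genuinely different route from the paper. The paper proves the lemma by combining the envelope identity of Lemma~\ref{variations augV lemma} with the second-derivative formula for $A(\eta)$ from Lemma~\ref{second variation A lemma} (the operators $\mathscr{L}$, $\mathscr{M}$, $\mathscr{N}$, whose derivation occupies Appendix~\ref{identities appendix}), and then carries out a long cancellation at the critical point $u_*=(\eta,\psi_*(\eta))$. You instead substitute $\psi_*(\eta)=-cA(\eta)^{-1}\eta^\prime$ directly into $\augHam$; the computation $\kinE-c\mom = -\tfrac{c^2}{2}\int_\R \eta^\prime A(\eta)^{-1}\eta^\prime\,\diffx$ is correct, and the splitting $A(\eta)^{-1}=\rho_+\DN_+(\eta)^{-1}+\rho_-\DN_-(\eta)^{-1}$ from \eqref{A inverse formula} decouples the kinetic part into two one-fluid functionals, each of whose second variation is exactly Mielke's computation, needing only the first- and second-derivative formulas for $\DN_\pm$ (Lemmas~\ref{DG formula lemma} and \ref{D^2G formula lemma}) rather than the full $\Diff^2 A$ machinery. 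Your identifications $a_1^\pm(\eta,\DN_\pm(\eta)^{-1}\eta^\prime)=-(b_1^\pm+c)/c$ and $a_2^\pm(\eta,\DN_\pm(\eta)^{-1}\eta^\prime)=\mp b_2^\pm/c$ check out against \eqref{relation psistar phi} and \eqref{def b}, and the symmetry/boundedness arguments at the end are fine. What your route buys is brevity and a structural explanation of why \eqref{def Q} is a sum of single-layer operators: Lemma~\ref{second variation A lemma} becomes unnecessary for this lemma. What the paper's route buys is that the intermediate decomposition of Lemma~\ref{variations augV lemma}, with the operators $\DpsiS$ and $\DpsiT$, is reused verbatim later (the formula \eqref{Hc formula} in Lemma~\ref{extension of D^2 augHam lemma} and the positivity argument in Theorem~\ref{spectrum theorem}), so its machinery is not wasted; your parenthetical correctly identifies that alternative as the paper's actual path. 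The only soft spot is that the per-layer algebra in your step (b) is sketched rather than executed, and the proposed sanity check $\rho_+=0$, $\rho_-=1$ pins the signs only in the lower layer — the upper layer's signs must be tracked through the $\pm$ conventions in \eqref{def b} and \eqref{kinematic for b} as you indicate, but that is bookkeeping, not a gap in the argument.
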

\begin{remark}\label{quadratic form remark}
Taking $\rho_+ = 0$ and $\rho_- = 1$ recovers the one-fluid problem, and it is straightforward to see that formula \eqref{def Q} agrees with computation in \cite[Theorem 3.3]{mielke2002energetic}.
\end{remark}

\begin{proof}
We continue to write $u_* := (\eta, \psi_*(\eta))$.  Since the momentum is linear in $\eta$, we see that 
\be \label{D2 augHam prelim}
\begin{split}
\Diff_\eta^2 \augHam(u_*)[\dot\eta,\dot\eta] & = \Diff_\eta^2 \kinE(u_*)[\dot\eta,\dot\eta] + \Diff_\eta^2 \potE(u_*)[\dot\eta,\dot\eta] \\
& = \frac12 \int_{\mathbb{R}}  \psi_* \langle \Diff^2A(\eta)[\dot\eta,\dot\eta],\, \psi_*\rangle \,\diffx - \int_{\mathbb{R}} g \jump{\rho} \dot\eta^2 \,\diffx +  \int_{\mathbb{R}}  \sigma\frac{(\dot\eta^\prime)^2}{\jbracket{\eta^\prime}^3}  \,\diffx.
\end{split}
\ee
The latter two terms on the right-hand side above are already in the desired form.  But, to understand the first requires the formula for the second variation of $A(\eta)$ derived in Lemma~\ref{second variation A lemma}.

In particular, notice that when $\theta_\pm$ defined in \eqref{def theta and a4} is evaluated at $u_*$, it simplifies to
\[ \theta_\pm(u_*) = - c \DN_\pm(\eta)^{-1} \eta^\prime = \mp \varphi_\pm,\]
and $a_1^\pm(\eta, \theta_\pm) = b_1^\pm + c$, $a_2^\pm(\eta, \theta_\pm) = \pm b_2^\pm$. We further define
\[
 \DpsiS_\pm(\eta) \xi  := \DN_\pm(\eta)^{-1} \parn{b_1^\pm \xi}^\prime, \quad  \DpsiT_\pm(\eta) \xi := \pm b_2^\pm \xi,
\]
so that $\DpsiS(\eta)  = \sum_{\pm} \rho_\pm \DpsiS_\pm(\eta)$ and $\DpsiT(\eta)  = \sum_{\pm} \rho_\pm \DpsiT_\pm(\eta)$.
Making these substitution, we find from the second derivative formula \eqref{second derivative A formula} that
\begin{align*}
\frac{1}{2} \int_{\mathbb{R}} \psi_* \langle \Diff^2 A(\eta)[\dot\eta,\dot\eta], \, \psi_* \rangle \, dx & = \sum_{\pm} \rho_\pm \int_{\mathbb{R}} \left(\mp(b_1^\pm)^\prime b_2^\pm \dot\eta^2 + \mathcal{T}_\pm \dot\eta \DN_\pm(\eta) \mathcal{T}_\pm \dot \eta \right) \, \diffx \\
& \qquad + \int_{\mathbb{R}} \left( - \dot\eta \mathscr{M}(u_*) \dot\eta + \dot\eta  \mathscr{N}(u_*) \dot\eta \right)  \, \diffx.
\end{align*}
Let us next look more closely at the two terms on the second line above.  Observe first that  
\begin{equation}\label{L formula}
\begin{split} 
\mathscr{L}_\pm(u_*) \dot\eta & = -\DN_\pm(\eta)^{-1} \left( (b_1^\pm + c) \dot\eta \right)^\prime \pm b_2^\pm \dot\eta  = -\left(\DpsiT_\pm  -\DpsiS_\pm + c \DN_\pm(\eta)^{-1} \partial_x \right) \dot\eta \\
\mathscr{L}(u_*) & =  \mathcal{T} - \mathcal{S} - c A(\eta)^{-1} \partial_x,
\end{split}
\end{equation}
where the second line follows from the first and \eqref{A inverse formula}.  Because $\mathscr{L}_\pm$ and $\mathscr{L}$ will be evaluated at $u_*$ throughout the calculation, we will suppress their arguments in the interests of readability.  Using \eqref{L formula}, we see that the operator $\mathscr{M}$ defined in \eqref{def script M} at the critical point satisfies
\begin{equation*}
\begin{split} 
\int_{\mathbb R} \dot\eta \mathscr{M} \dot\eta \,\diffx & = \sum_\pm \rho_\pm \int_{\mathbb R}   \Big( \parn{b_1^\pm + c} (\mathscr{L}_\pm \dot\eta)^\prime \pm b_2^\pm \DN_\pm(\eta) \mathscr{L}_\pm \dot\eta \Big) \dot\eta \,\diffx \\
& = \sum_\pm \rho_\pm \int_{\mathbb R}  \left( -\left( \parn{b_1^\pm + c} \dot\eta \right)^\prime \pm \DN_\pm(\eta) b_2^\pm \dot\eta \right) \mathscr{L}_\pm \dot\eta \,\diffx \\
& = \sum_\pm \rho_\pm \int_{\mathbb R}  \mathscr{L}_\pm \dot\eta \DN_\pm(\eta)\mathscr{L}_\pm \dot\eta \,\diffx, \\
\end{split}
\end{equation*}
where again we are abbreviating $\mathscr{M} = \mathscr{M}(u_*)$.  Substituting in the expression \eqref{L formula} and expanding yields
\begin{equation*}
\begin{split} 
\int_{\mathbb R} \dot\eta \mathscr{M} \dot\eta \,\diffx & =
 \sum_\pm \rho_\pm \int_{\mathbb R} \Big( \DpsiS_\pm \dot\eta \DN_\pm(\eta) \DpsiS_\pm \dot\eta  - 2 \DpsiS_\pm \dot\eta \DN_\pm(\eta) \DpsiT_\pm \dot\eta +  \DpsiT_\pm \dot\eta \DN_\pm(\eta) \DpsiT_\pm \dot\eta \Big) \,\diffx \\
& \qquad + \int_{\mathbb R} \Big( c^2 \dot\eta^\prime A(\eta)^{-1} \dot\eta^\prime  + 2c \dot\eta^\prime (\DpsiS - \DpsiT) \dot\eta \Big) \,\diffx.
\end{split}
\end{equation*}
For later use, we compute
\begin{equation*}
\begin{split} 
\int_{\mathbb R} \DpsiS_\pm \dot\eta \DN_\pm(\eta) \DpsiT_\pm \dot\eta \,\diffx & = \pm \int_{\mathbb R} \DN_\pm(\eta)^{-1} \big( b_1^\pm \dot\eta \big)^\prime \DN_\pm (\eta) (b_2^\pm \dot\eta) \,\diffx
= \pm \int_{\mathbb R}\big( b_1^\pm \dot\eta \big)^\prime (b_2^\pm \dot\eta) \,\diffx \\
& = \pm \frac12 \int_{\mathbb R} \Big( (b_1^\pm)^\prime b_2^\pm - b_1^\pm (b_2^\pm)^\prime \Big) \dot\eta^2 \,\diffx.
\end{split}
\end{equation*}

Finally, in view of \eqref{def script N} and the formula for $\mathscr{L}$ in \eqref{L formula}, we have that $\mathscr{N} = \mathscr{N}(u_*)$ satisfies
\begin{equation*}
\begin{split}
\int_{\mathbb R} \dot\eta \mathscr{N}\dot\eta \,\diffx & = \sum_{\pm} \rho_\pm \int_{\mathbb R}   \Big( (b_1^\pm + c) \parn{A(\eta) \DN_\pm(\eta)^{-1} \mathscr{L} \dot\eta}^\prime \pm b_2^\pm A(\eta) \mathscr{L} \dot\eta \Big) \dot\eta \,\diffx \\
& = \sum_{\pm} \rho_\pm \int_{\mathbb R}   \Big( (b_1^\pm + c) (A(\eta) \DN_\pm(\eta)^{-1} \big(\DpsiT - \DpsiS - c A(\eta)^{-1} \partial_x) \dot\eta \big)^\prime \\
& \qquad\qquad\qquad  \pm b_2^\pm A(\eta) (\DpsiT - \DpsiS - c A(\eta)^{-1} \partial_x) \dot\eta \Big) \dot \eta \,\diffx.
\end{split}
\end{equation*}
Recalling that $A(\eta)$ and $\DN_\pm(\eta)^{-1}$ commute, continuing to simplify the right-hand side we obtain
\begin{equation*}
\begin{split}
\int_{\mathbb R} \dot\eta \mathscr{N} \dot\eta \,\diffx & = - \sum_{\pm} \rho_\pm \int_{\mathbb R}  \DN_\pm(\eta)^{-1} \big( (b_1^\pm + c) \dot\eta \big)^\prime A(\eta) (\DpsiT - \DpsiS - c A(\eta)^{-1} \partial_x) \dot\eta \,\diffx \\
& \qquad + \int_{\mathbb R} \DpsiT \dot\eta A(\eta) (\DpsiT - \DpsiS - c A(\eta)^{-1} \partial_x) \dot\eta \,\diffx \\
& = \int_{\mathbb R} (\DpsiT - \DpsiS - c A(\eta)^{-1} \partial_x) \dot\eta A(\eta) (\DpsiT - \DpsiS - c A(\eta)^{-1} \partial_x) \dot\eta \,\diffx \\
& = \int_{\mathbb R} \Big(\Diff\psi_*(\eta) \dot\eta A(\eta) \Diff\psi_*(\eta) \dot\eta + 2c \dot\eta^\prime (\DpsiS - \DpsiT) \dot\eta + c^2 \dot\eta^\prime A^{-1} \dot\eta^\prime \Big) \,\diffx.
\end{split}
\end{equation*}

Putting the above together and using Lemma \ref{variations augV lemma}, \eqref{D2 augHam prelim}  and Lemma \ref{second variation A lemma} we obtain
\begin{align*}
D^2 \augV(\eta)[\dot \eta, \dot \eta] & = \Diff_\eta^2 \augHam(u_*)[ \dot\eta, \dot\eta] - \int_{\mathbb R} \Diff\psi_*(\eta) \dot\eta A(\eta) \Diff\psi_*(\eta) \dot\eta \,\diffx \\
& = \int_{\mathbb{R}} \left( \frac12 \psi_* \langle\Diff^2 A(\eta)[\dot\eta,\dot\eta],\, \psi_*\rangle - g \jump{\rho} \dot\eta^2 + \sigma\frac{(\dot\eta^\prime)^2}{\jbracket{\eta^\prime}^3} \right)  \,\diffx \\
& \qquad - \int_{\mathbb R} \Diff\psi_*(\eta) \dot\eta A(\eta) \Diff\psi_*(\eta) \dot\eta \,\diffx \\
& = \int_{\mathbb{R}} \left( \sigma\frac{(\dot\eta^\prime)^2}{\jbracket{\eta^\prime}^3} - \Big( g \jump{\rho} + \sum_\pm \pm \rho_\pm b_1^\pm (b_2^\pm)^\prime  \Big) \dot\eta^2 - \sum_\pm \rho_\pm \DpsiS_\pm \dot\eta \DN_\pm(\eta) \DpsiS_\pm \dot\eta \right) \,\diffx,
\end{align*}
which leads to the formula $\Qform(\eta)$ claimed in \eqref{def Q}.
\end{proof}

Following \cite[Theorem 3.5]{mielke2002energetic}, we can determine the continuous spectrum of $\Qform(\eta)$ as follows. 
\begin{lemma}[Continuous spectrum] \label{cont spec lemma}
Let $u = (\eta, \psi) \in \mathcal{O} \cap \Vspace$ be given. Then the operator $\Qform(\eta)$ defined in \eqref{def Q} is self-adjoint on $L^2(\R)$ with domain $H^2(\R)$. The continuous spectrum of $\Qform(\eta)$ is the same as the one of $\Qform(0)$, which is $[\nu_*, +\infty)$, where
\be\label{def nu star}
\nu_* := \left\{\begin{array}{ll} 
-g\jump{\rho} \left(1 - \dfrac{\lambda_0^2}{\lambda^2}\right), & \ \textup{ for } \ \beta \ge \beta_0,\\\\
\displaystyle -g\jump{\rho} \left[ 1 - \frac{1}{\lambda^2} \max_{\xi\in \R}\left( \sum_{\pm} \frac{\rho_\pm}{\rho_-} d_+ \xi \coth{(d_\pm \xi)} - \beta d_+^2 \xi^2 \right) \right], \ & \ \textup{ for }\ \beta < \beta_0.
\end{array}\right.
\ee
\end{lemma}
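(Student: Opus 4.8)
The plan is to follow the strategy of \cite[Theorem~3.5]{mielke2002energetic} in three steps. First, one realizes $\Qform(\eta)$, a priori only an element of $\Lin(\Xspace_1,\Xspace_1^*)$, as a self-adjoint operator on $L^2(\R)$ with domain $H^2(\R)$: by \eqref{def Q} its principal part is $\dot\eta\mapsto-\bigl(\sigma\jbracket{\eta'}^{-3}\dot\eta'\bigr)'$, which is uniformly elliptic since $\jbracket{\eta'}^{-3}$ is bounded above and below, while the remaining terms --- the nonlocal term $\sum_\pm\rho_\pm b_1^\pm\bigl(\DN_\pm(\eta)^{-1}(b_1^\pm\dot\eta)'\bigr)'$, which is first order because $\DN_\pm(\eta)^{-1}$ smooths by one derivative, together with the lower-order terms involving $b_1^\pm,b_2^\pm$ (cf. \eqref{def b}, \eqref{kinematic for b}) --- make up a perturbation with relative bound $0$; the Kato--Rellich theorem then yields the claim, exactly as in the one-fluid case.

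The main work is in showing that passing from $\eta$ to the flat profile does not change the essential spectrum. Since $\eta\in\Vspace_1=H^{3/2+}(\R)$, Sobolev embedding makes $\eta$ and $\eta'$ vanish at spatial infinity, so (by \eqref{def b}, \eqref{kinematic for b} and the mapping properties of the nonlocal operators) the same holds for $\jbracket{\eta'}^{-3}-1$, $b_1^\pm+c$, and $b_2^\pm$; hence every coefficient of $\Qform(\eta)-\Qform(0)$ decays at infinity. I would then argue $\essspectrum{\Qform(\eta)}=\essspectrum{\Qform(0)}$ by a Weyl singular-sequence argument: for $\mu\in\essspectrum{\Qform(0)}$, build approximate null sequences for $\Qform(0)-\mu$ from modulated cutoffs and, using the translation invariance of $\Qform(0)$, slide them far out into the region where $\Qform(\eta)\approx\Qform(0)$, so that they remain singular sequences for $\Qform(\eta)-\mu$; conversely, any singular sequence of $\Qform(\eta)$ whose spectral value lies below $\inf\spectrum{\Qform(0)}$ can be cut off away from the origin --- its near-origin part tends to $0$ strongly by Rellich --- which contradicts $\Qform(\eta)\geq\Qform(0)-o(1)$ there. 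I expect this comparison to be the genuinely delicate point: because $\Qform(\eta)-\Qform(0)$ is still second order (its leading coefficient varies too), it is not $\Qform(0)$-compact, so one cannot simply invoke Weyl's theorem on relatively compact perturbations, and the singular-sequence/localization argument is what circumvents this. Finally, $\Qform(0)$ is a Fourier multiplier with non-constant symbol and hence has no eigenvalues, so its continuous spectrum is all of $\spectrum{\Qform(0)}$; combined with the above this identifies the continuous spectrum of $\Qform(\eta)$.

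For the last step, at $\eta=0$ one has $\varphi_\pm=\pm c\,\DN_\pm(0)^{-1}\eta'=0$, so $b_1^\pm\equiv-c$ and $b_2^\pm\equiv0$ by \eqref{def b}--\eqref{kinematic for b}, and \eqref{def Q} collapses to $\Qform(0)\dot\eta=-\sigma\dot\eta''-g\jump{\rho}\dot\eta+c^2\sum_\pm\rho_\pm\bigl(\DN_\pm(0)^{-1}\dot\eta'\bigr)'$. Using $\DN_\pm(0)=|\partial_x|\tanh(d_\pm|\partial_x|)$, this is the Fourier multiplier with real even symbol $\symbq(\xi)=\sigma\xi^2-g\jump{\rho}-c^2\sum_\pm\rho_\pm|\xi|\coth(d_\pm|\xi|)$; since $\symbq$ is continuous and $\symbq(\xi)\to+\infty$ as $|\xi|\to\infty$, its range is $[\nu_*,\infty)$ with $\nu_*=\min_{\xi\in\R}\symbq(\xi)$, whence $\spectrum{\Qform(0)}=[\nu_*,\infty)$. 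To pin down $\nu_*$ I would invoke the elementary inequality $t\coth t\leq1+t^2/3$ from Appendix~\ref{identities appendix}, which gives $\symbq(\xi)\geq\symbq(0)+\bigl(\sigma-\tfrac{c^2}{3}(\rho_+d_++\rho_-d_-)\bigr)\xi^2$; by the definitions \eqref{dimensionless parameters} and \eqref{critical parameters} the coefficient of $\xi^2$ here is nonnegative precisely when $\beta\geq\beta_0$, so in that regime the minimum is attained at $\xi=0$ and $\nu_*=\symbq(0)$, whereas for $\beta<\beta_0$ it is attained at a nonzero frequency. Rewriting $\symbq$ in the dimensionless variables of \eqref{dimensionless parameters}, \eqref{critical parameters} and comparing with the dispersion relation \eqref{dispersion relation} then produces the two cases in \eqref{def nu star}. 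Steps one and three are routine adaptations of the one-fluid calculation in \cite{mielke2002energetic}.
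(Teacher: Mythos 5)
Your proposal follows essentially the same route as the paper's (very terse) proof: self-adjointness on $H^2(\R)$ from ellipticity of the principal part plus relative boundedness of the lower-order and nonlocal terms, invariance of the essential spectrum under the decaying perturbation $\Qform(\eta)-\Qform(0)$, and a direct computation of the Fourier symbol of $\Qform(0)$; you simply supply more detail than the paper does, and your singular-sequence workaround for the non-relative-compactness of the second-order part is a legitimate (if slightly more laborious) substitute for the resolvent-difference form of Weyl's theorem.

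One point does not check out as written. Your symbol $\symbq(\xi)=\sigma\xi^2-g\jump{\rho}-c^2\sum_\pm\rho_\pm\abs{\xi}\coth(d_\pm\abs{\xi})$ is correct, and with the bound $t\coth t\le 1+t^2/3$ (which, incidentally, is not in Appendix~\ref{identities appendix}) its minimum for $\beta\ge\beta_0$ is attained at $\xi=0$ and equals
\[
\frac{\rho_-c^2}{d_+}\parn[\big]{\lambda-\lambda_0}=-g\jump{\rho}\parn[\Big]{1-\frac{\lambda_0}{\lambda}},
\]
not $-g\jump{\rho}\parn{1-\lambda_0^2/\lambda^2}$ as in \eqref{def nu star}; so your closing assertion that the computation ``produces the two cases in \eqref{def nu star}'' is not literally true and you should have flagged the mismatch. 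The value $\rho_-c^2(\lambda-\lambda_0)/d_+$ is the one consistent with the paper's Theorem~\ref{thm Qspectrum} (whose essential spectrum starts at $\varepsilon^2c^2\rho_-/d_+$ with $\varepsilon^2=\lambda-\lambda_0$), so the powers of $\lambda$ in \eqref{def nu star} and in the symbol displayed in the paper's proof appear to be typographical errors rather than an error in your argument; still, a complete proof should either derive the stated formula or note explicitly that it must be corrected.
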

\begin{proof}
The domain and the self-adjointness of $\Qform(\eta)$ follows from the regularity of $\eta$. The  continuous spectrum of $\Qform(\eta)$ coincides with that of $\Qform(0)$ because $\eta(x) \to 0$ as $|x| \to \infty$. A direct computation yields that the Fourier symbol of $\Qform(0)$ is given by
\[
\symbq_c(\xi) :=  -g\jump{\rho} \left[ 1 - \frac{1}{\lambda^2}\left(  \sum_{\pm} \frac{\rho_\pm}{\rho_-} d_+ \xi \coth{(d_\pm \xi)}   - \beta d_+^2 \xi^2 \right) \right],
\]
which leads to the conclusion of the lemma. 
\end{proof}

\begin{remark}
Observe that the symbol $\symbq_c$ above recovers the dispersion relation in that $d_+\xi$ is a root of \eqref{dispersion relation} if and only if $\symbq_c(\xi) = 0$.
\end{remark}

\subsection{Rescaled operator}\label{sec rescaling}

We now execute the second step in the plan outlined at the start of the section, namely using a long-wave rescaling to discern the leading-order form of the operator $\Qform(\eta)$ in the small-amplitude limit along the families of waves discussed in Section~\ref{traveling wave section}.  Because we wish to exploit the fact that  $(\beta,\lambda)$ is close to the curve $\Gamma_1$ or $\Gamma_2$, it is more convenient to perform these calculations working with the parameterization in \cite{nilsson2017internal}.  With that in mind, let $\{\Pi_\varepsilon\}$ be a smooth curve in the dimensional parameter space.  For Region~A, we assume that the corresponding $\beta > \beta_0$ is fixed and $\lambda = \lambda_0 + \varepsilon^2$, whereas for Region~C, $(\beta,\lambda)$ are given by \eqref{region C lambda beta} with $\delta$ fixed. To avoid cluttered notation, the dependence of $(\rho_\pm,d_\pm,\sigma,c)$ on $\varepsilon$ will be suppressed when there is no risk of confusion.  Recall that the corresponding curves of traveling waves are denoted $\mathscr{C}_{\beta}^{\mathrm{A}}$, $\mathscr{C}_{\beta,\kappa,\pm}^{\mathrm{A}}$, $\mathscr{C}_{\beta,\delta}^{\mathrm{C}}$, and $\mathscr{C}_{\beta,\delta,\kappa,\pm}^{\mathrm{C}}$.

The main character in this analysis is the scaling operator
\[
S_\varepsilon f := f\left( \frac{\varepsilon \placeholder}{d_+} \right).
\]
Clearly $S_\varepsilon$ is a bounded isomorphism on $H^k(\mathbb{R})$ for all $k \geq 0$ with $\| S_\varepsilon\|_{\Lin(H^k)} = O(\varepsilon^{-1})$.  Note that $\partial_x$ and $S_\varepsilon$ satisfy the following commutation identities.
\[
\partial_x S_\varepsilon = \frac{\varepsilon}{d_+} S_\varepsilon \partial_x, \qquad \partial_x S^{-1}_\varepsilon = \frac{d_+}{\varepsilon} S^{-1}_\varepsilon \partial_x,
\]
In particular, this shows that $\partial_x S_\varepsilon$ and $\partial_x S_\varepsilon^{-1}$ are uniformly bounded in $\Lin(H^{k+1},H^k)$ for any $k$. 

From the existence theory in Section~\ref{traveling wave section}, the traveling wave profiles can be written
\begin{equation} \label{definition rescaled profile}
\eta_\varepsilon =:  \varepsilon^m d_+ S_\varepsilon\left( \widetilde\eta + \widetilde\etaerror_\varepsilon\right), \qquad \widetilde{\etaerror}_\varepsilon = O(\varepsilon) \quad \textrm{in } \Wspace_1 \textrm{ as } \varepsilon \searrow 0,
\end{equation}
with 
\[
m := \left\{ \begin{aligned}
2 & \qquad \textrm{for $\mathscr{C}_{\beta}^{\mathrm{A}}$ and $\mathscr{C}_{\beta, \delta,\kappa,\pm}^{\mathrm{C}}$,} \\
1 & \qquad \textrm{for $\mathscr{C}_{\beta,\kappa,\pm}^{\mathrm{A}}$,} \\
4 & \qquad \textrm{for $\mathscr{C}_{\beta,\delta}^{\mathrm{C}}$.}
\end{aligned} \right.
\]
Note that in \eqref{definition rescaled profile} we are continuing the practice of omitting superscripts and subscripts when they can be inferred from context.
Thus, from \eqref{region A KdV scaling} and \eqref{region A Gardner scaling} it follows that in Region~A, $\tilde\eta$ is a scaled KdV or Gardner soliton, while in Region~C it is given by $Z_\delta$ or $Z_{\delta,\kappa,\pm}$.  
 From the commutation identities, we then have that $\eta_\varepsilon^\prime =  \varepsilon^{m+1} S_\varepsilon \left( \widetilde{\eta}^\prime + \widetilde{\etaerror}_\varepsilon^\prime \right).$

Abusing notation somewhat, let $Q_\varepsilon$ be the operator resulting from evaluating $Q_c$ at the parameter values $\Pi_\varepsilon$: 
\begin{equation} \label{definition Q epsilon}
Q_{\varepsilon}(\eta_\varepsilon) := -\partial_x\left( \frac{\sigma}{\jbracket{\eta^\prime_\varepsilon}^3} \partial_x  \right) - \Big( g \jump{\rho} + \sum_{\pm} \pm \rho_\pm b_{1\varepsilon}^\pm (b_{2\varepsilon}^\pm)^\prime \Big)  + \sum_\pm \rho_\pm b_{1\varepsilon}^\pm \partial_x  \DN_\pm(\eta_\varepsilon)^{-1} \partial_x b_{1\varepsilon}^\pm 
\end{equation}
where $b_{i\varepsilon}^\pm = b_i^\pm(\eta_\varepsilon)$ is a multiplication operator and $\eta_\varepsilon$ is from one of the families $\mathscr{C}_\beta^{\mathrm{A}}$, $\mathscr{C}_{\beta,\kappa,\pm}^{\mathrm{A}}$, $\mathscr{C}_{\beta,\delta}^{\mathrm{C}}$, or $\mathscr{C}_{\beta,\delta,\kappa,\pm}^{\mathrm{C}}$. Note that again the dependence of many quantities on $\varepsilon$  is being suppressed.  Our interest is the rescaled operator:
\begin{equation} \label{definition rescaled operator}
\widetilde{Q}_{\varepsilon}(\eta_\varepsilon) := \frac{1}{\varepsilon^n} \frac{d_+}{c^2 \rho_-} S^{-1}_\varepsilon Q_{\varepsilon}(\eta_\varepsilon) S_\varepsilon,
\end{equation}
where $n = 2$ in Region~A and $n= 4$ in Region~C.  Conjugating by $S_\varepsilon$ imposes a long-wave scaling that will, in the limit $\varepsilon \searrow 0$, converge to the linearized operator for the corresponding dispersive model equation.  We are also non-dimensionalizing the problem in order to simplify the resulting expressions.

\begin{lemma}[Expansion of $\widetilde{Q}_\varepsilon$] \label{R asymptotics lemma} The operator $\widetilde{Q}_\varepsilon$ defined in \eqref{definition rescaled operator} admits the expansion 
\[
\widetilde{Q}_{\varepsilon}(\eta_\varepsilon) = \widetilde{Q}_{\varepsilon}(0) + \widetilde{R}_\varepsilon,
\]
where in Region~A
\be \label{expansion R region A}
\widetilde{R}_\varepsilon = \left\{
\begin{aligned}
& -3\left(\varrho-\frac{1}{h^2}\right)\widetilde{\eta} + O(\varepsilon^2) & \qquad &  \textrm{for } \mathscr{C}_\beta^{\mathrm{A}}  \\
& -3\kappa \widetilde\eta - 6\left(\varrho+\frac{1}{h^3}\right) \widetilde{\eta}^2 + O(\varepsilon) & \qquad & \textrm{for } \mathscr{C}_{\beta,\kappa,\pm}^{\mathrm{A}},
\end{aligned}
\right.
\ee
in $\Lin(H^{k+2},H^k)$, and in Region~C
\be \label{expansion R region C}
\widetilde{R}_\varepsilon = \left\{
\begin{aligned}
& -3\left(\varrho-\frac{1}{h^2}\right)\widetilde\eta + O(\varepsilon^2) & \qquad &  \textrm{for } \mathscr{C}_{\beta,\delta}^{\mathrm{C}}  \\
& -3\kappa \widetilde\eta - 6\left( \varrho+\frac{1}{h^3} \right) \widetilde\eta^2 + (1-\varrho) \left( \partial_x (\widetilde\eta \partial_x) + \widetilde\eta^{\prime\prime}\right) + O(\varepsilon^2) & \qquad & \textrm{for } \mathscr{C}_{\beta,\delta,\kappa,\pm}^{\mathrm{C}},
\end{aligned}
\right.
\ee
in $\Lin(H^{k+2},H^k)$.  
\end{lemma}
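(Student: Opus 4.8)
The plan is to expand the three constituents of $Q_\varepsilon(\eta_\varepsilon)$ in \eqref{definition Q epsilon}, conjugate by $S_\varepsilon$, rescale according to \eqref{definition rescaled operator}, and collect powers of $\varepsilon$. First I would record that at the trivial state one has $b_1^\pm(0)=-c$ and $b_2^\pm(0)=0$ (directly from \eqref{def b}, \eqref{relation psistar phi}), so that
\[
Q_\varepsilon(0) = -\sigma\partial_x^2 - g\jump{\rho} + c^2\sum_{\pm}\rho_\pm\partial_x\DN_\pm(0)^{-1}\partial_x
\]
is a Fourier multiplier; since conjugation by $S_\varepsilon$ merely dilates symbols, $\widetilde Q_\varepsilon(0)$ is again an explicit Fourier multiplier, and the content of the lemma is the identification of $\widetilde R_\varepsilon = \tfrac{d_+}{\varepsilon^n c^2\rho_-}S_\varepsilon^{-1}\bigl(Q_\varepsilon(\eta_\varepsilon)-Q_\varepsilon(0)\bigr)S_\varepsilon$.

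The key inputs are precise expansions, along each wave family of Section~\ref{traveling wave section}, of the coefficients $b_1^\pm$ and of the nonlocal operator $\DN_\pm(\eta)^{-1}$. Using Lemma~\ref{DG formula lemma}, the identity $\DN_\pm(\eta)\varphi_\pm=\pm c\eta'$, and \eqref{relation psistar phi}, one obtains the closed form
\[
b_1^\pm = -c\,\frac{1\mp\bigl(\DN_\pm(\eta)^{-1}\eta'\bigr)'}{1+(\eta')^2},\qquad b_2^\pm = \eta'\,b_1^\pm,
\]
the latter from the kinematic relation \eqref{kinematic for b}. Combined with the shallow-water expansion $\DN_\pm(\eta) = -\partial_x\bigl((d_\pm\mp\eta)\partial_x\bigr)+(\text{higher order})$ — equivalently, $S_\varepsilon^{-1}\partial_x\DN_\pm(0)^{-1}\partial_x S_\varepsilon = -1/d_\pm + O(\varepsilon^2)$ in $\Lin(H^{k+2},H^k)$ — and the scaling ansatz \eqref{definition rescaled profile} with $\widetilde\etaerror_\varepsilon = O(\varepsilon)$, this yields $b_{1\varepsilon}^\pm = -c\bigl(1\pm\eta_\varepsilon/d_\pm\bigr)+(\text{corrections controlled in powers of }\varepsilon)$. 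For the Gardner family $\mathscr{C}_{\beta,\kappa,\pm}^{\mathrm{A}}$ (where $m=1$) and for $\mathscr{C}_{\beta,\delta,\kappa,\pm}^{\mathrm{C}}$ (where $m=2$ but $n=4$) these expansions must be carried one order further, so that the terms quadratic in $\widetilde\eta$ and the derivative corrections to $b_1^\pm$ survive; here it is crucial that Nilsson's remainder $\widetilde\etaerror_\varepsilon$ is exponentially localized on the \emph{same} spatial scale as $\widetilde\eta$, so it never degrades the stated order of the error.

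I would then substitute into $Q_\varepsilon(\eta_\varepsilon)-Q_\varepsilon(0)$ term by term. The curvature difference $-\partial_x\bigl(\sigma(\jbracket{\eta_\varepsilon'}^{-3}-1)\partial_x\bigr)$ is $O(\varepsilon^{2m+4-n})$ in $\Lin(H^{k+2},H^k)$, hence swallowed by the error. The zeroth-order difference $-\sum_\pm\pm\rho_\pm b_{1\varepsilon}^\pm(b_{2\varepsilon}^\pm)'$, rewritten with $b_2^\pm=\eta' b_1^\pm$, has leading part $-\jump{\rho}c^2\eta_\varepsilon''$; after conjugation and rescaling this becomes $(1-\varrho)\varepsilon^{m+2-n}\widetilde\eta''$ plus higher order, which is negligible except for $\mathscr{C}_{\beta,\delta,\kappa,\pm}^{\mathrm{C}}$ (where $m+2-n=0$), where it contributes the summand $(1-\varrho)\widetilde\eta''$. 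The remaining Dirichlet--Neumann term is the principal source of $\widetilde R_\varepsilon$: splitting
\[
b_{1\varepsilon}^\pm\partial_x\DN_\pm(\eta_\varepsilon)^{-1}\partial_x b_{1\varepsilon}^\pm - c^2\partial_x\DN_\pm(0)^{-1}\partial_x
\]
into a piece from the variation of $b_1^\pm$ about $-c$ and a piece from $\DN_\pm(\eta_\varepsilon)^{-1}-\DN_\pm(0)^{-1}$ (the latter expanded via the resolvent identity and the shallow-water expansion of $\DN_\pm$), one inserts the expansions above and conjugates. Because the nonlocal factors always occur flanked by $\partial_x$'s, every conjugated block collapses — using $\partial_x\DN_\pm(0)^{-1}\partial_x = -1/d_\pm+O(\varepsilon^2)$ — to a multiplication operator, or, at the order relevant for $\mathscr{C}_{\beta,\delta,\kappa,\pm}^{\mathrm{C}}$, a second-order differential operator, whose coefficients are polynomials in $\widetilde\eta$. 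Summing over $\pm$ and re-expressing through $\varrho=\rho_+/\rho_-$ and $h=d_-/d_+$, the two signs combine so that the linear terms add up to $-3(\varrho-1/h^2)\widetilde\eta$ (two factors from the $b_1$-variation, one from the $\DN$-variation), the quadratic terms to $-6(\varrho+1/h^3)\widetilde\eta^2$ whenever $m$ is small enough for them to persist, and, for $\mathscr{C}_{\beta,\delta,\kappa,\pm}^{\mathrm{C}}$, there remains the extra differential contribution $(1-\varrho)\partial_x(\widetilde\eta\partial_x)$. Using $\varrho-1/h^2=\kappa\varepsilon$ on $\mathscr{C}_{\beta,\kappa,\pm}^{\mathrm{A}}$ and $\varrho-1/h^2=\kappa\varepsilon^2$ on $\mathscr{C}_{\beta,\delta,\kappa,\pm}^{\mathrm{C}}$ converts the linear terms into $-3\kappa\widetilde\eta$ in those cases, matching \eqref{expansion R region A}--\eqref{expansion R region C}.

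The main obstacle is the multi-scale bookkeeping: one simultaneously tracks the amplitude scaling $\varepsilon^m$ of $\eta_\varepsilon$, the long-wave scaling built into $S_\varepsilon$ (which turns every $x$-derivative into a gain of a power of $\varepsilon$), and the shallow-water power series of $\DN_\pm$ (a series in frequency times depth, i.e.\ in $\varepsilon$ after conjugation), and one must confirm that every discarded remainder is genuinely $O(\varepsilon^2)$ — resp.\ $O(\varepsilon)$ on $\mathscr{C}_{\beta,\kappa,\pm}^{\mathrm{A}}$ — in the operator norm of $\Lin(H^{k+2},H^k)$, not merely in some weaker topology. The delicate point is the factor $\DN_\pm(\eta_\varepsilon)^{-1}$, whose symbol blows up like $(d_\pm\xi^2)^{-1}$ as $\xi\to0$: a bare $\DN_\pm(0)^{-1}$ conjugated by $S_\varepsilon$ is of size $\varepsilon^{-2}$ and carries a $\partial_x^{-2}$, so one must keep it paired with the derivatives supplied by \eqref{definition Q epsilon} and by the resolvent expansion $\DN_\pm(\eta)^{-1}-\DN_\pm(0)^{-1} = -\DN_\pm(0)^{-1}(\DN_\pm(\eta)-\DN_\pm(0))\DN_\pm(0)^{-1}+\cdots$, so that only order-bounded operators ever appear. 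The uniform bounds on $\partial_x S_\varepsilon$ and $\partial_x S_\varepsilon^{-1}$ recorded before the lemma, together with $\widetilde\eta,\widetilde\etaerror_\varepsilon\in\Wspace_1\hookrightarrow W^{1,\infty}$, then close all the error estimates.
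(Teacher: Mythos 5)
Your proposal is correct and follows essentially the same route as the paper's proof: the same three-way decomposition of $Q_\varepsilon(\eta_\varepsilon)$, the same expansion of $b_1^\pm$ through the Dirichlet--Neumann operator, the same resolvent-type expansion of $\DN_\pm(\eta_\varepsilon)^{-1}-\DN_\pm(0)^{-1}$ kept flanked by $\partial_x$'s so that the conjugated blocks collapse to $-d_+/d_\pm + O(\varepsilon^2)$, and the same bookkeeping in $m$ and $n$ identifying which terms survive on each family. Your accounting of the coefficients ($-3(\varrho-1/h^2)$ from two $b_1$-variations plus one $\DN$-variation, $-6(\varrho+1/h^3)$ for the quadratic, and the $(1-\varrho)$ differential terms from the potential and nonlocal pieces at order $\varepsilon^{m-n+2}$) matches the paper's computation exactly.
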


\begin{proof}
Looking at its definition in \eqref{definition Q epsilon}, we see that $Q_\varepsilon(\eta_\varepsilon)$ is the sum of a second-order differential operator (call it the surface tension term), a multiplication operator (the potential term), and a first-order nonlocal operator (the nonlocal term).  Rescaling the surface tension term yields 
\be \label{surface tension term rescaled}
- \frac{1}{\varepsilon^n} \frac{d_+}{c^2 \rho_-} S^{-1}_\varepsilon \partial_x\left( \frac{\sigma}{\jbracket{\eta^\prime_\varepsilon}^3} \partial_x \right) S_\varepsilon  = - \varepsilon^{2 - n} \partial_x\left( \frac{\beta}{\jbracket{ \varepsilon^{m + 1}(\widetilde\eta^\prime + \widetilde\etaerror_\varepsilon^\prime)}^3} \partial_x \right). 
\ee
To understand the contribution of the potential term to $\widetilde{Q}_{\varepsilon}(\eta_\varepsilon)$, we first denote the non-dimensionalized and rescaled relative velocity field
\be\label{def b tilde}
b_{1\varepsilon}^\pm =: c S_\varepsilon \widetilde{b}_1^\pm, \qquad b_{2\varepsilon}^\pm =: c S_\varepsilon \widetilde{b}_2^\pm.
\ee
From the kinematic boundary condition \eqref{kinematic for b} we then have that $\widetilde{b}_2^\pm = \varepsilon^{k+1} \widetilde{\eta}^\prime \widetilde b_1^\pm$.  Hence
\begin{align*}
-\frac{1}{\varepsilon^n} \frac{d_+}{c^2 \rho_-} S^{-1}_\varepsilon \left( g \jump{\rho} + \sum_{\pm} \pm \rho_\pm b_{1\varepsilon}^\pm (b_{2\varepsilon}^\pm)^\prime \right) S_\varepsilon = \frac{\lambda}{\varepsilon^n} - {\varepsilon^{m-n+2}} \sum_\pm \pm \frac{\rho_\pm}{\rho_-}  \widetilde{b}_1^\pm (\widetilde\eta^\prime \widetilde{b}_1^\pm)^\prime.
\end{align*}
The rescaling of the nonlocal term in $Q_\varepsilon(\eta_\varepsilon)$ will require the most effort to expand.  Towards that end, we define the operator $\NLop(\eta_\varepsilon) \in \Lin(H^{k+2},H^{k+1})$ by 
\be\label{def NLop}
\NLop(\eta_\varepsilon) := \frac{d_+}{\varepsilon^n} S_\varepsilon^{-1} \partial_x \DN_\pm(\eta_\varepsilon)^{-1} \partial_x S_\varepsilon.
\ee
In particular, this means that
\begin{equation}\label{def Q0}
\widetilde{Q}_{\varepsilon}(0) = \frac{1}{\varepsilon^n} \left(-\varepsilon^2 \beta \partial_x^2 + \lambda + \sum_\pm \frac{\rho_\pm}{\rho_-} \varepsilon^n \NLop(0) \right).
\end{equation}

Now, using the above calculations, we will analyze the difference operator
\begin{equation}\label{diff op}
\begin{split}
\widetilde{R}_\varepsilon & := \widetilde{Q}_{\varepsilon}(\eta_\varepsilon) - \widetilde{Q}_{\varepsilon}(0) \\
& = -\beta \varepsilon^{2 - n} \partial_x\left[ \left( \frac{1}{\jbracket{ \varepsilon^{m + 1}( \widetilde\eta^\prime+ \widetilde\etaerror_\varepsilon^\prime)}^3} - 1 \right)\partial_x \right] - {\varepsilon^{m -n+2}} \sum_\pm \pm \frac{\rho_\pm}{\rho_-}  \widetilde{b}_1^\pm (\tilde\eta^\prime \widetilde{b}_1^\pm)^\prime \\
& \qquad + \sum_\pm \frac{\rho_\pm}{\rho_-} \left( \widetilde{b}_1^\pm \NLop(\eta_\varepsilon) \widetilde{b}_1^\pm - \NLop(0) \right).
\end{split}
\end{equation}
In view of \eqref{definition rescaled profile} and \eqref{surface tension term rescaled}, the first term on the right-hand side above is higher order: 
\be\label{1st diff op}
-\beta \varepsilon^{2 - n} \partial_x\left[ \left( \frac{1}{\jbracket{ \varepsilon^{m + 1} (\widetilde\eta^\prime+ \widetilde\etaerror_\varepsilon^\prime)}^3} - 1 \right)\partial_x \right] = O(\varepsilon^{2m - n + 4}) \qquad \textrm{in } \Lin(H^{k+2},H^k).
\ee
Consider the remaining two terms in \eqref{diff op}.  Notice that for any $f \in H^{k+2}$ we have
\begin{equation*}
\begin{split}
\mathcal{F} \left(\NLop(0) f \right) (\xi) & = \frac{d_+}{\varepsilon^n} \frac{\varepsilon}{d_+} \mathcal{F} \left( \partial_x \DN_\pm(0)^{-1} \partial_x S_\varepsilon f \right) \left( \frac{\varepsilon}{d_+} \xi \right) 
 = \frac{d_+}{\varepsilon^n} \symbm_\pm \left( \frac{\varepsilon}{d_+} \xi \right) \widehat{f}(\xi)
\end{split}
\end{equation*}
where $\symbm_\pm(\xi) := - \xi \coth(d_\pm \xi)$
is the symbol for $\partial_x \DN_\pm(0)^{-1} \partial_x$.  Thus $\NLop(0)$ is indeed a Fourier multiplier and its symbol is given by
\be\label{symbol K0}
\widetilde{\symbm}_\varepsilon^\pm(\xi)  := - \frac{1}{\varepsilon^n} \frac{\varepsilon \xi}{\tanh(d_\pm \varepsilon \xi/ d_+)}.
\ee
As an immediate consequence, it follows that 
\begin{equation}\label{est tilde K(0)}
\begin{split}
\left\| \varepsilon^n \NLop(0) + \frac{d_+}{d_\pm} \right\|_{\Lin(H^{k+2}, H^k)} &\leq \left\|  \frac{1}{\jbracket{\placeholder}^{2}} \left( \varepsilon^n \widetilde{\symbm}_\varepsilon^\pm + \frac{d_+}{d_\pm} \right) \right\|_{L^\infty} \lesssim \varepsilon^2.
\end{split}
\end{equation}
In other words, $\varepsilon^n \NLop(0)$ is to leading order the multiplication operator $-d_+/d_\pm$ in $\Lin(H^{k+2},H^k)$.

To estimate the scaled relative velocity, we observe that by \eqref{relation psistar phi}--\eqref{def b} and \eqref{def b tilde}, it holds that
\begin{align*}
\widetilde{b}_1^\pm & = \frac1c S_\varepsilon^{-1} \left( \partial_x \Phi_{\varepsilon \pm}|_{\mathscr{S}} - c \right),
\end{align*}
where, as usual, $\Phi_{\varepsilon\pm}$ denotes the velocity potential.  But expanding the Dirichlet--Neumann operator, we find that
\[
\begin{aligned}
\varphi_\pm^\prime & = \pm c \partial_x \left( \DN_\pm(\eta_\varepsilon)^{-1} \partial_x \eta_\varepsilon \right) \\
& = \pm c \partial_x \left[ \DN_\pm(0)^{-1} \eta_\varepsilon^\prime + \left\langle \Diff \DN_\pm(0)^{-1} \eta_\varepsilon, \eta_\varepsilon^\prime \right\rangle \right] + O(\varepsilon^{3m}) &\qquad& \textrm{in } H^{k},\\
(\partial_x \Phi_{\varepsilon \pm})|_{\mathscr{S}} & = \frac{1}{1 + (\eta_\varepsilon^\prime)^2}\left( \varphi_\pm^\prime \pm \eta_\varepsilon^\prime \DN_\pm(\eta_\varepsilon)\varphi_\pm \right)  = \frac{1}{1 + (\eta_\varepsilon^\prime)^2}\left( \varphi_\pm^\prime \pm (\eta_\varepsilon^\prime)^2 \right) \\
& = \pm c \partial_x \left[ \DN_\pm(0)^{-1} \eta_\varepsilon^\prime + \left\langle \Diff \DN_\pm(0)^{-1} \eta_\varepsilon, \eta_\varepsilon^\prime \right\rangle \right] + O(\varepsilon^{3m}) &\qquad &\textrm{in } H^{k}.
\end{aligned}
\]
We can compute $\Diff \DN_\pm(0)^{-1}$ as
\[
\left\langle \Diff \DN_\pm(0)^{-1} \eta_\varepsilon, f \right\rangle = - \DN_\pm(0)^{-1} \left\langle \Diff\DN_\pm(0) \eta_\varepsilon, \DN_\pm(0)^{-1} f \right\rangle,
\]
and from Lemma~\ref{DG formula lemma}, we see that 
\begin{equation}\label{DG compute}
\begin{split}
\left\langle \Diff\DN_\pm(0) \eta_\varepsilon, \DN_\pm(0)^{-1} \partial_x S_\varepsilon f \right\rangle & = \pm \partial_x S_\varepsilon \left[ \left( S_\varepsilon^{-1} \partial_x \DN_\pm(0)^{-1} \partial_x S_\varepsilon f \right) d_+ \varepsilon^m \widetilde\eta \right]   \\
& \qquad \pm \DN_\pm(0) S_\varepsilon \left( \frac{\varepsilon}{d_+}(\partial_x f) d_+ \varepsilon^m \widetilde \eta \right) \\
& = \pm \partial_x S_\varepsilon \varepsilon^{m+n} \left( \NLop(0) f \right) \widetilde\eta \pm \varepsilon^{m+1} \DN_\pm(0) S_\varepsilon(\widetilde\eta \partial_x f).
\end{split}
\end{equation}
Therefore
\be \label{expansion b_1 tilde}
\begin{split}
\widetilde{b}_1^\pm & = S_\varepsilon^{-1} \big[ \pm \partial_x \left( \DN_\pm(0)^{-1} \eta_\varepsilon^\prime + \left\langle \Diff \DN_\pm(0)^{-1} \eta_\varepsilon, \eta_\varepsilon^\prime \right\rangle \right) - 1 \big] + O(\varepsilon^{3m}) \\
& = \pm \varepsilon^m d_+ \left[ S_\varepsilon^{-1} \partial_x \DN_\pm(0)^{-1} \partial_x (S_\varepsilon \widetilde\eta) - S_\varepsilon^{-1} \partial_x \DN_\pm(0)^{-1} \left\langle \Diff\DN_\pm(0) \eta_\varepsilon, \DN_\pm(0)^{-1} \partial_x S_\varepsilon \widetilde\eta \right\rangle \right] \\
& \qquad - 1 + O(\varepsilon^{3m}) \\
& = \pm \varepsilon^{m + n} \NLop(0) \widetilde\eta - 1+ O(\varepsilon^{3m}) \\
& \qquad \mp \varepsilon^m d_+ S_\varepsilon^{-1} \partial_x \DN_\pm(0)^{-1} \left[ \pm \varepsilon^{m+n} \partial_x S_\varepsilon \left( \NLop(0) \widetilde\eta \right) \widetilde\eta \pm \varepsilon^{m+1} \DN_\pm(0) S_\varepsilon(\widetilde\eta \widetilde\eta^\prime) \right] \\
& = \pm \varepsilon^{m + n} \NLop(0) \widetilde\eta - 1 - \varepsilon^{2m+2n} \NLop(0) \left( \left( \NLop(0) \widetilde\eta \right) \widetilde\eta \right) - \varepsilon^{2m+2} \partial_x(\widetilde\eta \widetilde\eta^\prime) + O(\varepsilon^{3m}) \\
& = -1 \mp \varepsilon^m \frac{d_+}{d_\pm} \widetilde\eta - \varepsilon^{2m} \frac{d_+^2}{d_\pm^2} \widetilde\eta^2 + O(\varepsilon^{m+2}) \qquad \textrm{in } H^k.
\end{split}
\ee
Hence for the second term on the right-hand side of \eqref{diff op} we have
\begin{equation}\label{2nd diff op}
\begin{split}
- {\varepsilon^{m -n+2}} \sum_\pm \pm \frac{\rho_\pm}{\rho_-}  \widetilde{b}_1^\pm (\widetilde\eta^\prime \widetilde{b}_1^\pm)^\prime & = {\varepsilon^{m -n+2}}  (1 - \varrho) \widetilde\eta^{\prime\prime} + \varepsilon^{2m -n+2} \left( \varrho + \frac{1}{h} \right) \left[ 2 \widetilde\eta \widetilde\eta^{\prime\prime} + (\widetilde\eta^\prime)^2 \right] \\
& \quad + O(\varepsilon^{3m - n +2}) \qquad \qquad \textrm{in } \Lin{(H^{k+2},H^k)}.
\end{split}
\end{equation}

Using the expansion \eqref{expansion b_1 tilde} for $\tilde{b}_1^\pm$ also furnishes the estimate
\begin{equation} \label{bMb estimate}
\begin{split}
\widetilde{b}_1^\pm \NLop(\eta_\varepsilon) \widetilde{b}_1^\pm &= \NLop(\eta_\varepsilon) \pm \varepsilon^m \frac{d_+}{d_\pm} \left[ \widetilde\eta \NLop(\eta_\varepsilon) + \NLop(\eta_\varepsilon) \widetilde\eta \right]   \\
& \qquad  + \varepsilon^{2m} \frac{d_+^2}{d_\pm^2} \left[ \widetilde\eta \NLop(\eta_\varepsilon) \widetilde\eta + \widetilde\eta^2 \NLop(\eta_\varepsilon) + \NLop(\eta_\varepsilon) \widetilde\eta^2 \right]  + O(\varepsilon^{3m - n})
\end{split}
\end{equation}
in $\Lin(H^{k+2},H^k)$.
On the other hand, from the definition of $\NLop$ in \eqref{def NLop} it follows that for all $f \in H^{k+2}$ with $\| f \|_{H^{k+2}} = 1$, 
\begin{equation} \label{difference of Ms} 
\begin{split}
\left( \NLop(\eta_\varepsilon) - \NLop(0) \right) f & = \frac{d_+}{\varepsilon^n}  S_\varepsilon^{-1} \partial_x \left( \DN_\pm(\eta_\varepsilon)^{-1} - \DN_\pm(0)^{-1} \right) \partial_x S_\varepsilon f \\
& = \frac{d_+}{\varepsilon^n}  S_\varepsilon^{-1} \partial_x  \left\langle \Diff \DN_\pm(0)^{-1} \eta_\varepsilon, \partial_x S_\varepsilon f \right\rangle  \\
& \qquad + \frac{d_+}{2\varepsilon^n}  S_\varepsilon^{-1} \partial_x  \left\langle \Diff^2 \DN_\pm(0)^{-1} [\eta_\varepsilon, \eta_\varepsilon], \partial_x S_\varepsilon f \right\rangle  + O(\varepsilon^{3m - n}) \quad \textrm{in } H^k.
\end{split}
\end{equation}
Explicit calculation yields
\begin{align*}
\left\langle \Diff^2 \DN_\pm(0)^{-1} [\eta_\varepsilon, \eta_\varepsilon], f \right\rangle & = - \DN_\pm(0)^{-1} \left\langle \Diff^2\DN_\pm(0) [\eta_\varepsilon, \eta_\varepsilon], \DN_\pm(0)^{-1} f \right\rangle \\
& \qquad + 2 \DN_\pm(0)^{-1} \left\langle \Diff\DN_\pm(0) \eta_\varepsilon, \DN_\pm(0)^{-1} \left\langle \Diff\DN_\pm(0) \eta_\varepsilon, \DN_\pm(0)^{-1} f \right\rangle \right\rangle.
\end{align*}
From \eqref{DG compute} we have 
\begin{equation*}
\begin{split}
\left\langle \Diff\DN_\pm(0) \eta_\varepsilon, \right. & \left. \DN_\pm(0)^{-1} \left\langle \Diff\DN_\pm(0) \eta_\varepsilon, \DN_\pm(0)^{-1} \partial_x S_\varepsilon f \right\rangle \right\rangle \\
& = \partial_x S_\varepsilon \left[ \left( S_\varepsilon^{-1} \partial_x \DN_\pm(0)^{-1} \partial_x S_\varepsilon \varepsilon^{m+n} \left( \NLop(0) f \right) \widetilde\eta \right) d_+ \varepsilon^m \widetilde\eta \right] + O(\varepsilon^{2m + 1}) \\
& = \varepsilon^{2(m + n)} \partial_x S_\varepsilon \NLop(0) \left( \left( \NLop(0) f \right) \widetilde\eta \right) \widetilde\eta + O(\varepsilon^{2m + 1}) \qquad \textrm{in } H^k.
\end{split}
\end{equation*}
Likewise, Lemma \ref{D^2G formula lemma} allows us to estimate
\[
\left\langle \Diff^2\DN_\pm(0) [\eta_\varepsilon, \eta_\varepsilon],\, \DN_\pm(0)^{-1} f \right\rangle = O(\varepsilon^{2m + 1}) \qquad \textrm{in } H^k.
\]
Substituting the above into \eqref{difference of Ms} yields
\begin{equation}\label{difference NLop}
\begin{split}
\left( \NLop(\eta_\varepsilon) - \NLop(0) \right) f & = \mp \frac{d_+}{\varepsilon^n} S_\varepsilon^{-1} \partial_x \DN_\pm(0)^{-1} \partial_x S_\varepsilon \left( \varepsilon^{m+n} \NLop(0) f \right) \widetilde\eta \\
& \qquad + \frac{d_+}{\varepsilon^n} S_\varepsilon^{-1} \partial_x \DN_\pm(0)^{-1} \partial_x S_\varepsilon \left( \varepsilon^{2(m + n)} \NLop(0) \left( \NLop(0) f \right) \widetilde\eta \right) \widetilde\eta \\
& \qquad \mp \frac{d_+}{\varepsilon^n} S_\varepsilon^{-1} \partial_x S_\varepsilon \left( \varepsilon^{m+1} \widetilde\eta \partial_x f \right)  + O(\varepsilon^{2m-n + 1}) \\
& = \mp \varepsilon^{m+n} \NLop(0) \left( \left( \NLop(0) f \right) \widetilde\eta \right) \mp \varepsilon^{m-n + 2} \partial_x \left( \widetilde\eta \partial_x f \right) \\
& \qquad + \varepsilon^{2(m+n)} \NLop(0) \left( \NLop(0) \left( \NLop(0) f \right) \widetilde\eta \right) \widetilde\eta + O(\varepsilon^{2m-n+1}) \\ 
& = \mp \varepsilon^{m - n} \frac{d_+^2}{d_\pm^2} \widetilde\eta f \mp \varepsilon^{m-n+ 2} \partial_x \left( \widetilde\eta \partial_x f \right) - \varepsilon^{2m - n} \frac{d_+^3}{d_\pm^3} \widetilde\eta^2 f + O(\varepsilon^{2m-n+1}), 
\end{split}
\end{equation}
in $H^{k}$.  Using this, the previous estimate  \eqref{bMb estimate} becomes
\begin{equation*}
\begin{split}
\widetilde{b}_1^\pm \NLop(\eta_\varepsilon) \widetilde{b}_1^\pm f & = \NLop(0) f \pm \varepsilon^m \frac{d_+}{d_\pm} \widetilde\eta \NLop(0) f \pm \varepsilon^m \frac{d_+}{d_\pm} \NLop(0) \widetilde\eta f \\
& \qquad + \varepsilon^{2m} \frac{d_+^2}{d_\pm^2} \left[ \widetilde\eta \NLop(0) \widetilde\eta f + \widetilde\eta^2 \NLop(0) f + \NLop(0)\widetilde\eta^2 f  \right] \\
& \qquad \mp \varepsilon^{m-n} \frac{d_+^2}{d_\pm^2} \widetilde\eta f \mp \varepsilon^{m-n + 2} \partial_x \left( \widetilde\eta \partial_x f \right) - 3 \varepsilon^{2m-n} \frac{d_+^3}{d_\pm^3} \widetilde\eta^2 f  \\
& \qquad - \varepsilon^{2m-n + 2} \frac{d_+}{d_\pm} \left[\widetilde\eta \partial_x(\widetilde\eta \partial_x f) + \partial_x (\widetilde\eta \partial_x(\widetilde\eta f)) \right] + O(\varepsilon^{2m - n + 1}),
\end{split}
\end{equation*}
in $H^k$.  We can simplify further by applying \eqref{est tilde K(0)}, which results in
\begin{equation*}
\begin{split}
\widetilde{b}_1^\pm \NLop(\eta_\varepsilon) \widetilde{b}_1^\pm f &= \NLop(0) f \mp 3 \varepsilon^{m - n} \frac{d_+^2}{d_\pm^2} \widetilde\eta f - 6 \varepsilon^{2m - n} \frac{d_+^3}{d_\pm^3} \widetilde\eta^2 f \mp \varepsilon^{m-n + 2} \partial_x \left( \widetilde\eta \partial_x f \right)  \\
& \qquad + O(\varepsilon^{2m - n + 1}) \qquad \textrm{in } H^k.
\end{split}
\end{equation*}

Therefore in computing the third term on the right-hand side of \eqref{diff op} we find
\begin{equation*}
\begin{split}
\sum_\pm \frac{\rho_\pm}{\rho_-} \left( \widetilde{b}_1^\pm \NLop(\eta_\varepsilon) \widetilde{b}_1^\pm -  \NLop(0) \right) f & = 3\varepsilon^{m-n} \sum_\pm \frac{\mp \rho_\pm}{\rho_-} \frac{d_+^2}{d_\pm^2} \widetilde\eta f - 6 \varepsilon^{2m - n} \sum_\pm \frac{\rho_\pm}{\rho_-} \frac{d_+^3}{d_\pm^3} \widetilde\eta^2 f \\
& \qquad + \varepsilon^{m-n + 2} \sum_\pm \frac{\mp \rho_\pm}{\rho_-} \partial_x \left( \widetilde\eta \partial_x f \right) + O(\varepsilon^{2m - n + 1}) \\
& = - 3\varepsilon^{m-n} \left( \varrho - \frac{1}{h^2} \right) \widetilde\eta f - 6 \varepsilon^{2m-n} \left( \varrho + \frac{1}{h^3} \right) \widetilde\eta^2 f \\
& \qquad + \varepsilon^{m-n + 2} (1 - \varrho) \partial_x \left( \widetilde\eta \partial_x f \right) + O(\varepsilon^{2m - n + 1}),
\end{split}
\end{equation*}
in $H^k$.  Taken together with \eqref{1st diff op} and \eqref{2nd diff op}, this yields the claimed expansion for $\widetilde{R}_\varepsilon$.
\end{proof}

Let us now look more closely at the leading-order part of $\widetilde{Q}_\varepsilon(\eta_\varepsilon)$, which by the above lemma is the Fourier multiplier $\widetilde{Q}_\varepsilon(0)$.  Analyzing its symbol will allow us to infer that it has a point-wise limit as $\varepsilon \searrow 0$.  Near the critical Bond number, however, there is a degeneracy that causes the limiting operator to be fourth order.  Combining this with the previous result, we obtain the following.

\begin{lemma}[Limiting rescaled operator]\label{lem limit rescaled}
Consider the rescaled operator $\widetilde{Q}_{\varepsilon}(\eta_\varepsilon)$ given by \eqref{definition rescaled operator}.  
\begin{enumerate}[label=\rm(\alph*)]
\item \label{scaled Q in region A} Suppose that $\beta > \beta_0$ and $\lambda = \lambda_0 + \varepsilon^2$ lies in Region~A.  Then for any $k > 1/2$ and $\zeta \in H^{k+2}$, 
\[
\| \widetilde{Q}_\varepsilon(\eta_\varepsilon) \zeta - \widetilde{Q}_0 \zeta \|_{H^k} \longrightarrow 0 \qquad \textrm{as } \varepsilon \searrow 0,
\]
where the operator $\tilde{Q}_0 \in \Lin(H^{k+2},H^k)$ is given by 
\begin{equation*} 
\widetilde{Q}_{0} =
\left\{
\begin{aligned}
& \displaystyle  - (\beta - \beta_0) \partial_x^2 +1 - 3 \left( \varrho - \frac{1}{h^2} \right) \widetilde\eta & \qquad & \textrm{for } \mathscr{C}_\beta^{\mathrm{A}} \\
& \displaystyle  - (\beta - \beta_0) \partial_x^2+1 - 3 \kappa \widetilde\eta - 6 \left( \varrho + \frac{1}{h^3} \right) \widetilde\eta^2 & \qquad & \textrm{for } \mathscr{C}_{\beta;\kappa,\pm}^{\mathrm{A}}.
\end{aligned}
\right.
\end{equation*}
 
\item \label{scaled Q in region C} Suppose that $(\beta,\lambda)$ lie in Region~C and are given by \eqref{region C lambda beta} for a fixed $\delta < 0$.  Then for any $k > 1/2$ and $\zeta \in H^{k+4}$,
\[
\| \widetilde{Q}_\varepsilon(\eta_\varepsilon) \zeta - \widetilde{Q}_0 \zeta \|_{H^k} \longrightarrow 0 \qquad \textrm{as } \varepsilon \searrow 0,
\]
where the operator $\widetilde{Q}_0 \in \Lin(H^{k+4},H^k)$ is given by
\begin{equation*}
\widetilde{Q}_0 = 
\left\{
\begin{aligned}
& \displaystyle \gammaC \partial_x^4 - 2(1 + \delta) \gammaC \partial_x^2 + \gammaC - 3 \left( \varrho - \frac{1}{h^2} \right) \widetilde\eta &  & \textrm{for } \mathscr{C}_{\beta,\delta}^{\mathrm{C}} \\
&
\begin{aligned}
& \gammaC \partial_x^4 - 2(1 + \delta) \gammaC \partial_x^2 + \gammaC - 3 \kappa \widetilde\eta - 6 \left( \varrho + \frac{1}{h^3} \right) \widetilde\eta^2 + (1 - \varrho) \left( \partial_x(\widetilde\eta \partial_x) + \widetilde\eta^{\prime\prime} \right)
\end{aligned}
&  & \textrm{for } \mathscr{C}_{\beta;\kappa,\delta,\pm}^{\mathrm{C}}.
\end{aligned}\right.
\end{equation*}
\end{enumerate}
\end{lemma}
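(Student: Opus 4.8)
The plan is to read the conclusion off of the operator expansion in Lemma~\ref{R asymptotics lemma} together with a direct symbol analysis of the leading, constant-coefficient piece $\widetilde{Q}_\varepsilon(0)$. By Lemma~\ref{R asymptotics lemma} we may write $\widetilde{Q}_\varepsilon(\eta_\varepsilon) = \widetilde{Q}_\varepsilon(0) + \widetilde{R}_\varepsilon$, and the displayed expansions show that $\widetilde{R}_\varepsilon$ converges, in the operator norm of $\Lin(H^{k+2},H^k)$, precisely to the variable-coefficient part of the claimed limit $\widetilde{Q}_0$ --- that is, to multiplication by $-3(\varrho - 1/h^2)\widetilde\eta$ for $\mathscr{C}_\beta^{\mathrm{A}}$ and $\mathscr{C}_{\beta,\delta}^{\mathrm{C}}$, and to $-3\kappa\widetilde\eta - 6(\varrho + 1/h^3)\widetilde\eta^2$ (plus the second-order term $(1-\varrho)\big(\partial_x(\widetilde\eta\partial_x) + \widetilde\eta^{\prime\prime}\big)$ in the resonant Region~C case) for the remaining families. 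Consequently $\widetilde{R}_\varepsilon\zeta \to (\cdots)\zeta$ in $H^k$ for every $\zeta \in H^{k+2}$ (resp. $H^{k+4}$). The lemma therefore reduces to showing that the Fourier multiplier $\widetilde{Q}_\varepsilon(0)$ converges strongly, on $H^{k+n}$, to the constant-coefficient operator $\widetilde{Q}_0^{(0)} := -(\beta-\beta_0)\partial_x^2 + 1$ when $n = 2$ (Region~A) and $\widetilde{Q}_0^{(0)} := \gammaC\partial_x^4 - 2(1+\delta)\gammaC\partial_x^2 + \gammaC$ when $n = 4$ (Region~C).

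For this I would use the explicit form \eqref{def Q0} of $\widetilde{Q}_\varepsilon(0)$ and the symbol \eqref{symbol K0} of $\NLop(0)$. Writing $F(t) := t\coth t$ (which is smooth, even, and grows like $|t|$ at infinity), the Fourier symbol of $\widetilde{Q}_\varepsilon(0)$ is
\[
\frac{1}{\varepsilon^n}\left( \varepsilon^2 \beta \xi^2 + \lambda - \sum_\pm \frac{\rho_\pm}{\rho_-}\,\frac{d_+}{d_\pm}\, F\!\left(\frac{d_\pm \varepsilon \xi}{d_+}\right) \right).
\]
I would Taylor expand $F$ about the origin to the relevant order, writing $F(t) = 1 + \tfrac{t^2}{3} + G_2(t)$ in Region~A and $F(t) = 1 + \tfrac{t^2}{3} - \tfrac{t^4}{45} + G_4(t)$ in Region~C, and record the elementary pointwise bounds $|G_2(t)| \lesssim |t|^2\min(t^2,1)$ and $|G_4(t)| \lesssim |t|^4\min(t^2,1)$, valid for all real $t$. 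Substituting, and inserting the algebraic identities
\[
\sum_\pm \frac{\rho_\pm}{\rho_-}\,\frac{d_+}{d_\pm} = \lambda_0, \qquad \frac{1}{3 d_+}\sum_\pm \frac{\rho_\pm}{\rho_-}\, d_\pm = \beta_0, \qquad \frac{1}{45 d_+^3}\sum_\pm \frac{\rho_\pm}{\rho_-}\, d_\pm^3 = \gammaC
\]
(the first two immediate from \eqref{critical parameters}, the last identifying $\gammaC$ with the quartic coefficient in the small-$\xi$ expansion of the dispersion relation near $\Gamma_2$), together with the scaling relations $\lambda - \lambda_0 = \varepsilon^2$ in Region~A and $\lambda - \lambda_0 = \gammaC\varepsilon^4$, $\beta - \beta_0 = 2(1+\delta)\gammaC\varepsilon^2$ in Region~C, one finds after cancellation of all the would-be singular contributions that the symbol of $\widetilde{Q}_\varepsilon(0)$ equals that of $\widetilde{Q}_0^{(0)}$ plus the remainder
\[
E_\varepsilon(\xi) := -\frac{1}{\varepsilon^n}\sum_\pm \frac{\rho_\pm}{\rho_-}\,\frac{d_+}{d_\pm}\, G_n\!\left(\frac{d_\pm \varepsilon \xi}{d_+}\right),
\]
with $G_n$ equal to $G_2$ or $G_4$ as appropriate.

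The remainder bounds give $|E_\varepsilon(\xi)| \lesssim |\xi|^n\min(\varepsilon^2\xi^2,1) \le \langle\xi\rangle^n$ uniformly in $\varepsilon \in (0,1)$, while $E_\varepsilon(\xi) \to 0$ as $\varepsilon \searrow 0$ for each fixed $\xi$. Hence, for $\zeta \in H^{k+n}$,
\[
\| \widetilde{Q}_\varepsilon(0)\zeta - \widetilde{Q}_0^{(0)}\zeta \|_{H^k}^2 = \int_{\R} \langle\xi\rangle^{2k}\, |E_\varepsilon(\xi)|^2\, |\widehat\zeta(\xi)|^2 \,\diff\xi \le C\int_{\R} \langle\xi\rangle^{2k+2n}\, |\widehat\zeta(\xi)|^2 \,\diff\xi < \infty,
\]
and the integrand tends pointwise to $0$, so the dominated convergence theorem forces the left-hand side to $0$. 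Combining this with the convergence of $\widetilde{R}_\varepsilon\zeta$ yields $\widetilde{Q}_\varepsilon(\eta_\varepsilon)\zeta \to \widetilde{Q}_0\zeta$ in $H^k$, which is the assertion in both parts.

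The main obstacle is that this convergence is genuinely only strong, not in operator norm: because $\coth$ grows linearly, $E_\varepsilon(\xi)$ is merely $O(\langle\xi\rangle^n)$ and does not vanish uniformly in $\xi$ as $\varepsilon \searrow 0$. This is exactly why the statement is cast as convergence on the smaller space $H^{k+n}$; the extra derivatives in the hypothesis $\zeta \in H^{k+n}$ are precisely what make the dominating integral finite, and they are used essentially in the dominated-convergence step. A secondary point requiring care is the verification of the algebraic identities above, in particular the one for $\gammaC$, which is where the near-$\Gamma_2$ structure of Region~C is used and which guarantees that the limiting operator is exactly fourth order with the stated coefficients. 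Finally, should the ratios $\varrho, h$ vary with $\varepsilon$ along the chosen family, one tracks this through coefficients that are continuous at $\varepsilon = 0$, so it contributes only terms of the same order as those already absorbed into $E_\varepsilon$ and into the remainder of Lemma~\ref{R asymptotics lemma}.
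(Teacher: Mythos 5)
Your proposal is correct and takes essentially the same route as the paper: the decomposition $\widetilde{Q}_\varepsilon(\eta_\varepsilon) = \widetilde{Q}_\varepsilon(0) + \widetilde{R}_\varepsilon$ from Lemma~\ref{R asymptotics lemma}, followed by a Taylor expansion of the Fourier symbol of $\widetilde{Q}_\varepsilon(0)$ in $\varepsilon\xi$ using the scaling of $(\beta,\lambda)$ in each region, and passage to the pointwise limit of the symbol. Your explicit remainder bounds and the dominated-convergence step simply make precise the strong (non-uniform) convergence that the paper records as a pointwise limit of symbols, so there is no substantive difference in approach.
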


\begin{proof}
Fix $k > 1/2$.  Recall that $\widetilde{Q}_{\varepsilon}(\eta_\varepsilon) = \widetilde{Q}_{\varepsilon}(0) + \widetilde{R}_\varepsilon$, where $\widetilde{Q}_{\varepsilon}(0)$ is given in \eqref{def Q0}. We have already seen in Lemma~\ref{R asymptotics lemma} that $\widetilde{R}_\varepsilon$ has a uniform limit in $\Lin(H^{k+2},H^k)$ as $\varepsilon \searrow 0$.  From \eqref{symbol K0}, it is clear that  $\widetilde{Q}_{\varepsilon}(0)$ is a Fourier multiplier: for all $f \in H^{k+2}$, 
\begin{equation*}
\mathcal{F}\left( \widetilde{Q}_{\varepsilon}(0) f \right) (\xi) = \frac{1}{\varepsilon^n} \left( \varepsilon^{2} \beta \xi^2 + \lambda - \sum_\pm \frac{\rho_\pm}{\rho_-} {\varepsilon \xi}\coth{\left(\frac{d_\pm}{d_+} \varepsilon \xi\right)} \right) \widehat{f}(\xi) =: \widetilde{\symbq}_{\varepsilon}(\xi) \widehat{f}(\xi).
\end{equation*}
Consider the point-wise limit of the symbol $\widetilde{\symbq}_{\varepsilon}$ as $\varepsilon \searrow 0$.  Here it is important to keep in mind that the dimensional parameters are moving along the curve $\{\Pi_\varepsilon\}$ and $\lambda \searrow \lambda_0$ in this limit.  Therefore, we write
\begin{equation*}
\begin{split}
\widetilde{\symbq}_{\varepsilon}(\xi) & = \varepsilon^{2-n} (\beta - \beta_0) \xi^2 + \frac{\lambda - \lambda_0}{\varepsilon^n} + \frac{1}{\varepsilon^n} \left( \beta_0 (\varepsilon \xi)^2 + \lambda_0 - \sum_\pm \frac{\rho_\pm}{\rho_-} \varepsilon \xi \coth{\left(\frac{d_\pm}{d_+} \varepsilon \xi\right)} \right) \\
& =: \varepsilon^{2-n} (\beta - \beta_0) \xi^2 + \frac{\lambda - \lambda_0}{\varepsilon^n} + \frac{\symbr(\varepsilon \xi)}{\varepsilon^n}.
\end{split}
\end{equation*}
Taylor expanding $\symbr$ near $\widetilde \xi := \varepsilon \xi = 0$ yields that
\begin{equation}\label{expansion r}
\begin{split}
\symbr(\widetilde\xi) = \beta_0 \widetilde\xi^2 + \lambda_0 - \sum_\pm \frac{\rho_\pm}{\rho_-}  \widetilde\xi \coth{\left(\frac{d_\pm}{d_+} \widetilde \xi \right)} = \gammaC \widetilde{\xi}^4 + O(\widetilde{\xi}^6) \quad \text{as }\ \widetilde\xi \to 0.
\end{split}
\end{equation}

For Region~A, we have $n = 2$ and $\lambda = \lambda_0 + \varepsilon^2$, and hence for each fixed $\xi \in \mathbb{R}$, 
\[ \widetilde{\symbq}_\varepsilon(\xi) \longrightarrow (\beta-\beta_0) \xi^2 + 1 \qquad \textrm{as } \varepsilon \searrow 0.\]
On the other hand, in Region~C we have $n = 4$ with $(\beta,\lambda)$ given by \eqref{region C lambda beta}.  Again, fixing $\xi$ we then have that the limiting symbol is
\[
\widetilde{\symbq}_\varepsilon(\xi) \longrightarrow \gammaC \xi^4 +  2(1+\delta) \gamma \xi^2 + \gamma \qquad \textrm{as } \varepsilon \searrow 0.
\]
Combining these expressions for the limiting symbol with the asymptotics of $\widetilde{R}_\varepsilon$ from \eqref{expansion R region A} and \eqref{expansion R region C}, the formulas for $\widetilde{Q}_0$ in \ref{scaled Q in region A} and \ref{scaled Q in region C} now follow.  
\end{proof}

\subsection{Spectrum of the linearized augmented potential} \label{spectrum linearized augV section}
Using the limiting behavior derived above, we will now characterize the spectrum of the $Q_\varepsilon(\eta_\varepsilon)$.  It is worth reiterating that an essential challenge in this analysis is that the operator converges point-wise to $Q_0(0)$ whose essential spectrum is $[0, \infty)$.  It is for this reason that we introduced the rescaled operator $\widetilde Q_\varepsilon(\eta_\varepsilon)$, since by Lemma \ref{lem limit rescaled} converges (again only point-wise) to $\widetilde Q_0$, which has a gap between the positive essential spectrum and $0$. 

\subsubsection*{Spectral analysis in Region~A}\label{subsubsec spectral A}
We start by deriving the spectral properties of $Q_\varepsilon(\eta_\varepsilon)$ for the strong surface tension waves with parameters $(\beta, \lambda)$ in Region~A.

\begin{lemma}\label{lem kdvspec}
In the setting of Lemma \ref{lem limit rescaled} \ref{scaled Q in region A}, the limiting rescaled operator $\widetilde{Q}_0$ satisfies
\begin{equation}\label{spec assump}
\essspectrum{\widetilde Q_0} = [1, \infty), \qquad \spectrum{\widetilde Q_0}= \{ -\widetilde \nu^2,\, 0 \} \cup \widetilde \Lambda
\end{equation}
where the first two eigenvalues $-\widetilde \nu^2 < 0$ and $0$ are both simple with corresponding eigenfuctions $\widetilde \phi_1$ and $\widetilde \phi_2 = \widetilde \eta^\prime$, respectively; and there exists $\nu_* > 0$ such that $\widetilde \Lambda \subset [\nu_*, \infty)$.
\end{lemma}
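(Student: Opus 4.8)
The plan is to recognize that, in both cases covered by Lemma~\ref{lem limit rescaled}\ref{scaled Q in region A}, the operator $\widetilde{Q}_0$ coincides with the linearization of a steady KdV or Gardner model equation about its solitary-wave profile $\widetilde\eta$, and then to extract the full spectrum from classical Sturm--Liouville oscillation theory for one-dimensional Schr\"odinger operators on the line. Write $\widetilde{Q}_0 = -(\beta-\beta_0)\partial_x^2 + 1 + W$, where $\beta - \beta_0 > 0$ and $W$ is multiplication by the smooth, exponentially decaying function $-3(\varrho - 1/h^2)\widetilde\eta$ for $\mathscr{C}_\beta^{\mathrm{A}}$, respectively $-3\kappa\widetilde\eta - 6(\varrho + 1/h^3)\widetilde\eta^2$ for $\mathscr{C}_{\beta;\kappa,\pm}^{\mathrm{A}}$. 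The explicit leading-order profiles \eqref{region A KdV scaling}--\eqref{region A Gardner scaling} show that $\widetilde\eta$ is even, smooth, single-signed, exponentially localized, and strictly monotone on each half-line.

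First I would dispatch the essential spectrum and self-adjointness. The operator $-(\beta-\beta_0)\partial_x^2 + 1$ is self-adjoint on $H^2(\R)$ with purely essential spectrum $[1,\infty)$, and $W$ is a bounded, relatively compact perturbation (its multiplier decays at spatial infinity while $(1-\partial_x^2)^{-1}$ decays at frequency infinity, so $W(1-\partial_x^2)^{-1}$ is compact, with relative bound $0$). Hence $\widetilde{Q}_0$ is self-adjoint on $H^2(\R)$ and, by Weyl's theorem, $\essspectrum{\widetilde{Q}_0} = [1,\infty)$. In particular the spectrum below $1$ consists of isolated eigenvalues of finite multiplicity; since $\widetilde{Q}_0$ is of Schr\"odinger type in one dimension, each such eigenvalue is simple (two $L^2$ eigenfunctions for the same eigenvalue would have constant --- hence vanishing --- Wronskian, so they are proportional).

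Next I would exhibit the kernel and run the oscillation count. Since $\widetilde\eta$ solves the leading-order steady equation whose linearization at $\widetilde\eta$ is $\widetilde{Q}_0$ --- in the KdV case $-(\beta-\beta_0)\widetilde\eta^{\prime\prime} + \widetilde\eta - \tfrac{3}{2}(\varrho-1/h^2)\widetilde\eta^2 = 0$, and in the Gardner case its cubic analogue --- differentiating in $x$ gives $\widetilde{Q}_0\widetilde\eta^\prime = 0$ (alternatively, this is an elementary direct computation using the closed forms of $\widetilde{Q}_0$ from Lemma~\ref{lem limit rescaled} and of $\widetilde\eta$); as $\widetilde\eta^\prime$ is smooth and exponentially decaying, it lies in $H^2(\R)$, so $0$ is an eigenvalue with eigenfunction $\widetilde\phi_2 := \widetilde\eta^\prime$. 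Because $\widetilde\eta$ is even and strictly monotone on each half-line, $\widetilde\eta^\prime$ vanishes exactly once, at $x = 0$. By the global Sturm oscillation theorem for operators $-a\partial_x^2 + V$ on $\R$ with $a > 0$ and $V$ bounded with $V \to 1$ at $\pm\infty$, the eigenvalues below $1$ can be listed $\mu_0 < \mu_1 < \mu_2 < \cdots$, all simple, with the eigenfunction of $\mu_n$ having exactly $n$ zeros. Thus $\widetilde\eta^\prime$ is the eigenfunction of $\mu_1$, so $\mu_1 = 0$ is simple, there is exactly one eigenvalue $\mu_0 < 0$ --- write $\mu_0 =: -\widetilde\nu^2$ with $\widetilde\nu > 0$ --- simple, with sign-definite eigenfunction $\widetilde\phi_1$, and every remaining point of $\spectrum{\widetilde{Q}_0}$ is either some $\mu_j$ with $j \ge 2$ (so $\mu_j > 0$) or lies in $[1,\infty)$. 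Setting $\widetilde\Lambda := \spectrum{\widetilde{Q}_0} \setminus \{-\widetilde\nu^2, 0\}$ and $\nu_* := \inf\widetilde\Lambda$, we get $\nu_* > 0$, since the eigenvalues in $(0,1)$ (if any) are isolated and bounded below by $\mu_2 > 0$ while the rest of $\widetilde\Lambda$ lies in $[1,\infty)$. This is precisely \eqref{spec assump}.

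The main obstacle is verifying that $\widetilde\eta$ genuinely satisfies the leading-order profile ODE whose linearization matches the formula for $\widetilde{Q}_0$ supplied by Lemma~\ref{lem limit rescaled} --- in the KdV case this is the elementary identity for $\sech^2$ profiles, but in the Gardner case it requires matching the coefficients of the profile equation against \eqref{Z2 ODE} and the Gardner ansatz, so that $\widetilde\eta^\prime$ truly lies in $\ker\widetilde{Q}_0$ --- together with invoking the Sturm oscillation theorem in the right generality for a whole-line operator whose potential tends to a nonzero constant. By contrast, the evenness and strict monotonicity of $\widetilde\eta$, which drive the whole count, are immediate from the closed-form expressions in Theorem~\ref{region A existence theorem}; these would be the crux only if the profiles were known merely implicitly.
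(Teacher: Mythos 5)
Your proof is correct and takes essentially the same route as the paper, whose own proof simply cites this as a classical result on one-dimensional Schr\"odinger operators (referring to Angulo Pava's book) and adds the Wronskian argument for simplicity of the eigenvalues. Your write-up merely supplies the standard details that the paper delegates to the reference: Weyl's theorem for the essential spectrum, the identification of $\widetilde\eta^\prime$ as the kernel element via the translated profile equation, and the Sturm oscillation count giving exactly one negative eigenvalue.
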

\begin{proof}
This is a classical result on linear Schr\"odinger operators, and can be found, for example, in \cite{AnguloPavabook}. The fact that $-\widetilde \nu^2$ and $0$ are all simple follows from the theory of ODEs: the Wronskian of two $L^2$ solutions to the eigenvalue problem $\widetilde Q_0 f = \widetilde\nu f$ is necessarily $0$. 
\end{proof}

Using a similar argument as \cite[Theorem 4.3]{mielke2002energetic}, we then have the following result. 
\begin{theorem}[Spectrum in Region A] \label{thm Qspectrum}
Let the assumptions of Lemma \ref{lem limit rescaled} \ref{scaled Q in region A} hold. For each $a \in (0, \nu_*)$ there exists some $\varepsilon_0 > 0$ such that for all $\varepsilon \in (0, \varepsilon_0)$ the operator $Q_\varepsilon(\eta_\varepsilon)$ satisfies
\[
\essspectrum{Q_\varepsilon(\eta_\varepsilon)} \subset [\varepsilon^2 c^2 \rho_-/d_+, \infty),  \qquad   \spectrum{Q_\varepsilon(\eta_\varepsilon)} = \{ - \nu^2, \, 0\} \cup \Lambda,
\]
where $\Lambda \subset [a\varepsilon^2 c^2 \rho_-/d_+, \infty)$, and 
\[
\nu^2 = \frac{\varepsilon^2 c^2 \rho_-}{d_+}\widetilde \nu^2 + o(\varepsilon^2) \qquad \textrm{as } \ \varepsilon \searrow 0.
\]
The first two eigenvalues $\nu_1 := -\nu^2 < 0$ and $\nu_2 := 0$ are simple with the associated eigenfunctions taking the form $\phi_i = S_\varepsilon \widetilde \phi_i + o(1)$ in $H^{k}$ as $\varepsilon \searrow 0$. 
\end{theorem}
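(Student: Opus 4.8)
\emph{Reduction by conjugation.} The plan is to transfer everything to the rescaled operator $\widetilde{Q}_\varepsilon(\eta_\varepsilon)$ of \eqref{definition rescaled operator}, where the limiting behaviour from Lemma~\ref{lem limit rescaled}\ref{scaled Q in region A} is available. Since $S_\varepsilon^{-1}=(\varepsilon/d_+)S_\varepsilon^*$ on $L^2(\R)$, the operator $\widetilde{Q}_\varepsilon(\eta_\varepsilon)=\varepsilon^{-2}\tfrac{d_+}{c^2\rho_-}S_\varepsilon^{-1}Q_\varepsilon(\eta_\varepsilon)S_\varepsilon$ is self-adjoint on $L^2(\R)$ with domain $H^2(\R)$ and is similar to the self-adjoint operator $\varepsilon^{-2}\tfrac{d_+}{c^2\rho_-}Q_\varepsilon(\eta_\varepsilon)$; hence $\spectrum{Q_\varepsilon(\eta_\varepsilon)}=\tfrac{\varepsilon^2 c^2\rho_-}{d_+}\spectrum{\widetilde{Q}_\varepsilon(\eta_\varepsilon)}$ and likewise for the essential spectrum, with equal multiplicities and with eigenfunctions corresponding under $S_\varepsilon$. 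So it suffices to show: for each $a\in(0,\nu_*)$ there is $\varepsilon_0>0$ so that for $0<\varepsilon<\varepsilon_0$ one has $\essspectrum{\widetilde{Q}_\varepsilon(\eta_\varepsilon)}\subset[1,\infty)$, while $\spectrum{\widetilde{Q}_\varepsilon(\eta_\varepsilon)}\cap(-\infty,a)$ consists of exactly two simple eigenvalues $-\widetilde\nu_\varepsilon^2\to-\widetilde\nu^2$ and $0$, with $L^2$-normalized eigenfunctions converging in $H^k$ to $\widetilde\phi_1$ and $\widetilde\phi_2=\widetilde\eta'$. The essential spectrum is immediate from Lemma~\ref{cont spec lemma} (inserting $\lambda=\lambda_0+\varepsilon^2$ and \eqref{dimensionless parameters}): $\essspectrum{Q_\varepsilon(\eta_\varepsilon)}\subset[\varepsilon^2 c^2\rho_-/d_+,\infty)$, which rescales to $[1,\infty)$. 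Equivalently, the Fourier multiplier $\widetilde{Q}_\varepsilon(0)$ of \eqref{def Q0} has symbol $\widetilde{\symbq}_\varepsilon(\xi)=(\beta-\beta_0)\xi^2+1+\varepsilon^{-2}\symbr(\varepsilon\xi)\ge(\beta-\beta_0)\xi^2+1$, so that $\widetilde{Q}_\varepsilon(0)\ge1+(\beta-\beta_0)(-\partial_x^2)$ uniformly in $\varepsilon$ --- a uniform ellipticity I will use repeatedly.

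\emph{Counting the eigenvalues below $a$.} Fix $a\in(0,\nu_*)$; by Lemma~\ref{lem kdvspec}, $a\notin\spectrum{\widetilde{Q}_0}$ and $\spectrum{\widetilde{Q}_0}\cap(-\infty,a)=\{-\widetilde\nu^2,0\}$. By Lemma~\ref{R asymptotics lemma}, $\widetilde{R}_\varepsilon:=\widetilde{Q}_\varepsilon(\eta_\varepsilon)-\widetilde{Q}_\varepsilon(0)$ splits as a bounded multiplication operator $(\widetilde R_\varepsilon)_0$ converging in $L^\infty$ to $-3(\varrho-1/h^2)\widetilde\eta$ plus a second-order operator $(\widetilde R_\varepsilon)_2$ of size $O(\varepsilon^2)$; with the uniform ellipticity this gives $\widetilde{Q}_\varepsilon(\eta_\varepsilon)\ge-C$ and, on any subspace where the form of $\widetilde{Q}_\varepsilon(\eta_\varepsilon)$ is $<a\|\cdot\|_{L^2}^2$, a uniform bound $\|\cdot\|_{H^1}\lesssim\|\cdot\|_{L^2}$. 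For the \emph{upper bound} I would argue by contradiction: if, along $\varepsilon_j\searrow0$, there were a three-dimensional subspace of such low-energy states, an $L^2$-orthonormal basis would be $H^1$-bounded, hence (after excluding escape of mass to infinity, where the form of $\widetilde{Q}_\varepsilon(0)$ dominates and exceeds $a$) precompact in $L^2$; in the limit, using $(\widetilde R_{\varepsilon_j})_0\to-3(\varrho-1/h^2)\widetilde\eta$, $(\widetilde R_{\varepsilon_j})_2\to0$, and weak lower semicontinuity of the form, one lands a three-dimensional subspace on which the form of $\widetilde{Q}_0$ is $\le a\|\cdot\|^2$ --- contradicting min--max and Lemma~\ref{lem kdvspec}. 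For the \emph{lower bound}, differentiating the translation invariance $\augV(T(s)\eta)=\augV(\eta)$ twice at the critical point $\eta_\varepsilon$ yields $Q_\varepsilon(\eta_\varepsilon)\eta_\varepsilon'=0$, which after the rescalings \eqref{definition rescaled operator}, \eqref{definition rescaled profile} becomes $\widetilde{Q}_\varepsilon(\eta_\varepsilon)(\widetilde\eta'+\widetilde\etaerror_\varepsilon')=0$; thus $0$ is an exact eigenvalue with eigenfunction $\widetilde\eta'+\widetilde\etaerror_\varepsilon'\to\widetilde\eta'$ in $H^k$ (using the uniform smoothness and decay of the bound states from Corollary~\ref{KdV bound state corollary}). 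Moreover $\widetilde\phi_1\in H^{k+2}$ is an approximate eigenvector: $\|(\widetilde{Q}_\varepsilon(\eta_\varepsilon)+\widetilde\nu^2)\widetilde\phi_1\|_{L^2}\le\|(\widetilde{Q}_\varepsilon(\eta_\varepsilon)-\widetilde{Q}_0)\widetilde\phi_1\|_{H^k}\to0$ by Lemma~\ref{lem limit rescaled}\ref{scaled Q in region A}, and since $-\widetilde\nu^2<0$ is uniformly separated from $\essspectrum{\widetilde{Q}_\varepsilon(\eta_\varepsilon)}\subset[1,\infty)$, for small $\varepsilon$ there is a negative eigenvalue within $o(1)$ of $-\widetilde\nu^2$. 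Together the two bounds force exactly two eigenvalues in $(-\infty,a)$ for small $\varepsilon$; both must then be simple, and the negative one is $-\widetilde\nu_\varepsilon^2=-\widetilde\nu^2+o(1)$.

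\emph{Convergence of the ground state and transfer back.} For the eigenfunction $\widetilde\phi_1^\varepsilon$ at $-\widetilde\nu_\varepsilon^2$, rewriting the eigenvalue equation as $\big(\widetilde{Q}_\varepsilon(0)+(\widetilde R_\varepsilon)_2\big)\widetilde\phi_1^\varepsilon=\big(-\widetilde\nu_\varepsilon^2-(\widetilde R_\varepsilon)_0\big)\widetilde\phi_1^\varepsilon$, with the left-hand operator uniformly elliptic of order two and invertible, elliptic bootstrapping bounds $\{\widetilde\phi_1^\varepsilon\}$ uniformly in every $H^j$, and a standard Agmon-type estimate (valid since $-\widetilde\nu_\varepsilon^2$ stays away from the essential spectrum and the coefficients decay uniformly) gives uniform exponential localization, hence precompactness in $H^k$. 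Any $H^k$-limit is a unit $L^2$-norm eigenfunction of $\widetilde{Q}_0$ at $-\widetilde\nu^2$, so equals $\pm\widetilde\phi_1/\|\widetilde\phi_1\|_{L^2}$ by simplicity; thus $\widetilde\phi_1^\varepsilon\to\widetilde\phi_1/\|\widetilde\phi_1\|_{L^2}$ in $H^k$. Undoing the conjugation then yields $\spectrum{Q_\varepsilon(\eta_\varepsilon)}=\{-\nu^2,0\}\cup\Lambda$ with $\Lambda\subset[a\varepsilon^2 c^2\rho_-/d_+,\infty)$, $\nu^2=\tfrac{\varepsilon^2 c^2\rho_-}{d_+}\widetilde\nu^2+o(\varepsilon^2)$, and $\phi_i=S_\varepsilon\widetilde\phi_i+o(1)$ in $H^k$. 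The Gardner branches $\mathscr{C}_{\beta,\kappa,\pm}^{\mathrm{A}}$ are handled identically, with $\widetilde\eta$ the rescaled Gardner profile and $\widetilde{Q}_0$ as in Lemma~\ref{lem limit rescaled}\ref{scaled Q in region A}.

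\emph{Main obstacle.} The hard part is that the convergence $\widetilde{Q}_\varepsilon(\eta_\varepsilon)\to\widetilde{Q}_0$ from Lemma~\ref{lem limit rescaled} is only strong --- uniform merely on the fixed scale $\Lin(H^{k+2},H^k)$ --- and not norm-resolvent on $L^2$, since the symbol $\widetilde{\symbq}_\varepsilon$ grows like $\varepsilon^{-2}$ at frequencies $|\xi|\sim1/\varepsilon$; thus Kato--Rellich perturbation theory does not apply and one cannot simply track eigenvalues. Following Mielke~\cite{mielke2002energetic}, the remedy is to lean on the uniform ellipticity $\widetilde{Q}_\varepsilon(0)\ge1+(\beta-\beta_0)(-\partial_x^2)$, which both keeps $\essspectrum{\widetilde{Q}_\varepsilon(\eta_\varepsilon)}$ uniformly above the discrete part and controls $H^1$-norms of low-energy states, together with the uniform exponential decay of the low-energy eigenfunctions that this entails, which restores the compactness otherwise lost in passing to the limit. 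An alternative that makes the perturbation genuinely norm-convergent is to recast the count via the Birman--Schwinger operator $-(\widetilde{Q}_\varepsilon(0)-a)^{-1/2}\widetilde{R}_\varepsilon(\widetilde{Q}_\varepsilon(0)-a)^{-1/2}$, whose high-frequency behaviour is tamed by the sandwiching resolvents and which converges in operator norm to a compact limit, and to count its eigenvalues exceeding $1$.
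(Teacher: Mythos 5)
Your proposal is correct and follows essentially the same route as the paper: conjugate to the rescaled operator, use the uniform ellipticity of $\widetilde{Q}_\varepsilon(0)$ together with approximate eigenvectors for the lower bound on the eigenvalue count, and recover compactness of low-energy eigenfunctions via elliptic bootstrapping plus a weighted (exponential/Agmon-type) estimate to pass to the limit $\widetilde{Q}_0$ and identify the limits by simplicity. The only differences are cosmetic — the paper counts via spectral projections rather than a min--max contradiction on a three-dimensional subspace, and works with the intermediate operator $\widetilde{Q}_\varepsilon(0)+\widetilde{R}_0$ — while your Birman--Schwinger remark is an optional alternative, not part of the main argument.
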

\begin{proof}
From Lemma \ref{R asymptotics lemma} we see that it suffices to prove that the operator
\[
\mathcal{Q}_\varepsilon := \widetilde Q_\varepsilon(0) + \widetilde R_0,
\]
with $\widetilde{R}_0$ defined by \eqref{expansion R region A} with $\varepsilon = 0$, has exactly two simple eigenvalues lying in $(-\infty, a)$ that converge to $\widetilde \nu_i$ respectively for $i = 1, 2$. It is clear that $\mathcal{Q}_\varepsilon$ is self-adjoint. Note that $\mathcal{Q}_\varepsilon$ may not have 0 as an exact eigenvalue, but this does hold for $\widetilde Q_\varepsilon(\eta_\varepsilon)$.

Firstly, from Lemma \ref{lem limit rescaled} \ref{scaled Q in region A} it follows that
\begin{equation}\label{evalue closeness}
\| \left( \mathcal{Q}_\varepsilon - \widetilde\nu_i  \right) \widetilde \phi_i \|_{H^k} \le C \varepsilon^2 \|\widetilde \phi_i \|_{H^k}.
\end{equation}
Therefore $\mathcal{Q}_\varepsilon$ admits spectral values close to $\widetilde \nu_i$ with $O(\varepsilon^2)$ distance.

Now we consider a sequence $\{ (\nu_{\varepsilon_j}, \phi_{\varepsilon_j}) \}$ of eigenpairs of $\mathcal{Q}_{\varepsilon_j}$ with $\nu_{\varepsilon_j} \in (-\infty, a)$ and $\varepsilon_j \searrow 0$ as $j \to \infty$. Our goal is to prove the compactness of the eigenpair sequence and confirm that the limit must be an eigenpair of $\widetilde Q_0$.

We normalize so that $\| \phi_{\varepsilon_j} \|_{H^k} = 1$. Note that $\|\widetilde\eta\|_{W^{N, \infty}} \le C_N$ for any $N \geq 0$. Moreover from the proof of Lemma \ref{R asymptotics lemma} we see that $\widetilde Q_\varepsilon(0) - 1$ is positive semi-definite. From this we know that the spectrum of $\mathcal{Q}_\varepsilon$ is bounded below: $\spectrum{\mathcal{Q}_\varepsilon} \subset [1 - C_k, \infty)$. Since $\widetilde \eta$ decays exponentially, we have that $\essspectrum{\mathcal{Q}_\varepsilon} = [1, \infty)$. Thus $\spectrum{\mathcal{Q}_\varepsilon} \cap [1 - C_k, a]$ consists of discrete eigenvalues of finite multiplicity. By definition,
\begin{equation}\label{evalue prob}
\left( \widetilde Q_{\varepsilon_j}(0) - \nu_{\varepsilon_j} \right) \phi_{\varepsilon_j} = -\widetilde R_0 \phi_{\varepsilon_j}.
\end{equation}
Since $\nu_{\varepsilon_j} \in [1 - C_k, a]$, from the proof of Lemma \ref{R asymptotics lemma}, the Fourier symbol of operator on the left-hand side is
\[
\widetilde{\symbq}_{\varepsilon_j}(\xi) - \nu_{\varepsilon_j} \ge \  \widetilde{\symbq}_0(\xi) - a = (\beta - \beta_0) \xi^2 + 1 - a \ge  \delta_* (1 + \xi^2)
\]
for some $\delta_* > 0$ independent of $\varepsilon_j$. This uniform ellipticity property allows us via bootstrapping to obtain the bound $\| \phi_{\varepsilon_j} \|_{H^{k+4}} \le C_*$ from some universal constant $C_* > 0$.

To obtain compactness of the sequence $\{\phi_{\varepsilon_j}\}$ in $H^{k+2}$, we proceed to prove a uniform decay estimate. Given an exponential weight $w := \cosh(\alpha \placeholder)$ for some $\alpha > 0$, we see that for any Schwartz function $f$, 
\[
\mathcal{F} \left[ w \left( \widetilde Q_{\varepsilon_j}(0) - \nu_{\varepsilon_j}  \right)^{-1} f \right](\xi) = \frac12 \left[ \frac{\widehat f(\xi + i \alpha)}{\widetilde{\symbq}_{\varepsilon_j}(\xi + i \alpha) - \nu_{\varepsilon_j}} + \frac{\widehat f(\xi - i \alpha)}{\widetilde{\symbq}_{\varepsilon_j}(\xi - i \alpha) - \nu_{\varepsilon_j}} \right].
\]
Taking $\alpha^2 < (1 - a)/(\beta - \beta_0)$ it follows that
\[
\sup_{|\imagpart{\xi}| \le \alpha} \left| \frac{1}{\widetilde{\symbq}_{\varepsilon_j}(\xi \pm i \alpha) - \nu_{\varepsilon_j}} \right| \le C^*,
\]
for some $C^* > 0$. Therefore 
\[
\left\| \left( \widetilde Q_{\varepsilon_j}(0) - \nu_{\varepsilon_j} \right)^{-1} \right\|_{\Lin(L^2_w)} \le C^*,
\]
where $L^2_w := \{ f\in L^2: \ w f \in L^2 \}$ is the weighted $L^2$ space corresponding to $w$. Hence from \eqref{evalue prob},
\[
\| \phi_{\varepsilon_j} \|_{L^2_w} \le C^* \| \widetilde R_0 \phi_{\varepsilon_j} \|_{L^2_w} \le C^* \| w \widetilde R_0 \|_{L^\infty} \| \phi_{\varepsilon_j} \|_{L^2} \le C^* \| w \widetilde R_0 \|_{L^\infty} \lesssim 1.
\]
Thus $\{ \phi_{\varepsilon_j} \}$ is bounded in $H^{k+4} \cap L^2_w$, which is compactly embedded in $H^{k+2}$. Hence up to a subsequence, as $j \to \infty$, $\nu_{\varepsilon_j} \to \nu_* \in (-\infty, a]$ and $\phi_{\varepsilon_j} \to \phi_*$ in $H^{k+2}$ with $\|\phi_*\|_{H^k} = 1$. Moreover, $\widetilde Q_0 \phi_* = \nu_* \phi_*$, which indicates that $\phi_* = \widetilde\phi_i$ for some $i = 1, 2$.

Finally we check the convergence of the corresponding spectral projections. Set $\mathcal P_\varepsilon$ to be the spectral projection for $\mathcal{Q}_\varepsilon$ associated with the interval $[1 - C_k, a]$. From \eqref{evalue closeness}, there exists $\varepsilon_0 > 0$ such that $\text{dim}\Rng{\mathcal P_\varepsilon} \ge 2$ for $\varepsilon \in (0, \varepsilon_0)$. Also $\mathcal P_\varepsilon = \sum^{N_\varepsilon}_{i = 1} \left\langle \placeholder, \phi_{i,\varepsilon} \right\rangle \phi_{i, \varepsilon}$ for some finite integer $N_\varepsilon$ and orthonormal eigenbasis $\{ \phi_{i, \varepsilon} \}_{i=1}^{N_\varepsilon}$. Were there a sequence $\varepsilon_j \searrow 0$ such that $N_{\varepsilon_j} \ge 3$, then it would contradict the above convergence result. Therefore, for all $\varepsilon$ sufficiently small, it must be that $N_\varepsilon = 2$.  We can then conclude that $\phi_{i, \varepsilon} \to \widetilde\phi_i$ in $H^k$.
\end{proof}

\subsubsection*{Spectral analysis in Region~C}\label{subsubsec spectral C}
The same argument can also be applied to the near critical surface tension waves with $(\beta,\lambda)$ in Region~C. On the solution curve $\mathscr{C}_{\beta,\delta}^{\mathrm{C}}$, we have that $\widetilde \eta$ satisfies
\be\label{near critical eqn}
\gamma \partial_x^4 \widetilde \eta - 2(1 + \delta) \gamma \partial_x^2 \widetilde \eta + \gamma \widetilde \eta - \frac32 \left( \varrho - \frac{1}{h^2} \right) \widetilde \eta^2 = 0.
\ee
Direct computation shows that the Green's function of $[\partial_x^4 - 2(1+\delta)\partial_x^2 +1]^{-1}$ decays like $e^{-s|x|}$ as $x \to \pm\infty$, where
\be\label{decay green}
s := \left\{\begin{array}{ll}
\displaystyle \sqrt{1+\delta - \sqrt{\delta(2+\delta)}}, \quad & \delta \ge 0, \\
\sqrt{|\delta| / 2}, \quad & \delta < 0,
\end{array}\right.
\ee
indicating that $\widetilde\eta$ is exponentially localized. Therefore, invoking the Weyl theorem on continuous spectrum, we know that 
\begin{equation}\label{ess nu}
\essspectrum{\widetilde Q_0} = [\gamma, \infty) \quad \text{when } \ \delta > -2.
\end{equation} 
Note that the operator $\widetilde Q_0$ is self-adjoint in $L^2(\R)$ with domain $H^{k+4}(\R)$. Therefore, its spectrum is confined to the real line. Standard ODE theory shows that any eigenvalue of $\widetilde Q_0$ has geometric multiplicity $\le 2$.

By setting $Z := \frac{3}{2\gamma}\left( \varrho - \frac{1}{h^2} \right) \widetilde \eta$, equation \eqref{near critical eqn} becomes \eqref{intro kawahara} 
\[
Z^{\prime\prime\prime\prime} - 2(1+\delta) Z^{\prime\prime} + Z - Z^2 = 0,
\]
which leads us to study
\be \label{linearized operator kawahara}
Q_\delta := \partial_x^4 - 2(1+\delta) \partial_x^2 + 1 - 2 Z_\delta
\ee
viewed as an unbounded operator on $L^2(\mathbb{R})$ with domain $H^4(\mathbb{R})$.  While more exotic than the Schr\"odinger operator encountered in Region~A, the spectral properties of this $Q_\delta$ for $\delta > -2$ have been studied by Sandstede \cite{sandstede1997}.  We quote an important results of his below.  
\begin{lemma}[Sandstede \cite{sandstede1997}]\label{lem spec kawahara}
Let $\delta > -2$ and $Z_\delta$ be a homoclinic solution of \eqref{intro kawahara}, and consider the linearized operator $Q_\delta$ given by \eqref{linearized operator kawahara}.
\begin{enumerate}[label=\rm(\roman*)]
\item \label{negative evalue} $Q_\delta$ has at least one negative eigenvalue.
\item \label{simple evalue} If $Z_\delta$ is transversely constructed, then zero is a simple eigenvalue of $Q_\delta$. Moreover, when $\delta$ is varied, the number of negative eigenvalues remains constant until $Z_\delta$ ceases to be transversely constructed.
\item \label{spec at transversed soln} In particular, for $\delta \ge 0$ or $-1 \ll \delta < 0$ and consider $Z_\delta$ being a transversely constructed primary homoclinic orbit. Then $Q_\delta$ has exactly one negative eigenvalue. That is, the spectrum of $Q_\delta$ takes the form
\begin{equation*}
\essspectrum{Q_\delta} = [1, \infty), \qquad \spectrum{Q_\delta}= \{ -\widetilde \nu^2,\, 0 \} \cup \widetilde \Lambda
\end{equation*}
where $-\widetilde \nu^2 < 0$ and $0$ are both simple with corresponding eigenfuctions $\widetilde \phi_1$ and $\widetilde \phi_2 = Z_\delta^\prime$, respectively; and there exists $\nu_* > 0$ such that $\widetilde \Lambda \subset [\nu_*, \infty)$.
\end{enumerate}
\end{lemma}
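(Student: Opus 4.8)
Since this lemma reproduces results of Sandstede \cite{sandstede1997}, my plan is only to sketch the architecture of that argument; in the paper it will simply be cited. For part~\ref{negative evalue} I would use the variational structure: \eqref{intro kawahara} is the Euler--Lagrange equation for $Z \mapsto \int_\R \bigl( \tfrac12 (Z'')^2 + (1+\delta)(Z')^2 + \tfrac12 Z^2 - \tfrac13 Z^3 \bigr)\,\diffx$, and $Q_\delta$ is its Hessian at $Z_\delta$. Differentiating \eqref{intro kawahara} in $x$ gives $Q_\delta Z_\delta' = 0$, while reading off the equation directly gives $Q_\delta Z_\delta = -Z_\delta^2$, so that $\langle Q_\delta Z_\delta, Z_\delta \rangle_{L^2} = -\int_\R Z_\delta^3 \,\diffx$, which is strictly negative for the positive primary homoclinic ($\delta \ge 0$). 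Since $\essspectrum{Q_\delta} = [1,\infty)$ by Weyl's theorem and the exponential decay \eqref{decay green}, the min--max principle then yields at least one negative eigenvalue; for $-1 \ll \delta < 0$, where $Z_\delta$ has oscillatory tails and need not be sign-definite, one instead argues by continuity from $\delta = 0^+$.

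For part~\ref{simple evalue}, translation invariance makes $0$ an eigenvalue with eigenfunction $Z_\delta'$, and its simplicity is exactly the content of transversality: a bounded solution $f$ of $Q_\delta f = 0$ is a tangent vector in $T_{Z_\delta(0)} W^{\mathrm{s}}(0) \cap T_{Z_\delta(0)} W^{\mathrm{u}}(0)$ for the variational equation of the fourth-order ODE along the orbit $Z_\delta$, restricted to the zero level set of the Hamiltonian, and transverse intersection of $W^{\mathrm{s}}$ and $W^{\mathrm{u}}$ forces this space to be one-dimensional, i.e.\ $\Dim{\Ker{Q_\delta}} = 1$. For the constancy of the negative-eigenvalue count, the eigenvalues depend continuously on $\delta$ while $\essspectrum{Q_\delta} = [1,\infty)$ is pinned, so the count can change only when an eigenvalue crosses $0$; an eigenvalue reaching $0$ would force $\Dim{\Ker{Q_\delta}} \ge 2$, contradicting the simplicity just established, so the count is locally constant on any $\delta$-interval along which $Z_\delta$ stays transversely constructed.

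Part~\ref{spec at transversed soln} then follows by combining the first two items with a determination of the negative count at one reference value of $\delta$ in each family: for $\delta$ large, a long-wave rescaling sends $Q_\delta$ to a linearized KdV operator $-\partial_x^2 + 1 - V$ with $V$ a $\sech^2$-potential, which has exactly one simple negative eigenvalue, and part~\ref{simple evalue} propagates the count down to all $\delta \ge 0$; alternatively one anchors at the explicit profile $Z_{1/6} = \tfrac{35}{24}\sech^4\bigl(\tfrac{\sqrt6}{12}\,\placeholder\bigr)$. Continuity across $\delta = 0$ then covers $-1 \ll \delta < 0$, and the description $\widetilde\phi_1 = $ ground state, $\widetilde\phi_2 = Z_\delta'$, together with the spectral gap $\widetilde\Lambda \subset [\nu_*,\infty)$, comes along with it. The genuine obstacles here are the transversality/kernel-simplicity input and the spectral count at the anchoring value of $\delta$; both are supplied by the Evans-function and Maslov-index methods of \cite{sandstede1997}, which we invoke rather than reprove.
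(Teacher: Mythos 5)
The paper gives no proof of this lemma at all: it is stated as a quotation of Sandstede's results, with the argument deferred entirely to \cite{sandstede1997}, so your decision to invoke rather than reprove is exactly what the paper does, and your sketch of the underlying architecture is consistent with how those results are obtained. The elementary pieces of your outline check out: differentiating \eqref{intro kawahara} gives $Q_\delta Z_\delta' = 0$, and $Q_\delta Z_\delta = -Z_\delta^2$ gives $\langle Q_\delta Z_\delta, Z_\delta\rangle_{L^2} = -\int_\R Z_\delta^3\,\diffx$; note that you can avoid both the positivity assumption and the continuity argument for $-1 \ll \delta < 0$ by multiplying \eqref{intro kawahara} by $Z_\delta$ and integrating, which yields $\int_\R Z_\delta^3\,\diffx = \int_\R \bigl((Z_\delta'')^2 + 2(1+\delta)(Z_\delta')^2 + Z_\delta^2\bigr)\,\diffx > 0$ for every nontrivial homoclinic with $\delta > -1$. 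The one place where your sketch is lighter than the real difficulty is the anchoring of the negative-eigenvalue count in part~\ref{spec at transversed soln}: the $\delta \to \infty$ reduction to a Schr\"odinger operator is a singular perturbation of precisely the kind this paper spends Section~\ref{spectrum section} making rigorous, and the explicit profile $Z_{1/6}$ does not by itself make counting the negative eigenvalues of a fourth-order operator routine. That count, together with the equivalence between transversality and simplicity of the kernel, is the genuine content of Sandstede's Evans-function analysis, which you correctly flag as the input being cited rather than reproved.
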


With these provisions, we obtain the following theorem of the spectrum of the augmented potential in Region~C. The proof is very similar to the one for Theorem \ref{thm Qspectrum}, and hence we omit it. 
\begin{theorem}[Spectrum in Region C] \label{thm Qspectrum conditional}
Let the assumptions of Lemma \ref{lem limit rescaled} \ref{scaled Q in region C} hold. 
Let $\widetilde \nu, \nu_*$ and $\widetilde \phi_{1,2}$ given as in Lemma \ref{lem spec kawahara} \ref{spec at transversed soln}. Then 
for each $a \in (0, \nu_*)$ there exists some $\varepsilon_0 > 0$ such that for all $\varepsilon \in (0, \varepsilon_0)$ the operator $Q_\varepsilon(\eta_\varepsilon)$ satisfies
\[
\essspectrum{Q_\varepsilon(\eta_\varepsilon)} \subset [\gamma \varepsilon^4 c^2 \rho_-/d_+, \infty),  \qquad   \spectrum{Q_\varepsilon(\eta_\varepsilon)} = \{ - \nu^2, \, 0\} \cup \Lambda,
\]
where $\Lambda \subset [a\varepsilon^4 c^2 \rho_-/d_+, \infty)$, and 
\[
\nu^2 = \frac{\varepsilon^4 c^2 \rho_-}{d_+}\widetilde \nu^2 + o(\varepsilon^4) \qquad \textrm{as } \ \varepsilon \searrow 0.
\]
The first two eigenvalues $\nu_1 := -\nu^2$ and $\nu_2 := 0$ are simple with the associated eigenfunctions taking the form $\phi_i = S_\varepsilon \widetilde \phi_i + o(1)$ in $H^{k}$ as $\varepsilon \searrow 0$.
\end{theorem}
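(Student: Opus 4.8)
The plan is to follow the proof of Theorem~\ref{thm Qspectrum} essentially verbatim, the only structural change being that the limiting operator is now the fourth-order Kawahara linearization rather than a Schr\"odinger operator, so that Sandstede's Lemma~\ref{lem spec kawahara}\ref{spec at transversed soln} plays exactly the role that the classical result Lemma~\ref{lem kdvspec} played in Region~A. I would work with the family $\mathscr{C}_{\beta,\delta}^{\mathrm{C}}$ and $-1 \ll \delta < 0$, for which Lemma~\ref{lem limit rescaled}\ref{scaled Q in region C} gives $\widetilde{R}_0 = -3(\varrho - 1/h^2)\widetilde\eta$ and, after the substitution $Z = \tfrac{3}{2\gammaC}(\varrho-1/h^2)\widetilde\eta$ turning \eqref{near critical eqn} into \eqref{intro kawahara}, the limiting rescaled operator $\widetilde{Q}_0 = \gammaC Q_\delta$ with $Q_\delta$ as in \eqref{linearized operator kawahara}. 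Its full spectrum --- essential spectrum $[\gammaC,\infty)$, one simple negative eigenvalue $-\gammaC\widetilde\nu^2$ with eigenfunction $\widetilde\phi_1$, simple kernel spanned by $\widetilde\phi_2 = Z_\delta'$, and the remaining discrete spectrum in $[\gammaC\nu_*,\infty)$ --- is then supplied directly by Lemma~\ref{lem spec kawahara}\ref{spec at transversed soln}. As in Region~A, the first move is to replace $\widetilde{Q}_\varepsilon(\eta_\varepsilon)$ by the self-adjoint operator $\mathcal{Q}_\varepsilon := \widetilde{Q}_\varepsilon(0) + \widetilde{R}_0$, which differs from it by $O(\varepsilon^2)$ in $\Lin(H^{k+2},H^k)$ by Lemma~\ref{R asymptotics lemma}, while noting that $\widetilde{Q}_\varepsilon(\eta_\varepsilon)$ --- unlike $\mathcal{Q}_\varepsilon$ --- does have $0$ in its spectrum, with eigenfunction proportional to $S_\varepsilon^{-1}\eta_\varepsilon'$, obtained by differentiating the steady equation $\Diff\augV(\eta_\varepsilon) = 0$ in $x$.

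The two substantive steps are then: (i) producing two eigenvalues of $\mathcal{Q}_\varepsilon$ below $a$ for small $\varepsilon$ via the quasimode bound $\|(\mathcal{Q}_\varepsilon+\gammaC\widetilde\nu^2)\widetilde\phi_1\|_{H^k} + \|\mathcal{Q}_\varepsilon\widetilde\phi_2\|_{H^k} = O(\varepsilon^2)$, which follows from the symbol expansion $\widetilde{\symbq}_\varepsilon(\xi) = \gammaC\xi^4 + 2(1+\delta)\gammaC\xi^2 + \gammaC + O(\varepsilon^2\langle\xi\rangle^6)$ coming out of \eqref{expansion r}, applied to the Schwartz functions $\widetilde\phi_i$; and (ii) ruling out a third such eigenvalue by a compactness argument. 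For (ii) I would take eigenpairs $(\nu_{\varepsilon_j},\phi_{\varepsilon_j})$ with $\nu_{\varepsilon_j}\le a$ and $\|\phi_{\varepsilon_j}\|_{H^k}=1$; since the symbol of $\widetilde{Q}_{\varepsilon_j}(0)$ is $\ge\gammaC$ (by \eqref{expansion r}, using $\beta-\beta_0=2(1+\delta)\gammaC\varepsilon^2>0$ and $\symbr\ge0$), $\mathcal{Q}_\varepsilon$ is uniformly bounded below, and since $\widetilde\eta$ decays exponentially, $\essspectrum{\mathcal{Q}_\varepsilon}=[\gammaC,\infty)$ by Weyl's theorem, so $\spectrum{\mathcal{Q}_\varepsilon}\cap(-\infty,a]$ is discrete and finite. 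The uniform quartic ellipticity $\widetilde{\symbq}_{\varepsilon_j}(\xi)-\nu_{\varepsilon_j}\gtrsim\langle\xi\rangle^4$ bootstraps a uniform $H^{k+8}$ bound for $\phi_{\varepsilon_j}$, and an exponentially weighted resolvent estimate with weight $\cosh(\alpha\placeholder)$ --- legitimate because the holomorphic continuation $\widetilde{\symbq}_{\varepsilon_j}(\xi\pm i\alpha)$ stays off $[\nu_{\varepsilon_j},\infty)$ for $\alpha$ small (the poles of $\coth(\tfrac{d_\pm}{d_+}\varepsilon\xi)$ sit at distance $\sim\varepsilon^{-1}$ from $\R$) and because $\widetilde{R}_0$ decays faster than $\cosh(\alpha\placeholder)^{-1}$ once $\alpha$ is below the rate $s$ in \eqref{decay green} --- gives a uniform $L^2_w$ bound. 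Hence $\{\phi_{\varepsilon_j}\}$ is precompact in $H^{k+4}$; passing to the limit with Lemma~\ref{lem limit rescaled}\ref{scaled Q in region C} identifies any limit as an eigenpair of $\widetilde{Q}_0$, hence as $(-\gammaC\widetilde\nu^2,\widetilde\phi_1)$ or $(0,\widetilde\phi_2)$ by Lemma~\ref{lem spec kawahara}\ref{spec at transversed soln}. The standard spectral-projection dichotomy --- a subsequence with $\dim\Rng{\mathcal{P}_{\varepsilon_j}}\ge3$ would force a third orthonormal limit, contradicting simplicity of the two limiting eigenvalues --- then pins $\dim\Rng{\mathcal{P}_\varepsilon}=2$, yields simplicity of both low eigenvalues, and gives $\phi_{i,\varepsilon}\to\widetilde\phi_i$ in $H^k$. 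Undoing the rescaling \eqref{definition rescaled operator} by the factor $\varepsilon^4 c^2\rho_-/d_+$, and matching the continuous spectrum with Lemma~\ref{cont spec lemma} (note that the threshold $\nu_*$ in \eqref{def nu star} is $O(\varepsilon^4)$ as $\lambda\searrow\lambda_0$), delivers the stated conclusion.

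The step I expect to be the genuine work --- everything else being bookkeeping transcribed from Theorem~\ref{thm Qspectrum} --- is establishing the two uniform-in-$\varepsilon$ estimates for $\widetilde{Q}_\varepsilon(0)$ in the fourth-order setting: the uniform quartic ellipticity of $\widetilde{\symbq}_\varepsilon - \nu$ on $\R$, and the invertibility of its holomorphic extension to a fixed horizontal strip. Both hinge on the degeneracy of the dispersion relation at $(\beta_0,\lambda_0)$ recorded in \eqref{expansion r} (the quadratic part of $\symbr$ cancels, leaving a genuine $\gammaC\widetilde\xi^4$ leading term, which is precisely what makes the limiting essential spectrum gap $[\gammaC,\infty)$ rather than $[0,\infty)$ available), together with careful tracking of where the $\coth$ poles of the rescaled Dirichlet--Neumann symbol migrate as $\varepsilon\searrow0$. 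One should also verify that restricting to $\mathscr{C}_{\beta,\delta}^{\mathrm{C}}$ is necessary: for the family $\mathscr{C}_{\beta;\kappa,\delta,\pm}^{\mathrm{C}}$, Lemma~\ref{lem limit rescaled}\ref{scaled Q in region C} produces a limiting operator carrying the extra term $(1-\varrho)\bigl(\partial_x(\widetilde\eta\partial_x) + \widetilde\eta''\bigr)$, which is not of the form covered by Sandstede's result --- exactly the obstruction noted after Lemma~\ref{region C bound states lemma}.
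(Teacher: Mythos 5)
Your proposal is correct and is exactly the argument the paper intends: the authors omit the proof of this theorem, saying only that it is ``very similar to the one for Theorem~\ref{thm Qspectrum},'' and your adaptation --- Sandstede's Lemma~\ref{lem spec kawahara}\ref{spec at transversed soln} in place of Lemma~\ref{lem kdvspec}, the limiting symbol $\gammaC\xi^4+2(1+\delta)\gammaC\xi^2+\gammaC$ extracted from \eqref{expansion r}, the exponentially weighted resolvent bound with $\alpha$ below the decay rate $s$ of \eqref{decay green}, and the restriction to $\mathscr{C}_{\beta,\delta}^{\mathrm{C}}$ --- fills in the omitted details faithfully. One small correction: the uniform quartic ellipticity $\widetilde{\symbq}_{\varepsilon_j}(\xi)-\nu_{\varepsilon_j}\gtrsim\langle\xi\rangle^4$ is false for $|\xi|\gtrsim 1/\varepsilon_j$, where $\symbr(\varepsilon_j\xi)/\varepsilon_j^4$ grows only like $\beta_0\xi^2/\varepsilon_j^2$; however, the uniform quadratic bound $\gtrsim\langle\xi\rangle^2$ does hold there (since $\xi^2/\varepsilon^2\ge\xi^2$), so iterating the two-derivative resolvent gain still produces the uniform higher-order Sobolev bounds on the eigenfunctions, exactly as in the Region~A proof.
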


\subsection{Spectrum of the linearized augmented Hamiltonian} \label{spectrum linearized augHam section}

\begin{lemma}[Extension of $\Diff^2\augHam$] \label{extension of D^2 augHam lemma}
Let $\{ U_c \}$ be one of the family of bound states $\{ U_c^{\mathrm{A}} \}$, $\{ U_c^{\mathrm{A}\pm} \}$, or $\{ U_c^{\mathrm{C}} \}$ given by Corollaries~\ref{KdV bound state corollary}, \ref{Gardner bound state corollary} or Lemma~\ref{region C bound states lemma}\ref{region C kawahara part}, respectively.   Then  $\Diff^2 \augHam(U_c)$ extends uniquely to a bounded linear operator $\Hc : \Xspace \to \Xspace^*$ such that
\[   \Diff^2 \augHam(U_c)[\dot u,\, \dot v]  = \langle \Hc \dot u,\, \dot v\rangle_{\Xspace^* \times \Xspace} \qquad \textrm{for all } \dot u,\, \dot v \in \Vspace, \]
and $I^{-1} \Hc$ is self-adjoint on $\Xspace$.
\end{lemma}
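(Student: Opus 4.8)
The plan is to exploit that $\augHam$ depends quadratically on $\psi$, which reduces the whole computation to two objects whose mapping properties on the energy space are already in hand: the form $\Qform(\eta)$ of Lemma~\ref{quadratic form lemma} and the operator $A(\eta)$.  First I would note that, since $\kinE$ is quadratic in $\psi$ with Hessian $A(\eta)$, $\potE$ is $\psi$-independent, and $\mom$ is affine in $\psi$, the map $\psi \mapsto \augHam(\eta,\psi)$ is quadratic with minimizer $\psi_*(\eta) = -cA(\eta)^{-1}\eta^\prime$ from \eqref{definition psistar}; completing the square gives the exact identity $\augHam(\eta,\psi) = \tfrac12\int_{\mathbb R}(\psi - \psi_*(\eta))A(\eta)(\psi - \psi_*(\eta))\,\diffx + \augV(\eta)$.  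Differentiating this twice at $U_c = (\eta_c,\psi_*(\eta_c))$, where the quadratic remainder together with its gradient vanishes and every surviving term of its Hessian other than $\int_{\mathbb R}(\dot\psi - \Diff\psi_*(\eta_c)\dot\eta)A(\eta_c)(\dot\xi - \Diff\psi_*(\eta_c)\dot\zeta)\,\diffx$ carries a factor $\psi - \psi_*(\eta)$ evaluated at $U_c$, which is zero, I obtain, using \eqref{quadratic form formula} for $\Diff^2\augV$ and \eqref{compute Dpsistar} to identify $\Diff\psi_*(\eta_c) = \DpsiS - \DpsiT$,
\[
\Diff^2\augHam(U_c)[\dot u, \dot v] = \big\langle \Qform(\eta_c)\dot\eta,\, \dot\zeta\big\rangle_{\Xspace_1^* \times \Xspace_1} + \int_{\mathbb R}\big(\dot\psi - (\DpsiS - \DpsiT)\dot\eta\big)A(\eta_c)\big(\dot\xi - (\DpsiS - \DpsiT)\dot\zeta\big)\,\diffx
\]
for all $\dot u = (\dot\eta,\dot\psi),\, \dot v = (\dot\zeta,\dot\xi) \in \Vspace$.  (Alternatively, this is Lemma~\ref{variations augV lemma} combined with $\Diff_\eta\Diff_\psi\augHam(U_c)[\dot\eta,\dot\psi] = -\int_{\mathbb R}(\DpsiS - \DpsiT)\dot\eta\,A(\eta_c)\dot\psi\,\diffx$, obtained by differentiating $\Diff_\psi\augHam(\eta,\psi_*(\eta))\equiv 0$ in $\eta$, and $\Diff_\psi^2\augHam(U_c)[\dot\psi,\dot\psi] = \int_{\mathbb R}\dot\psi A(\eta_c)\dot\psi\,\diffx$.)

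Next I would verify that the right-hand side is a bounded symmetric bilinear form on $\Xspace \times \Xspace$.  The first term is handled directly by Lemma~\ref{quadratic form lemma}, which gives $\Qform(\eta_c) \in \Lin(\Xspace_1;\Xspace_1^*) = \Lin(H^1;H^{-1})$, self-adjoint.  For the second term the decisive point is the regularity of the bound state: since $U_c \in \Wspace$, the relative-velocity coefficients $b_1^\pm + c$ and $b_2^\pm$ lie in $H^{2+}(\mathbb R)$ and decay exponentially (cf. the discussion after \eqref{def b}), and $\DN_\pm(\eta_c)^{-1}$ is an isomorphism on the homogeneous Sobolev scale.  A direct estimate — using that $H^{2+}(\mathbb R)$ is an algebra embedding into $W^{1,\infty}(\mathbb R)$, the one-dimensional inclusion $H^1(\mathbb R) \hookrightarrow \dot H^{1/2}(\mathbb R)$ to absorb the non-decaying constant part $-c$ of $b_1^\pm$, and the localization of $b_2^\pm$ to control low frequencies — then shows $\DpsiS - \DpsiT \in \Lin(\Xspace_1;\dot H^{1/2}(\mathbb R))$.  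Hence $\dot u \mapsto \dot\psi - (\DpsiS - \DpsiT)\dot\eta$ is bounded from $\Xspace$ to $\dot H^{1/2}(\mathbb R)$, and composing with the bounded self-adjoint positive operator $A(\eta_c) : \dot H^{1/2}(\mathbb R) \to \dot H^{-1/2}(\mathbb R)$ shows the second term is a bounded symmetric bilinear form on $\Xspace$.

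Finally, I would define $\Hc \in \Lin(\Xspace;\Xspace^*)$ by the displayed formula.  It coincides with $\Diff^2\augHam(U_c)$ on $\Vspace \times \Vspace$, so, as $\Vspace$ is dense in $\Xspace$ (cf. Remark~\ref{density remark}), it is the unique such bounded extension.  Since $\Hc$ is symmetric and everywhere defined, $I^{-1}\Hc \in \Lin(\Xspace)$ is symmetric for the inner product $(u,v)_\Xspace := \langle Iu, v\rangle_{\Xspace^*\times\Xspace}$, and a bounded everywhere-defined symmetric operator on a Hilbert space is self-adjoint; this closes the argument.  I expect the only genuine obstacle to be the estimate $\DpsiS - \DpsiT \in \Lin(\Xspace_1;\dot H^{1/2})$ from the previous paragraph — keeping track of the interplay between the homogeneous and inhomogeneous norms and of the part of $b_1^\pm$ that does not decay — while the rest is bookkeeping built on the lemmas already proved.
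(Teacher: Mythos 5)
Your proposal is correct and follows essentially the same route as the paper: everything hinges on the single identity \eqref{Hc formula}, writing $\Diff^2\augHam(U_c)$ as $\langle \Qform(\eta_c)\dot\eta,\dot\zeta\rangle$ plus the positive $A(\eta_c)$-form in $\dot\psi-(\DpsiS-\DpsiT)\dot\eta$, after which boundedness on $\Xspace$, the unique extension by density, and self-adjointness of $I^{-1}\Hc$ follow as you say. Your complete-the-square derivation of that identity is a slightly tidier path than the paper's direct block-by-block expansion of the Hessian (which you also sketch as the parenthetical alternative), and you usefully make explicit the mapping property $\DpsiS-\DpsiT\in\Lin(\Xspace_1;\dot H^{1/2})$ that the paper leaves as ``clear.''
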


\begin{proof} It suffices to consider the diagonal, so let a bound state $U_c = (\eta_c,\psi_c)$ and $\dot u = (\dot\eta,\dot\psi) \in \Vspace$ be given. By Lemmas~\ref{variations augV lemma} and \ref{quadratic form lemma}, we have that 
\begin{align*}
\Diff^2 \augHam(U_c)[\dot u,\dot u] & = \Diff^2 \augV(\eta_c)[\dot \eta, \dot \eta] + \int_{\mathbb{R}} (\DpsiT_c-\DpsiS_c) \dot\eta A(\eta_c) (\DpsiT_c-\DpsiS_c) \dot\eta \, \diffx \\
& \qquad +2 \Diff_\psi \Diff_\eta  \augHam(U_c)[\dot\eta,\dot\psi] + \Diff_\psi^2 \augHam(U_c)[\dot\psi,\dot\psi] \\
& = \langle \Qform(\eta_c) \dot\eta, \dot\eta \rangle_{\Xspace^* \times \Xspace} + \int_{\mathbb{R}} (\DpsiT_c-\DpsiS_c) \dot\eta A(\eta_c) (\DpsiT_c-\DpsiS_c) \dot\eta \, \diffx \\ 
& \qquad + 2 \int_{\mathbb{R}} \dot\psi \langle \Diff A(\eta_c) \dot\eta, \psi_c \rangle \, \diffx + 2c \int_{\mathbb{R}} \dot\eta^\prime \dot\psi \, \diffx + \int_{\mathbb{R}} \dot\psi A(\eta_c) \dot\psi \, \diffx ,
\end{align*} 
where we write $\DpsiS_c$ and $\DpsiT_c$ to indicate that these operators are being evaluated at $U_c$. The first derivative formula in Lemma~\ref{DG formula lemma} then gives
\begin{align*}
\Diff^2 \augHam(U_c)[\dot u,\dot u] & = \langle \Qform(\eta_c) \dot\eta, \dot\eta \rangle_{\Xspace^* \times \Xspace} + \int_{\mathbb{R}}  (\DpsiT_c-\DpsiS_c) \dot\eta  A(\eta_c) (\DpsiT_c-\DpsiS_c) \dot\eta \, \diffx + \int_{\mathbb{R}} \dot\psi A(\eta_c) \dot\psi \, \diffx  \\ 
& \quad + 2c \int_{\mathbb{R}} \dot\eta^\prime \dot\psi \, \diffx + 2 \sum_\pm \rho_\pm \int_{\mathbb{R}}\left(  (b_{1c}^\pm+c)  \left( \DN_\pm(\eta_c)^{-1} A(\eta_c) \dot \psi \right)^{\prime} \pm b_{2c}^\pm A(\eta_c) \dot\psi \right)  \dot\eta \, \diffx,
\end{align*}  
where $(b_{1c}^\pm, b_{2c}^\pm)$ is the relative velocity determined by $U_c$ via \eqref{def b}.  Recalling the definitions of $\DpsiS$ and $\DpsiT$ in \eqref{compute Dpsistar}, this can be expressed quite concisely as:
\be \label{Hc formula}
\Diff^2 \augHam(U_c)[\dot u,\dot u] = \langle \Qform(\eta_c) \dot\eta, \dot\eta \rangle_{\Xspace^* \times \Xspace} + \int_{\mathbb{R}}   \left( (\DpsiT_c-\DpsiS_c) \dot\eta + \dot\psi \right) A(\eta_c) \left( (\DpsiT_c-\DpsiS_c) \dot\eta + \dot\psi \right)  \, \diffx.
\ee
It is then clear that $\Diff^2 \augHam(U_c)$ extend to an element of $\Xspace^*$.
\end{proof}

We can now state and prove the main result of this section, which characterizes the spectrum of $\augHam$.  It corresponds to \cite[Assumption 6]{varholm2020stability}.

\begin{theorem}[Spectrum] \label{spectrum theorem}
Let $\{ U_c \}$ be one of the family of bound states $\{ U_c^{\mathrm{A}} \}$, $\{ U_c^{\mathrm{A}\pm} \}$, or $\{ U_c^{\mathrm{C}} \}$ given by Corollaries~\ref{KdV bound state corollary}, \ref{Gardner bound state corollary} or Lemma~\ref{region C bound states lemma}\ref{region C kawahara part}, respectively.  Then 
\[ \spectrum{I^{-1} \Hc} = \{ -\mu_c^2, \, 0\} \cup \Sigma_c, \]
where $-\mu_c^2 < 0$ is a simple eigenvalue corresponding to a unique eigenvector $\chi_c$; $0$ is a simple eigenvalue generated by $T$; and $\Sigma_c \subset (0,\infty)$ is bounded uniformly away from $0$.  
\end{theorem}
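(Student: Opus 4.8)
The plan is to read off all the required spectral information from the quadratic‑form representation \eqref{Hc formula}, which exhibits $\Hc$ as congruent to a block‑diagonal operator built out of $\Qform(\eta_c)$ — whose spectrum was determined in Section~\ref{spectrum linearized augV section} — and the manifestly positive operator $A(\eta_c)$. Concretely, for a bound state $U_c=(\eta_c,\psi_c)$ I would introduce on $\Xspace$ the change of unknown $\Psi\colon(\dot\eta,\dot\psi)\mapsto(\dot\eta,w)$ with $w:=\dot\psi+(\DpsiT_c-\DpsiS_c)\dot\eta=\dot\psi-\Diff\psi_*(\eta_c)\dot\eta$. Since $\eta_c\in\Wspace_1$ is smooth and exponentially localized, $\Diff\psi_*(\eta_c)\in\Lin(\Xspace_1,\Xspace_2)$ by \eqref{compute Dpsistar}, so $\Psi$ is a bounded automorphism of $\Xspace$ (and of $\Vspace$); by \eqref{Hc formula} it diagonalizes the form:
\[
\Diff^2\augHam(U_c)[\dot u,\dot u]=\langle\Qform(\eta_c)\dot\eta,\dot\eta\rangle_{\Xspace_1^*\times\Xspace_1}+\int_\R w\,A(\eta_c)\,w\,\diffx .
\]
Thus $\Hc$ is congruent, through $\Psi$, to $\mathrm{diag}\!\left(\Qform(\eta_c),A(\eta_c)\right)$.

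From here the negative and zero parts of the spectrum are forced. The operator $A(\eta_c)$ is self‑adjoint and positive definite, because $A(\eta_c)^{-1}=\rho_+\DN_+(\eta_c)^{-1}+\rho_-\DN_-(\eta_c)^{-1}$ by \eqref{A inverse formula} with $\rho_\pm>0$ and $\DN_\pm(\eta_c)$ positive definite; hence the $w$‑block contributes neither a kernel nor negative directions. By Theorem~\ref{thm Qspectrum} in Region~A (for $\{U_c^{\mathrm A}\}$ and $\{U_c^{\mathrm A\pm}\}$) and Theorem~\ref{thm Qspectrum conditional} in Region~C (for $\{U_c^{\mathrm C}\}$), together with the long‑wave asymptotics \eqref{definition rescaled profile}, the operator $\Qform(\eta_c)$ has exactly one negative eigenvalue $-\nu^2$ (simple) and a simple zero eigenvalue, and its remaining spectrum lies in $[a\varepsilon_c^n c^2\rho_-/d_+,\infty)$; the negative index and nullity of the associated form on $\Xspace_1=H^1$ agree with those of its $L^2$‑realization, the form being semibounded with leading term $\int_\R\sigma(\dot\eta')^2\langle\eta_c'\rangle^{-3}\,\diffx$. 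Sylvester's law of inertia, applied to the bounded congruence $\Psi$, then shows that $I^{-1}\Hc$ has Morse index $1$ — a simple negative eigenvalue $-\mu_c^2$ with an eigenvector $\chi_c$ unique up to scalars — and nullity $1$. That the kernel is the translation mode follows in the usual way: $\eng$ and $\mom$ are translation invariant (Lemma~\ref{symmetry lemma}\ref{energy conserved part} and $\mom(T(s)u)=\mom(u)$), so $\augHam$ is; since $U_c$ is a critical point of $\augHam$, differentiating $\Diff\augHam(T(s)U_c)=\Diff\augHam(U_c)\circ T(-s)$ at $s=0$ gives $\Hc\,T^\prime(0)U_c=0$, and $T^\prime(0)U_c=-\partial_x U_c\not\equiv 0$ spans the kernel by Corollary~\ref{KdV bound state corollary}\ref{bound state nontrivial part}. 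Hence $0$ is a simple eigenvalue generated by $T$.

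It remains to control $\spectrum{I^{-1}\Hc}$ away from $\{-\mu_c^2,0\}$. For the essential spectrum I would run a Weyl‑sequence argument: because $\eta_c$ and the relative velocities $b_{1c}^\pm+c$, $b_{2c}^\pm$ decay exponentially, $\Hc$ differs from its value at the quiescent state $U=0$ by a relatively compact perturbation, so $\essspectrum{I^{-1}\Hc}=\essspectrum{I^{-1}\Hc|_{U=0}}$; the latter is the spectrum of a Fourier multiplier, and it is positive since the symbol of $\Qform(0)$ is bounded below by $\nu_*>0$ in Lemma~\ref{cont spec lemma} (which applies throughout the relevant portions of Regions~A and~C, where $\lambda>\lambda_0$). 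Combined with the block structure — on the $\Xspace$‑orthogonal complement of $\linspan{\{\chi_c,T^\prime(0)U_c\}}$ the form is nonnegative, with all genuine near‑zero behaviour governed by the spectral gap of $\Qform(\eta_c)$ in the $\dot\eta$‑component — the min–max principle shows no further eigenvalues accumulate at $0$. Assembling the three pieces yields $\spectrum{I^{-1}\Hc}=\{-\mu_c^2,0\}\cup\Sigma_c$ with $\Sigma_c\subset(0,\infty)$ as claimed.

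The main obstacle is this last step: $I^{-1}\Hc$ is \emph{not} coercive on $\Xspace$, since the symbol of $A(\eta_c)$ vanishes to second order at zero frequency while the $\Xspace_2=\dot H^{1/2}$ inner product sees only one power of $|\xi|$, so one cannot simply deduce a spectral gap from positivity on a finite‑codimension subspace. The point of the block‑diagonalization in the first step is precisely that it confines this low‑frequency degeneracy to the $w$‑component, where the form is manifestly nonnegative and carries neither negative nor null content, while the entire negative/null and near‑zero structure is governed by $\Qform(\eta_c)$ acting on the $H^1$‑component, where Theorem~\ref{thm Qspectrum}/\ref{thm Qspectrum conditional} already supplies a genuine gap. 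Transferring this through the bounded (non‑unitary) congruence $\Psi$ — Sylvester's law for the inertia, a direct Weyl argument for the essential spectrum — is what delivers the statement; this mirrors the strategy of \cite[Theorem~4.3]{mielke2002energetic}, complicated here by the two‑fluid structure of $\Qform$.
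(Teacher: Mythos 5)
Your proposal follows the same basic route as the paper's proof: everything is read off from the completed‑square identity \eqref{Hc formula}, reducing the problem to the spectrum of $\Qform(\eta_c)$ supplied by Theorems~\ref{thm Qspectrum} and \ref{thm Qspectrum conditional} together with the positivity of $A(\eta_c)$. Your substitution $(\dot\eta,\dot\psi)\mapsto(\dot\eta,w)$ with $w=\dot\psi+(\DpsiT_c-\DpsiS_c)\dot\eta$ is exactly the paper's explicit negative direction $u=(\phi_{1c},(\DpsiS_c-\DpsiT_c)\phi_{1c})$ read backwards, and the identification of the kernel with $T^\prime(0)U_c$ is identical.

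The one place you genuinely diverge is the derivation of the uniform gap for $\Sigma_c$, and there your argument is softer than the paper's. The paper never invokes Sylvester's law: it notes that $\Qform(\eta_c)+(\alpha-\nu_c^2)\langle\placeholder,\phi_{1c}\rangle\phi_{1c}+\alpha\langle\placeholder,\eta_c^\prime\rangle\eta_c^\prime$ is positive definite and then proves the quantitative estimate
\[
\langle \Hc u,u\rangle_{\Xspace^*\times\Xspace}+(\alpha-\nu_c^2)\langle I^{-1}(\phi_{1c},0),u\rangle^2_{\Xspace^*\times\Xspace}+\alpha\langle I^{-1}(\eta_c^\prime,0),u\rangle^2_{\Xspace^*\times\Xspace}\gtrsim_c\|u\|_{\Xspace}^2,
\]
from which the min--max principle immediately gives positivity \emph{with a gap} on a codimension‑two subspace; no separate essential‑spectrum analysis is required. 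By contrast, a congruence preserves only the inertia, not spectral gaps, so your block‑diagonal picture plus Sylvester does not by itself exclude positive spectrum accumulating at $0$; and your patch --- that the $w$‑block is ``manifestly nonnegative'' --- is exactly where the low‑frequency degeneracy you flag must be confronted, since nonnegativity of the $A(\eta_c)$‑block is not a lower bound in $\dot H^{1/2}$. The paper closes this by using that $A(\eta_c)$ is a self‑adjoint, positive‑definite \emph{isomorphism} $\dot H^{1/2}\to\dot H^{-1/2}$ (as asserted in Section~\ref{nonlocal operators section} for $\DN_\pm$ and $B$), which upgrades the second term of \eqref{Hc formula} to $\gtrsim\|(\DpsiT_c-\DpsiS_c)\dot\eta+\dot\psi\|_{\dot H^{1/2}}^2$ and hence yields the displayed coercivity. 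To make your version complete you should replace the inertia/Weyl step by this quantitative estimate; as written, the final paragraph of your proposal identifies the difficulty correctly but does not actually resolve it.
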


\begin{proof}
This follows from the structure of $I^{-1} \Hc$ and a soft analysis argument as in \cite[Proposition 5.3]{mielke2002energetic}.  Due either to Theorem~\ref{thm Qspectrum} or Theorem~\ref{thm Qspectrum conditional}, the operator 
 \[
 \Qform(\eta_c) + (\alpha - \nu_c^2) \langle \placeholder, \phi_{1c} \rangle  \phi_{1c} + \alpha \langle \placeholder, \eta_c^\prime\rangle \eta_c^\prime
 \]
 is positive definite for all $\alpha > 0$, where $-\nu_c^2$ is the negative eigenvalue of $Q_c(\eta_c)$ and $\phi_{1c}$ is the corresponding eigenfunction.  As $A(\eta_c)$ is itself positive definite,  from \eqref{Hc formula} we obtain the estimate
\[ \langle \Hc u, \, u \rangle_{\Xspace^* \times \Xspace} + (\alpha - \nu_c^2) \langle I^{-1} (\phi_{1c},0), u \rangle_{\Xspace^* \times \Xspace}^2 + \alpha \langle I^{-1} (\eta_c^\prime,0), \, u \rangle_{\Xspace^* \times \Xspace}^2 \gtrsim_c \| u \|_{\Xspace}^2,\]
for all $u \in \Xspace$.  Thus $I^{-1}\Hc$ is positive definite on a codimension $2$ subspace.  

On the other hand, we know that $T^\prime(0) U_c$ is in the kernel of $\Hc$, and by \eqref{Hc formula} we have that 
\[ \langle \Hc u, \, u  \rangle_{\Xspace^* \times \Xspace} = \langle \Qform(\eta_c) \phi_{1c}, \phi_{1c} \rangle_{\Xspace^* \times \Xspace} = -\nu_c^2 < 0 \qquad \textrm{for } u = (\phi_{1c}, (\DpsiS_c - \DpsiT_c) \phi_{1c}).\]
Thus $I^{-1} \Hc$ has a one-dimensional kernel generated by $T^\prime(0) U_c$, a one-dimensional negative definite subspace, and it is positive definite in the orthogonal complement.  The claimed spectral properties of $I^{-1} \Hc$ are now easily confirmed.   
\end{proof}

\section{Proof of the main results} \label{proof section}

Finally, in this section we will give the proof of the stability theorems discussed in Section~\ref{introduction section}.  In order to state them more concisely, we introduce the following notation.  For a fixed bound state $U_c$ and radius $r > 0$, we define the tubular neighborhoods
\begin{align*}
\tube_r^\Xspace & := \{ u \in \mathcal{O} : \inf_{s \in \mathbb{R}} \| u - T(s) U_c \|_{\Xspace} < r \}, \\
\tube_r^\Wspace & := \{ u \in \mathcal{O} \cap \Wspace : \inf_{s \in \mathbb{R}} \| u - T(s) U_c \|_{\Wspace} < r \}.
\end{align*}
Similarly, for any $R > 0$, let $\mathcal{B}_R^\Wspace$ denote the intersection of $\mathcal{O}$ with the ball of radius $R$ centered at the origin in $\Wspace$.  Then $U_c$ is said to be \emph{conditionally orbitally stable} provided that for all $r > 0$ and $R > 0$, there exists $r_0 > 0$ such that if $u : [0,t_0) \to \mathcal{B}_R^\Wspace$ is a solution to \eqref{Hamiltonian equation} with $u(0) \in \tube_{r_0}^\Xspace$, then $u(t) \in \tube_r^\Xspace$ for all $t \in [0,t_0)$.  On the other hand, we say that $U_c$ is \emph{orbitally unstable} provided that there exists $\nu_0 > 0$ such that, for all $0 < \nu < \nu_0$ there exists initial data in $\tube_\nu^\Wspace$ for which the corresponding solution exits $\tube_{\nu_0}^\Wspace$ in finite time.  

\subsection{Stability of uniform flows}

We begin with the simpler case of the trivial solution $U_c = (0,0)$, corresponding to a laminar flow with (the same) constant purely horizontal velocity in each layer.   For $(\beta,\lambda)$ in Region~B, we then have by Lemma~\ref{cont spec lemma} that $I^{-1} \Hc$ is positive definite.  Let us now state and prove a rigorous version of Theorem~\ref{parallel flow theorem}.   Because $U_c = 0$, the tubular neighborhoods above simply become balls in the appropriate spaces, and hence conditional orbital stability is equivalent to conditional stability.  
\begin{theorem}[Stability of uniform flows] \label{precise parallel theorem}
Let $U_c = (0,0)$ be the trivial bound state for the internal wave problem \eqref{Hamiltonian equation} with wave speed $c \in \mathbb{R}$.  Then $U_c$ is conditionally stable if the corresponding $(\beta,\lambda)$ lies in Region~B.
\end{theorem}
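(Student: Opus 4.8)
The plan is to use the conserved augmented Hamiltonian $\augHam = \eng - c\mom$ as a Lyapunov functional. Since $U_c = (0,0)$, none of the spectral machinery of Section~\ref{spectrum section} is needed: the bound state is a critical point of $\augHam$, so $\Diff\augHam(U_c) = 0$, and a direct computation from the definitions \eqref{definition energy} and \eqref{definition momentum} (using $b_1^\pm = -c$, $b_2^\pm = 0$ at $\eta = 0$, and $\sum_\pm \rho_\pm \DN_\pm(0)^{-1} = A(0)^{-1}$) gives the explicit quadratic form
\be \label{parallel Hc}
\Diff^2\augHam(U_c)[u,u] = \langle \Qform(0)\dot\eta, \dot\eta\rangle_{\Xspace_1^*\times\Xspace_1} + \int_\R \left( \dot\psi + c A(0)^{-1}\dot\eta^\prime\right) A(0)\left( \dot\psi + c A(0)^{-1}\dot\eta^\prime\right)\diffx, \qquad u = (\dot\eta,\dot\psi).
\ee
Because $A(0)$ is positive definite, this extends boundedly to $\Hc \in \Lin(\Xspace;\Xspace^*)$, and by Lemma~\ref{cont spec lemma} the Fourier symbol $\symbq_c$ of $\Qform(0)$ obeys $\inf_\xi \symbq_c(\xi) = \nu_* > 0$ throughout Region~B, so $\Qform(0)$ is positive definite on $H^1(\R)$. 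It follows that $I^{-1}\Hc$ is positive definite on $\Xspace$ -- equivalently, there is $c_0 \in (0,1]$ with
\be \label{parallel coercive}
c_0\|u\|_\Xspace^2 \le \langle \Hc u, u\rangle_{\Xspace^*\times\Xspace} \le c_0^{-1}\|u\|_\Xspace^2 \qquad \textrm{for all } u \in \Xspace.
\ee

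Next I would expand $\augHam$ to second order about the origin. As $\mom$ is exactly quadratic and $\eng \in C^\infty(\nbhdO \cap \Vspace;\R)$, Taylor's theorem with the cubic Lagrange remainder together with $\Diff\augHam(U_c) = 0$ yields, for every $u \in \mathcal B_R^\Wspace$,
\be \label{parallel taylor}
\augHam(u) - \augHam(U_c) = \tfrac12 \langle \Hc u, u\rangle + \mathscr R(u), \qquad |\mathscr R(u)| \le C(R)\,\|u\|_\Vspace^3,
\ee
where $C(R)$ bounds $\Diff^3\eng$ on the $R$-ball of $\Wspace \hookrightarrow \Vspace$. Feeding the interpolation inequality of Lemma~\ref{spaces lemma} and $\|u\|_\Wspace \le R$ into the remainder bound gives $|\mathscr R(u)| \le C_R\|u\|_\Xspace^{2+\theta}$ for some $\theta \in (0,\tfrac14)$, so by \eqref{parallel coercive} there is $\rho_* = \rho_*(R) > 0$ with
\be \label{parallel twosided}
\tfrac{c_0}{4}\|u\|_\Xspace^2 \le \augHam(u) - \augHam(U_c) \le c_0^{-1}\|u\|_\Xspace^2 \qquad \textrm{whenever } u \in \mathcal B_R^\Wspace \textrm{ and } \|u\|_\Xspace \le \rho_*.
\ee

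The theorem then follows from a standard continuity (barrier) argument. Fix $r, R > 0$; since stability for a smaller radius implies it for a larger one, assume $r \le \rho_*$, and choose $r_0 \in (0,r)$ with $c_0^{-1}r_0^2 < \tfrac{c_0}{4}r^2$. Let $u : [0,t_0) \to \mathcal B_R^\Wspace$ solve \eqref{Hamiltonian equation} with $u(0) \in \tube_{r_0}^\Xspace$, which here simply means $\|u(0)\|_\Xspace < r_0$ since $T(s)U_c \equiv 0$. Because $\eng$ and $\mom$ are conserved along $\Wspace$-valued solutions of \eqref{Hamiltonian equation} (a consequence of the Hamiltonian structure; cf.\ \cite{varholm2020stability}), so is $\augHam$, and hence by the upper bound in \eqref{parallel twosided} at $t = 0$ we get $\augHam(u(t)) - \augHam(U_c) = \augHam(u(0)) - \augHam(U_c) \le c_0^{-1}r_0^2 < \tfrac{c_0}{4}r^2$ for all $t \in [0,t_0)$. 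If $\|u(\cdot)\|_\Xspace$ reached $r$ at a first time $\tau < t_0$ -- meaningful by continuity of $t \mapsto u(t)$ into $\Xspace$ and $\|u(0)\|_\Xspace < r \le \rho_*$ -- then $\|u(\tau)\|_\Xspace = r \le \rho_*$ and the lower bound in \eqref{parallel twosided} would force $\augHam(u(\tau)) - \augHam(U_c) \ge \tfrac{c_0}{4}r^2$, a contradiction. Therefore $\|u(t)\|_\Xspace < r$, i.e.\ $u(t) \in \tube_r^\Xspace$, for all $t \in [0,t_0)$, which is the asserted conditional stability.

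The main obstacle -- and the reason the conclusion is only \emph{conditional} -- is the mismatch of function spaces: $\eng$, hence $\augHam$, is smooth only on $\Vspace \cap \nbhdO$ and is not even twice differentiable on the energy space $\Xspace$ on which the coercivity \eqref{parallel coercive} lives. Reconciling the Taylor expansion \eqref{parallel taylor} with the $\Xspace$-coercivity is precisely what the nested scale $\Wspace \hookrightarrow \Vspace \hookrightarrow \Xspace$ and the cubic interpolation inequality of Lemma~\ref{spaces lemma} accomplish, and it is why one must restrict to solutions confined to $\mathcal B_R^\Wspace$. A secondary point, already discharged by Lemma~\ref{cont spec lemma}, is that establishing \eqref{parallel coercive} -- in particular uniform positivity of $\symbq_c$ including at low frequency -- requires $(\beta,\lambda)$ to lie in Region~B (so that, e.g., $\lambda > \lambda_0$ and $\nu_* > 0$); outside Region~B the quadratic form $\langle\Hc\,\cdot\,,\cdot\,\rangle$ ceases to control $\|\,\cdot\,\|_\Xspace^2$ and the more delicate analysis of Section~\ref{spectrum section} would be needed.
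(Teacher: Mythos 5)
Your proof is correct and follows essentially the same route as the paper: positivity of $I^{-1}\Hc$ in Region~B via Lemmas~\ref{cont spec lemma} and \ref{extension of D^2 augHam lemma}, Taylor expansion of $\augHam$ at $U_c$ with the cubic remainder absorbed through the interpolation inequality of Lemma~\ref{spaces lemma}, and conservation of $\augHam$. The only cosmetic difference is in the endgame: you close with a quantitative two-sided bound and a first-exit-time barrier argument, while the paper argues by contradiction with a sequence of initial data, using Lemma~\ref{spaces lemma} once more to upgrade $u_0^n \to 0$ in $\Xspace$ to convergence in $\Vspace$ and then continuity of $\eng$ and $\mom$ there; the two conclusions are equivalent.
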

\begin{proof}
Because $\augHam$ is $C^\infty(\Vspace; \mathbb{R})$, $\augHam(U_c) = 0$, and $\Diff\augHam(U_c) = 0$, Taylor expanding it at $U_c$ gives
\[
\augHam(u) = \frac{1}{2} \langle \Hc u, \, u \rangle + O( \| u \|_{\Vspace}^3).
\]
For $(\beta,\lambda)$ in Region~B, we have by Lemmas~\ref{cont spec lemma} and \ref{extension of D^2 augHam lemma}  that $I^{-1} \Hc$ is positive definite on $\Xspace$.  On the other hand, the cubic term above can be controlled via Lemma~\ref{spaces lemma}:
\[ 
\| u \|_{\Vspace}^3 \lesssim  \| u \|_{\Wspace}^{1-\theta} \| u \|_{\Xspace}^{2+\theta} \leq r^\theta R^{1-\theta}  \| u \|_{\Xspace}^{2} \qquad \textrm{for all } u \in \tube_r^\Xspace \cap \mathcal{B}_R^\Wspace.
\]
Thus, for $r > 0$ sufficiently small, it holds that
\be \label{lower bound shear flow}
\augHam(u)  \geq \alpha \| u \|_{\Xspace}^2 \qquad \textrm{for all } u \in \tube_r^\Xspace \cap \mathcal{B}_R^\Wspace,
\ee
for some $\alpha = \alpha(r,R)> 0$.

Now, seeking a contradiction, suppose that $U_c$ is not conditionally stable.  Thus there exists $R > 0$, $r > 0$, and a sequence of initial data $\{ u_0^n\} \subset \mathcal{O} \cap \Wspace$ with $u_0^n \to 0$ in $\Xspace$ but for which the corresponding solution $u_n : [0,t_0^n) \to \mathcal{B}_R^\Wspace$ exits $\tube_r^\Xspace$ in finite time:
\[
\| u_n(\tau_n) \|_\Xspace = r \qquad \textrm{for some $\tau_n \in (0,t_0^n)$.}
\]
Let $\tau_n$ be the first such time and, if necessary, shrink $r$ so that \eqref{lower bound shear flow} holds.  
Together with the conservation of energy and momentum, this ensures that
\be \label{impossible inequality}
 \augHam(u_0^n) = \augHam(u_n(\tau_n)) \geq \alpha r^2  \qquad \textrm{for all } n \geq 1.
\ee

Because $\{ u_0^n \} \subset \mathcal{B}_R^\Wspace$ and $u_0^n \to 0$ in $\Xspace$, Lemma~\ref{spaces lemma} forces $u_0^n \to 0$ in $\Vspace$.  But then, the continuity of $\eng$ and $\mom$ would imply that $\augHam(u_0^n)$  also vanishes in the limit.  As this is in obvious contradiction with \eqref{impossible inequality}, the proof is complete.
\end{proof}

\subsection{Stability for strong surface tension} 

Next, we turn to the more complicated situation where the wave in question is small-amplitude but nontrivial.  Consider first the strong surface tension case corresponding to the waves in Region~A.  In Theorem~\ref{spectrum theorem}, it was shown that $I^{-1} \Hc$ has a negative direction in this regime, and so we will use the energy-momentum approach to show stability.  Having laid the groundwork for this argument in the previous sections, we are prepared to state and prove a precise version of Theorem~\ref{large beta theorem}.  

\begin{theorem}[Stability for strong surface tension] \label{precise strong surface tension theorem}
For all $c$ such that $0 < \lambda_c - \lambda_0 \ll 1$, the bound states $U_c^{\mathrm{A}}$ and $U_c^{\mathrm{A}\pm}$ given by Corollaries~\ref{KdV bound state corollary} and \ref{Gardner bound state corollary} are conditionally orbitally stable.  
\end{theorem}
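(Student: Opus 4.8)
The plan is to derive the theorem from the abstract conditional orbital stability result of \cite{varholm2020stability}. Every structural hypothesis of that theory has already been verified for the internal wave system \eqref{Hamiltonian equation}: Assumptions~1--4 in Section~\ref{formulation section} (Lemmas~\ref{spaces lemma}--\ref{momentum lemma}), Assumption~5 by Corollaries~\ref{KdV bound state corollary} and \ref{Gardner bound state corollary}, and Assumption~6 by Theorem~\ref{spectrum theorem}, which establishes that $I^{-1}\Hc$ has Morse index one, a simple zero eigenvalue generated by $T^\prime(0)U_c$, and the remainder of its spectrum bounded uniformly away from $0$. Under these hypotheses the general theory reduces conditional orbital stability of $U_c$ to the single scalar requirement that the \emph{moment of instability} $d(c) := \augHam(U_c) = E(U_c) - cP(U_c)$ be strictly convex (its $C^2$ regularity being immediate from Corollary~\ref{KdV bound state corollary}\ref{bound state smooth part}). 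Since $\Diff\augHam(U_c)=0$, one has $d^\prime(c) = -P(U_c)$, so it suffices to show that $c \mapsto P(U_c)$ is strictly decreasing throughout the regime $0 < \lambda_c - \lambda_0 \ll 1$.

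To compute $P(U_c)$ I would start from $\psi_c = \psi_*(\eta_c) = -cA(\eta_c)^{-1}\eta_c^\prime$, which together with the formula \eqref{A inverse formula} for $A(\eta)^{-1}$ gives $P(U_c) = -\int_\R \eta_c^\prime \psi_c\,\diffx = c\int_\R \eta_c^\prime A(\eta_c)^{-1}\eta_c^\prime\,\diffx$, a strictly positive multiple of $c$. To make the dependence on $c$ explicit I would pass to the long-wave rescaling of Section~\ref{sec rescaling}: writing $\eta_c = \varepsilon_c^m d_+ S_{\varepsilon_c}(\widetilde\eta + \widetilde r_{\varepsilon_c})$ with $\varepsilon_c = \varepsilon_c^{\mathrm A} = \sqrt{\lambda_c - \lambda_0}$ and expanding $A(\eta)^{-1} = \rho_+\DN_+(\eta)^{-1} + \rho_-\DN_-(\eta)^{-1}$ about $\eta = 0$ --- using the small-frequency expansion of the symbols of $\DN_\pm(0)^{-1}$ and the Fr\'echet derivative formulas of Lemmas~\ref{DG formula lemma} and \ref{D^2G formula lemma}, exactly in the spirit of the proof of Lemma~\ref{R asymptotics lemma} --- one obtains an asymptotic formula for $P(U_c)$ whose leading term is $c\,\varepsilon_c^{2m-1}\mathfrak p(\varepsilon_c)$, where $\mathfrak p$ is smooth and $\mathfrak p(0)$ equals an explicit positive constant times $\int_\R \widetilde\eta_0^2\,\diffx$, with $\widetilde\eta_0$ the limiting KdV soliton for $U_c^{\mathrm A}$, respectively the limiting Gardner soliton for $U_c^{\mathrm A\pm}$.

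Because $\lambda_c = -g\jump{\rho_*}d_{+*}/(\rho_{-*}c^2)$ is an explicit function of $c$, so is $\varepsilon_c$; in particular $\varepsilon_c \searrow 0$ as $\lambda_c \searrow \lambda_0$ and $\varepsilon_c\,\tfrac{d\varepsilon_c}{dc} = -\lambda_c/c \neq 0$. Differentiating the expansion of $P(U_c)$ in $c$ and identifying the leading-order coefficient as $\varepsilon_c \to 0$ --- which requires using the ODEs satisfied by the explicit model profiles, the associated Pohozaev-type identities, and, in the Gardner case, the dependence of the profile on $\kappa = \kappa_c^{\mathrm A}$ --- one finds $\tfrac{d}{dc}P(U_c) < 0$ uniformly in the stated regime. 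Hence $d$ is strictly convex, and the abstract theorem of \cite{varholm2020stability} delivers conditional orbital stability of $U_c^{\mathrm A}$ and $U_c^{\mathrm A\pm}$. Rewriting the conclusion in the original surface variables $(\eta,\varphi_+,\varphi_-)$, for which the $\Xspace$-norm is equivalent to the physical energy (Remark~\ref{strong surface tension remark}), this is exactly Theorem~\ref{large beta theorem}.

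The heart of the argument --- and its main obstacle --- is the asymptotic computation above. The naive leading part of $A(\eta_c)^{-1}$ does not on its own fix the sign of $\tfrac{d}{dc}P(U_c)$: after differentiation in $c$, the $O(\varepsilon_c)$ profile correction $\widetilde r_{\varepsilon_c}$, the next small-frequency coefficient of the symbol of $A(0)^{-1}$, and (for the Gardner waves) the $\kappa_c^{\mathrm A}$-derivative all contribute at the same order, so one must carry the expansion of the nonlocal operator $A(\eta)^{-1}$ and of the traveling wave profile far enough to exhibit the requisite cancellations. This bookkeeping runs parallel to, but is somewhat heavier than, that of Lemma~\ref{R asymptotics lemma}; the saving feature is that the surviving leading coefficient is expressible through integrals of the explicit KdV and Gardner solitons, for which the needed negativity is elementary. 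This is precisely where Region~A is more favorable than the near-critical regime of Theorem~\ref{critical beta theorem}, in which the analogous coefficient reduces to $\tfrac{d}{d\delta}\int_\R Z_\delta^2\,\diffx$ and is genuinely model-dependent.
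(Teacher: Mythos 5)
Your overall skeleton coincides with the paper's: verify the assumptions of \cite{varholm2020stability}, reduce conditional orbital stability to $\mi^{\prime\prime}(c_*)>0$ for the moment of instability, use $\mi^\prime(c)=-\mom(U_c)$, and evaluate $\mom(U_c)=c\int\eta_c^\prime A(\eta_c)^{-1}\eta_c^\prime\,\diffx$ through the long-wave rescaling, arriving at the leading term $c\,\varepsilon_c^{2m-1}\times(\text{positive constant})\times\int\widetilde\eta^2\,\diffx$. The gap is in the one step that actually constitutes the theorem: you assert that $\tfrac{d}{dc}\mom(U_c)<0$ but do not establish it, and the route you sketch for doing so rests on an incorrect order count. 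You claim the leading part of $A(\eta_c)^{-1}$ cannot fix the sign and that, after differentiating in $c$, the $O(\varepsilon_c)$ profile correction $\widetilde r_c$, the next coefficient in the symbol of $A(0)^{-1}$, and the $\kappa_c^{\mathrm A}$-derivative all enter at the same order, so that cancellations must be exhibited via a deeper expansion and Pohozaev-type identities. This is not the case. Since $\lambda_c,\beta_c\propto c^{-2}$, one has $\tfrac{d\varepsilon_c}{dc}=-\lambda_c/(c\varepsilon_c)$ (a fact you record but do not use), so differentiating the explicit prefactor $-c\,\varepsilon_c^{2m-1}\sqrt{\beta_c-\beta_0}$ produces a term of definite sign and of size $\varepsilon_c^{2m-3}$, whereas the contributions of $\widetilde r_c$ and of the $O(\varepsilon_c^2)$ symbol correction to $\tfrac{d}{dc}\mom(U_c)$ are smaller by at least one factor of $\varepsilon_c$ (this is exactly what the $C^1(\mathscr{I})$ control of the remainder in \eqref{final deriv d equation} encodes). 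Hence the leading term alone determines $\mi^{\prime\prime}(c_*)$, and no cancellation needs to be tracked.

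The only place where an extra same-order effect appears is the Gardner family $U_c^{\mathrm A\pm}$, where $m=1$ and $\kappa_c^{\mathrm A}=(\varrho_*-1/h_*^2)/\varepsilon_c$ makes $\int\widetilde\eta_{\kappa_c}^2$ genuinely $c$-dependent with $\tfrac{d\kappa_c}{dc}$ of size $\varepsilon_c^{-2}$, contributing at the same order $\varepsilon_c^{-1}$ as the prefactor derivative. But even there the resolution is not a cancellation: both contributions push $\mi^\prime$ in the same (increasing) direction, which is checked by elementary sign considerations for $\widetilde\eta_c^{\mathrm A+}$ and by the explicit arctangent formula for $\int(\widetilde\eta_c^{\mathrm A-})^2\,\diffx$, using that $c\mapsto\kappa_c\signum{c}$ is increasing. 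In the KdV case $\widetilde\eta$ is independent of $c$, so no properties of the profile beyond $\int\widetilde\eta^2>0$ are needed, and in particular no ODE or Pohozaev identities enter. As written, your proposal leaves the convexity of $\mi$ unproved and proposes to prove it by a substantially heavier computation whose premise (same-order competing terms requiring cancellation) is false; supplying the monotonicity argument for the explicit prefactor, together with the $\kappa_c$-monotonicity check for the Gardner waves, is what is missing.
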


\begin{proof}
Let $U_c$ stand for both $U_c^{\mathrm{A}}$ and $U_c^{\mathrm{A}\pm}$, as the first stage of the proof is identical in either case.  In Section~\ref{formulation section}, we confirmed that Assumptions $1$--$5$ of \cite{varholm2020stability} hold, and Assumption~$6$ was verified in Theorem~\ref{spectrum theorem}.  By \cite[Theorem 2.4]{varholm2020stability}, to prove that $U_{c_*}$ is conditionally orbitally stable we need only show that $\mi^{\prime\prime}(c_*) > 0$, where $\mi$ is \emph{moment of instability} defined by
\be\label{def moment}
\mi(c) := \augHam(U_c) = \eng(U_c) - c \mom(U_c).
\ee
Because $U_c$ is a critical point of $\augHam$, differentiating the above equation gives
\be \label{first derivative d formula}
\mi^\prime(c) = -\mom(U_c).
\ee
Thus we must confirm that $c \mapsto -\mom(U_c)$ is strictly increasing at $c = c_*$.  

The definition of the momentum \eqref{definition momentum} and kinematic condition \eqref{definition psistar} yield the explicit formula
\begin{equation*}
\begin{split}
\mi^{\prime}(c) & = \int_\R \eta_c' \psi_c \,\diffx =  c \int_\R \eta_c \partial_x  A(\eta_c)^{-1} \eta_c^\prime \,\diffx.
\end{split}
\end{equation*}
 As in Section~\ref{sec rescaling}, we will exploit a long-wave rescaling to analyze this quantity.  Recycling notation, let us redefine the scaling operator to be
\be\label{rescale T}
S_c f := f\left( \frac{\varepsilon_c \placeholder}{d_+ \sqrt{\beta_c - \beta_0}} \right),
\ee
where $\varepsilon_c = \varepsilon_c^{\mathrm{A}}$ and $\beta_c$ are given by \eqref{definition beta_c^A}.  Likewise, the asymptotics for the free surface profile established in \eqref{region A KdV scaling} and \eqref{region A Gardner scaling} permits us to write
\[ 
\eta_c =: \varepsilon_c^m d_+ S_c \left(  \widetilde\eta + \widetilde{r}_c \right) \qquad \textrm{for } \widetilde{r}_c = O(\varepsilon_c) \quad \textrm{in } H^k,
\]
with $m = 2$ for $U_c^{\mathrm{A}}$ and $m = 1$ for $U_c^{\mathrm{A}\pm}$. 
Using the rescaling, we compute that
\begin{equation*}
\begin{split}
\mi^{\prime}(c) & = c \varepsilon_c^{2m} d_+^2 \int_\R   (S_c (\widetilde\eta+\widetilde{r}_c)) \partial_x  A(\eta_c)^{-1} \partial_x  S_c (\widetilde\eta + \widetilde{r}_c ) \,\diffx \\
& = c \varepsilon_c^{2m-1} d_+^3 \sqrt{\beta_c - \beta_0}  \int_\R (\widetilde\eta+\widetilde{r}_c) S_c^{-1} \partial_x A(\eta_c)^{-1} \partial_x S_c (\widetilde\eta+\widetilde{r}_c)   \,\diffx \\
& = c \varepsilon_c^{2m-1} d_+^3 \sqrt{\beta_c - \beta_0}\sum_\pm \rho_\pm \int_\R (\widetilde\eta+\widetilde{r}_c)  S_c^{-1} \partial_x  \DN_\pm(\eta_c)^{-1} \partial_x S_c   (\widetilde\eta+\widetilde{r}_c) \,\diffx,
\end{split}
\end{equation*}
where the last line follows from \eqref{A inverse formula}.  Similar to \eqref{def NLop}, let us define
\[
\NLopc(\eta_c) := d_+ S_c^{-1} \partial_x \DN_\pm(\eta_c)^{-1} \partial_x S_c.
\]
Arguing as in Lemma~\ref{R asymptotics lemma}, we then find that 
\[
\left\| \NLopc(0) + \frac{d_+}{d_\pm} \right\|_{\Lin(H^2, L^2)} \lesssim \varepsilon_c^2, \qquad 
\left\| \NLopc(\eta_c) - \NLopc(0) \right\|_{\Lin(H^2, L^2)} \lesssim \varepsilon_c^{m},
\]
and hence
\begin{equation} \label{final deriv d equation}
\begin{aligned}
\mi^{\prime}(c) & = c \varepsilon_c^{2m-1} d_+^2 \sqrt{\beta_c - \beta_0} \sum_\pm \rho_\pm   \int_\R  \widetilde \eta \NLopc(0) \widetilde\eta \,\diffx + O(\varepsilon_c^{3m - 1}) &\qquad& \textrm{in } C^1(\mathscr{I}) \\
& = - c \varepsilon_c^{2m-1} d_+^2 \sqrt{\beta_c-\beta_0}  \sum_\pm \rho_\pm \frac{d_+}{d_\pm}  \int_\R  \widetilde{\eta}^2 \,\diffx + O(\varepsilon_c^{3m - 1}) &\qquad& \textrm{in } C^1(\mathscr{I}),
\end{aligned}
\end{equation}
where recall that $\mathscr{I}$ is a sufficiently small interval containing $c_*$.

Now, observe that $\varepsilon_c, \beta_c > 0$, and from \eqref{definition beta_c^A}, 
\[
c\mapsto \varepsilon_c \textrm{ and } c \mapsto c \beta_c \signum{c} \textrm{ are both positive and } \left\{ \begin{aligned} \textrm{strictly decreasing for } & c > 0, \textrm{ and }  \\ \textrm{strictly increasing for } & c < 0, \end{aligned} \right.
\]
Therefore $c \mapsto -c \varepsilon_c^{2m-1} (\beta_c-\beta_0)^{1/2}$ is strictly increasing. This completes the proof for the family $\{ U_c^{\mathrm{A}} \}$, as $\widetilde\eta$ is independent of $c$ in that case.  

The argument for $\{U_c^{\mathrm{A}\pm}\}$ is only slightly more complicated.  Recall that by \eqref{region A Gardner scaling},
\be\label{etatilde form}
\widetilde\eta = \widetilde\eta_c^{\mathrm{A}\pm}(x) = \frac{1}{\kappa_c \pm \sqrt{\kappa_c^2 + 4(\varrho + h)} \cosh x},
\ee
with $\kappa_c = \kappa_c^{\mathrm{A}}$ defined as in Corollary~\ref{Gardner bound state corollary}.  Since we are in fact computing $\int_\R \widetilde\eta^2 \,\diffx$, it is sufficient to assume that $\kappa_c > 0$.  Then, clearly $\widetilde\eta_c^{\mathrm{A}+} > 0$ and $c \mapsto \kappa_c \signum{c}$ is increasing, so we again have by \eqref{final deriv d equation} and the argument in the previous paragraph that $\mi^{\prime}$ is strictly increasing.   
Finally, $\widetilde\eta_c^{\mathrm{A}-}$ is a wave of depression and 
an explicit computation using \eqref{etatilde form} gives
\[
\int_\R (\widetilde\eta_c^{\mathrm{A}-})^2 \,\diffx = \frac{\kappa_c \tan^{-1}\left( \frac{\kappa_c + \sqrt{\kappa_c^2 + 4(\varrho + h)}}{4(\varrho + h)} \right)}{2(\varrho + h)^{3/2}} - \frac{1}{2(\varrho + h)}.
\]
It is easily seen that the right-hand side above is strictly increasing in $c$ for $c > 0$ and strictly decreasing for $c < 0$.   The proof is therefore complete.
\end{proof}

\subsection{Stability for near critical surface tension}

Consider now the families of bound states $\{U_c^{\mathrm{C}}\}$ that correspond to traveling waves in Region~C.  Recall from Section~\ref{traveling wave section}, that to leading order, the corresponding free surface profiles are rescalings of the family of primary homoclinic orbits $\{ Z_\delta \}$ of the ODEs \eqref{Z1 ODE}.  To unify the presentation, we will write $Z_c$ as shorthand for  $Z_{\delta_c^{\mathrm{C}}}$.

The next theorem shows that under the hypothesis of Theorem~\ref{thm Qspectrum conditional}, the orbital stability/instability of these waves can be inferred purely from properties of the primary homoclinic orbits.  

\begin{theorem}[Stability for critical surface tension]
Consider the family of traveling waves $\{U_c^{\mathrm{C}}\}$ given in Lemma~\ref{region C bound states lemma}~\ref{region C kawahara part} and assume that the hypothesis of Theorem \ref{thm Qspectrum conditional} holds.  For all $c_*$ with $0 < \lambda_{c_*} - \lambda_0 \ll 1$, the corresponding wave is conditionally orbitally stable provided that the function
\be \label{d prime region c} 
c \mapsto \signum{c} \int_\R  Z_{c}^2 \,\diffx \quad \textrm{is strictly increasing at } c_*,
\ee
and it is orbitally unstable if this function is strictly decreasing there.  
\end{theorem}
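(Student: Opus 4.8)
The plan is to run the argument in the proof of Theorem~\ref{precise strong surface tension theorem} essentially verbatim, replacing only the Region~A long-wave rescaling by the Region~C one. Assumptions $1$--$5$ of \cite{varholm2020stability} were verified in Section~\ref{formulation section}, and Assumption~$6$ for the family $\{U_c^{\mathrm{C}}\}$ was established in Theorem~\ref{spectrum theorem} under the hypothesis of Theorem~\ref{thm Qspectrum conditional}. Hence \cite[Theorem~2.4]{varholm2020stability} reduces conditional orbital stability of $U_{c_*}$ to the inequality $\mi^{\prime\prime}(c_*) > 0$, and the instability counterpart in \cite{varholm2020stability} gives orbital instability when $\mi^{\prime\prime}(c_*) < 0$, where $\mi(c) := \augHam(U_c) = \eng(U_c) - c\mom(U_c)$ is the moment of instability. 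Everything therefore comes down to the sign of $\mi^{\prime\prime}(c_*)$. As in \eqref{first derivative d formula}, since $U_c$ is a critical point of $\augHam$, using \eqref{definition psistar} one has
\[
\mi^\prime(c) = -\mom(U_c) = \int_\R \eta_c^\prime \psi_c \,\diffx = -c\int_\R \eta_c^\prime A(\eta_c)^{-1}\eta_c^\prime \,\diffx .
\]

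Next I would rescale, following \eqref{rescale T}--\eqref{final deriv d equation}. Put $S_c f := f\left( \frac{\varepsilon_c^{\mathrm{C}}\placeholder}{d_+} \right)$ and, using Lemma~\ref{region C bound states lemma}\ref{region C kawahara part}, write $\eta_c = (\varepsilon_c^{\mathrm{C}})^{4} d_+\, S_c\left(\widetilde\eta + \widetilde{\etaerror}_c\right)$, where $\widetilde\eta$ is the fixed multiple of $Z_c$ appearing in \eqref{region C Z1 scaling} and $\widetilde{\etaerror}_c$ is an exponentially localized remainder of higher order in $\varepsilon_c^{\mathrm{C}}$. Splitting $A(\eta_c)^{-1} = \rho_{+}\DN_+(\eta_c)^{-1} + \rho_{-}\DN_-(\eta_c)^{-1}$ by \eqref{A inverse formula} and setting $\NLopc(\eta_c) := d_+ S_c^{-1}\partial_x\DN_\pm(\eta_c)^{-1}\partial_x S_c$, the estimates
\[
\left\| \NLopc(0) + \frac{d_+}{d_\pm} \right\|_{\Lin(H^2,L^2)} \lesssim (\varepsilon_c^{\mathrm{C}})^2, \qquad \left\| \NLopc(\eta_c) - \NLopc(0) \right\|_{\Lin(H^2,L^2)} \lesssim (\varepsilon_c^{\mathrm{C}})^{4},
\]
obtained exactly as in Lemma~\ref{R asymptotics lemma}, lead to an asymptotic identity of the form
\[
\mi^\prime(c) = -\,c\,(\varepsilon_c^{\mathrm{C}})^{7}\,\kappa_* \int_\R Z_c^2 \,\diffx \;+\; (\text{higher order}) \qquad \text{in } C^1(\mathscr{I}),
\]
with $\kappa_* > 0$ depending only on the (fixed) dimensional parameters.

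Finally I would differentiate this identity at $c_*$; because the remainder is controlled in $C^1(\mathscr{I})$, $\mi^{\prime\prime}(c_*)$ has the sign of the $c$-derivative at $c_*$ of the leading term. Using the explicit formulas \eqref{definition beta_c^A} and \eqref{definition epsilon C region} for $\varepsilon_c^{\mathrm{C}}$ and $\delta_c$ as functions of $c$ — in particular that $c\mapsto\varepsilon_c^{\mathrm{C}}$ is positive and strictly monotone and that $c\mapsto\delta_c$ is a local diffeomorphism near $c_*$ whose orientation is governed by $\signum{c_*}$ — the sign of $\mi^{\prime\prime}(c_*)$ is seen to coincide with the sign of the derivative at $c_*$ of $c\mapsto\signum{c}\int_\R Z_c^2 \,\diffx$; the factor $\signum{c}$ in the statement is precisely what absorbs the orientation of this change of variables. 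Strict monotone increase then gives $\mi^{\prime\prime}(c_*)>0$ and conditional orbital stability, while strict decrease gives $\mi^{\prime\prime}(c_*)<0$ and orbital instability. The step I expect to be the main obstacle is the rescaling: as in Section~\ref{sec rescaling}, the Region~C nonlocal operators must be expanded to higher order than in Region~A because the underlying model equation \eqref{intro kawahara} is fourth rather than second order, and this expansion has to be pushed far enough — and controlled in $C^1$ in the wave speed rather than merely pointwise — that the sign of $\mi^{\prime\prime}(c_*)$ is genuinely dictated by the stated monotonicity condition and is not spoiled by the competing lower-order terms produced when the rapidly varying quantities $\varepsilon_c^{\mathrm{C}}$ and $\delta_c$ are differentiated in $c$. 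The circumstance that, unlike in Theorem~\ref{precise strong surface tension theorem}, the profile $\widetilde\eta$ now depends on $c$ through $\delta_c$ is exactly why the conclusion must be conditional on properties of the homoclinic family $\{Z_\delta\}$ of \eqref{intro kawahara}.
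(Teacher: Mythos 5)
Your proposal is correct and takes essentially the same route as the paper: the same reduction via \cite[Theorems 2.4 and 2.6]{varholm2020stability} to the sign of $\mi^{\prime\prime}(c_*)$, the same Region~C long-wave rescaling (with $m=4$, $n=4$) yielding $\mi^\prime(c) = -c\,\varepsilon_c^{7}\,\kappa_*\int_\R Z_c^2\,\diffx + O(\varepsilon_c^{11})$ in $C^1(\mathscr{I})$ with $\kappa_* = d_+^2\gammaC\sum_\pm\rho_\pm d_+/d_\pm$, and the same concluding appeal to the monotonicity of $c\mapsto\signum{c}\int_\R Z_c^2\,\diffx$. The only difference is cosmetic: in the last step you attribute the sign of $\mi^{\prime\prime}(c_*)$ to the orientation of the change of variables $c\mapsto\delta_c$, while the paper phrases it through the monotonicity of the prefactor $-c\,\varepsilon_c^{7}$ together with the hypothesis; both treat that step at the same brief level of detail, and you in fact flag the competition between differentiating $\varepsilon_c$, $\delta_c$ more explicitly than the paper does.
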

\begin{proof}
Throughout the argument, we abbreviate $\{U_c\}$ for $\{U_c^{\mathrm{C}}\}$ and $\varepsilon_c = \varepsilon_c^{\mathrm{C}}$.  We have already proved in Theorem~\ref{spectrum theorem} that the spectral hypothesis on $I^{-1} \Hc$ in \cite[Assumption 6]{varholm2020stability} holds.  As in the previous subsection, we may therefore apply \cite[Theorem~2.4]{varholm2020stability} to conclude that $U_{c_*}$ is conditionally orbitally stable provided that $d^{\prime\prime}(c_*) > 0$, where $d$ is the moment of instability \eqref{def moment}.  On the other hand, because the Cauchy problem is locally well-posed, \cite[Assumption 7]{varholm2020stability} is satisfied, and so \cite[Theorem 2.6]{varholm2020stability} tells us that $U_{c_*}$ is orbitally unstable if $d^{\prime\prime}(c_*) < 0$.

From Lemma~\ref{region C bound states lemma}, we know that free surface profile takes the form
\[
\eta_c = \varepsilon_c^4 d_+ S_c\left( Z_c + \widetilde{r}_c \right) \qquad \textrm{with } \widetilde{r}_c = O(\varepsilon_c) \textrm{ in $H^k$ as } \varepsilon \searrow 0,
\]
where we have redefined the scaling operator to be $S_c f := f(\varepsilon \placeholder/d_+)$.
The same argument as in the proof of Theorem~\ref{precise strong surface tension theorem} reveals that
\[ 
d^\prime(c) = - c \varepsilon_c^{7} d_+^2 \gamma \sum_\pm \rho_\pm \frac{d_+}{d_\pm}  \int_\R  Z_c^2 \,\diffx + O(\varepsilon_c^{11}) \quad \textrm{in } C^1(\mathscr{I}).
\]
From the definition of $\varepsilon_c$ in \eqref{definition epsilon C region}, 
\[
c\mapsto \varepsilon_c \textrm{ and } c \mapsto c \varepsilon^2_c \signum{c} \textrm{ are both positive and } \left\{ \begin{aligned} \textrm{strictly decreasing for } & c > 0, \textrm{ and }  \\ \textrm{strictly increasing for } & c < 0, \end{aligned} \right.
\]
Thus $c \mapsto -c \varepsilon_c^{7}$ is strictly increasing. Therefore $d^{\prime}(c)$ is strictly increasing at $c_*$ when \eqref{d prime region c} is satisfied. 
\end{proof}

We remark that \eqref{d prime region c} is stated in terms of the wave speed $c$, but to compare it to results on dispersive model equations of Kawahara type \eqref{intro kawahara} it is natural to consider the related function $\delta \mapsto \int Z_\delta^2 \, \diffx$.  Looking carefully at its definition in \eqref{definition epsilon C region}, we see that  $c \mapsto \delta_c^{\mathrm{C}}$ can be both increasing or decreasing depending on the various physical parameters.  

\section*{Acknowledgments}
The research of RMC is supported in part by the NSF through DMS-1907584.  The research of SW is supported in part by the NSF through DMS-1812436.  The authors would also like to thanks Dag Nilsson for enlightening communications regarding the existence theory in Section~\ref{traveling wave section}, and Daniel Sinambela for close readings of earlier versions of the manuscript.


\appendix

\section{Elementary identities} \label{identities appendix}

\begin{proof}[Proof of Lemma~\ref{second variation A lemma}]
As in the proof of Lemma~\ref{DG formula lemma}, we start by considering the corresponding formula for $A(\eta)^{-1}$.  Recalling \eqref{A inverse formula}, we see that 
\[ \begin{split} 
 D^2 (A(\eta)^{-1})[ \dot \eta, \dot \eta] & = \sum_{\pm} \rho_\pm \Diff^2 (\DN_\pm(\eta)^{-1})[ \dot\eta, \dot\eta] \\
 & = -\sum_{\pm} \rho_\pm \DN_\pm(\eta)^{-1} \left( \Diff^2\DN_\pm(\eta)[\dot\eta,\dot\eta] - 2 \Diff\DN_\pm(\eta)[\dot\eta] \DN_\pm(\eta)^{-1} \Diff\DN_\pm(\eta)[\dot\eta]\right) \DN_\pm(\eta)^{-1}.
 \end{split}
 \]
 On the other hand, we have the elementary identity 
\be \label{D^2 A identity} 
\begin{split}
 \Diff^2 A(\eta)[ \dot\eta,\dot\eta] 
& = -A(\eta) \Diff^2(A(\eta)^{-1})[\dot\eta,\dot\eta] A(\eta) + 2\Diff A(\eta)[\dot\eta] A(\eta)^{-1} \Diff A(\eta)[\dot\eta].
\end{split}
\ee
Together, these will furnish a representation formula for the second variation of $A(\eta)^{-1}$ once we have fully expanded these expressions using \eqref{first derivative DN formula} and \eqref{second derivative DN formula}.  

Consider each of the terms on the right-hand side of \eqref{D^2 A identity}.  For the first, we have
\begin{align*}
-\int_{\mathbb{R}} \psi A(\eta) \Diff^2 (A(\eta)^{-1})[\dot\eta,\dot\eta] A(\eta) \psi \,\diffx & = \sum_{\pm} \rho_\pm \int_{\mathbb{R}}  \theta_\pm     \Diff^2 \DN_\pm(\eta)[\dot\eta,\dot\eta] \theta_\pm \,\diffx    \\
& \qquad -2 \sum_{\pm} \rho_\pm \int_{\mathbb{R}}  \theta_\pm \Diff\DN_\pm(\eta)[\dot\eta] \DN_\pm(\eta)^{-1} \Diff\DN_\pm(\eta)[\dot\eta]  \theta_\pm \,\diffx,
\end{align*}
where  recall that $\theta_\pm = \theta_\pm(\eta, \psi)$ is given by \eqref{def theta and a4}.  Throughout the remainder of the proof, $a_i^\pm$ will always be evaluated at $(\eta, \theta_\pm)$, so we suppress the arguments for readability.  By the first variation \eqref{first derivative DN formula} and second variation \eqref{second derivative DN formula}  formulas for $\DN_\pm(\eta)$, this becomes
\begin{align*}
-\int_{\mathbb{R}} \psi A(\eta) \Diff^2 (A(\eta)^{-1})[\dot\eta,\dot\eta] A(\eta) \psi \,\diffx & = \sum_\pm \rho_\pm \int_{\mathbb{R}}  \left( a_4^\pm \dot\eta^2 + 2a_2^\pm \dot\eta \DN_\pm(\eta) \left( a_2^\pm \dot\eta \right) \right) \,\diffx \\
& \qquad -2 \sum_{\pm} \rho_\pm \int_{\mathbb{R}} a_1^\pm \left(  \DN_\pm(\eta)^{-1} \Diff\DN_\pm(\eta)[\dot\eta]\theta_\pm \right )^\prime \dot\eta \,\diffx  \\
&  \qquad -2 \sum_\pm \rho_\pm \int_{\mathbb{R}}  a_2^\pm \left( \Diff\DN_\pm(\eta)[\dot\eta] \theta_\pm \right) \dot\eta \,\diffx \\
& = \sum_\pm \rho_\pm \int_{\mathbb{R}}  \left( a_4^\pm \dot\eta^2 + 2a_2^\pm \dot\eta \DN_\pm(\eta) \left( a_2^\pm\dot\eta \right) \right) \,\diffx \\
& \qquad -2 \sum_\pm \rho_\pm \int_{\mathbb{R}} \mathscr{L}_\pm[\dot\eta] \Diff\DN_\pm(\eta)[\dot \eta] \theta_\pm  \,\diffx,
\end{align*}
for the linear operator $\mathscr{L}_\pm$ given by \eqref{def script L}.
Using \eqref{first derivative DN formula} once more allows us to simplify this to
\begin{align*}
-\int_{\mathbb{R}} \psi A(\eta) \Diff^2 (A(\eta)^{-1})[\dot\eta,\dot\eta] A(\eta) \psi \,\diffx & = \sum_\pm \rho_\pm \int_{\mathbb{R}}  \left( a_4^\pm \dot\eta + 2a_2^\pm \DN_\pm(\eta) \left( a_2^\pm \dot\eta \right) \right) \dot\eta \,\diffx \\
& \qquad -2 \sum_{\pm} \rho_\pm \int_{\mathbb{R}} \left( a_1^\pm \mathscr{L}_\pm[\dot \eta]^\prime + a_2^\pm \DN_\pm(\eta) \mathscr{L}_\pm[\dot\eta] \right) \dot \eta \,\diffx.
\end{align*} 
So finally we have
\be \label{D^2A calculation first term}
-\int_{\mathbb{R}} \psi A(\eta) \Diff^2 (A(\eta)^{-1})[\dot\eta,\dot\eta] A(\eta) \psi \,\diffx  =  \int_{\mathbb{R}}  \Big( a_4 \dot\eta + 2\sum_\pm \rho_\pm a_2^\pm \DN_\pm(\eta) \left( a_2^\pm \dot \eta \right) - 2 \mathscr{M}\dot\eta  \Big) \dot\eta \,\diffx 
\ee
where recall $a_4 = a_4(\eta,\psi)$ and $\mathscr{M} = \mathscr{M}(\eta,\psi)$ were defined in \eqref{def theta and a4} and \eqref{def script M}, respectively.   

Likewise, the second in term on the right-hand side of \eqref{D^2 A identity} can be treated as follows.  Using \eqref{first derivative A formula}, we calculate that
\begin{align*}
\int_{\mathbb{R}} \psi \Diff A(\eta)[ \dot\eta] A(\eta)^{-1} \Diff A(\eta)[\dot\eta] \psi \,\diffx & = \sum_{\pm} \rho_\pm\int_{\mathbb{R}}  \Big(  a_1^\pm \left( \DN_\pm(\eta)^{-1} \Diff A(\eta)[\dot\eta]\psi \right)^\prime \\
& \qquad\qquad\qquad +a_2^\pm A(\eta) \Diff A(\eta)[\dot\eta] \psi \Big) \dot\eta \,\diffx \\
& = \sum_\pm \rho_\pm \int_{\mathbb{R}} \mathscr{L}_\pm[\dot\eta] \Diff A(\eta)[\dot\eta] \psi \,\diffx = \int_{\mathbb{R}} \mathscr{L}[\dot\eta] \Diff A(\eta)[\dot\eta] \psi \,\diffx.
\end{align*}
Applying \eqref{first derivative A formula} once more then yields
\be \label{script N identity} 
\begin{split}
\int_{\mathbb{R}} \psi \Diff A(\eta)[ \dot\eta] A(\eta)^{-1} \Diff A(\eta)[\dot\eta] \psi \,\diffx & = \sum_{\pm}  \rho_{\pm} \int_{\mathbb{R}} \Big( a_1^{\pm} \left( A(\eta) \DN_{\pm}(\eta)^{-1} \mathscr{L}[\dot\eta] \right)^\prime  \\
& \qquad \qquad\qquad +a_2^{\pm} A(\eta) \mathscr{L}[\dot\eta] \Big) \dot\eta \,\diffx \\
& = \int_{\mathbb{R}} \dot\eta\mathscr{N} \dot\eta \,\diffx,
\end{split}
\ee
with $\mathscr{N} = \mathscr{N}(\eta,\psi)$ defined in \eqref{def script N}.  Combining this with \eqref{D^2 A identity} and \eqref{D^2A calculation first term} gives the formula \eqref{second derivative A formula}, completing the proof.  
\end{proof}

\bibliographystyle{siam}
\bibliography{internalwaves}

\end{document}